\theoremstyle{thmstyletwo}%
\newtheorem{theorem}{Theorem}[section]
\newtheorem{proposition}[theorem]{Proposition}%
\newtheorem{lemma}[theorem]{Lemma}
\newtheorem{cor}[theorem]{Corollary}
\newtheorem{assumption}[theorem]{Assumption}
\theoremstyle{remark}
\newtheorem{remark}{Remark}%
\theoremstyle{definition}
\newtheorem{definition}{Definition}
\newtheorem{example}{Example}%
\numberwithin{equation}{section}
\newcommand{\domD}{\mathscr{D}}
\renewcommand{\pi}{\uppi}
\renewcommand{\Re}{\operatorname{Re}}
\renewcommand{\Im}{\operatorname{Im}}
\DeclareMathOperator{\erf}{erf}
\DeclareMathOperator{\Si}{Si}
\DeclareMathOperator{\sinc}{sinc}
\DeclareMathOperator{\OO}{O}
\DeclareMathOperator{\E}{e}
\DeclareMathOperator{\I}{i}
\DeclareMathOperator{\diag}{diag}
\newcommand{\SE}{\text{\scriptsize{\rm SE}}}
\newcommand{\DE}{\text{\scriptsize{\rm DE}}}
\newcommand{\SEt}{\psi^{\SE}}
\newcommand{\DEt}{\psi^{\DE}}
\newcommand{\SEi}{\phi^{\SE}}
\newcommand{\DEi}{\phi^{\DE}}
\newcommand{\JmSE}{\mathcal{J}_m^{\SE}}
\newcommand{\tJmSE}{\mathcal{\tilde{J}}_m^{\SE}}
\newcommand{\JmDE}{\mathcal{J}_m^{\DE}}
\newcommand{\tJmDE}{\mathcal{\tilde{J}}_m^{\DE}}
\newcommand{\oSE}{\omega^{\SE}}
\newcommand{\oDE}{\omega^{\DE}}
\newcommand{\wSE}{W^{\SE}}
\newcommand{\wDE}{W^{\DE}}
\newcommand{\vSEm}{\mathcal{V}^{\SE}_m}
\newcommand{\vDEm}{\mathcal{V}^{\DE}_m}
\newcommand{\dSEm}{D^{\SE}_m}
\newcommand{\dDEm}{D^{\DE}_m}
\newcommand{\aSEm}{A^{\SE}_m}
\newcommand{\aDEm}{A^{\DE}_m}
\newcommand{\vvSEm}{\boldsymbol{v}^{\SE}_m}
\newcommand{\vvDEm}{\boldsymbol{v}^{\DE}_m}
\newcommand{\D}{\,\mathrm{d}}
\newcommand{\Hinf}{\mathbf{H}^{\infty}}
\newcommand{\MC}{\mathbf{M}}
\DeclareMathSymbol{\round}{\mathord}{letters}{"40} 
\renewcommand{\partial}{\textrm{\Pisymbol{psy}{"B6}}} 
\begin{document}

\DOI{DOI HERE}
\copyrightyear{2021}
\vol{00}
\pubyear{2021}
\access{Advance Access Publication Date: Day Month Year}
\appnotes{Paper}
\copyrightstatement{Published by Oxford University Press on behalf of the Institute of Mathematics and its Applications. All rights reserved.}
\firstpage{1}


\title[Refinement of the theory and convergence of the Sinc convolution]{Refinement of the theory and convergence of the Sinc convolution---beyond Stenger's conjecture}

\author{Tomoaki Okayama*\ORCID{0000-0001-9942-1670}
\address{\orgdiv{Graduate School of Information Sciences}, \orgname{Hiroshima City University}, \orgaddress{\street{3-4-1, Ozuka-higashi, Asaminami-ku}, \postcode{731-3194}, \state{Hiroshima}, \country{Japan}}}}

\authormark{Tomoaki Okayama}

\corresp[*]{Corresponding author: \href{okayama@hiroshima-cu.ac.jp}{okayama@hiroshima-cu.ac.jp}}

\received{Date}{0}{Year}
\revised{Date}{0}{Year}
\accepted{Date}{0}{Year}


\abstract{The Sinc convolution is an
approximate formula for indefinite convolutions proposed by Stenger.
The formula was derived based on the Sinc indefinite integration formula
combined with the single-exponential transformation.
Although its efficiency has been confirmed in various fields,
several theoretical issues remain unresolved.
The first contribution of this study is to resolve those issues
by refining the underlying theory of the Sinc convolution.
This contribution includes an essential resolution of Stenger's conjecture.
The second contribution of this study is to improve the convergence rate
by replacing the single-exponential transformation with
the double-exponential transformation.
Theoretical analysis and numerical experiments confirm that
the modified formula achieves superior convergence
compared to Stenger's original formula.
}
\keywords{Sinc approximatioin; Sinc indefinite integration; Laplace transform; Dunford integral.}


\maketitle

\section{Introduction}
\label{sec:intro}

This study is concerned with indefinite convolutions of the form
\begin{equation}
 p(x) = \int_a^x f(x - t)g(t) \D{t},\quad a\leq x\leq b,
\label{eq:p-x}
\end{equation}
and their collocation-type approximations given by
\[
 p(x)\approx \sum_{j=1}^m p_j \omega_j(x),
\]
where the collocation points $x_i$ satisfy $\omega_j(x_i)=\delta_{ij}$
($\delta_{ij}$ denotes the Kronecker delta).
Here, $p_j$ are fixed with respect to $x$,
and $\omega_j$ are composed of elementary functions.
Such approximations are particularly well-suited for
solving ordinary and partial differential equations whose solution can be
written as convolution-type integral, or
that can be transformed into
convolution-type integral equations~\citep{stenger00:_summar_sinc}.

\citet{stenger95:_colloc}
derived a beautiful approximation formula of such a type
utilizing the method known as the ``Sinc indefinite integration.''
The derivation of the formula comprises two parts.
The first part is to derive the expression of $p(x)$ as
\begin{equation}
 p(x) = \left(F(\mathcal{J}) g\right)(x),
\label{eq:p-F-J-g}
\end{equation}
where $F(s)=\hat{f}(1/s)$, $\hat{f}$ denotes the Laplace transform
of $f$, and $\mathcal{J}$ denotes an integral operator defined by
\begin{equation}
 \left(\mathcal{J}g\right)(x) = \int_a^x g(t)\D{t}.
\label{eq:def-J}
\end{equation}
For instance, when $f(x)=x$, its transform yields $F(s)=\hat{f}(1/s)=s^2$,
and consequently, $p(x)$ can be represented as
\begin{equation}
 \int_a^x (x - t) g(t)\D{t}
= \left(\mathcal{J}^2 g\right)(x).
\label{eq:example-f-x}
\end{equation}
This expression $\mathcal{J}^2 g$ aligns with the fact
that the left-hand side of~\eqref{eq:example-f-x} can be written as
a repeated integral of $g$ as
\[
 \int_a^x (x - t) g(t)\D{t}
=\int_a^x\left(\int_a^t g(u)\D{u}\right)\D{t}
=\left(\mathcal{J}\left(\mathcal{J} g\right)\right)(x),
\]
and we can interpret~\eqref{eq:p-F-J-g} as a general formulation
of an indefinite convolution.
The second part is to approximate $\mathcal{J}$ by $\JmSE$
in the expression~\eqref{eq:p-F-J-g} as
\[
 p(x) = \left(F(\mathcal{J}) g\right)(x) \approx
 \left(F(\JmSE) g\right)(x),
\]
where $\JmSE$ denotes an approximation of $\mathcal{J}$
via the Sinc indefinite integration
combined with the single-exponential (SE) transformation
(the definition will be given in~\eqref{eq:JmSE}).
Furthermore, he theoretically showed that
the formula may achieve root-exponential convergence:
$\OO(\exp(-c\sqrt{m}))$.
Owing to its efficiency, the method has found applications
in diverse fields, such as
the Hilbert transform~\citep{yamamoto06:_approx_hilbert},
fractional calculus~\citep{baumann11:_fract_calc,baumann15:_sinc_approx},
Laplace transform inversion~\citep{Stenger,stenger98:_comput_sinc},
integral equations~\citep{stenger13:_visco},
partial differential equations~\citep{morlet95:_sinc,stenger07:_separ_pde,stenger02:_sampl_pde,stenger04:_sinc}, and
Navier--Stokes equations~\citep{Navier-Stokes}.

In the theory of the Sinc convolution, however,
there still remain two points to be resolved.
The first point is the assumption on the function $F$.
In Stenger's derivation of the expression~\eqref{eq:p-F-J-g},
$F$ is assumed to be analytic on the open right half of
the complex plane $\Omega^{+}=\{z\in\mathbb{R}:\Re z > 0\}$.
This assumption is not satisfied in the case of several classical examples,
such as $f(x)=\E^{x}$ $(F(s)=s/(1-s))$.
Furthermore, the restriction of $F$ implies that
the spectrum of the operator $\JmSE$, denoted by $\sigma(\JmSE)$,
must lie on $\Omega^{+}$ to ensure the well-definedness of $F(\JmSE)$.
This spectral inclusion, $\sigma(\JmSE)\subset \Omega^{+}$, is known as
Stenger's conjecture~\citep{stenger97:_matrices_sinc},
which has remained open since the inception of the Sinc convolution.
\citet{han14:_proof_steng_i_sincy}
made substantial progress on the conjecture,
but as remarked at the end of their paper,
the conjecture was not fully proved.
Even if proved,
the case $F(s)=s/(1-s)$ remains problematic due to
the singular point on $\Omega^{+}$.

The second point is the assumption on the function
\begin{equation}
\label{eq:P-v-x}
 P(v, x) = \int_a^x f(v + x - t) g(t)\D{t},\quad a\leq x\leq b,
\end{equation}
where $v\in [0, b-a]$.
In Stenger's error analysis,
$P(v, x)$ is assumed to belong to a function space
that is characterized by parameters $\alpha$, $\beta$ and $d$
(details will be explained in Assumption~\ref{assump:SE2}).
The values of these parameters are required for implementation;
however, their explicit determination is challenging
because $P(v, x)$ is not a given function.

The first objective of this study is to resolve the two points outlined above.
Regarding the first point, instead of the earlier assumption on $F$,
this study assumes that
there exists a sufficiently large circle
centered at the origin
encompassing all singularities of $\hat{f}$.
This assumption permits singularities on $\Omega^{+}$.
Under the assumption, the expression~\eqref{eq:p-F-J-g} is derived.
Furthermore, this study shows that for all sufficiently large $m$,
$\sigma(\JmSE)$ is located in some domain on which $F$ is analytic,
i.e., $F(\JmSE)$ is well-defined for all sufficiently large $m$.
This finding provides an alternative resolution to Stenger's conjecture:
it is not necessary to establish
$\sigma(\JmSE)\subset \Omega^{+}$
to ensure the well-definedness of $F(\JmSE)$.
Regarding the second point, this study provides an error analysis
yielding the convergence rate $\OO(\exp(-c\sqrt{m}))$
without making an assumption on the function $P(v, x)$.
This finding is of great practical importance because
it eliminates the need for prior knowledge of $P(v, x)$,
which is unavailable or difficult to ascertain in applied contexts.

The second objective of this study is to enhance the convergence rate
of the Sinc convolution.
Recently,
by replacing the SE transformation
with the double-exponential (DE) transformation,
an improved Sinc indefinite integration has been
developed~\citep{okayama22:_yet_de_sinc}.
Based on the result,
if we appropriately define $\JmDE$ as an approximation of $\mathcal{J}$,
we naturally derive the improved approximation formula as
\[
 p(x) = \left(F(\mathcal{J}) g\right)(x) \approx
 \left(F(\JmDE) g\right)(x).
\]
This study also theoretically shows that
the enhanced formula may attain substantially accelerated convergence:
$\OO(\exp(-\tilde{c} m /\log m))$.

It should be emphasized that the expression of
the indefinite convolution~\eqref{eq:p-F-J-g} is valid
regardless of the approximation method for the integral
operator $\mathcal{J}$.
If we apply another method instead of the Sinc
indefinite integration, then we obtain another approximation
formula for~\eqref{eq:p-F-J-g}.
In fact, for some approximation formulas,
Stenger's conjecture
was formulated~\citep{stenger15:_comput_schr,stenger21:_indef},
which motivates researchers for further interest
investigations~\citep{gautschi19:_steng,ait-haddou23:_steng}.
The results provided in this study are also useful
for such formulas.

The remainder of this paper is organized as follows.
Section~\ref{sec:preliminary} introduces
essential approximation formulas needed in the subsequent sections.
Section~\ref{sec:derive-expression}
explains the derivation of the expression~\eqref{eq:p-F-J-g}.
Section~\ref{sec:main-result}
summarizes the main results of the present paper
with indicating the locations of corresponding proofs.
Section~\ref{sec:resolvent-approx-op}
establishes the existence of resolvent of approximated operators
$\JmSE$ and $\JmDE$.
Section~\ref{sec:proof-F-welldef}
analyzes the properties of the matrices
arising in the Sinc convolution,
thereby addressing the first point (well-definedness of $F(\JmSE)$).
Section~\ref{sec:proof-convergence} provides the proofs
of the presented convergence theorems,
thereby addressing the second point (assumption on $P(v,x)$).
Section~\ref{sec:numerical} provides numerical illustrations.
Finally,
Section~\ref{sec:conclusion} concludes this paper.

\section{Sinc approximation and Sinc indefinite integration}
\label{sec:preliminary}

The Sinc approximation is a function approximation formula
expressed as
\begin{equation}
 f(u) \approx \sum_{j=-M}^N f(j h) \sinc\left(\frac{u - j h}{h}\right),
\quad u\in\mathbb{R},
\label{eq:sinc-approx}
\end{equation}
where $\sinc(x)$ denotes the normalized sinc function.
As explained later, $h$, $M$ and $N$ are suitably selected
depending on a positive integer $n$.
Furthermore, throughout this paper, $m$ is set as $m=M+N+1$.

Integrating both sides of~\eqref{eq:sinc-approx},
we derive an approximate formula for the indefinite integral
over the real axis as
\begin{align}
\label{eq:sinc-indef}
\int_{-\infty}^{\tau}f(u)\D{u}
\approx \sum_{j=-M}^N f(j h)
\int_{-\infty}^{\tau}\sinc\left(\frac{u - j h}{h}\right)\D{u}
=\sum_{j=-M}^N f(j h) J(j,h)(\tau),\quad \tau\in\mathbb{R},
\end{align}
where $J(j,h)(x)$ is defined by
\begin{equation}
\label{eq:basis-J}
J(j,h)(x) = h\left\{\frac{1}{2}
 + \frac{1}{\pi}\Si\left(\frac{\pi(x - j h)}{h}\right)\right\},
\end{equation}
and $\Si(x)$ denotes the sine integral
defined by $\Si(x)=\int_0^x \{(\sin t)/t\}\D{t}$.
The approximation formula~\eqref{eq:sinc-indef}
is referred to as the ``Sinc indefinite integration.''

\subsection{Original and mat-vec friendly SE-Sinc indefinite integration}

For the indefinite integral
over the finite interval $(a, b)$,
\citet{haber93:_two} combined the formula~\eqref{eq:sinc-indef}
with the tanh transformation
\[
 t = \SEt(u) = \frac{b-a}{2}\tanh\left(\frac{u}{2}\right) + \frac{b+a}{2},
\]
which is also referred to as the single-exponential (SE) transformation.
More precisely, the formula was derived as
\begin{align}
\int_a^x g(t)\D{t}
&=\int_{-\infty}^{\phi^{\text{\tiny{\rm SE}}}(x)}g(\SEt(u))(\SEt)'(u)\D{u}
\nonumber\\
&\approx\sum_{j=-M}^N g(\SEt(j h))(\SEt)'(j h) J(j,h)(\SEi(x)),
\quad x\in [a, b],
\label{eq:SE-Sinc-indef-Haber}
\end{align}
where $\SEi$ denotes the inverse function of $\SEt$.
The formula~\eqref{eq:SE-Sinc-indef-Haber}
is referred to as the original SE-Sinc indefinite integration
in the present paper.
If we define an approximate operator $\tJmSE$ as
\begin{equation}
 \left(\tJmSE g\right)(x)
=\sum_{j=-M}^N g(\SEt(j h))(\SEt)'(j h) J(j,h)(\SEi(x)),
\label{eq:tJmSE}
\end{equation}
the approximation~\eqref{eq:SE-Sinc-indef-Haber} can be compactly expressed as
$\mathcal{J} g \approx \tJmSE g$ (recall~\eqref{eq:def-J}).
The approximation is highly efficient if
$g$ is analytic on an eye-shaped domain
\[
 \SEt(\domD_d) = \left\{z=\SEt(\zeta): \zeta\in\domD_d\right\},
\]
where $\domD_d$ is a strip complex domain defined by
\[
 \domD_d = \{\zeta\in\mathbb{C}: \left|\Im\zeta\right| < d\},
\]
for a positive constant $d$.
In fact, the following convergence theorem
was given.

\begin{theorem}[{\citet[Theorem 2.9]{Okayama-et-al}}]
\label{thm:Okayama-tJmSE}
Assume that $g$ is analytic on $\SEt(\domD_d)$ for $d\in(0,\pi)$,
and that there exist positive constants
$K$, $\alpha$ and $\beta$
such that
\begin{equation}
 |g(z)|\leq K|z-a|^{\alpha-1}|b-z|^{\beta-1}
\label{eq:indef-condition}
\end{equation}
holds for all $z\in\SEt(\domD_d)$.
Let $\mu = \min\{\alpha, \beta\}$,
$n$ be a positive integer, and $h$ be
selected by the formula
\begin{equation}
 h = \sqrt{ \frac{\pi d}{\mu n} }. \label{eq:h-SE}
\end{equation}
Moreover, let $M$ and $N$ be positive integers defined by
\begin{align}
M = \left\lceil\frac{\mu}{\alpha}n\right\rceil,
\quad
N = \left\lceil\frac{\mu}{\beta}n\right\rceil,
\label{eq:MN-SE}
\end{align}
respectively.
Then, there exists a constant $C$ independent of $n$ such that
\begin{align*}
 \max_{x\in [a,b]}\left|
\left(\mathcal{J} g\right)(x) - \left(\tJmSE g\right)(x)
\right|
 \leq C  \exp\left(-\sqrt{\pi d \mu n}\right).
\end{align*}
\end{theorem}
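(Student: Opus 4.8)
The plan is to pass to the real line through the SE transformation and then treat the resulting integrand with the standard two-part error analysis for the Sinc indefinite integration. Writing $G(u)=g(\SEt(u))(\SEt)'(u)$ and setting $\tau=\SEi(x)$, the definitions \eqref{eq:SE-Sinc-indef-Haber} and \eqref{eq:tJmSE} show that the quantity to be estimated equals
\[
 \sup_{\tau\in\RR}\left|\int_{-\infty}^{\tau}G(u)\D{u}-\sum_{j=-M}^{N}G(jh)J(j,h)(\tau)\right|,
\]
since $\SEi$ maps $(a,b)$ bijectively onto $\RR$. I would split this into a \emph{discretization error}, in which the finite sum is first completed to the doubly infinite sum $\sum_{j=-\infty}^{\infty}$, and a \emph{truncation error} consisting of the two discarded tails $\sum_{j\le -M-1}$ and $\sum_{j\ge N+1}$.

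For the discretization error I would invoke the known bound for the infinite Sinc indefinite integration: if $G$ belongs to the Hardy-type space $\mathbf{H}^1(\domD_d)$, then the error of $\sum_{j=-\infty}^{\infty}G(jh)J(j,h)(\tau)$ is $\OO(\E^{-\pi d/h})$, uniformly in $\tau$. The substantive step here is to verify the hypothesis, i.e.\ that $G$ is analytic on $\domD_d$ and that $\lim_{y\to d^-}\int_{-\infty}^{\infty}\{|G(t+\I y)|+|G(t-\I y)|\}\D{t}$ is finite. Analyticity follows because $g$ is analytic on $\SEt(\domD_d)$ and $\SEt$ maps $\domD_d$ conformally onto it; integrability must be obtained by pushing the pointwise estimate \eqref{eq:indef-condition} through $\SEt$. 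Along a horizontal line $\Im u=\pm y$ with $y<d$, the bound \eqref{eq:indef-condition} combined with the explicit behavior of $\SEt(u)$ and $(\SEt)'(u)$ should yield an integrable majorant uniformly in $y$, the endpoint factors $|z-a|^{\alpha-1}$ and $|b-z|^{\beta-1}$ being precisely what controls $G$ where the eye-shaped domain $\SEt(\domD_d)$ pinches toward $a$ and $b$.

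For the truncation error I would use the decay of $G$ implied by \eqref{eq:indef-condition}. As $u\to-\infty$ the map satisfies $\SEt(u)-a=\OO(\E^{u})$ and $(\SEt)'(u)=\OO(\E^{u})$, whence $|G(u)|=\OO(\E^{\alpha u})$; symmetrically $|G(u)|=\OO(\E^{-\beta u})$ as $u\to+\infty$. Since $|J(j,h)(\tau)|$ is bounded by a constant multiple of $h$ uniformly in $j$ and $\tau$ (recall \eqref{eq:basis-J}), the two tails are bounded by constant multiples of $h\sum_{j\ge M+1}\E^{-\alpha jh}$ and $h\sum_{j\ge N+1}\E^{-\beta jh}$, that is by $C_2\,\E^{-\alpha M h}$ and $C_3\,\E^{-\beta N h}$. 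Collecting the three contributions gives an overall bound of the form $C(\E^{-\pi d/h}+\E^{-\alpha M h}+\E^{-\beta N h})$.

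It then remains to insert the prescribed parameters and balance the three exponents. With $h$ from \eqref{eq:h-SE} one finds $\pi d/h=\sqrt{\pi d\mu n}$, and with $M,N$ from \eqref{eq:MN-SE} one checks in the case $\mu=\alpha$ that $\alpha M h=\alpha n h=\sqrt{\pi d\mu n}$ while $\beta N h\ge\alpha n h$ by the choice $N=\lceil(\alpha/\beta)n\rceil$, so that every exponent is at least $\sqrt{\pi d\mu n}$; the case $\mu=\beta$ is symmetric. This yields the claimed bound $C\exp(-\sqrt{\pi d\mu n})$. I expect the main obstacle to be the uniform integrability verification of the second paragraph, namely transporting the endpoint decay condition on $g$ into a genuine $\mathbf{H}^1(\domD_d)$ estimate for $G$ near the boundary lines $\Im u=\pm d$, where the conformal image degenerates at the corners $a$ and $b$.
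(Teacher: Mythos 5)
Your proposal is correct and follows essentially the same route as the proof of this result in the cited reference (the paper itself states Theorem~\ref{thm:Okayama-tJmSE} without proof): pull back to $\mathbb{R}$ via $\SEt$, split the error into the discretization part for the doubly infinite Sinc indefinite integration of $G=(g\circ\SEt)\cdot(\SEt)'$ (controlled by the $\mathbf{H}^1(\domD_d)$ bound, whose hypothesis is verified exactly by transporting \eqref{eq:indef-condition} through the exponential endpoint behavior of $\SEt$ and $(\SEt)'$) and the truncation part (controlled by $|G(u)|=\OO(\E^{\alpha u})$ as $u\to-\infty$, $\OO(\E^{-\beta u})$ as $u\to+\infty$, together with the uniform bound $|J(j,h)|\leq 1.1h$), and then balance the exponents $\pi d/h$, $\alpha Mh$, $\beta Nh$ with the prescribed $h$, $M$, $N$. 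The parameter bookkeeping in your last paragraph is also correct, so no gap remains beyond the routine uniform-integrability computation you already identify.
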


However,
the approximation formula~\eqref{eq:SE-Sinc-indef-Haber}
entails a computational limitation
due to the involvement of the special function $\Si(x)$
in the basis function $J(j,h)$.
because the basis function $J(j,h)$ includes a special function $\Si(x)$.
To circumvent this issue,
\citet{stenger93:_numer,stenger95:_colloc}
derived another approximation formula
whose the basis functions comprise
the normalized sinc function augmented with
two auxiliary functions:
\[
 \eta(x) = \frac{x-a}{b-a},\quad
\tilde{\eta}(x) = \frac{b-x}{b-a}.
\]
The explicit representation of the basis functions is given by
\begin{align*}
 \oSE_{-M}(x)&=\frac{1}{\tilde{\eta}(\SEt(-Mh))}
\left\{
\tilde{\eta}(x)
-\sum_{k=-M+1}^N\tilde{\eta}(\SEt(kh))\sinc\left(\frac{\SEi(x) - k h}{h}\right)
\right\},\\
\oSE_{j}(x) &= \sinc\left(\frac{\SEi(x) - j h}{h}\right)
\quad (j=-M+1,\,-M+2,\,\ldots,\,N-2,\,N-1),\\
 \oSE_{N}(x)&=\frac{1}{\eta(\SEt(Nh))}
\left\{
\eta(x)-\sum_{k=-M}^{N-1}\eta(\SEt(kh))\sinc\left(\frac{\SEi(x) - k h}{h}\right)
\right\}.
\end{align*}
Note that $\oSE_j(\SEt(ih))=\delta_{ij}$ ($i,j=-M,\,\ldots,\,N$) holds,
where $\delta_{ij}$ denotes the Kronecker delta.
Consequently, Stenger's formula takes the form
\begin{equation}
 \int_a^x g(t)\D t
\approx \sum_{i=-M}^N\left(
h\sum_{j=-M}^N \delta_{i-j}^{(-1)} g(\SEt(jh))(\SEt)'(jh)
\right)\oSE_i(x),\quad x\in [a,b],
\label{eq:SE-Sinc-indef-orig}
\end{equation}
where $\delta_{k}^{(-1)} = (1/2) + \sigma_{k}$, and
$\sigma_{k} = \int_0^{k} \sinc t\D{t}$.
For reference, tabulated values of $\sigma_k$ are
provided~\citep[Table~1.10.1]{stenger93:_numer}.

The approximation formula~\eqref{eq:SE-Sinc-indef-orig}
admits a representation in matrix-vector form,
requiring the following notations.
Let $\boldsymbol{\omega}^{\SE}_m$
be a row vector of order $m$ (recall $m=M+N+1$) defined by
\begin{equation}
\label{eq:oSEm}
 \boldsymbol{\omega}^{\SE}_m(x)
=\left[
\oSE_{-M}(x),\,\oSE_{-M+1}(x),\,\ldots,\,\oSE_{N-1}(x),\,\oSE_{N}(x)
\right],
\end{equation}
and let $\vSEm$ be a sampling operator
that maps a function $f$ to a column vector of order $m$
as
\begin{equation}
\label{eq:vSEm}
 \vSEm f =
 \left[
f(\SEt(-Mh)),\,f(\SEt(-(M-1)h)),\,\ldots,\,f(\SEt(Nh))
\right]^{\mathrm{T}}.
\end{equation}
Note that
$\vSEm \boldsymbol{\omega}^{\SE}_m = I_m$ holds,
where $I_m$ denotes the identity matrix of order $m$.
Let $I^{(-1)}_m$ be a square matrix of order $m$ having
$\delta^{(-1)}_{i-j}$ as its $(i, j)$th element, and
\begin{align}
\nonumber
 \dSEm &= \diag
 \left[
(\SEt)'(-Mh),\,(\SEt)'(-(M-1)h),\,\ldots,\,(\SEt)'(Nh)
\right],\\
\aSEm &= h I_m^{(-1)} \dSEm.
\label{eq:aSEm}
\end{align}
Then, the approximation~\eqref{eq:SE-Sinc-indef-orig} can be expressed as
\begin{equation}
 \int_{a}^x g(t)\D{t}
\approx \boldsymbol{\omega}^{\SE}_m(x) \aSEm \vSEm g ,\quad x\in [a, b].
\label{eq:SE-Sinc-indef}
\end{equation}
This mat-vec friendly approximation formula
is referred to as the SE-Sinc indefinite integration
in the present paper
(note that the formula~\eqref{eq:SE-Sinc-indef-Haber}
is referred to as the \textit{original} SE-Sinc indefinite integration).
If we define an approximate operator $\JmSE$ as
\begin{equation}
 \left(\JmSE g\right)(x) = \boldsymbol{\omega}^{\SE}_m(x) \aSEm \vSEm g ,
\label{eq:JmSE}
\end{equation}
the approximation~\eqref{eq:SE-Sinc-indef} can be compactly expressed as
$\mathcal{J} g \approx \JmSE g$.
Its convergence was analyzed as follows.

\begin{theorem}[{\citet[Theorem 4.9]{stenger95:_colloc}}]
\label{thm:Stenger-JmSE}
Assume that $g$ is analytic on $\SEt(\domD_d)$ for $d\in(0,\pi)$,
and that there exist positive constants
$K$, $\alpha$ with $\alpha\leq 1$ and $\beta$ with $\beta\leq 1$
such that~\eqref{eq:indef-condition}
holds for all $z\in\SEt(\domD_d)$.
Let $\mu = \min\{\alpha, \beta\}$,
$n$ be a positive integer, and $h$ be
selected by the formula~\eqref{eq:h-SE}.
Moreover, let $M$ and $N$ be positive integers defined by~\eqref{eq:MN-SE}.
Then, there exists a constant $C$ independent of $n$ such that
\[
 \max_{x\in [a,b]}\left|
\left(\mathcal{J} g\right)(x) - \left(\JmSE g\right)(x)
\right| \leq C  \sqrt{n} \exp\left(-\sqrt{\pi d \mu n}\right).
\]
\end{theorem}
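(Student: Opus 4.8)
The plan is to deduce Theorem~\ref{thm:Stenger-JmSE} from the already-established convergence of the \emph{original} SE-Sinc indefinite integration (Theorem~\ref{thm:Okayama-tJmSE}) by recognizing $\JmSE$ as the nodal interpolant of $\tJmSE$. First I would record the identity
\[
 \left(\JmSE g\right)(x)=\sum_{i=-M}^{N}\left(\tJmSE g\right)(\SEt(ih))\,\oSE_i(x).
\]
It follows from two facts: the basis is cardinal, $\oSE_i(\SEt(kh))=\delta_{ik}$, so the right-hand side reproduces nodal values; and, evaluating~\eqref{eq:tJmSE} at $x=\SEt(ih)$ together with $J(j,h)(ih)=h\{1/2+\sigma_{i-j}\}=h\,\delta^{(-1)}_{i-j}$ (which is just $\sigma_k=(1/\pi)\Si(\pi k)$ inserted into~\eqref{eq:basis-J}), one sees that the nodal values $(\tJmSE g)(\SEt(ih))$ are exactly the entries of the vector $\aSEm\vSEm g$ appearing in~\eqref{eq:JmSE}, recalling~\eqref{eq:aSEm}. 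Writing $\mathcal{P}_m\phi=\sum_{i}\phi(\SEt(ih))\oSE_i$ for the interpolation operator, this identity reads $\JmSE g=\mathcal{P}_m\tJmSE g$.

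Next I would insert the exact primitive $\mathcal{J}g$ and use linearity of $\mathcal{P}_m$:
\[
 \mathcal{J}g-\JmSE g=\left(\mathcal{J}g-\mathcal{P}_m\mathcal{J}g\right)+\mathcal{P}_m\left(\mathcal{J}g-\tJmSE g\right).
\]
The second term is easy: $\max_{x\in[a,b]}|\mathcal{P}_m(\mathcal{J}g-\tJmSE g)(x)|\le \Lambda_m\,\max_{x\in[a,b]}|(\mathcal{J}g-\tJmSE g)(x)|$, where $\Lambda_m$ is the Lebesgue constant of $\mathcal{P}_m$. By Theorem~\ref{thm:Okayama-tJmSE} the last maximum is $\OO(\exp(-\sqrt{\pi d\mu n}))$, and $\Lambda_m$ grows only logarithmically in $m$ (the standard Lebesgue-constant estimate for Sinc interpolation; note that the prefactors $1/\tilde{\eta}(\SEt(-Mh))$ and $1/\eta(\SEt(Nh))$ in $\oSE_{-M},\oSE_N$ tend to $1$, so no endpoint blow-up occurs), whence this term is dominated by the bound claimed in Theorem~\ref{thm:Stenger-JmSE}.

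The main work is the first term, the genuine Sinc interpolation error of the exact primitive $\Psi:=\mathcal{J}g$. I would first observe that $\Psi$ is analytic on $\SEt(\domD_d)$ as an antiderivative of the analytic $g$ ($\alpha>0$ securing integrability at $a$), with $\Psi(a)=0$ and $\Psi(b)=\int_a^b g\D t=:I_g$. Integrating the hypothesis~\eqref{eq:indef-condition}, I would show that the endpoint-corrected function $\Psi(z)-I_g\eta(z)$ decays like $|z-a|^{\alpha}$ as $z\to a$ and like $|b-z|^{\beta}$ as $z\to b$. This is precisely where the restrictions $\alpha\le1$ and $\beta\le1$ in Theorem~\ref{thm:Stenger-JmSE} enter: for $\alpha>1$ (resp.\ $\beta>1$) the linear term $I_g\eta$ would dictate the slower boundary rate and spoil the exponent $\mu=\min\{\alpha,\beta\}$. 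With this analyticity and algebraic boundary behavior in hand, and noting that the $\tilde{\eta}$- and $\eta$-parts of $\oSE_{-M}$ and $\oSE_N$ reproduce the endpoint values of $\Psi$, the classical SE-Sinc approximation estimate applied to $\Psi$ yields $\max_{x\in[a,b]}|\Psi-\mathcal{P}_m\Psi|\le C\sqrt{n}\exp(-\sqrt{\pi d\mu n})$; the factor $\sqrt{n}$ is the usual truncation-sum factor of plain Sinc interpolation, and it is exactly what distinguishes the present bound from the cleaner estimate of Theorem~\ref{thm:Okayama-tJmSE}. Combining the two terms by the triangle inequality gives the stated result.

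I expect the first term to be the main obstacle: establishing the correct algebraic boundary behavior of the primitive after subtracting the endpoint interpolant, and feeding it into a Sinc approximation estimate with the explicit $\sqrt{n}$ factor and a constant $C$ independent of $n$. The cardinal identity of the first step and the Lebesgue-constant bound are comparatively routine.
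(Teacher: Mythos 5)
This theorem is not proved in the paper at all: it is quoted from Stenger's 1995 article with a citation, so there is no in-house proof to compare against, and your argument has to be judged on its own. It is correct, and it reconstructs what is essentially the standard route. The pivotal identity $\JmSE g=\mathcal{P}_m\tJmSE g$ is right: evaluating \eqref{eq:tJmSE} at $x=\SEt(ih)$ and using $J(j,h)(ih)=h\{1/2+\sigma_{i-j}\}=h\delta^{(-1)}_{i-j}$ shows the nodal values of $\tJmSE g$ are exactly the entries of $\aSEm\vSEm g$, so \eqref{eq:JmSE} is the cardinal interpolant of Haber's formula (the paper itself trades on this same relation between $\JmSE$ and $\tJmSE$ in \eqref{eq:rewrite-target-SE}). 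Your splitting into the interpolation error of the primitive $\Psi=\mathcal{J}g$ plus a Lebesgue-constant term is sound; the $\OO(\log n)$ Lebesgue bound follows from Lemma~\ref{lem:bound-sum-sinc-real} together with $\tilde{\eta}(\SEt(-Mh)),\eta(\SEt(Nh))\to 1$; and you correctly locate both the origin of the $\sqrt{n}$ factor (the $1/h$ in the truncation part of the plain Sinc estimate) and the role of $\alpha\leq 1$, $\beta\leq 1$ (so that subtracting $I_g\eta$ does not degrade the boundary exponents). Two steps are asserted rather than carried out: showing that $\Psi$ belongs to $\MC_{\alpha,\beta}(\SEt(\domD_d))$ of Definition~\ref{def:MC} requires integrating \eqref{eq:indef-condition} along paths from $a$ (resp.\ $b$) that stay inside the eye-shaped domain with length comparable to $|z-a|$ (resp.\ $|b-z|$), which uses the star-shapedness of $\SEt(\domD_d)$ with respect to its corners; and the final estimate invokes the SE-Sinc approximation theorem for that class with explicit $h$, $M$, $N$. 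Both are standard and citable, so these are acceptable black boxes in a sketch whose target is itself a quoted result.
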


\subsection{Original and mat-vec friendly DE-Sinc indefinite integration}

In the original SE-Sinc indefinite integration~\eqref{eq:SE-Sinc-indef-Haber},
the SE transformation $x=\SEt(u)$ is employed to
map $\mathbb{R}$ onto $(a, b)$.
Not only the SE transformation but also
the double-exponential (DE) transformation
\[
 x = \DEt(u)
 = \frac{b-a}{2}\tanh\left(\frac{\pi}{2}\sinh u\right) + \frac{b+a}{2}
\]
maps $\mathbb{R}$ onto $(a, b)$.
Notably, replacing the SE with the DE transformation
often enhances numerical methods based on the Sinc
approximation~\citep{mori05:_discov,murota25:_doubl,sugihara04:_recen_sinc}.
This observation motivated
\citet{Muhammad-Mori} to derive
another approximation formula as
\begin{align}
\int_a^x g(t)\D{t}
&=\int_{-\infty}^{\phi^{\text{\tiny{\rm DE}}}(x)}g(\DEt(u))(\DEt)'(u)\D{u}
\nonumber\\
&\approx\sum_{j=-M}^N g(\DEt(j h))(\DEt)'(j h) J(j,h)(\DEi(x)),
\quad x\in [a, b],
\label{eq:DE-Sinc-indef-MM}
\end{align}
where $\DEi$ denotes the inverse function of $\DEt$.
The formula~\eqref{eq:DE-Sinc-indef-MM}
is referred to as the original DE-Sinc indefinite integration
in the present paper.
If we define an approximate operator $\tJmDE$ as
\begin{equation}
 \left(\tJmDE g\right)(x)
=\sum_{j=-M}^N g(\DEt(j h))(\DEt)'(j h) J(j,h)(\DEi(x)),
\label{eq:tJmDE}
\end{equation}
the approximation~\eqref{eq:DE-Sinc-indef-MM} can be compactly expressed as
$\mathcal{J} g \approx \tJmDE g$.
The following convergence theorem shows that
replacing $\SEt$ in~\eqref{eq:SE-Sinc-indef-Haber}
with $\DEt$ may improve the convergence rate.
Here, $g$ is assumed to be analytic on
\[
 \DEt(\domD_d) = \left\{z=\DEt(\zeta): \zeta\in\domD_d\right\},
\]
which is a Riemann surface.

\begin{theorem}[{\citet[Theorem 2.16]{Okayama-et-al}}]
\label{thm:Okayama-tJmDE}
Assume that $g$ is analytic on $\DEt(\domD_d)$ for $d\in(0,\pi/2)$,
and that there exist positive constants
$K$, $\alpha$ and $\beta$
such that~\eqref{eq:indef-condition}
holds for all $z\in\DEt(\domD_d)$.
Let $\mu = \min\{\alpha, \beta\}$,
$n$ be a positive integer, and $h$ be
selected by the formula
\begin{equation}
 h = \frac{\log(2dn/\mu)}{n}. \label{eq:h-DE}
\end{equation}
Moreover, let $M$ and $N$ be positive integers defined by
\begin{align}
M= n - \left\lfloor\frac{1}{h}\log\left(\dfrac{\alpha}{\mu}\right)\right\rfloor,
\quad
N= n - \left\lfloor\frac{1}{h}\log\left(\dfrac{\beta}{\mu}\right)\right\rfloor,
\label{eq:MN-DE}
\end{align}
respectively.
Then, there exists a constant $C$ independent of $n$ such that
\begin{align*}
 \max_{x\in [a,b]}\left|
\left(\mathcal{J} g\right)(x) - \left(\tJmDE g\right)(x)
\right|
 \leq C \frac{\log(2 d n/\mu)}{n}
 \exp\left(\frac{- \pi d n}{\log(2 d n/\mu)}\right).
\end{align*}
\end{theorem}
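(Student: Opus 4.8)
The plan is to carry out the standard two-part error analysis for Sinc indefinite integration, specialized to the DE transformation and in close parallel with the proof of Theorem~\ref{thm:Okayama-tJmSE}. First I would move everything onto the real line by setting $G(u)=g(\DEt(u))(\DEt)'(u)$. Since $J(j,h)(\tau)=\int_{-\infty}^{\tau}\sinc((u-jh)/h)\D{u}$ (as in the derivation of~\eqref{eq:basis-J}), the first equality in~\eqref{eq:DE-Sinc-indef-MM} shows that the quantity to be estimated equals
\[
 \sup_{\tau\in\RR}\left|\int_{-\infty}^{\tau}G(u)\D{u}-\sum_{j=-M}^{N}G(jh)J(j,h)(\tau)\right|,
\]
with $\tau=\DEi(x)$ ranging over $\RR$ as $x$ ranges over $[a,b]$. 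I would then split this into a discretization error and a truncation error, $|E_{\mathrm{d}}|+|E_{\mathrm{t}}|$, where
\[
 E_{\mathrm{d}}=\int_{-\infty}^{\tau}\left(G(u)-\sum_{j=-\infty}^{\infty}G(jh)\sinc\left(\frac{u-jh}{h}\right)\right)\D{u}
\]
is the integral of the full cardinal-series interpolation error and $E_{\mathrm{t}}=\sum_{j<-M}G(jh)J(j,h)(\tau)+\sum_{j>N}G(jh)J(j,h)(\tau)$ collects the omitted terms.

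The decisive preliminary step, which I expect to be the main obstacle, is to verify that $G$ lies in the function class for which the base Sinc estimates apply. Concretely I must show that $G$ is analytic on the strip $\domD_d$ and establish two families of bounds on $g(\DEt(\zeta))(\DEt)'(\zeta)$: uniform integrability $\int_{-\infty}^{\infty}|G(x\pm\I d')|\D{x}\leq\text{const}$ for all $d'<d$, which drives the discretization estimate, and double-exponential decay on the real axis, of the form $|G(x)|\leq C\exp(-(\pi\beta/2)\E^{x})$ as $x\to+\infty$ and $|G(x)|\leq C\exp(-(\pi\alpha/2)\E^{-x})$ as $x\to-\infty$, which drives the truncation estimate. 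Both follow from~\eqref{eq:indef-condition} once one controls $|\DEt(\zeta)-a|$, $|b-\DEt(\zeta)|$, and $|(\DEt)'(\zeta)|$ over the whole strip; this is genuinely delicate because $\DEt$ maps $\domD_d$ onto a Riemann surface rather than a simple eye-shaped region, so the estimates must be read off from the explicit structure $\DEt(\zeta)=\tfrac{b-a}{2}\tanh(\tfrac{\pi}{2}\sinh\zeta)+\tfrac{b+a}{2}$. The hypothesis $d\in(0,\pi/2)$ enters precisely here: it keeps $\tfrac{\pi}{2}\sinh\zeta$ away from the poles $\I\pi(2k+1)/2$ of $\tanh$ for $|\Im\zeta|<d$, so that $\DEt$ is analytic and finite on $\domD_d$ and $g\circ\DEt$ remains in the analyticity domain of $g$.

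With these estimates secured, the discretization error is handled by the Cauchy-type contour-integral representation of the integrated interpolation error over $\partial\domD_d$; on the lines $\Im\zeta=\pm d$ the factor $1/\sin(\pi\zeta/h)$ contributes $\E^{-\pi d/h}$, while integrating the oscillatory interpolation kernel in $u$ supplies an additional factor $h$, so that $|E_{\mathrm{d}}|\leq C_1 h\,\E^{-\pi d/h}$. For the truncation error I would use $|J(j,h)(\tau)|\leq C h$ together with the double-exponential decay, reducing each tail to a constant multiple of its first retained term, which gives $|E_{\mathrm{t}}|\leq C_2 h\left(\E^{-(\pi\alpha/2)\E^{Mh}}+\E^{-(\pi\beta/2)\E^{Nh}}\right)$.

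Finally I would insert the prescribed parameters. The choice~\eqref{eq:h-DE}, $h=\log(2dn/\mu)/n$, gives $\E^{-\pi d/h}=\exp(-\pi d n/\log(2dn/\mu))$, which is exactly the exponential in the assertion, and makes the discretization bound equal to $C_1(\log(2dn/\mu)/n)\exp(-\pi d n/\log(2dn/\mu))$. The choices of $M$ and $N$ in~\eqref{eq:MN-DE} are calibrated so that $\E^{Mh}$ and $\E^{Nh}$ are of order $n$ (namely $\E^{Mh}\approx 2dn/\alpha$ and $\E^{Nh}\approx 2dn/\beta$ in the case $\mu=\alpha$), whence both truncation exponents are of order $\exp(-\pi d n)$; since $\log(2dn/\mu)>1$ for large $n$, this truncation contribution is dominated by the discretization term. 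This asymmetry with the SE case is exactly why the stated bound carries the extra factor $h$: in the DE regime the discretization error, rather than the truncation error, sets the rate. Collecting the two bounds and absorbing constants into a single $C$ yields the claimed estimate, and taking the supremum over $\tau\in\RR$ (equivalently the maximum over $x\in[a,b]$) completes the proof.
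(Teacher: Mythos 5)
This theorem is quoted in the paper from Okayama et al.\ [Theorem~2.16] without proof, so there is no in-paper argument to compare against; your sketch is a correct reconstruction of the standard proof from the cited work. The decomposition into discretization and truncation errors on the real line, the verification that $G=(g\circ\DEt)\cdot(\DEt)'$ is analytic and integrable on $\domD_d$ (with $d<\pi/2$ keeping $\tfrac{\pi}{2}\sinh\zeta$ away from the poles of $\tanh$) and decays double-exponentially on $\RR$, the calibration $\E^{Mh}\approx 2dn/\alpha$, $\E^{Nh}\approx 2dn/\beta$, and the observation that in the DE regime the discretization term $\OO(h\E^{-\pi d/h})$ dominates and supplies the factor $\log(2dn/\mu)/n$ are all consistent with the structure reflected in the paper's Theorems~\ref{thm:Okayama-tJmDE-with-const} and~\ref{thm:Okayama-JmDE-with-const}.
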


In the basis function $J(j,h)$, however,
a special function $\Si(x)$ is included
in common with~\eqref{eq:SE-Sinc-indef-Haber}.
The remedy was proposed by replacing $\SEt$ in~\eqref{eq:SE-Sinc-indef}
with $\DEt$~\citep{okayama22:_yet_de_sinc}.
To express the resulting approximation precisely,
let us introduce notational conventions
following the framework of SE-Sinc indefinite integration.
Let $\boldsymbol{\omega}^{\DE}_m$
be a row vector of order $m$ defined by
\begin{equation}
 \boldsymbol{\omega}^{\DE}_m(x)
=\left[
\oDE_{-M}(x),\,\oDE_{-M+1}(x),\,\ldots,\,\oDE_{N-1}(x),\,\oDE_{N}(x)
\right],
\label{eq:oDEm}
\end{equation}
where
\begin{align*}
 \oDE_{-M}(x)&=\frac{1}{\tilde{\eta}(\DEt(-Mh))}
\left\{
\tilde{\eta}(x)
-\sum_{k=-M+1}^N\tilde{\eta}(\DEt(kh))\sinc\left(\frac{\DEi(x) - k h}{h}\right)
\right\},\\
\oDE_{j}(x) &= \sinc\left(\frac{\DEi(x) - j h}{h}\right)
\quad (j=-M+1,\,-M+2,\,\ldots,\,N-2,\,N-1),\\
 \oDE_{N}(x)&=\frac{1}{\eta(\DEt(Nh))}
\left\{
\eta(x)-\sum_{k=-M}^{N-1}\eta(\DEt(kh))\sinc\left(\frac{\DEi(x) - k h}{h}\right)
\right\}.
\end{align*}
Furthermore, let $\vDEm$ be a sampling operator
that maps a function $f$ to a column vector of order $m$
as
\begin{equation}
 \vDEm f =
 \left[
f(\DEt(-Mh)),\,f(\DEt(-(M-1)h)),\,\ldots,\,f(\DEt(Nh))
\right]^{\mathrm{T}},
\label{eq:vDEm}
\end{equation}
and let $\dDEm$ and $\aDEm$ are defined by
\begin{align}
\nonumber
 \dDEm &= \diag
 \left[
(\DEt)'(-Mh),\,(\DEt)'(-(M-1)h),\,\ldots,\,(\DEt)'(Nh)
\right],\\
\aDEm &= h I_m^{(-1)} \dDEm. \label{eq:aDEm}
\end{align}
Then, the precise approximation form is written as
\begin{equation}
 \int_a^x g(t)\D{t}
\approx \boldsymbol{\omega}^{\DE}_m(x) \aDEm \vDEm g ,\quad x\in [a, b].
\label{eq:DE-Sinc-indef}
\end{equation}
This approximation is referred to as the DE-Sinc indefinite integration
in the present paper.
If we define an approximate operator $\JmDE$ as
\begin{equation}
 \left(\JmDE g\right)(x) = \boldsymbol{\omega}^{\DE}_m(x) \aDEm \vDEm g ,
\label{eq:JmDE}
\end{equation}
the approximation~\eqref{eq:DE-Sinc-indef} can be compactly expressed as
$\mathcal{J} g \approx \JmDE g$.
Its convergence was analyzed as follows.

\begin{theorem}[{\citet[Theorem 2.6]{okayama22:_yet_de_sinc}}]
\label{thm:Okayama-JmDE}
Assume that $g$ is analytic on $\DEt(\domD_d)$ for $d\in(0,\pi/2)$,
and that there exist positive constants
$K$, $\alpha$ with $\alpha\leq 1$ and $\beta$ with $\beta\leq 1$
such that~\eqref{eq:indef-condition}
holds for all $z\in\DEt(\domD_d)$.
Let $\mu = \min\{\alpha, \beta\}$,
$n$ be a positive integer, and $h$ be
selected by the formula~\eqref{eq:h-DE}.
Moreover, let $M$ and $N$ be positive integers defined by~\eqref{eq:MN-DE}.
Then, there exists a constant $C$ independent of $n$ such that
\[
 \max_{x\in [a,b]}\left|
\left(\mathcal{J} g\right)(x) - \left(\JmDE g\right)(x)
\right| \leq C  \exp\left(\frac{-\pi d n}{\log(2 d n/\mu)}\right).
\]
\end{theorem}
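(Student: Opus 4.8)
The plan is to derive the bound from the convergence of the \emph{original} DE-Sinc indefinite integration, which is already available as Theorem~\ref{thm:Okayama-tJmDE}. Writing $G(u)=g(\DEt(u))(\DEt)'(u)$ and $\xi=\DEi(x)$, I would start from the triangle inequality
\[
 \max_{x\in[a,b]}\bigl|(\mathcal{J}g)(x)-(\JmDE g)(x)\bigr|
 \le \max_{x\in[a,b]}\bigl|(\mathcal{J}g)(x)-(\tJmDE g)(x)\bigr|
 +\max_{x\in[a,b]}\bigl|(\tJmDE g)(x)-(\JmDE g)(x)\bigr|.
\]
With the \emph{same} $h$, $M$, $N$ from \eqref{eq:h-DE}--\eqref{eq:MN-DE}, Theorem~\ref{thm:Okayama-tJmDE} bounds the first term by $C\,\tfrac{\log(2dn/\mu)}{n}\exp(-\pi dn/\log(2dn/\mu))$, which is of strictly smaller order than the target because $\log(2dn/\mu)/n\to0$. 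Everything therefore reduces to showing that the second term, the gap between the original formula and its mat-vec friendly surrogate, is $\OO(\exp(-\pi dn/\log(2dn/\mu)))$.

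To organize that gap I would use two structural identities. First, since $\sinc((\cdot-jh)/h)$ is bandlimited to the Nyquist band $[-\pi/h,\pi/h]$ and $\sum_{i\in\mathbb{Z}}\sinc((\xi-ih)/h)=1$, the antiderivative $J(j,h)$ is reproduced exactly by its cardinal series; using $J(j,h)(\DEt(ih))=h\,\delta^{(-1)}_{i-j}$ (which follows from $(1/\pi)\Si(\pi k)=\sigma_k$ and \eqref{eq:basis-J}) this gives
\[
 (\tJmDE g)(x)=\sum_{i\in\mathbb{Z}}\tilde{c}_i\,\sinc\Bigl(\tfrac{\xi-ih}{h}\Bigr),
 \quad \tilde{c}_i=h\sum_{j=-M}^{N}\delta^{(-1)}_{i-j}G(jh).
\]
Second, I would unfold the two endpoint basis functions into a plain sinc term plus a boundary-error correction,
\[
 \oDE_N(x)=\sinc\Bigl(\tfrac{\xi-Nh}{h}\Bigr)+R_N(x),\quad
 R_N(x)=\frac{\eta(x)-\sum_{k=-M}^{N}\eta(\DEt(kh))\sinc\bigl(\tfrac{\xi-kh}{h}\bigr)}{\eta(\DEt(Nh))},
\]
and symmetrically $\oDE_{-M}=\sinc((\xi+Mh)/h)+R_{-M}$ with $\tilde{\eta}$ in place of $\eta$. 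Since the coefficients of $\JmDE g$ in the basis $\{\oDE_i\}$ are exactly $\tilde{c}_{-M},\dots,\tilde{c}_N$, subtracting gives
\[
 (\tJmDE g)(x)-(\JmDE g)(x)
 =\Bigl(\sum_{i<-M}+\sum_{i>N}\Bigr)\tilde{c}_i\,\sinc\Bigl(\tfrac{\xi-ih}{h}\Bigr)
 -\tilde{c}_NR_N(x)-\tilde{c}_{-M}R_{-M}(x),
\]
so the whole problem becomes a comparison between the truncated tails of the cardinal series and the two boundary corrections.

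The heart of the proof is to show that these pieces cancel up to $\OO(\exp(-\pi d/h))$. Near $x=b$, $R_N(x)$ is exactly the (scaled) Sinc-approximation error of $\eta\circ\DEt$, and because $\eta\circ\DEt$ is analytic on the strip $\domD_d$ of width $d$, its full-line Sinc approximation reconstructs $\eta$ with error $\OO(\exp(-\pi d/h))$; combined with $\eta(\DEt(ih))\to1$ as $i\to+\infty$ this identifies the leading part of $R_N$ with $\sum_{i>N}\sinc((\xi-ih)/h)$. On the coefficient side $\tilde{c}_i\to\mathcal{I}:=\int_a^b g(t)\D t$ as $i\to+\infty$ and $\tilde{c}_N\to\mathcal{I}$, with the discrepancies governed by the Sinc-quadrature error $h\sum_j G(jh)-\mathcal{I}$ and by the tails of $1/2-\sigma_{i-j}$ weighted by $|G(jh)|$. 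Matching $\tilde{c}_i\approx\mathcal{I}\approx\tilde{c}_N$ against $R_N\approx\sum_{i>N}\sinc$ then produces the cancellation, leaving a residual of size $\mathcal{I}\cdot\OO(\exp(-\pi d/h))$. Near $x=a$, by contrast, both $\tilde{c}_i$ (for $i<-M$) and $\tilde{c}_{-M}$ tend to $0$ double-exponentially by \eqref{eq:indef-condition}, so that contribution is negligible. The surviving residual is thus $\OO(\exp(-\pi d/h))=\OO(\exp(-\pi dn/\log(2dn/\mu)))$, exactly the claimed rate, and it is a factor $1/h$ larger than the original formula's error precisely because the boundary correction is an uncancelled discretization error that the integration never improves.

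The main obstacle I anticipate is making this cancellation rigorous and \emph{uniform in} $x\in[a,b]$: one must bound $\eta(x)-\sum_k\eta(\DEt(kh))\sinc(\cdot)$ and the slowly decaying ($1/\mathrm{distance}$) sinc tails $\sum_{i>N}\sinc((\xi-ih)/h)$ simultaneously for all $\xi\in\mathbb{R}$, and must quantify $\tilde{c}_N-\mathcal{I}$ and $\tilde{c}_i-\mathcal{I}$ sharply enough that $\sum_{i>N}(\tilde{c}_i-\mathcal{I})\sinc(\cdot)$ and similar terms do not spoil the exponent. A secondary technical point is that the relevant analyticity estimates for $\eta\circ\DEt$, $\tilde{\eta}\circ\DEt$, and $G$ must be carried out on the Riemann surface $\DEt(\domD_d)$ rather than on a simple strip; once the strip-type discretization bound $\OO(\exp(-\pi d/h))$ for these boundary profiles is in hand, the remaining estimates are routine and dominated by the already-controlled first term.
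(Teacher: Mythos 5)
You should first note that the paper does not actually prove Theorem~\ref{thm:Okayama-JmDE}: it is imported verbatim from Okayama and Tanaka, and the only in-paper indication of how the discrepancy between the original and mat-vec friendly formulas is controlled is the decomposition~\eqref{eq:rewrite-target-DE} together with Theorems~\ref{thm:Okayama-tJmDE-with-const} and~\ref{thm:Okayama-JmDE-with-const}. Your opening move --- the triangle inequality through $\tJmDE g$ and the appeal to Theorem~\ref{thm:Okayama-tJmDE} for the first term --- is exactly the standard first step and is fine. The difficulty is concentrated entirely in your treatment of $(\tJmDE g-\JmDE g)$, and there the argument does not close.

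Two concrete gaps. First, the identity $(\tJmDE g)(x)=\sum_{i\in\mathbb{Z}}\tilde c_i\sinc((\xi-ih)/h)$ presupposes that $J(j,h)$ is reproduced exactly by its cardinal series. But $J(j,h)$ is a bounded function of exponential type exactly $\pi/h$ whose samples tend to $h$ at $+\infty$, so the series converges only conditionally, and bounded type-$\pi/h$ functions need not equal their cardinal series ($\sin(\pi x/h)$ vanishes at every node). The identity does hold for $J(j,h)$, but it requires proof, not an appeal to bandlimitedness. Second, and more seriously, the cancellation at the ``heart'' of your argument is attributed to the wrong mechanism and, with the ingredients you list, yields only a polynomially small bound. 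Indeed, $\tilde c_i-h\sum_j G(jh)=-h\sum_j \rho_{i-j}G(jh)$ with $\rho_k=\int_k^{\infty}\sinc(t)\D{t}=\OO(1/k)$, so $\tilde c_i\to\mathcal I$ only at rate $\OO(1/(i-N))$ plus an $\OO(h)$ bulk contribution; pairing this against the $1/(i-N)$-decaying sinc tails and summing over $i>N$ by absolute values gives something of order $h$ or $\log n/n$, nowhere near $\exp(-\pi dn/\log(2dn/\mu))$. The exponential smallness comes from oscillation: $\rho_k\sim(-1)^k/(\pi^2k)$ and $\sinc((\xi-ih)/h)$ alternates in $i$, and after reorganization the surviving quantities are alternating sums $h\sum_j(-1)^jA(jh)$ of functions analytic in a strip of width $d$, which are $\OO(\E^{-\pi d/h})$ by a Paley--Wiener/contour argument. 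You never identify this, and nothing you state can substitute for it. The clean way to capture precisely this cancellation --- and the route the paper itself points to in~\eqref{eq:rewrite-target-DE} --- is to write $(\tJmDE g-\JmDE g)(x)=\sum_j g(\DEt(jh))(\DEt)'(jh)\bigl[J(j,h)(\DEi(x))-\sum_i J(j,h)(ih)\,\oDE_i(x)\bigr]$ and to bound the bracket as the error of the boundary-corrected Sinc approximation of the single function $J(j,h)\circ\DEi\in\Hinf(\DEt(\domD_d))$, whose norm is $\OO(h^2\E^{\pi d/h})$ by the lemmas of Section~\ref{sec:resolvent-approx-op}; the contour-integral error representation then supplies the factor $\E^{-\pi d/h}$ uniformly in $x$ and $j$, and summation against $h\sum_j(\DEt)'(jh)|g(\DEt(jh))|=\OO(1)$ finishes the proof. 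Without that (or an equivalent rigorous oscillatory-sum estimate), your proposal establishes at best a polynomial rate.
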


\section{Rewriting the indefinite convolution}
\label{sec:derive-expression}

In this section,
for completeness,
the derivation of the expression~\eqref{eq:p-F-J-g}
is described.
Although this result was previously established
by Stenger~\cite{stenger95:_colloc},
the derivation herein differs and reflects the formulation
adopted in the present work.

\subsection{Existence of the resolvent of \texorpdfstring{$\mathcal{J}$}{J}}

Let $\mathbf{Z}=L^1(a,b)$, and let
$\mathcal{J}g$ be defined for $g\in\mathbf{Z}$ by~\eqref{eq:def-J}.
Then, $\mathcal{J}$ denotes a linear operator
from $\mathbf{Z}$ onto itself.
By the Cauchy formula for repeated integration, it follows
for all positive integers $k$ that
\begin{equation}
 \left(\mathcal{J}^k g\right)
=\int_{a}^x\frac{(x - t)^{k-1}}{(k-1)!}g(t)\D{t}.
\label{eq:Cauchy-formula}
\end{equation}
With the standard norm $\|g\|_{\mathbf{Z}}=\int_a^b |g(t)|\D{t}$,
$\mathbf{Z}$ is a Banach space.
For the space $\mathbf{Z}$,
the resolvent of $\mathcal{J}$
exists for all $z\in\mathbb{C}\setminus \{0\}$,
as stated below.

\begin{proposition}\label{prop:resolvent-Z}
For all $z\in\mathbb{C}\setminus \{0\}$,
$(z - \mathcal{J})^{-1}:\mathbf{Z}\to\mathbf{Z}$ exists.
\end{proposition}
\begin{proof}
We show that the Neumann series
\[
 \frac{1}{z}\sum_{k=0}^{\infty}\frac{\mathcal{J}^k}{z^k}
=(z - \mathcal{J})^{-1}
\]
converges for all $z\in\mathbb{C}$ excluding the point $z=0$.
Applying~\eqref{eq:Cauchy-formula}, we have
\begin{align*}
  \left\|\mathcal{J}^k g\right\|_{\mathbf{Z}}
&=\int_a^b \left| \int_a^x \frac{(x - t)^{k-1}}{(k-1)!}g(t)\D{t}\right|\D{x}\\
&\leq\int_a^b \left\{
\int_a^x \frac{(x - t)^{k-1}}{(k-1)!}|g(t)|\D{t}\right\}\D{x}\\
&=\int_a^b \left\{
\int_t^b \frac{(x - t)^{k-1}}{(k-1)!}\D{x}\right\}|g(t)|\D{t}\\
&=\int_a^b \frac{(b-t)^k}{k!}|g(t)|\D{t}\\
&\leq \frac{(b-a)^k}{k!}\int_a^b|g(t)|\D{t}\\
&= \frac{(b-a)^k}{k!} \|g\|_{\mathbf{Z}},
\end{align*}
from which
$\|\mathcal{J}^k\|_{\mathcal{L}(\mathbf{Z},\mathbf{Z})}\leq (b-a)^k/k!$
follows.
This inequality implies that for $z\neq 0$
\begin{equation}
 \left\|
\frac{1}{z}\sum_{k=0}^{\infty}\frac{\mathcal{J}^k}{z^k}
 \right\|_{\mathcal{L}(\mathbf{Z},\mathbf{Z})}
\leq \frac{1}{|z|}
\sum_{k=0}^{\infty}\left\|\frac{\mathcal{J}^k}{z^k}\right\|_{\mathcal{L}(\mathbf{Z},\mathbf{Z})}
\leq \frac{1}{|z|}
\sum_{k=0}^{\infty}\frac{(b - a)^k}{|z|^k k!}
=\frac{1}{|z|}\E^{(b-a)/|z|} < \infty,
\label{eq:bound-resolvent}
\end{equation}
which shows the claim.
\end{proof}

The argument here can be reproduced even if
$\mathbf{Z}=L^1(a, b)$ is replaced with
another Banach space $\mathbf{W}=C([a, b])$,
and the norm $\|\cdot \|_{\mathbf{Z}}$
is replaced with $\|\cdot \|_{\mathbf{W}}$.
This result is important for the error analysis,
which is given by the uniform norm on $[a, b]$, i.e.,
$\|\cdot \|_{\mathbf{W}}$.

\begin{proposition}\label{prop:resolvent-W}
For all $z\in\mathbb{C}\setminus \{0\}$,
$(z - \mathcal{J})^{-1}:\mathbf{W}\to\mathbf{W}$ exists.
\end{proposition}
\begin{proof}
Note that $\mathcal{J}$ maps $\mathbf{W}$ onto $\mathbf{W}$.
Applying~\eqref{eq:Cauchy-formula}, we have
\begin{align*}
 \left\|\mathcal{J}^k g\right\|_{\mathbf{W}}
&=\max_{a\leq x\leq b}
\left|\int_a^x \frac{(x-t)^{k-1}}{(k-1)!}g(t)\D{t}\right|\\
&\leq \max_{a\leq x\leq b}
\left\{
\int_a^x \frac{(x-t)^{k-1}}{(k-1)!}|g(t)|\D{t}
\right\}\\
&\leq \|g\|_{\mathbf{W}}
\max_{a\leq x\leq b}\left\{
\int_a^x \frac{(x-t)^{k-1}}{(k-1)!}\D{t}
\right\}\\
&=\|g\|_{\mathbf{W}}
\max_{a\leq x\leq b} \left\{\frac{(x - a)^k}{k!}\right\}\\
&=\|g\|_{\mathbf{W}}\frac{(b-a)^k}{k!},
\end{align*}
from which we have
$\|\mathcal{J}^k\|_{\mathcal{L}(\mathbf{W},\mathbf{W})}\leq (b - a)^k/k!$.
Then, the claim follows
by the same argument as~\eqref{eq:bound-resolvent}.
\end{proof}

The same result holds for the following function space,
which is crucial for the error analysis in the present paper.

\begin{definition}
Let $\domD$ be a bounded and simply-connected domain
(or Riemann surface).
Then, $\Hinf(\domD)$ denotes
the family of all functions $f$
that are analytic and bounded on $\domD$.
This function space is a Banach space with the norm
\[
 \|f\|_{\Hinf(\domD)} = \sup_{z\in\domD}|f(z)|.
\]
\end{definition}

The following result is instrumental
in establishing properties of the function space $\mathbf{Y}=\Hinf(\domD)$.

\begin{lemma}[Okayama et al.~{\cite[Lemma~5.1]{okayama15:_theor_sinc_nystr_volter}}]
Let $\domD=\psi(\domD_d)$ or $\domD=\phi(\domD_d)$.
Assume that $f\in\Hinf(\domD)$.
Then, it holds for all positive integers $k$ that
\[
 \|\mathcal{J}^k f\|_{\Hinf(\domD)}
\leq \frac{\{(b - a) c_d\}^k}{k!}\|f\|_{\Hinf(\domD)},
\]
where $c_d$ is a constant depending only on $d$.
\end{lemma}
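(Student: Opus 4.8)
The plan is to extend the Cauchy repeated-integration identity~\eqref{eq:Cauchy-formula} from the real interval to the complex domain $\domD$ and then estimate the resulting contour integral by comparing a chord with an arc. First I would extend $\mathcal{J}$ to $\Hinf(\domD)$ by $(\mathcal{J}f)(z)=\int_a^z f(t)\D{t}$, the integral being a contour integral in $\domD$; since $f$ is analytic on the simply-connected $\domD$ this is path-independent, and iterating yields, for every $z\in\domD$,
\[
 \left(\mathcal{J}^k f\right)(z)
 = \int_a^z \frac{(z - t)^{k-1}}{(k-1)!} f(t)\D{t}.
\]
Writing $\psi$ for the transformation carrying $\domD_d$ onto $\domD$ (the case of $\phi$ being identical), I would fix $z=\psi(\xi+\I\eta)$ with $\left|\eta\right|<d$ and take as contour the image $\gamma\colon s\mapsto \psi(s+\I\eta)$, $s\in(-\infty,\xi]$, of the horizontal ray at height $\eta$. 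This ray lies in $\domD_d$, so $\gamma$ lies in $\domD$, and $\psi(s+\I\eta)\to a$ as $s\to-\infty$, which secures convergence of the improper integral at the endpoint $a$.

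Parametrizing $\gamma$ by its arc length $\ell\in[0,L]$ measured from $a$, where $L=\int_{-\infty}^{\xi}\left|\psi'(u+\I\eta)\right|\D{u}$ and $z$ corresponds to $\ell=L$, the chord is dominated by the arc, so $\left|z-t\right|\le L-\ell$ for the point $t$ at arc length $\ell$. Bounding $\left|f\right|\le\|f\|_{\Hinf(\domD)}$ and using $\left|\D t\right|=\D\ell$, this gives
\[
 \left|\left(\mathcal{J}^k f\right)(z)\right|
 \le \frac{\|f\|_{\Hinf(\domD)}}{(k-1)!}\int_0^L (L-\ell)^{k-1}\D\ell
 = \frac{L^k}{k!}\|f\|_{\Hinf(\domD)}.
\]

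It then remains to bound $L$ uniformly in $z$. Extending the integral to all of $\RR$ and factoring the common factor $b-a$ out of $\psi'$,
\[
 L\le \int_{-\infty}^{\infty}\left|\psi'(u+\I\eta)\right|\D{u}
 \le (b-a)\,c_d,\qquad
 c_d := \sup_{\left|\eta\right|<d}\frac{1}{b-a}\int_{-\infty}^{\infty}\left|\psi'(u+\I\eta)\right|\D{u}.
\]
This $c_d$ is finite and depends only on $d$: for the SE transformation $\psi'$ carries an explicit factor $b-a$ while its $\operatorname{sech}^2$-part decays exponentially away from the poles at $\Im\zeta=\pm\pi$, so the integral converges for $\left|\eta\right|<d<\pi$, and the DE case is analogous with double-exponential decay for $d<\pi/2$. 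Taking the supremum over $z\in\domD$ then gives the assertion. The chord--arc inequality and the elementary integral $\int_0^L(L-\ell)^{k-1}\D\ell=L^k/k$ are routine; the step requiring the most care is the first, namely justifying the contour representation of $\mathcal{J}^k f$ together with the choice of a \emph{single} contour whose length is controlled by $(b-a)c_d$. On the SE eye-shaped domain this follows from simple connectedness, but on the DE Riemann surface one must verify that the horizontal-ray image is an admissible path from $a$ to $z$; this is the main obstacle, resolved by observing that the estimate uses only the Euclidean chord between the underlying complex values $z$ and $t$, which behaves identically on the planar domain and on the surface.
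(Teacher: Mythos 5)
The paper does not prove this lemma at all --- it is imported verbatim from the cited reference \cite{okayama15:_theor_sinc_nystr_volter} --- so there is no in-paper proof to compare against. Your argument is correct and is essentially the standard proof from that source: represent $\mathcal{J}^k f$ by the contour version of the Cauchy repeated-integration formula, integrate along the image of the horizontal ray through $\phi(z)$, bound the chord by the remaining arc length, and control the total path length by $\int_{-\infty}^{\infty}\lvert\psi'(u+\I\eta)\rvert\,\mathrm{d}u\leq (b-a)c_d$ (the reference makes $c_d$ explicit, e.g.\ $1/\cos^2(d/2)$ in the SE case, where you leave it as a supremum). Your closing remarks on path-independence and on reading $z-t$ as a difference of projections on the DE Riemann surface correctly dispose of the only delicate points.
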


From the inequality and noting that
$\mathcal{J}$ maps $\mathbf{Y}$ onto $\mathbf{Y}$,
we have the following result in the same way as
Propositions~\ref{prop:resolvent-Z} and~\ref{prop:resolvent-W}.

\begin{proposition}\label{prop:resolvent-Y}
For all $z\in\mathbb{C}\setminus \{0\}$,
$(z - \mathcal{J})^{-1}:\mathbf{Y}\to\mathbf{Y}$ exists.
\end{proposition}

\subsection{Derivation of the expression of \texorpdfstring{$p(x)$}{p(x)} using the ``Laplace transform''}

Throughout Stenger's paper~\cite{stenger95:_colloc},
the Laplace transform denotes
the function $\hat{f}(s)=\int_0^{c} \E^{-st}f(t)\D{t}$,
while the term ``Laplace transform''
(referred with quotations) denotes the function
\begin{equation}
 F(s) = \hat{f}(1/s) = \int_0^{c} \E^{-t/s}f(t)\D{t},
\label{eq:def-F}
\end{equation}
where $c\in [b - a, \infty]$.
The typical choice of $c$ is $c=\infty$,
which is appropriate if the integral~\eqref{eq:def-F} exists.
If this is not the case, e.g., $f(t)=t^{-1/3}\exp(t^2)$ and
$(a,b)=(0,1)$,
$c$ can be restricted to a finite value within $b - a \leq c < \infty$,
to ensure $F(s)$ exists.
%
Using the Bromwich integral
for the inverse Laplace transform, we have
\begin{equation}
 f(t) = \frac{1}{2\pi\I}
\int_{\gamma - \I\infty}^{\gamma + \I\infty}
\E^{st} \hat{f}(s)\D{s},
\label{eq:Bromwich}
\end{equation}
where $\gamma$ is chosen so that
all singularities of $\hat{f}(s)$ lie to the left of the line
$s=\gamma$.
If $\gamma<0$,
we can rechoose $\gamma$ as $\gamma=0$.
Therefore, without loss of generality,
let $\gamma\geq 0$ in the following derivation.

\begin{remark}
In view of~\eqref{eq:p-x},
the variable of $f$ is restricted to the interval $[0, b-a]$.
Accordingly, instead of a given function $f$,
we can consider another function
\[
 f_c(x) =
\begin{cases}
 f(x) & (x\in [0, c])\\
 0    & (\text{otherwise})
\end{cases}
\]
where $c\in [b - a, \infty]$.
Even when $c$ is finite,
the standard Laplace transform can be written as
\[
 \int_0^{\infty} \E^{-st} f_c(t)\D{t} =
\int_0^c \E^{-st} f(t)\D{t} = \hat{f}(s),
\]
and by the inverse Laplace transform via the Bromwich integral,
it follows for $0\leq t\leq c$ that
\[
 f(t) = f_c(t) = \frac{1}{2\pi\I}
\int_{\gamma - \I\infty}^{\gamma + \I\infty} \E^{st}
\left(
\int_0^{\infty} \E^{-st} f_c(t)\D{t}
\right)
\D{s}
=\frac{1}{2\pi\I}
\int_{\gamma - \I\infty}^{\gamma + \I\infty} \E^{st}
\hat{f}(s)
\D{s},
\]
which conforms with the framework presented by Stenger.
\end{remark}

Hereafter,
let $\varGamma_{R}$ denote
the positively oriented circle of radius $R$ centered at the origin.
We rewrite the integral~\eqref{eq:Bromwich}
as a contour integral along $\varGamma_{R}$ as follows.
\begin{lemma}
\label{lem:Bromwich-contour}
Let $\hat{f}(s)=\int_0^c\E^{-st}f(t)\D{t}$ for some $c\in[b - a,\infty]$.
Assume that $\hat{f}$ is an analytic function
satisfying the condition that
there exists a sufficiently large real positive number $R$
such that
all singularities of $\hat{f}$ lie inside the contour $\varGamma_{R}$.
Then, the integral path of the Bromwich integral~\eqref{eq:Bromwich}
can be replaced with the contour
$\varGamma_{R}$. That is,
\begin{equation}
 \frac{1}{2\pi\I}\int_{\gamma - \I\infty}^{\gamma + \I\infty}
\E^{st} \hat{f}(s)\D{s}
=\frac{1}{2\pi\I}\oint_{\varGamma_{R}} \E^{st} \hat{f}(s)\D{s},
\quad t>0,
\label{eq:Bromwich-contour}
\end{equation}
where $\gamma$ is a nonnegative constant chosen so that
all singularities of $\hat{f}(s)$ lie to the left of the vertical line
$s=\gamma$.
\end{lemma}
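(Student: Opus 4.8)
The plan is to prove the identity as a pure contour deformation, turning the infinite vertical Bromwich line into the finite circle $\varGamma_{R}$ by closing the line with a large arc on the left and then comparing the two closed contours, both of which encircle every singularity of $\hat{f}$ exactly once. The condition $t>0$ is what dictates closing on the left, since $|\E^{st}|=\E^{t\Re s}$ decays as $\Re s\to-\infty$ precisely in that case.

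First I would truncate the Bromwich integral to the segment $L_T$ running upward from $\gamma-\I T$ to $\gamma+\I T$, and close it with the arc $A_{\rho}$ of the circle $|s|=\rho$, where $\rho=\sqrt{\gamma^{2}+T^{2}}$, consisting of the points with $\Re s\leq\gamma$; the union $L_T\cup A_{\rho}$ is then a positively oriented closed contour bounding the region $D=\{s:|s|\leq\rho,\ \Re s\leq\gamma\}$. We may enlarge $R$ if necessary so that $R>\gamma$ (this does not alter $\oint_{\varGamma_{R}}$, as no singularity lies outside $\varGamma_{R}$), and take $\rho>R$; then every singularity of $\hat f$ satisfies $|s|<R$ and $\Re s<\gamma$, hence lies in the interior of $D$ as well as inside $\varGamma_{R}$. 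Consequently $L_T\cup A_{\rho}$ and $\varGamma_{R}$ have winding number one about each singularity, so they are homologous in the domain of analyticity of $\E^{st}\hat f(s)$, and Cauchy's integral theorem yields
\[
\oint_{L_T\cup A_{\rho}}\E^{st}\hat f(s)\D s=\oint_{\varGamma_{R}}\E^{st}\hat f(s)\D s,
\]
where the right-hand side does not depend on $T$. Letting $T\to\infty$ (so $\rho\to\infty$), the segment $L_T$ reproduces the full Bromwich integral, and it remains only to show that the arc term tends to zero.

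The main obstacle is precisely this arc estimate, $\int_{A_{\rho}}\E^{st}\hat f(s)\D s\to0$ as $\rho\to\infty$, which is a Jordan-lemma-type argument. On the bulk of $A_{\rho}$ one has $\Re s<0$, so $|\E^{st}|\leq1$ and the standard bound $\int_{0}^{\pi}\E^{-\rho t\sin\theta}\D\theta=\OO(1/(\rho t))$ gives a factor $\OO(1/(\rho t))$ that, against $|\D s|=\rho\,\D\theta$, leaves $\OO(\sup_{|s|=\rho}|\hat f(s)|/t)$; the short complementary sub-arc with $0\leq\Re s\leq\gamma$ subtends an angle of order $\gamma/\rho$, hence has bounded length, and there $|\E^{st}|\leq\E^{\gamma t}$ combines with the same supremum to give a vanishing contribution. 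The delicate ingredient is therefore the decay of $\hat f$ at infinity, $\sup_{|s|=\rho}|\hat f(s)|\to0$, which is where the representation $\hat f(s)=\int_{0}^{c}\E^{-s\tau}f(\tau)\D\tau$ together with the assumed analyticity of $\hat f$ outside $\varGamma_{R}$ must be invoked (e.g.\ via a Riemann--Lebesgue-type bound on the boundary behaviour and a growth estimate into the left half-plane). Once this decay is secured, combining it with the homological deformation above gives the claimed equality for every $t>0$.
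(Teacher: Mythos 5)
Your argument is essentially the paper's: truncate the Bromwich line, close it on the left with a large arc so that the resulting closed contour is homologous to $\varGamma_{R}$ in the region where $\E^{st}\hat{f}(s)$ is analytic, and kill the arc contribution by a Jordan-type estimate combined with the decay of $\hat{f}$ at infinity. The only differences are cosmetic — you close with a single circular arc through $\gamma\pm\I T$ where the paper uses two horizontal segments plus a left semicircle of radius $R$ (its Lemma~\ref{lem:Jordan}) — and the decay $\sup|\hat f|\to 0$ on the closing contour, which you correctly flag as the delicate ingredient, is handled in the paper by the equally brief assertion~\eqref{eq:lim-hat-f-zero} from the integral representation.
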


To prove the lemma, the following estimate
is useful.

\begin{lemma}
\label{lem:Jordan}
Let $R$ be a positive constant,
and let $C_{R}$ be a semicircular contour given by
\[
 \left\{z = R\E^{\I\theta}\,
  \middle|\, \frac{\pi}{2}\leq \theta\leq \frac{3}{2}\pi\right\}
\]
with counterclockwise direction (see Fig.~\ref{fig:cntr_C_R_gamma}).
Furthermore,
let $\gamma$ be a nonnegative constant,
let $C_{R,\gamma}^{+}$ be a straight-line path from
$\gamma+R\I$ to $R\I$,
let $C_{R,\gamma}^{-}$ be a straight-line path from
$-R\I$ to $\gamma - R\I$,
and let $C_{R,\gamma}=C_{R,\gamma}^{+}\cup C_{R}\cup C_{R,\gamma}^{-}$.
Assume that $\hat{f}$ is continuous on $C_{R,\gamma}$.
Then, for $t>0$, we have
\[
 \left|
  \int_{C_{R,\gamma}} \E^{st}\hat{f}(s)\D{s}
\right|
\leq \left\{
 \frac{\pi(1 - \E^{-R t})}{t} + \frac{2(\E^{\gamma t} - 1)}{t}
\right\}\varLambda,
\]
where
\[
\varLambda
 =\max_{s\in C_{R,\gamma}}\left|\hat{f}(s)\right|.
\]
\end{lemma}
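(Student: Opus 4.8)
The plan is to split the contour $C_{R,\gamma}=C_{R,\gamma}^{+}\cup C_R\cup C_{R,\gamma}^{-}$ into its three constituent pieces, bound each contribution separately, and combine them by the triangle inequality. Throughout, I would use the uniform bound $|\hat{f}(s)|\leq\varLambda$ on $C_{R,\gamma}$ together with the elementary identity $|\E^{st}|=\E^{t\Re s}$, valid for $t>0$.

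For the semicircular arc $C_R$, I would parametrize $s=R\E^{\I\theta}$ with $\theta\in[\pi/2,3\pi/2]$, so that $|\D s|=R\D\theta$ and $|\E^{st}|=\E^{tR\cos\theta}$. Since $\E^{tR\cos\theta}$ is symmetric about $\theta=\pi$, the estimate reduces to $2R\int_{\pi/2}^{\pi}\E^{tR\cos\theta}\D\theta$. Substituting $\phi=\theta-\pi/2$ turns $\cos\theta$ into $-\sin\phi$, and the decisive step is Jordan's inequality $\sin\phi\geq 2\phi/\pi$ on $[0,\pi/2]$, which gives $\E^{-tR\sin\phi}\leq\E^{-2tR\phi/\pi}$. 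Integrating the right-hand side yields exactly $\pi(1-\E^{-Rt})/t$, so multiplying by $\varLambda$ produces the first term of the claimed bound.

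For the horizontal segments $C_{R,\gamma}^{\pm}$, I would parametrize $s=x\pm R\I$ with $x$ ranging over $[0,\gamma]$ (up to orientation), so that $\D s=\D x$ and $|\E^{st}|=\E^{tx}$. Each segment then contributes at most $\varLambda\int_0^{\gamma}\E^{tx}\D x=\varLambda(\E^{\gamma t}-1)/t$; because the bound is taken in absolute value, the orientation of the path is immaterial. Summing the two horizontal contributions gives the term $2(\E^{\gamma t}-1)/t$ times $\varLambda$.

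Adding the three estimates via the triangle inequality yields the stated inequality. I expect the only nontrivial step to be the arc $C_R$, where the exponential factor must be controlled uniformly in $\theta$: the Jordan-inequality substitution is the heart of the argument, whereas the horizontal segments are routine direct estimates.
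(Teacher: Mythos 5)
Your proposal is correct and follows essentially the same route as the paper's proof: split $C_{R,\gamma}$ into its three pieces by the triangle inequality, bound the horizontal segments by the elementary integral $\varLambda\int_0^{\gamma}\E^{tx}\D{x}=\varLambda(\E^{\gamma t}-1)/t$ each, and control the arc via the parametrization $s=R\E^{\I\theta}$, symmetry about $\theta=\pi$, and Jordan's inequality $\sin\phi\geq 2\phi/\pi$. No gaps; the explicit substitution $\phi=\theta-\pi/2$ is just a more detailed rendering of the reduction the paper performs implicitly.
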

\begin{proof}
First, it holds that
\[
 \left|
  \int_{C_{R,\gamma}} \E^{st}\hat{f}(s)\D{s}
\right|
\leq
\left|
  \int_{C_{R,\gamma}^{+}} \E^{st}\hat{f}(s)\D{s}
\right|
+
\left|
  \int_{C_{R}} \E^{st}\hat{f}(s)\D{s}
\right|
+
\left|
  \int_{C_{R,\gamma}^{-}} \E^{st}\hat{f}(s)\D{s}
\right|.
\]
On the first and third terms,
changing the variable as $s=u\pm\I R$, we have
\[
 \left|
  \int_{C_{R,\gamma}^{\pm}} \E^{st}\hat{f}(s)\D{s}
\right|
\leq \varLambda\int_{C_{R,\gamma}^{\pm}}|\E^{st}| |\D{s}|
=\varLambda\int_0^{\gamma} \E^{ut}\D{u}
=\varLambda\frac{\E^{\gamma t}-1}{t}.
\]
On the second term,
changing the variable as $s=R\E^{\I\theta}$, we have
\begin{align*}
 \left|
  \int_{C_{R}} \E^{st}\hat{f}(s)\D{s}
\right|
\leq \varLambda \int_{C_{R}} |\E^{st}| |\D{s}|
=\varLambda \int_{\frac{\pi}{2}}^{\frac{3}{2}\pi}
 \E^{tR\cos\theta} R\D{\theta}
=2R  \varLambda \int_{0}^{\frac{\pi}{2}}
\E^{-tR\sin\theta} \D{\theta}.
\end{align*}
Furthermore, applying Jordan's inequality, we have
\begin{align*}
2R  \varLambda \int_{0}^{\frac{\pi}{2}}
\E^{-tR\sin\theta} \D{\theta}
\leq
2R \varLambda \int_{0}^{\frac{\pi}{2}}
 \E^{-2tR\theta/\pi} \D{\theta}
=\pi\varLambda\frac{1 - \E^{-R t}}{t},
\end{align*}
from which the claim follows.
\end{proof}

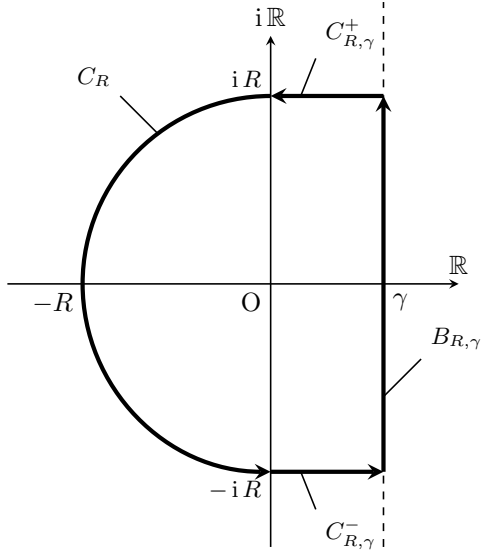
\begin{figure}
\centering
\scalebox{0.75}{
\begin{tikzpicture}
\draw[dashed,semithick] (1.5,-3.5)--(1.5,3.75);
\draw[->,>=stealth,semithick] (-3.5,0)--(2.5,0)node[above]{$\mathbb{R}$};
\draw[->,>=stealth,semithick] (0,-3.5)--(0,3.3)node[above]{$\I \mathbb{R}$};
\draw (0,0)node[below left]{$\OO$};
\draw (-2.5,0)node[below left]{$- R$};
\draw (1.5,0)node[below right]{$\gamma$};
\draw [->,>=stealth,ultra thick] (0,2.5) arc (90:270:2.5);
\draw[->,>=stealth,ultra thick] (1.5, 2.5)--(0, 2.5);
\draw[<-,>=stealth,ultra thick] (1.5,-2.5)--(0,-2.5);
\draw[->,>=stealth,ultra thick] (1.5,-2.5)--(1.5,2.5);
\draw (0, 2.5)node[above left]{$\I R$};
\draw (0,-2.5)node[below left]{$-\I R$};
\draw[semithick] (0.4,2.5)--(0.6,3)node[above right]{$C_{R,\gamma}^{+}$};
\draw[semithick] (0.4,-2.5)--(0.6,-3)node[below right]{$C_{R,\gamma}^{-}$};
\draw[semithick] (-1.5,2)--(-2,2.5)node[above left]{$C_{R}$};
\draw[semithick] (1.5,-1.5)--(2,-1)node[above right]{$B_{R,\gamma}$};
\end{tikzpicture}
}
\caption{Contours $C_{R}$, $C_{R,\gamma}^{+}$, $C_{R,\gamma}^{-}$, and $B_{R,\gamma}$.}
\label{fig:cntr_C_R_gamma}
\end{figure}

Using Lemma~\ref{lem:Jordan},
we can prove Lemma~\ref{lem:Bromwich-contour} as follows.

\begin{proof}[Proof of Lemma~\ref{lem:Bromwich-contour}]
From the assumption, $\hat{f}$
has no singularity at $s=\infty$,
because all singularities of $\hat{f}$ lie inside $\varGamma_R$.
Furthermore, using
\begin{equation}
\lim_{s\to\infty}\hat{f}(s)
=\lim_{s\to\infty}\int_0^c \E^{-st}f(t)\D{t}
=0
\label{eq:lim-hat-f-zero}
\end{equation}
and Lemma~\ref{lem:Jordan},
we can rewrite the integral~\eqref{eq:Bromwich} as
\[
 \int_{\gamma - \I\infty}^{\gamma + \I\infty}
\E^{st} \hat{f}(s)\D{s}
=\lim_{R\to\infty}
\oint_{B_{R,\gamma}\cup C_{R,\gamma}}
\E^{st} \hat{f}(s)\D{s},
\]
where $C_{R,\gamma}$ is a contour defined in Lemma~\ref{lem:Jordan},
and $B_{R,\gamma}$ is a straight-line path from $\gamma-\I R$
to $\gamma + \I R$ (see Fig.~\ref{fig:cntr_C_R_gamma}).
The desired integral is obtained as
\[
 \lim_{R\to\infty}
\oint_{B_{R,\gamma}\cup C_{R,\gamma}}
\E^{st} \hat{f}(s)\D{s}
=\oint_{B_{R,\gamma}\cup C_{R,\gamma}}
\E^{st} \hat{f}(s)\D{s}
=\oint_{\varGamma_{R}}\E^{st} \hat{f}(s)\D{s},
\]
because $\hat{f}$ is analytic outside $\varGamma_{R}$
and to the right of a vertical line $s=\gamma$.
\end{proof}

Putting $r=1/R$ and $s = (1/r)\E^{\I\theta}$, we can further rewrite
the integral~\eqref{eq:Bromwich-contour} as
\begin{align*}
 \frac{1}{2\pi\I}\oint_{\varGamma_{1/r}} \E^{st} \hat{f}(s)\D{s}
&=\frac{1}{2\pi\I}\int_{-\pi}^{\pi} \E^{t \E^{\I\theta}/r}
\hat{f}\left(\frac{1}{r}\E^{\I\theta}\right)
\frac{1}{r}\E^{\I\theta} \I \D{\theta}\\
&=\frac{1}{2\pi\I}\int_{\pi}^{-\pi} \E^{t \E^{-\I\theta}/r}
\hat{f}\left(\frac{1}{r}\E^{-\I\theta}\right)
\frac{1}{r}\E^{-\I\theta} \I (-\D{\theta})\\
&=\frac{1}{2\pi\I}\int_{-\pi}^{\pi} \E^{t /(r\E^{\I\theta})}
\hat{f}\left(\frac{1}{r\E^{\I\theta}}\right)
\frac{1}{r \E^{\I\theta}} \I \D{\theta}\\
&=\frac{1}{2\pi\I}\oint_{\varGamma_{r}}
\E^{t/z} \hat{f}\left(\frac{1}{z}\right)\frac{1}{z^2}\D{z},
\end{align*}
where $z$ is put as $z=r\E^{\I\theta}$ in the last equality.
Using the equality and $F(s)=\hat{f}(1/s)$, we have
\begin{equation}
f(t) = \frac{1}{2\pi \I}
 \oint_{\varGamma_{r}}
 \frac{1}{z^2}\E^{t/z}F(z)\D{z}.
\label{eq:Bromwich2}
\end{equation}
By using~\eqref{eq:Bromwich2}, $p(x)$ in~\eqref{eq:p-x}
can be rewritten as
\begin{equation}
 p(x) = \int_a^x\left(
\frac{1}{2\pi\I}
 \oint_{\varGamma_{r}}
 \frac{1}{z^2}\E^{(x-t)/z}F(z)\D{z}
\right) g(t)\D{t}
 = \frac{1}{2\pi\I} \oint_{\varGamma_{r}}
w(x,z)\D{z},
\label{eq:p-x-Laplace}
\end{equation}
where
\[
 w(x,z) = \int_a^x \frac{1}{z^2}\E^{(x-t)/z}F(z) g(t)\D{t},
\]
which exists for $g\in\mathbf{Z}$.
Using the series expansion of $\E^{x}$ and~\eqref{eq:Cauchy-formula},
we can further rewrite $w(x,z)$ as
\begin{align}
\label{eq:def-w-x-s}
w(x,z)
&=\int_a^x \frac{1}{z^2}
\sum_{n=0}^{\infty}\frac{(x - t)^n}{n! z^n} F(z) g(t)\D{t}
=\left(
\frac{\mathcal{J}}{z^2}\sum_{n=0}^{\infty}\frac{\mathcal{J}^{n}}{z^{n}} F(z) g
\right)(x)
=\left(\frac{\mathcal{J}}{z} (z - \mathcal{J})^{-1} F(z) g\right)(x).
\end{align}
Substituting~\eqref{eq:def-w-x-s} into~\eqref{eq:p-x-Laplace},
we have
\begin{align*}
p(x) = \left(
\frac{1}{2\pi\I}\oint_{\varGamma_{r}}
\frac{\mathcal{J}}{z} (z - \mathcal{J})^{-1} F(z) \D{z} g
\right)(x)
=\left(
\frac{1}{2\pi\I}\oint_{\varGamma_{r}}
\left\{(z - \mathcal{J})^{-1} - \frac{1}{z}\right\}  F(z) \D{z} g
\right)(x).
\end{align*}
This can be regarded as the Dunford integral,
and we may use Cauchy's integral formula to evaluate the integral as
\[
\left(
\frac{1}{2\pi\I}\oint_{\varGamma_{r}}
\left\{(z - \mathcal{J})^{-1} - \frac{1}{z}\right\}  F(z) \D{z} g
\right)(x)
=\left(\{F(\mathcal{J}) - F(0)\}g \right)(x),
\]
where the residues at $z=\mathcal{J}$ and $z=0$ are calculated
(note that $F$ is analytic inside $\varGamma_r$).
Furthermore, from~\eqref{eq:lim-hat-f-zero},
we have $F(0)=\lim_{s\to\infty}\hat{f}(s)=0$.
Thus, the desired expression~\eqref{eq:p-F-J-g} is finally obtained.
To summarize the result, let us set the following assumption.

\begin{assumption}
\label{assump:F}
Let $F$ be defined by~\eqref{eq:def-F} for some $c\in[b - a,\infty]$.
Assume that $F$ is an analytic function
satisfying the condition that
there exists a sufficiently small real positive number $r$
such that
all singularities of $F$ lie outside the contour $\varGamma_{r}$.
\end{assumption}

Under this assumption, if $g$ belongs to $L^1(a, b)$,
we have the following theorem.

\begin{theorem}
Assume that Assumption~\ref{assump:F} is fulfilled.
Furthermore, assume that $g\in L^1(a,b)$.
Then, it holds that
\[
 \int_a^x f(x - t)g(t) \D{t} = \left(F(\mathcal{J}) g\right)(x),
\quad a\leq x\leq b,
\]
where $\mathcal{J}$
denotes the integral operator defined by~\eqref{eq:def-J}.
\end{theorem}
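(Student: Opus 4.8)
The plan is to assemble the chain of identities already developed in \eqref{eq:Bromwich2}--\eqref{eq:def-w-x-s} into a rigorous argument whose substance is the holomorphic (Dunford) functional calculus for the quasinilpotent operator $\mathcal{J}$. First I would start from $p(x)=\int_a^x f(x-t)g(t)\D t$ and insert the contour representation \eqref{eq:Bromwich2} of $f$; its validity is precisely Lemma~\ref{lem:Bromwich-contour} under Assumption~\ref{assump:F}, identifying $R=1/r$ and noting that ``all singularities of $F$ lie outside $\varGamma_r$'' translates into ``all singularities of $\hat f$ lie inside $\varGamma_{1/r}$.'' This produces $p(x)=\frac{1}{2\pi\I}\oint_{\varGamma_r} w(x,z)\D z$ once the order of the $t$-integration and the $z$-integration is exchanged.

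The first technical step to justify is this exchange, which I would handle by Fubini's theorem: on the compact contour $\varGamma_r$ the factor $F(z)\E^{(x-t)/z}/z^2$ is continuous and bounded (Assumption~\ref{assump:F} guarantees $F$ is analytic, hence bounded, on and inside $\varGamma_r$), while $g\in L^1(a,b)$, so the integrand is jointly integrable over $\varGamma_r\times[a,x]$. Next I would expand $\E^{(x-t)/z}$ in its power series and integrate term by term; for fixed $z\in\varGamma_r$ the series converges uniformly in $t\in[a,x]$ because $|x-t|\leq b-a$, so termwise integration is legitimate, and Cauchy's repeated-integration formula \eqref{eq:Cauchy-formula} identifies the $n$-th term with $\mathcal{J}^{n+1}z^{-(n+2)}F(z)g$. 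Summing the Neumann series and invoking Proposition~\ref{prop:resolvent-Z} (which applies since $r>0$, so every $z\in\varGamma_r$ is nonzero) yields $w(x,z)=\bigl(\tfrac{\mathcal{J}}{z}(z-\mathcal{J})^{-1}F(z)g\bigr)(x)$, with the uniform operator bound \eqref{eq:bound-resolvent} controlling the convergence along $\varGamma_r$.

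The heart of the proof is the final evaluation. Using the resolvent identity $\tfrac{\mathcal{J}}{z}(z-\mathcal{J})^{-1}=(z-\mathcal{J})^{-1}-\tfrac1z$, the operator-valued contour integral becomes the Dunford integral $\frac{1}{2\pi\I}\oint_{\varGamma_r}\{(z-\mathcal{J})^{-1}-\tfrac1z\}F(z)\D z$. The bound $\|\mathcal{J}^k\|\leq (b-a)^k/k!$ (established in Propositions~\ref{prop:resolvent-Z} and~\ref{prop:resolvent-W}) shows that $\mathcal{J}$ has spectral radius $0$, i.e. $\sigma(\mathcal{J})=\{0\}$, which lies inside $\varGamma_r$; since $F$ is analytic inside $\varGamma_r$ by Assumption~\ref{assump:F}, the holomorphic functional calculus gives $\frac{1}{2\pi\I}\oint_{\varGamma_r}(z-\mathcal{J})^{-1}F(z)\D z=F(\mathcal{J})$, while Cauchy's integral formula gives $\frac{1}{2\pi\I}\oint_{\varGamma_r} z^{-1}F(z)\D z=F(0)$. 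Hence $p(x)=(\{F(\mathcal{J})-F(0)\}g)(x)$, and \eqref{eq:lim-hat-f-zero} furnishes $F(0)=\lim_{s\to\infty}\hat f(s)=0$, completing the argument.

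I expect the main obstacle to be rigorously legitimizing the passage to the Banach-space-valued functional calculus: namely, that the termwise-summed Neumann series may be integrated against $g$ and along $\varGamma_r$ in any order, and that the resulting Dunford integral is genuinely $F(\mathcal{J})$ in the sense of the analytic functional calculus (with the spurious $1/z$ pole at the origin accounting for the subtracted $F(0)$). All of these rest on the compactness of $\varGamma_r$, the uniform resolvent bound \eqref{eq:bound-resolvent}, and the quasinilpotence of $\mathcal{J}$; the pointwise evaluations in $x$ then follow because the same estimates hold in $\mathbf{W}=C([a,b])$ by Proposition~\ref{prop:resolvent-W}.
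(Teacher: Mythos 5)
Your proposal is correct and follows essentially the same route as the paper: the contour representation of $f$ via Lemma~\ref{lem:Bromwich-contour}, the exchange of integrals, the termwise summation of the exponential series into the Neumann series $\frac{\mathcal{J}}{z}(z-\mathcal{J})^{-1}F(z)g$, the resolvent identity, and the evaluation of the Dunford integral with $F(0)=0$. The extra care you take in justifying the Fubini exchange and the quasinilpotence of $\mathcal{J}$ only makes explicit what the paper leaves implicit.
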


In the same way,
the expression~\eqref{eq:p-F-J-g} can be obtained for
both $C([a,b])$ and $\Hinf(\domD)$ as follows.

\begin{theorem}
Assume that Assumption~\ref{assump:F} is fulfilled.
Furthermore, assume that $g\in C([a, b])$.
Then, it holds that
\[
 \int_a^x f(x - t)g(t) \D{t} = \left(F(\mathcal{J}) g\right)(x),
\quad a\leq x\leq b,
\]
where $\mathcal{J}$
denotes the integral operator defined by~\eqref{eq:def-J}.
\end{theorem}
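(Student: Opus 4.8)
The plan is to retrace, essentially verbatim, the derivation carried out in Section~\ref{sec:derive-expression} for $\mathbf{Z}=L^1(a,b)$, now working in the Banach space $\mathbf{W}=C([a,b])$ and invoking Proposition~\ref{prop:resolvent-W} wherever Proposition~\ref{prop:resolvent-Z} was used. The representation~\eqref{eq:Bromwich2} of $f$ as a contour integral over $\varGamma_r$ is a statement about $f$ alone (it rests only on Assumption~\ref{assump:F} through Lemma~\ref{lem:Bromwich-contour}) and is therefore unaffected by the space to which $g$ belongs. Hence the only thing that must be re-examined is whether the operator-valued manipulations leading from~\eqref{eq:p-x-Laplace} to~\eqref{eq:p-F-J-g} remain valid when $g$ is merely continuous.

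First I would substitute~\eqref{eq:Bromwich2} into $p(x)$ and interchange the bounded contour integral with the $t$-integral to obtain $p(x)=\frac{1}{2\pi\I}\oint_{\varGamma_r}w(x,z)\D{z}$ as in~\eqref{eq:p-x-Laplace}. For $g\in C([a,b])$ the inner integral defining $w(x,z)$ converges, since a continuous function on the compact interval $[a,b]$ is bounded, so $w(x,z)$ is well-defined exactly as in the $L^1$ case. Next, expanding $\E^{(x-t)/z}$ as a power series and applying the Cauchy formula~\eqref{eq:Cauchy-formula} rewrites $w(x,z)$ as $\bigl(\frac{\mathcal{J}}{z}(z-\mathcal{J})^{-1}F(z)g\bigr)(x)$, precisely as in~\eqref{eq:def-w-x-s}. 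Here the resolvent $(z-\mathcal{J})^{-1}$ is taken on $\mathbf{W}$, whose existence for every $z$ on $\varGamma_r$ (so $z\neq 0$) is furnished by Proposition~\ref{prop:resolvent-W}; the interchange of the summation with the $t$-integral is justified by the bound $\|\mathcal{J}^k\|_{\mathcal{L}(\mathbf{W},\mathbf{W})}\leq (b-a)^k/k!$ established in that proposition, which forces the Neumann series to converge uniformly for $z$ on the circle $\varGamma_r$.

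Having expressed $w$ through the resolvent, I would substitute back into~\eqref{eq:p-x-Laplace} and, after the operator partial-fraction identity $\frac{\mathcal{J}}{z}(z-\mathcal{J})^{-1}=(z-\mathcal{J})^{-1}-\frac{1}{z}$ already employed in the $L^1$ derivation, recognize the resulting operator-valued contour integral as a Dunford integral. Since the spectrum of the Volterra-type operator $\mathcal{J}$ equals $\{0\}$ on $\mathbf{W}$ just as on $\mathbf{Z}$, and $F$ is analytic inside $\varGamma_r$ by Assumption~\ref{assump:F}, evaluating by residues yields $F(\mathcal{J})$ from the term $(z-\mathcal{J})^{-1}$ and $-F(0)$ from the term $-1/z$, so that $p(x)=\bigl(\{F(\mathcal{J})-F(0)\}g\bigr)(x)$. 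Finally $F(0)=\lim_{s\to\infty}\hat{f}(s)=0$ by~\eqref{eq:lim-hat-f-zero}, which gives the claimed identity.

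I expect no genuinely new difficulty here: every analytic ingredient of the $L^1$ argument depends only on the existence of $(z-\mathcal{J})^{-1}$ for $z\neq 0$ and on the bound $\|\mathcal{J}^k\|\leq(b-a)^k/k!$, both of which Proposition~\ref{prop:resolvent-W} supplies on $\mathbf{W}$. The main point to verify carefully is simply that these operator-theoretic facts hold on $C([a,b])$ rather than on $L^1(a,b)$, together with the well-definedness of $w(x,z)$ for continuous $g$; once these are in hand the Dunford evaluation is formally identical. (Alternatively, since $C([a,b])\subset L^1(a,b)$ and the Neumann series defining $(z-\mathcal{J})^{-1}$, and hence $F(\mathcal{J})g$, produces the \emph{same} function in both spaces, the identity could instead be inherited directly from the preceding theorem.)
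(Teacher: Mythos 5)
Your proposal is correct and follows essentially the same route as the paper: the paper derives the identity for $g\in L^1(a,b)$ in Section~\ref{sec:derive-expression} and then obtains the $C([a,b])$ case by the identical argument, substituting Proposition~\ref{prop:resolvent-W} for Proposition~\ref{prop:resolvent-Z}, exactly as you describe. The points you single out for verification (well-definedness of $w(x,z)$, the Neumann-series bound $\|\mathcal{J}^k\|_{\mathcal{L}(\mathbf{W},\mathbf{W})}\leq (b-a)^k/k!$, and the Dunford evaluation) are precisely the ingredients the paper relies on.
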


\begin{theorem}
\label{thm:convolution-express}
Assume that Assumption~\ref{assump:F} is fulfilled.
Furthermore, assume that $g\in \Hinf(\domD)$.
Then, it holds that
\[
 \int_a^x f(x - t)g(t) \D{t} = \left(F(\mathcal{J}) g\right)(x),
\quad a\leq x\leq b,
\]
where $\mathcal{J}$
denotes the integral operator defined by~\eqref{eq:def-J}.
\end{theorem}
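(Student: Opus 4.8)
The plan is to reproduce, essentially verbatim, the derivation carried out in Section~\ref{sec:derive-expression}, replacing the base space $\mathbf{Z}=L^1(a,b)$ by $\mathbf{Y}=\Hinf(\domD)$. Every step of that derivation is purely operator-theoretic once the resolvent of $\mathcal{J}$ on the relevant space is available, and the latter is furnished here by Proposition~\ref{prop:resolvent-Y}. Concretely, I would first record that Assumption~\ref{assump:F} provides a radius $r>0$ for which $F$ is analytic on and inside $\varGamma_r$, and that $F(0)=\lim_{s\to\infty}\hat{f}(s)=0$ by~\eqref{eq:lim-hat-f-zero}. Lemma~\ref{lem:Bromwich-contour} then yields the contour representation~\eqref{eq:Bromwich2} of $f(t)$ for $t>0$; this is a statement about $f$ alone and is therefore unaffected by the choice of space for $g$.

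Next I would substitute~\eqref{eq:Bromwich2} into $p(x)=\int_a^x f(x-t)g(t)\D t$ and interchange the order of integration to obtain~\eqref{eq:p-x-Laplace}. This Fubini step is legitimate because $g\in\Hinf(\domD)$ restricts to a bounded function on $[a,b]$, so the integrand is bounded on the compact product of $\varGamma_r$ and $[a,x]$. Expanding $\E^{(x-t)/z}$ and applying the Cauchy formula for repeated integration~\eqref{eq:Cauchy-formula} exactly as in~\eqref{eq:def-w-x-s}, I would rewrite $w(x,z)$ as $\left(\frac{\mathcal{J}}{z}(z-\mathcal{J})^{-1}F(z)g\right)(x)$, the resolvent being well defined for every $z\in\varGamma_r$ (since $|z|=r\neq 0$) by Proposition~\ref{prop:resolvent-Y}.

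The crux of the argument, and the only place where $\mathbf{Y}$ must be handled differently from $\mathbf{Z}$ and $\mathbf{W}$, lies in justifying the interchange of the infinite Neumann sum with the contour integral and the subsequent functional-calculus evaluation. For $\mathbf{Y}$ this rests on the factorial-decay bound $\|\mathcal{J}^k\|_{\mathcal{L}(\mathbf{Y},\mathbf{Y})}\leq \{(b-a)c_d\}^k/k!$ supplied by the lemma cited just before Proposition~\ref{prop:resolvent-Y}, which is the exact analogue of the bounds $\|\mathcal{J}^k\|\leq (b-a)^k/k!$ used in Propositions~\ref{prop:resolvent-Z} and~\ref{prop:resolvent-W}. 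This bound makes $\sum_k \mathcal{J}^k/z^{k}$ converge absolutely in operator norm, uniformly on $\varGamma_r$, so the series may be integrated term by term; it also shows that $\mathcal{J}$ is quasinilpotent with spectrum $\{0\}$ lying inside $\varGamma_r$, thereby legitimizing the reading of $\frac{1}{2\pi\I}\oint_{\varGamma_r}(z-\mathcal{J})^{-1}F(z)\D z$ as a Dunford integral in the Banach algebra $\mathcal{L}(\mathbf{Y},\mathbf{Y})$.

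Finally I would apply the operator identity $\frac{\mathcal{J}}{z}(z-\mathcal{J})^{-1}=(z-\mathcal{J})^{-1}-\frac1z$ and evaluate the two resulting contour integrals: the Dunford integral produces $F(\mathcal{J})$, while Cauchy's integral formula applied to the analytic $F$ gives $\frac{1}{2\pi\I}\oint_{\varGamma_r}\frac{F(z)}{z}\D z=F(0)$. Combining these yields $(\{F(\mathcal{J})-F(0)\}g)(x)$, and since $F(0)=0$ the desired identity $p(x)=(F(\mathcal{J})g)(x)$ follows for $a\leq x\leq b$. The main obstacle I anticipate is not any single estimate but the bookkeeping needed to carry the operator-valued manipulations rigorously in $\mathcal{L}(\mathbf{Y},\mathbf{Y})$, in particular confirming that term-by-term integration and the Dunford/Cauchy residue computation remain valid in this Banach algebra; all of this, however, reduces to the one factorial bound noted above.
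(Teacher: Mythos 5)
Your proposal is correct and follows essentially the same route as the paper: the paper derives the identity for $\mathbf{Z}=L^1(a,b)$ via Lemma~\ref{lem:Bromwich-contour}, the Neumann series, and the Dunford/Cauchy evaluation with $F(0)=0$, and then obtains the $\Hinf(\domD)$ case "in the same way," with Proposition~\ref{prop:resolvent-Y} (resting on the factorial bound $\|\mathcal{J}^k\|\leq\{(b-a)c_d\}^k/k!$) supplying exactly the ingredient you identify as the crux. Your additional remarks on term-by-term integration and the quasinilpotency of $\mathcal{J}$ are consistent with, and slightly more explicit than, the paper's presentation.
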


\section{Approximate formulas for the indefinite convolution and their convergence theorems (main results)}
\label{sec:main-result}

This section presents the principal findings of this study
concerning the sinc convolution,
originally derived by \citet{stenger95:_colloc}.
As noted in Section~\ref{sec:intro},
he assumed that $F$ is analytic on the open right half of
the complex plane $\Omega^{+}=\{z\in\mathbb{R}:\Re z > 0\}$,
and the spectrum of the operator $\JmSE$ lies on $\Omega^{+}$.
These assumptions are made to ensure that
$F(\JmSE)$ is well-defined for all $m$.
As a first contribution,
without the assumptions,
this paper shows that $F(\JmSE)$ is well-defined for
all sufficiently large $m$.
As a second contribution,
this paper improves Stenger's convergence theorem
by eliminating the assumption on $P(v,x)$
defined by~\eqref{eq:P-v-x}.
Then, as a third contribution,
this paper extends these results to the DE-Sinc framework;
the SE transformation is replaced with the DE transformation.

\subsection{Sinc convolution combined with the SE transformation}
\label{sec:Stenger-Sinc-conv}

First, we discuss on Stenger's approximate formula.
His idea to derive the formula is
to approximate $\mathcal{J}$ by $\JmSE$ as
\begin{equation}
\label{eq:approx-p-SE}
 p(x) = \left(F(\mathcal{J}) g\right)(x)
\approx \left(F(\JmSE) g\right)(x)
:= \boldsymbol{\omega}^{\SE}_m(x) F(\aSEm) \vSEm g,
\end{equation}
where $\boldsymbol{\omega}^{\SE}_m(x)$,
$\aSEm$, and $\vSEm$
are defined by~\eqref{eq:oSEm},~\eqref{eq:aSEm}
and~\eqref{eq:vSEm}, respectively.
If a non-singular matrix $X_m$ and complex numbers $s_{m,j}$
are determined such that
\[
 \aSEm = X_m \diag[s_{m,-M},\,\ldots,\,s_{m,N}] X_m^{-1},
\]
then the square matrix $F(\aSEm)$ may be defined via the equation
\[
 F(\aSEm) = X_m \diag[F(s_{m,-M}),\,\ldots,\,F(s_{m,N})] X_m^{-1}.
\]
Therefore, to ensure that $F(\aSEm)$ is well-defined,
the spectrum of $\aSEm$ must lie entirely
within the domain on which $F$ is non-singular.
He pointed out numerical evidence demonstrating that
$I_m^{(-1)}$ is diagonalizable,
and all eigenvalues lie on the open right half of
the complex plane $\Omega^{+}=\{z\in\mathbb{R}:\Re z > 0\}$
for all orders $m$ up to $513$
(later, up to $1024$~\citep{Stenger}).
He also noted that the real parts of
the eigenvalues of $\aSEm$ are positive
if and only if those of $I_m^{(-1)}$
are positive.
On the basis of these results,
he adopted the following assumption
to ensure that $F(\aSEm)$ is well-defined.

\begin{assumption}
\label{assump:SE1}
Let $\hat{f}(s)=\int_0^c\E^{-st}f(t)\D{t}$ for some $c\in[2(b - a),\infty]$.
Assume that $\hat{f}$ is analytic on $\Omega^{+}$.
Furthermore, assume that $\aSEm$ defined by~\eqref{eq:aSEm}
is diagonalizable,
and all eigenvalues of $\aSEm$ lie on $\Omega^{+}$.
\end{assumption}

Note that if $\hat{f}$ is analytic on $\Omega^{+}$,
then $F$ defined by~\eqref{eq:def-F} is also analytic on $\Omega^{+}$.
The condition $c\geq 2(b - a)$
(not $c\geq b - a$) is not for the well-definedness of $F(\aSEm)$,
but due to technical reasons for the error analysis.

In this study,
to eliminate Assumption~\ref{assump:SE1},
the following lemma is established.
The proof is given in Section~\ref{sec:proof-F-welldef-SE}.

\begin{lemma}
\label{lem:SE-F-welldef}
Assume that Assumption~\ref{assump:F} is fulfilled.
Let $d$ be a positive constant with $d<\pi$,
let $n$ be a positive integer, and let $h$ be
selected by the formula
\begin{equation}
h = \sqrt{\frac{\pi d}{n}}.
 \label{eq:h-SE-new}
\end{equation}
Moreover, let $M=N=n$,
and let $\aSEm$ be defined by~\eqref{eq:aSEm}.
Then, there exists a positive integer $n_r$ such that
for all $n\geq n_r$, $F(\aSEm)$ is well-defined.
\end{lemma}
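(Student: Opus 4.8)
The plan is to replace the eigenvalue/diagonalization requirement of Assumption~\ref{assump:SE1} by the holomorphic functional calculus, and thereby reduce the statement to a spectral-radius bound. Under Assumption~\ref{assump:F} the function $F$ is analytic on the open disk bounded by $\varGamma_r$, and (since the singularities lie strictly outside $\varGamma_r$) on a neighborhood of its closure. Hence, if the spectral radius of $\aSEm$ satisfies $\rho(\aSEm)<r$, then $F(\aSEm)$ is unambiguously defined by
\[
 F(\aSEm) = \frac{1}{2\pi\I}\oint_{\varGamma_r} F(z)\,(z I_m - \aSEm)^{-1}\D{z},
\]
with no need for $\aSEm$ to be diagonalizable; this agrees with the definition in~\eqref{eq:approx-p-SE} whenever $\aSEm$ is diagonalizable. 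Thus it suffices to prove that $\rho(\aSEm)<r$ for all sufficiently large $n$.

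To bound the eigenvalues I would pass from the matrix $\aSEm$ to the operator $\JmSE$ on $\mathbf{W}=C([a,b])$. Let $\lambda$ be a nonzero eigenvalue of $\aSEm$ with eigenvector $v$, and set $g=\boldsymbol{\omega}^{\SE}_m v$. Using $\vSEm\boldsymbol{\omega}^{\SE}_m=I_m$, one checks that $\JmSE g = \boldsymbol{\omega}^{\SE}_m\aSEm v = \lambda g$ while $\vSEm g = v\neq 0$, so $g\in\mathbf{W}$ is a genuine eigenfunction of $\JmSE$. Writing $(\lambda-\mathcal{J})g = (\lambda-\JmSE)g+(\JmSE-\mathcal{J})g = (\JmSE-\mathcal{J})g$ and applying Proposition~\ref{prop:resolvent-W} together with the bound $\|(\lambda-\mathcal{J})^{-1}\|_{\mathcal{L}(\mathbf{W},\mathbf{W})}\le |\lambda|^{-1}\E^{(b-a)/|\lambda|}$ (established on $\mathbf{W}$ exactly as in~\eqref{eq:bound-resolvent}) yields $\|g\|_{\mathbf{W}}\le |\lambda|^{-1}\E^{(b-a)/|\lambda|}\,\|(\mathcal{J}-\JmSE)g\|_{\mathbf{W}}$. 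With $\varphi(t)=t\,\E^{-(b-a)/t}$ this rearranges to
\[
 \varphi(|\lambda|) \le \frac{\|(\mathcal{J}-\JmSE)g\|_{\mathbf{W}}}{\|g\|_{\mathbf{W}}}.
\]

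It remains to bound the right-hand side by a quantity $\delta_n$ that is independent of the particular eigenfunction and tends to $0$ as $n\to\infty$. Granting this, since $\varphi$ is strictly increasing on $(0,\infty)$ with $\varphi(0^{+})=0$, the relation $\varphi(|\lambda|)\le\delta_n$ forces $|\lambda|\le\varphi^{-1}(\delta_n)\to 0$; choosing $n_r$ so large that $\varphi^{-1}(\delta_n)<r$ for all $n\ge n_r$ then gives $\rho(\aSEm)<r$, and the proof concludes by the first paragraph. The main obstacle is precisely this uniform error estimate: the eigenfunction $g=\boldsymbol{\omega}^{\SE}_m v$ is a linear combination of the sinc-type basis functions and in general does \emph{not} satisfy the endpoint-decay condition~\eqref{eq:indef-condition}, so Theorem~\ref{thm:Stenger-JmSE} cannot be applied to it, and indeed $\|\mathcal{J}-\JmSE\|_{\mathcal{L}(\mathbf{W},\mathbf{W})}$ need not be small. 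Supplying the missing control on such $g$ is the purpose of the resolvent analysis of the approximate operators in Section~\ref{sec:resolvent-approx-op}: the estimate above is equivalent to showing that $(z-\JmSE)^{-1}$ exists and stays bounded for all $|z|\ge r$ once $n$ is large, and it is this resolvent bound—rather than a classical convergence theorem—that must carry the argument.
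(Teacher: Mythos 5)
Your proposal is correct and follows essentially the same route as the paper: reduce well-definedness of $F(\aSEm)$ to localizing $\sigma(\aSEm)$ inside $\varGamma_r$ via the Dunford integral, transfer the spectrum from the matrix to the operator $\JmSE$ (your direct eigenvector correspondence $g=\boldsymbol{\omega}^{\SE}_m v$ is a streamlined one-directional version of Lemma~\ref{lem:existence-unique-SE}), and then rule out eigenvalues with $|\lambda|\geq r$ using the resolvent bound for $\mathcal{J}$ together with the smallness of $\|(\mathcal{J}-\JmSE)\JmSE\|_{\mathcal{L}(\mathbf{W},\mathbf{W})}$ from Section~\ref{sec:resolvent-approx-op} (note that for an eigenfunction one has $(\mathcal{J}-\JmSE)g=\lambda^{-1}(\mathcal{J}-\JmSE)\JmSE g$, which is exactly how the needed uniform estimate follows and reproduces the paper's inequality~\eqref{eq:operator-less-than-1-SE}). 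You correctly identify that this last estimate, rather than any classical convergence theorem applied to $g$, is the point where the real work lies, which matches the paper's reliance on Lemma~\ref{lem:resolvent-JmSE-exist}.
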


Additionally, for the error analysis,
Stenger introduced the following function space.

\begin{definition}
\label{def:MC}
Let $\alpha$ and $\beta$ be positive constants
with $\alpha\leq 1$ and $\beta\leq 1$,
and let $\domD$ be a bounded and simply-connected domain
(or Riemann surface) that contains the interval $(a, b)$.
Then, $\MC_{\alpha,\beta}(\domD)$ denotes the family of all functions
$f\in\Hinf(\domD)$ that satisfy the
following two inequalities with a constant $H$:
\begin{align}
 |f(z) - f(a)| &\leq H |z - a|^{\alpha}, \label{leq:H-1}\\
 |f(b) - f(z)| &\leq H |b - z|^{\beta}, \label{leq:H-2}
\end{align}
for all $z\in\domD$.
\end{definition}

On the function $P(v,x)$ in~\eqref{eq:P-v-x},
he adopted the following assumption.

\begin{assumption}
\label{assump:SE2}
Let $\alpha$, $\beta$, $\alpha_f$, $\beta_f$,
$d$, and $\epsilon$ be positive constants
with $\alpha\leq 1$ and $\beta\leq 1$.
Let $P(v,x)$ be a function defined by~\eqref{eq:P-v-x}.
Assume that $P(v,\cdot)\in\MC_{\alpha,\beta}(\SEt(\domD_{d+\epsilon}))$
uniformly for $v\in[0, b-a]$.
Furthermore, assume that
there exists a constant $c_1$ independent of $v$ and $\tau$
such that
\[
 \left|
\frac{\partial}{\partial v}P(v,\tau)
\right|
\leq c_1 v^{\alpha_f - 1}\left((b-a) - v\right)^{\beta_f - 1}
\]
holds for all $v\in[0, b-a]$ and $\tau\in\SEt(\domD_d)$.
\end{assumption}

Under Assumptions~\ref{assump:SE1} and~\ref{assump:SE2},
the following result was presented.

\begin{theorem}[{\citet[Theorem~4.13]{stenger95:_colloc}}]
\label{thm:SE-old}
Assume that $g$ is analytic on $\SEt(\domD_d)$ for $d\in(0,\pi)$,
and absolutely integrable on the boundary of $\SEt(\domD_d)$.
Furthermore, assume that
Assumptions~\ref{assump:SE1} and~\ref{assump:SE2} are fulfilled.
Let $F$ be defined by~\eqref{eq:def-F}.
Let $\mu = \min\{\alpha, \beta\}$,
$n$ be a positive integer, and $h$ be
selected by the formula~\eqref{eq:h-SE}.
Moreover, let $M$ and $N$ be positive integers defined by~\eqref{eq:MN-SE}.
Then, there exists a constant $C$ independent of $n$ such that
\[
 \max_{x\in[a, b]}
\left|
p(x) - \left(F(\JmSE) g\right)(x)
\right|
\leq C \sqrt{n} \exp\left(-\sqrt{\pi d \mu n}\right),
\]
where $F(\JmSE) g$ is defined by~\eqref{eq:approx-p-SE}.
\end{theorem}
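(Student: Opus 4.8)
The plan is to convert the matrix function $F(\aSEm)$ into an integral over the shift variable $v$, so that the approximation error decomposes into an ordinary SE-Sinc \emph{interpolation} error of $p$ together with a \emph{nodal} error, both of which are controlled through the family $P(v,\cdot)$. First I would use Assumption~\ref{assump:SE1}: since $\aSEm$ is diagonalizable with spectrum in $\Omega^{+}$, it is invertible and $-(\aSEm)^{-1}$ has spectrum in the open left half-plane, so $\exp(-v(\aSEm)^{-1})$ is well-defined and controlled for $v\geq 0$. Inserting $F(s)=\int_0^c\E^{-v/s}f(v)\D v$ into the spectral representation of $F(\aSEm)$ gives
\[
 F(\aSEm)=\int_0^c f(v)\exp\!\bigl(-v(\aSEm)^{-1}\bigr)\D v,
\]
so that, writing $x_i=\SEt(ih)$ and $\boldsymbol{p}=F(\aSEm)\vSEm g$, the computed nodal value is $\boldsymbol{p}_i=\int_0^c f(v)\bigl[\exp(-v(\aSEm)^{-1})\vSEm g\bigr]_i\D v$, whereas the exact one is $p(x_i)=\int_0^{x_i-a}f(v)g(x_i-v)\D v$, i.e.\ the same integral with the matrix semigroup replaced by the (discontinuous) truncated backward shift of $g$.

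I would then split
\[
 p(x)-\bigl(F(\JmSE)g\bigr)(x)
 =\bigl[p(x)-\boldsymbol{\omega}^{\SE}_m(x)\vSEm p\bigr]
 +\boldsymbol{\omega}^{\SE}_m(x)\bigl(\vSEm p-\boldsymbol{p}\bigr).
\]
The first bracket is the SE-Sinc interpolation error of $p=P(0,\cdot)$. Assumption~\ref{assump:SE2} (at $v=0$) places $p\in\MC_{\alpha,\beta}(\SEt(\domD_{d+\epsilon}))$, and the standard interpolation estimate for this class yields the bound $C\sqrt n\exp(-\sqrt{\pi d\mu n})$, the factor $\sqrt n$ reflecting the Lebesgue constant of the basis $\boldsymbol{\omega}^{\SE}_m$.

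For the second (nodal) bracket I would bound $\|\boldsymbol{\omega}^{\SE}_m(\cdot)\|$ by the same Lebesgue constant and estimate $\|\vSEm p-\boldsymbol{p}\|_\infty$ entrywise. The key point is that each discrepancy $\bigl[\exp(-v(\aSEm)^{-1})\vSEm g\bigr]_i-g(x_i-v)$ is the error of the Sinc-indefinite-integration semigroup against the exact shift: it is exponentially small in the interior but jumps across $v=x_i-a$, and hence cannot be integrated against $f(v)$ naively. The remedy is to integrate by parts in $v$, transferring the derivative onto the accumulated convolution; this is exactly where the hypothesis $|\partial_v P(v,\tau)|\leq c_1 v^{\alpha_f-1}((b-a)-v)^{\beta_f-1}$ and the condition $c\geq 2(b-a)$ enter, making the $v$-integral convergent at both endpoints while leaving the uniform interior Sinc rate $\sqrt n\exp(-\sqrt{\pi d\mu n})$ intact. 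Taking $\max_{x\in[a,b]}$ then gives the stated bound.

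The main obstacle I expect is precisely this nodal estimate. One must simultaneously (i) quantify the discrete-shift error uniformly in the node $i$ and in $v$, using the uniform-in-$v$ membership $P(v,\cdot)\in\MC_{\alpha,\beta}$ rather than analyticity of a single target, since the naive continuous limit (the backward shift of $g$ truncated at $x=a+v$) is discontinuous and therefore incompatible with Sinc accuracy; and (ii) control the boundary contributions of the $v$-integration near $v=0$ and $v=b-a$, where the truncated shift develops its singular behaviour, using only the $\partial_v P$ bound of Assumption~\ref{assump:SE2}. The boundedness of $\exp(-v(\aSEm)^{-1})$ for $v\ge 0$ from Assumption~\ref{assump:SE1} and the $\OO(\sqrt n)$ Lebesgue constant are the remaining ingredients that keep every constant independent of $n$.
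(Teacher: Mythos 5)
This theorem is not proved in the paper at all: it is quoted from Stenger~\cite{stenger95:_colloc} (his Theorem~4.13) purely as the baseline that Theorem~\ref{thm:SE-new} is designed to improve upon. So there is no in-paper proof to compare against; I can only measure your reconstruction against the role the paper assigns to Assumptions~\ref{assump:SE1} and~\ref{assump:SE2}. Your overall architecture --- the representation $F(\aSEm)=\int_0^c f(v)\exp(-v(\aSEm)^{-1})\D{v}$, the split into an interpolation error for $p=P(0,\cdot)$ plus a nodal error, and the insertion of Assumption~\ref{assump:SE2} into both pieces --- is the right shape and matches how Stenger organizes the argument around $P(v,x)$.

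There are, however, two concrete gaps. First, the assertion that $\exp(-v(\aSEm)^{-1})$ is ``controlled for $v\geq 0$'' with constants independent of $n$ does not follow from Assumption~\ref{assump:SE1}. Diagonalizability plus $\sigma(\aSEm)\subset\Omega^{+}$ only yields $\|\exp(-v(\aSEm)^{-1})\|\leq\kappa(X_m)$ with $\kappa(X_m)$ the condition number of the eigenvector matrix; since $I_m^{(-1)}$ is strongly non-normal, nothing in the hypotheses prevents $\kappa(X_m)$ from growing with $m$, and this factor multiplies the accumulated nodal error over the entire $v$-integration. This is precisely the hardest point of the whole analysis, and your proposal passes over it in one clause. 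Second, the reason for requiring $c\geq 2(b-a)$ is misidentified: it is needed so that $P(v,x)=\int_a^x f(v+x-t)g(t)\D{t}$ is defined at all, because $v+x-t$ ranges up to $2(b-a)$ when $v\in[0,b-a]$ and $x-t\in[0,b-a]$; it is not about endpoint convergence of the $v$-integral. Relatedly, ``integrate by parts in $v$'' only gestures at the actual mechanism: the identity $\partial_x P-\partial_v P=f(v)g(x)$, equivalently $P(v,\cdot)=f(v)\mathcal{J}g+\mathcal{J}\,\partial_v P(v,\cdot)$, is discretized by replacing $\mathcal{J}$ with $\JmSE$ to produce a linear ODE for the nodal vector of $P$, and the bound on $\partial_v P$ in Assumption~\ref{assump:SE2} controls the local truncation error of that ODE --- a different, and more delicate, use of the hypothesis than the one you describe. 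Without these two ingredients made precise, the claimed uniform bound $C\sqrt{n}\exp(-\sqrt{\pi d\mu n})$ is not yet established.
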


As an improvement of this theorem,
this study provides the following theorem.
The proof is given in Section~\ref{sec:proof-SE}.

\begin{theorem}
\label{thm:SE-new}
Assume that all the assumptions of
Theorem~\ref{thm:convolution-express} are fulfilled with $\domD=\SEt(\domD_d)$
for $d\in(0,\pi)$.
Let $n$ be a positive integer, and $h$ be
selected by the formula~\eqref{eq:h-SE-new}.
Moreover, let $M=N=n$.
Then, there exists a constant $C$ independent of $n$ such that
for all sufficiently large $n$
\[
 \max_{x\in[a, b]}
\left|
p(x) - \left(F(\JmSE) g\right)(x)
\right|
\leq C \log(n+1) \sqrt{n} \exp\left(-\sqrt{\pi d n}\right),
\]
where $F(\JmSE) g$ is defined by~\eqref{eq:approx-p-SE}.
\end{theorem}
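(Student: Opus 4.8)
The plan is to represent both $p(x)=(F(\mathcal{J})g)(x)$ and its approximation $(F(\JmSE)g)(x)$ as Dunford integrals over the \emph{same} contour $\varGamma_r$ and to subtract them. By Lemma~\ref{lem:SE-F-welldef} together with the resolvent estimates of Section~\ref{sec:resolvent-approx-op}, for all sufficiently large $n$ the spectrum of $\aSEm$ lies strictly inside $\varGamma_r$, so that $F$ being analytic there (Assumption~\ref{assump:F}) gives $F(\aSEm)=\frac{1}{2\pi\I}\oint_{\varGamma_r}F(z)(zI_m-\aSEm)^{-1}\D{z}$. A direct computation using $\vSEm\boldsymbol{\omega}^{\SE}_m=I_m$ shows $(\JmSE)^k=\boldsymbol{\omega}^{\SE}_m(\aSEm)^k\vSEm$ and the resolvent identity
\[
(z-\JmSE)^{-1}=\frac{1}{z}I+\frac{1}{z}\boldsymbol{\omega}^{\SE}_m\aSEm(zI_m-\aSEm)^{-1}\vSEm ,
\]
whence, since $F(0)=0$, a term-by-term evaluation gives $(F(\JmSE)g)(x)=\frac{1}{2\pi\I}\oint_{\varGamma_r}F(z)\{((z-\JmSE)^{-1}g)(x)-g(x)/z\}\D{z}$. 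The same representation holds for $p(x)$ with $\mathcal{J}$ in place of $\JmSE$ by the derivation of Section~\ref{sec:derive-expression} and Proposition~\ref{prop:resolvent-Y}. Subtracting, the $g(x)/z$ terms cancel and
\[
p(x)-(F(\JmSE)g)(x)=\frac{1}{2\pi\I}\oint_{\varGamma_r}F(z)\left[\left((z-\mathcal{J})^{-1}g\right)(x)-\left((z-\JmSE)^{-1}g\right)(x)\right]\D{z}.
\]

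Next I would estimate the integrand uniformly in $z\in\varGamma_r$ and $x\in[a,b]$. Writing $u=(z-\mathcal{J})^{-1}g$ and $u_m=(z-\JmSE)^{-1}g$, the identities $(z-\mathcal{J})u=g=(z-\JmSE)u_m$ yield
\[
u-u_m=(z-\JmSE)^{-1}(\mathcal{J}-\JmSE)u ,
\]
so that $\|u-u_m\|_{\mathbf{W}}\leq\|(z-\JmSE)^{-1}\|_{\mathcal{L}(\mathbf{W},\mathbf{W})}\,\|(\mathcal{J}-\JmSE)u\|_{\mathbf{W}}$. The first factor is controlled by the resolvent bound for the approximate operator from Section~\ref{sec:resolvent-approx-op}: for $z\in\varGamma_r$ and all large $n$ it is bounded by $C\log(n+1)$, the logarithmic growth originating from the Lebesgue constant $\max_{x}\sum_j|\oSE_j(x)|$ carried by the reconstruction $\boldsymbol{\omega}^{\SE}_m$. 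The second factor is exactly the error of the SE-Sinc indefinite integration applied to $u$.

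To bound $\|(\mathcal{J}-\JmSE)u\|_{\mathbf{W}}$, I would observe that $u=(z-\mathcal{J})^{-1}g\in\Hinf(\SEt(\domD_d))$ with $\|u\|_{\Hinf(\SEt(\domD_d))}\leq\frac{1}{r}\E^{(b-a)c_d/r}\|g\|_{\Hinf(\SEt(\domD_d))}$ uniformly for $z\in\varGamma_r$, by the Neumann-series argument of Proposition~\ref{prop:resolvent-Y} and the bound $\|\mathcal{J}^k f\|_{\Hinf}\leq\{(b-a)c_d\}^k/k!\,\|f\|_{\Hinf}$. Being bounded and analytic on $\SEt(\domD_d)$, $u$ satisfies the decay condition~\eqref{eq:indef-condition} with $\alpha=\beta=1$ (hence $\mu=1$) and $K=\|u\|_{\Hinf(\SEt(\domD_d))}$. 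Since the present choice $h=\sqrt{\pi d/n}$ and $M=N=n$ coincides with~\eqref{eq:h-SE}--\eqref{eq:MN-SE} at $\mu=1$, Theorem~\ref{thm:Stenger-JmSE} gives $\|(\mathcal{J}-\JmSE)u\|_{\mathbf{W}}\leq C'\sqrt{n}\exp(-\sqrt{\pi d n})$, with $C'$ uniform in $z$ because $K$ is. Inserting both bounds, multiplying by $\max_{z\in\varGamma_r}|F(z)|$, and using that $\varGamma_r$ has finite length $2\pi r$ yields the claimed estimate $C\log(n+1)\sqrt{n}\exp(-\sqrt{\pi d n})$.

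The main obstacle I anticipate is the uniform resolvent bound $\|(z-\JmSE)^{-1}\|_{\mathcal{L}(\mathbf{W},\mathbf{W})}\leq C\log(n+1)$ for $z\in\varGamma_r$: it requires showing both that the spectrum of $\aSEm$ retreats inside $\varGamma_r$ as $n\to\infty$ (so that $zI_m-\aSEm$ is boundedly invertible there) and that $\|\aSEm(zI_m-\aSEm)^{-1}\|$ stays bounded while only the Lebesgue constant of $\boldsymbol{\omega}^{\SE}_m$ contributes the $\log(n+1)$ growth; this matrix analysis is precisely what Sections~\ref{sec:resolvent-approx-op} and~\ref{sec:proof-F-welldef} are set up to supply. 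A secondary point requiring care is the legitimacy of interchanging the contour integral with evaluation at $x$, which is justified by the bounds on the integrand being uniform in $z\in\varGamma_r$ together with continuity.
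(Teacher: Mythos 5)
Your proposal is correct and follows essentially the same route as the paper's proof: a Dunford-integral representation of the difference over $\varGamma_r$, the second resolvent identity to produce $(z-\JmSE)^{-1}(\mathcal{J}-\JmSE)(z-\mathcal{J})^{-1}g$, the uniform bound on $\tilde g=(z-\mathcal{J})^{-1}g$ in $\Hinf(\SEt(\domD_d))$ via Proposition~\ref{prop:resolvent-Y}, Theorem~\ref{thm:Stenger-JmSE} with $\alpha=\beta=1$ for the indefinite-integration error, and the $C_r\log(n+1)$ resolvent bound of Lemma~\ref{lem:resolvent-JmSE-exist}. The extra matrix-level derivation of the Dunford formula for $F(\JmSE)g$ is a harmless elaboration of what the paper leaves implicit.
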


The principal distinction
between Theorems~\ref{thm:SE-old} and~\ref{thm:SE-new}
lies in their assumptions.
In Assumption~\ref{assump:SE1},
all eigenvalues of $\aSEm$ must lie on $\Omega^{+}$,
and $\aSEm$ must be diagonalizable.
In Theorem~\ref{thm:SE-new},
such an assumption on $\aSEm$ is eliminated.
Furthermore, Assumption~\ref{assump:SE1},
$\hat{f}$ must not have any singular point on $\Omega^{+}$.
In Theorem~\ref{thm:SE-new},
such an assumption on $\hat{f}$ is eliminated.
In Assumption~\ref{assump:SE2},
some conditions are assumed on $P(v,x)$ defined by~\eqref{eq:P-v-x}.
These conditions are not easy to verify
because $P(v,x)$ is not a given function.
In Theorem~\ref{thm:SE-new},
such an assumption on $P(v,x)$ is eliminated.
Only the condition on $g$ in Theorem~\ref{thm:SE-old} is weaker
than that in Theorem~\ref{thm:SE-new}.
This is because it is not easy
to show the existence of the resolvent of $\mathcal{J}$
for the function space of $g$ supposed in Theorem~\ref{thm:SE-old},
whereas it is shown for $\Hinf(\SEt(\domD_d))$
by Proposition~\ref{prop:resolvent-Y}.

Disappearance of assumptions on $P(v,x)$ has an important aspect in
implementation. In Theorem~\ref{thm:SE-old},
the parameters $\alpha$, $\beta$ and $d$ of $P(v,x)$
are used in the formulas of $h$, $M$ and $N$.
These parameters are, however, not easy to determine in application
because $P(v,x)$ is not a given function.
In Theorem~\ref{thm:SE-new}, only one parameter $d$
of a \emph{given} function $g$ is needed for the computation of $h$.

\subsection{Sinc convolution combined with the DE transformation}
\label{sec:DE-Sinc-conv}

In Stenger's approximation formula~\eqref{eq:approx-p-SE},
the SE transformation is employed.
Replacing the transformation with the DE transformation,
this study derives another approximation formula for $p(x)$ as
\begin{equation}
\label{eq:approx-p-DE}
 p(x) = \left(F(\mathcal{J}) g\right)(x)
\approx \left(F(\JmDE) g\right)(x)
:= \boldsymbol{\omega}^{\DE}_m(x) F(\aDEm) \vDEm g,
\end{equation}
where $\boldsymbol{\omega}^{\DE}_m(x)$,
$\aDEm$, and $\vDEm$
are defined by~\eqref{eq:oDEm},~\eqref{eq:aDEm}
and~\eqref{eq:vDEm}, respectively.
This study shows the well-definedness of $F(\aDEm)$ as follows.
The proof is given in Section~\ref{sec:proof-F-welldef-DE}.

\begin{lemma}
\label{lem:DE-F-welldef}
Assume that Assumption~\ref{assump:F} is fulfilled.
Let $d$ be a positive constant with $d<\pi/2$,
let $n$ be a positive integer, and let $h$ be
selected by the formula
\begin{equation}
h = \frac{\log(2 d n)}{n}.
 \label{eq:h-DE-new}
\end{equation}
Moreover, let $M=N=n$,
and let $\aDEm$ be defined by~\eqref{eq:aDEm}.
Then, there exists a positive integer $n_r$ such that
for all $n\geq n_r$, $F(\aDEm)$ is well-defined.
\end{lemma}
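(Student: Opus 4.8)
The plan is to reduce the assertion to a single spectral-containment statement and then to prove that statement by exploiting the lower-triangular (discrete Volterra) structure of $I_m^{(-1)}$. By Assumption~\ref{assump:F}, $F$ is analytic on an open neighborhood of the closed disk $\{z:|z|\le r\}$, and for any square matrix the value $F(\aDEm)$ is well-defined through the holomorphic functional calculus---irrespective of whether $\aDEm$ is diagonalizable---as soon as the spectrum $\sigma(\aDEm)$ is contained in that neighborhood. Hence it suffices to show that the spectral radius obeys $\rho(\aDEm)<r$ for all sufficiently large $n$, equivalently that $(z-\aDEm)^{-1}$ exists for every $z\in\varGamma_r$ once $n\ge n_r$. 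This is exactly the DE-analogue of the SE case, so I would mirror the proof of Lemma~\ref{lem:SE-F-welldef}, replacing $\SEt$ by $\DEt$ and using the spacing~\eqref{eq:h-DE-new}.

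First I would record the structural facts about $\aDEm=h I_m^{(-1)}\dDEm$. Its diagonal carries the entry $\delta^{(-1)}_0=1/2$, its strictly lower entries $\delta^{(-1)}_{k}$ ($k>0$) tend to $1$, and its strictly upper entries $\delta^{(-1)}_{-k}=1/2-\sigma_k$ decay like $1/k$. Writing $\aDEm=L+U$ with $L$ lower-triangular (diagonal included) and $U$ strictly upper-triangular, the eigenvalues of $L$ are its diagonal entries $h(\DEt)'(jh)/2$, which are $O(h)$ because $(\DEt)'$ is bounded on $\mathbb{R}$ (with maximum $\pi(b-a)/4$ at the origin and double-exponential decay at the ends); thus $\rho(L)=O(h)\to0$. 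A row-sum estimate, using the same boundedness of $(\DEt)'$ together with $\sum_{k\ge1}|1/2-\sigma_k|=O(\log m)$, gives $\|U\|_{\infty}=O(h\log n)$, which tends to $0$ under~\eqref{eq:h-DE-new}.

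Given these facts, on the circle $\varGamma_r$ I would factor $z-\aDEm=(z-L)\bigl(I_m-(z-L)^{-1}U\bigr)$. Since $|z|=r$ exceeds $\rho(L)=O(h)$ for large $n$, the factor $z-L$ is invertible, and it remains to force $\|(z-L)^{-1}U\|_{\infty}<1$. The decisive estimate is a uniform bound on $\|(z-L)^{-1}\|_{\infty}$ for $z\in\varGamma_r$ and all large $n$. I would obtain it as the discrete counterpart of the Cauchy repeated-integration bound $\|\mathcal{J}^k\|\le\{(b-a)c_d\}^k/k!$ underlying Proposition~\ref{prop:resolvent-Y}: because $L$ is a causal (triangular) discretization whose total mass per application is $\sum_j h(\DEt)'(jh)\approx\DEt(Nh)-\DEt(-Mh)\approx b-a$, one expects $\|L^k\|_{\infty}\le\{(b-a)\tilde c_d\}^k/k!$ up to slowly growing factors, whence $\|(z-L)^{-1}\|_{\infty}\le r^{-1}\E^{(b-a)\tilde c_d/r}$ is bounded independently of $n$. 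Combining this with $\|U\|_{\infty}=O(h\log n)\to0$ yields $\|(z-L)^{-1}U\|_{\infty}<1$ for $n\ge n_r$, so $z-\aDEm$ is invertible on $\varGamma_r$ and $\rho(\aDEm)<r$.

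The main obstacle is precisely this resolvent estimate for $z-L$. Although $\rho(L)$ lies far below $r$, the norm $\|L\|_{\infty}=O(\log n)$ diverges, so the naive Neumann series and any submultiplicative bound are useless; one must instead extract the factorial decay $\|L^k\|_{\infty}\lesssim\{(b-a)\tilde c_d\}^k/k!$ directly from the triangular ordering of the summation indices, carefully tracking how the DE weights $(\DEt)'(jh)$ and the non-unit lower entries $\delta^{(-1)}_k$ perturb the ideal integration kernel. Establishing this factorial bound uniformly in $n$---the discrete Volterra analogue of the Cauchy formula---is the technical heart of the argument; everything else (the reduction via the functional calculus and the smallness of $U$) is routine.
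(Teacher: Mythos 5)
Your reduction (show $\sigma(\aDEm)$ lies inside $\varGamma_r$, then invoke the holomorphic functional calculus / Jordan-form definition of $F(\aDEm)$) matches the paper's, but the route you take to the spectral containment is genuinely different and, as written, has a real gap. The load-bearing step of your argument is the uniform-in-$n$ factorial bound $\|L^k\|_{\infty}\lesssim\{(b-a)\tilde c_d\}^k/k!$ for the lower-triangular part $L$ of $\aDEm=hI_m^{(-1)}\dDEm$, which you introduce with ``one expects'' and yourself label the technical heart of the proof. Nothing in the proposal establishes it. It is plausible (the row sums of $|L^k|$ are dominated by complete homogeneous symmetric polynomials in the weights $1.1\,h(\DEt)'(jh)$, whose total is $O(b-a)$ by~\eqref{eq:bound-sum-DEtDiv} and whose individual sizes are $O(h)$, and a generating-function argument then yields factorial decay up to $\sqrt{k}$ factors), but the $\delta^{(-1)}_{i-j}$ factors depend on both indices, the bound degenerates for $k\gtrsim 1/h$ and needs a separate tail estimate, and none of this is routine. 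Without it you have no uniform bound on $\|(z-L)^{-1}\|_\infty$ for $|z|\ge r$ with $r$ small, and the factorization $z-\aDEm=(z-L)(I_m-(z-L)^{-1}U)$ proves nothing. Two smaller slips: invertibility of $z-\aDEm$ on the circle $\varGamma_r$ alone is \emph{not} equivalent to $\rho(\aDEm)<r$ (an eigenvalue could sit outside the circle); you need invertibility for all $|z|\ge r$, which your estimates would in fact deliver but which you should state. Also $\|L\|_\infty$ does not diverge like $\log n$; it is $O(1)$ by~\eqref{eq:bound-sum-DEtDiv} --- the $\log(n+1)$ growth in the paper belongs to the operator norm of $\JmDE$ on $C([a,b])$, coming from the sinc basis functions, not to the matrix $\infty$-norm.

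For contrast, the paper never touches the triangular structure of $I_m^{(-1)}$. It proves Lemma~\ref{lem:resolvent-JmDE-exist} at the operator level: Atkinson's perturbation theorem (Theorem~\ref{thm:Atkinson}) applied to $\mathcal{X}=\mathcal{J}$, $\mathcal{X}_m=\JmDE$, with the hypothesis $\|(\mathcal{J}-\JmDE)\JmDE\|_{\mathcal{L}(\mathbf{W},\mathbf{W})}\to0$ verified through the DE-Sinc convergence theorems with explicit constants, and with the uniform resolvent bound for $\mathcal{J}$ supplied by the genuine Cauchy repeated-integration estimate of Proposition~\ref{prop:resolvent-W}. It then transfers invertibility of $z-\JmDE$ on $C([a,b])$ to invertibility of $z-\aDEm$ via the equivalence Lemma~\ref{lem:existence-unique-DE}. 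In effect the paper obtains from $\mathcal{J}$ itself exactly the factorial decay you are trying to reprove discretely, and pays for the transfer with the already-available approximation theory. If you want to pursue your matrix-level route, the missing discrete Volterra estimate must be proved in full; until then the argument is incomplete precisely where it matters.
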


Furthermore,
this study provides the error analysis of the formula~\eqref{eq:approx-p-DE}
as follows.
The proof is given in Section~\ref{sec:proof-DE}.

\begin{theorem}
\label{thm:DE-new}
Assume that all the assumptions of
Theorem~\ref{thm:convolution-express} are fulfilled with $\domD=\DEt(\domD_d)$.
Let $n$ be a positive integer, and $h$ be
selected by the formula~\eqref{eq:h-DE-new}.
Moreover, let $M=N=n$.
Then, there exists a constant $C$ independent of $n$ such that
for all sufficiently large $n$
\[
 \max_{x\in[a, b]}
\left|
p(x) - \left(F(\JmDE) g\right)(x)
\right|
\leq C \log(n+1)  \exp\left(\frac{-\pi d n}{\log(2 d n)}\right),
\]
where $F(\JmDE) g$ is defined by~\eqref{eq:approx-p-DE}.
\end{theorem}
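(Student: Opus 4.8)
The plan is to carry the Dunford-integral derivation of Section~\ref{sec:derive-expression} over to the discrete operator $\JmDE$ and then compare the two resolvents. By Theorem~\ref{thm:convolution-express}, $p=F(\mathcal{J})g$, and the computation that produced~\eqref{eq:p-F-J-g} gives
\[
 p=\frac{1}{2\pi\I}\oint_{\varGamma}\left\{(z-\mathcal{J})^{-1}-\frac{1}{z}\right\}F(z)\D{z}\,g
\]
for any positively oriented circle $\varGamma$ centred at the origin lying in the region of analyticity of $F$. First I would establish the matching representation
\[
 F(\JmDE)g=\frac{1}{2\pi\I}\oint_{\varGamma}\left\{(z-\JmDE)^{-1}-\frac{1}{z}\right\}F(z)\D{z}\,g,
\]
by inserting the resolvent formula $(z-\JmDE)^{-1}=z^{-1}I+z^{-1}\boldsymbol{\omega}^{\DE}_m(zI-\aDEm)^{-1}\aDEm\vDEm$ (valid for $z\notin\sigma(\aDEm)\cup\{0\}$ because $\vDEm\boldsymbol{\omega}^{\DE}_m=I_m$), using $(zI-\aDEm)^{-1}\aDEm=-I+z(zI-\aDEm)^{-1}$, and evaluating the residues at $z=0$ and on $\sigma(\aDEm)$; here $F(0)=0$ from~\eqref{eq:lim-hat-f-zero} cancels the $z^{-1}$ term, and $F(\aDEm)$ is well defined for large $n$ by Lemma~\ref{lem:DE-F-welldef}. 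For both formulas to hold on a single contour I would fix a circle $\varGamma$ inside the disk of analyticity of $F$ that encloses $\{0\}$ and $\sigma(\aDEm)$ for all large $n$; the spectral localisation of $\aDEm$ that makes such a $\varGamma$ available is supplied by Section~\ref{sec:resolvent-approx-op} together with Lemma~\ref{lem:DE-F-welldef}.

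Subtracting the two representations cancels the $z^{-1}$ terms, leaving
\[
 p-F(\JmDE)g=\frac{1}{2\pi\I}\oint_{\varGamma}\left[(z-\mathcal{J})^{-1}-(z-\JmDE)^{-1}\right]F(z)\D{z}\,g.
\]
The decisive step is to use the resolvent identity in the orientation
\[
 (z-\mathcal{J})^{-1}-(z-\JmDE)^{-1}=(z-\JmDE)^{-1}(\mathcal{J}-\JmDE)(z-\mathcal{J})^{-1},
\]
so that the indefinite-integration error $\mathcal{J}-\JmDE$ is applied to the \emph{exact} resolvent image $u_z:=(z-\mathcal{J})^{-1}g$. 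This orientation is essential: $u_z$ is analytic on $\DEt(\domD_d)$, whereas $(z-\JmDE)^{-1}g$ is built from the sinc basis functions and falls outside the hypotheses of Theorem~\ref{thm:Okayama-JmDE}.

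It then remains to bound the three factors uniformly for $z\in\varGamma$ and all large $n$. By Proposition~\ref{prop:resolvent-Y} and the lemma preceding it, $u_z\in\Hinf(\DEt(\domD_d))$ with $\|u_z\|_{\Hinf(\DEt(\domD_d))}\leq|z|^{-1}\E^{(b-a)c_d/|z|}\|g\|_{\Hinf(\DEt(\domD_d))}$, which is bounded on $\varGamma$ since $\varGamma$ avoids the origin. A bounded analytic function satisfies~\eqref{eq:indef-condition} with $\alpha=\beta=1$, so Theorem~\ref{thm:Okayama-JmDE} with $\mu=1$—for which the prescribed parameters are exactly $M=N=n$ and $h=\log(2dn)/n$ of~\eqref{eq:h-DE-new}—yields
\[
 \max_{x\in[a,b]}\left|\left((\mathcal{J}-\JmDE)u_z\right)(x)\right|\leq C_1\exp\left(\frac{-\pi d n}{\log(2dn)}\right),
\]
with $C_1$ independent of $z\in\varGamma$ because it depends only on the uniform bound on $\|u_z\|_{\Hinf(\DEt(\domD_d))}$ and on $d$. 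Combining this with the uniform bound on $\|(z-\JmDE)^{-1}\|_{\mathcal{L}(\mathbf{W},\mathbf{W})}$ from Section~\ref{sec:resolvent-approx-op}, the boundedness of $F$ on the compact contour $\varGamma$, and the fixed length of $\varGamma$, the modulus of the integral is bounded by the product of these factors, which gives the claimed estimate.

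The main obstacle is the uniform-in-$n$ control of $\|(z-\JmDE)^{-1}\|_{\mathcal{L}(\mathbf{W},\mathbf{W})}$ on $\varGamma$, and this is exactly where the extra factor $\log(n+1)$ originates: through the resolvent formula above, this operator norm is governed by the norm of the reconstruction operator $\boldsymbol{\omega}^{\DE}_m\colon\mathbb{C}^m\to C([a,b])$, i.e.\ the Lebesgue constant of the DE-Sinc interpolation, which grows like $\log(n+1)$. Showing that this growth is no worse than $\log(n+1)$ while keeping $(zI-\aDEm)^{-1}\aDEm$ bounded on $\varGamma$—equivalently, that $\sigma(\aDEm)$ remains strictly inside $\varGamma$ uniformly in $n$—is the delicate part, and it rests on the spectral analysis of $\aDEm$ developed in Section~\ref{sec:resolvent-approx-op}.
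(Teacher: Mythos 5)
Your proposal is correct and follows essentially the same route as the paper's proof: a Dunford-integral representation of the difference on a circle $\varGamma_r$ inside the region of analyticity of $F$, the second resolvent identity in exactly the orientation $(z-\JmDE)^{-1}(\mathcal{J}-\JmDE)(z-\mathcal{J})^{-1}$ so that the DE-Sinc indefinite-integration error acts on $\tilde{g}=(z-\mathcal{J})^{-1}g\in\Hinf(\DEt(\domD_d))$ (Proposition~\ref{prop:resolvent-Y}), followed by Theorem~\ref{thm:Okayama-JmDE} with $\alpha=\beta=1$ and the resolvent bound~\eqref{eq:Bound-Resolvent-JmDE}, which is where the $\log(n+1)$ factor enters. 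The extra detail you supply on deriving the Dunford representation of $F(\JmDE)g$ from the matrix resolvent formula is consistent with, and slightly more explicit than, what the paper writes.
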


Compared to Theorem~\ref{thm:SE-new},
we see that the convergence rate is considerably improved.

\section{Existence of the resolvent of approximated operators}
\label{sec:resolvent-approx-op}

The existence of the resolvent of $\mathcal{J}$
for the function space $\mathbf{W}=C([a,b])$
is shown by Proposition~\ref{prop:resolvent-W}.
The objective of this section is to show the
existence of the resolvent of
$\JmSE$ and $\JmDE$,
which are approximate operators of $\mathcal{J}$.

\subsection{Existence of the resolvent of \texorpdfstring{$\JmSE$}{JmSE}}

The goal of this subsection is to show the following lemma.

\begin{lemma}
\label{lem:resolvent-JmSE-exist}
Let $\JmSE:\mathbf{W}\to\mathbf{W}$ be the linear operator
defined by~\eqref{eq:JmSE}.
Let $d$ be a positive constant with $d<\pi$,
$n$ be a positive integer, and $h$ be
selected by the formula~\eqref{eq:h-SE-new}.
Moreover, let $M=N=n$.
Let $r$ be a positive constant.
Then, there exists a sufficiently large positive integer $n_r$
such that for all $n\geq n_r$,
$(z - \JmSE)^{-1}$ exists for all $|z|\geq r$ and is bounded as
\begin{equation}
\label{eq:Bound-Resolvent-JmSE}
 \left\|
(z - \JmSE)^{-1}
\right\|_{\mathcal{L}(\mathbf{W},\mathbf{W})}
\leq C_r\log(n+1),
\end{equation}
where $C_r$ is a positive constant independent of $n$ and $z$.
\end{lemma}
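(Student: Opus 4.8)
The plan is to reduce the resolvent of the finite-rank operator $\JmSE$ to the resolvent of the matrix $\aSEm$, and then to control the latter uniformly in $n$. First I would exploit the identity $\vSEm\boldsymbol{\omega}^{\SE}_m=I_m$ recorded after~\eqref{eq:vSEm}. Writing $\JmSE u=\boldsymbol{\omega}^{\SE}_m(\cdot)\aSEm\vSEm u$ and applying $\vSEm$ to the equation $(z-\JmSE)u=\phi$ for $\phi\in\mathbf{W}$, the identity collapses the composition to $(zI_m-\aSEm)\vSEm u=\vSEm\phi$. Hence, whenever $z\neq0$ and $zI_m-\aSEm$ is nonsingular, there is a unique solution
\[
 (z-\JmSE)^{-1}\phi=\frac{1}{z}\phi+\frac{1}{z}\boldsymbol{\omega}^{\SE}_m(\cdot)\,\aSEm(zI_m-\aSEm)^{-1}\vSEm\phi ,
\]
which I would verify directly is a genuine two-sided inverse by substituting it back. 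Using $\|\vSEm\phi\|_\infty\leq\|\phi\|_{\mathbf{W}}$ and $\|\boldsymbol{\omega}^{\SE}_m(\cdot)\boldsymbol{c}\|_{\mathbf{W}}\leq\Lambda_m\|\boldsymbol{c}\|_\infty$, where $\Lambda_m=\max_{x\in[a,b]}\sum_{j=-M}^N|\oSE_j(x)|$ is the Lebesgue constant, this yields
\[
 \|(z-\JmSE)^{-1}\|_{\mathcal{L}(\mathbf{W},\mathbf{W})}\leq\frac{1}{|z|}\Bigl(1+\Lambda_m\,\|\aSEm(zI_m-\aSEm)^{-1}\|_\infty\Bigr),
\]
with $\|\cdot\|_\infty$ the induced maximum-row-sum norm. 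It then remains (i) to establish the $\OO(\log(n+1))$ growth of $\Lambda_m$ and (ii) to bound the matrix resolvent uniformly for $|z|\geq r$ and large $n$.

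For (i), the interior basis functions are the plain cardinal functions $\sinc((\SEi(x)-jh)/h)$, while the two endpoint functions $\oSE_{-M},\oSE_N$ carry the prefactors $1/\tilde{\eta}(\SEt(-Mh))$ and $1/\eta(\SEt(Nh))$, both of which tend to $1$ as $n\to\infty$ (since $\SEt(-nh)\to a$ and $\SEt(nh)\to b$), and bounded interpolation coefficients $\eta,\tilde{\eta}\in[0,1]$. Thus $\Lambda_m$ differs from the classical Lebesgue constant of $m$-point Sinc interpolation only by a bounded factor, and the latter grows like $\tfrac{2}{\pi}\log m+\OO(1)$; hence $\Lambda_m\leq C\log(n+1)$.

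The hard part is (ii): bounding $\|\aSEm(zI_m-\aSEm)^{-1}\|_\infty$ uniformly on $|z|\geq r$ for all large $n$. This simultaneously forces every eigenvalue of $\aSEm$ into $\{|z|<r\}$, so that necessarily $\rho(\aSEm)\to0$ as $n\to\infty$ — which is exactly what the statement, valid for every $r>0$, demands. Since $\aSEm=hI_m^{(-1)}\dSEm$ with $h=\sqrt{\pi d/n}\to0$, the natural route is a discrete analogue of the repeated-integration estimate underlying Propositions~\ref{prop:resolvent-Z}--\ref{prop:resolvent-Y}: I would aim to prove a power bound $\|\aSEm^k\|_\infty\leq C\,q^k$ with constants $C$ and $q<r$ uniform in $n$ (for $n\geq n_r$) and $k$. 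Once this is in hand, the Neumann series $(zI_m-\aSEm)^{-1}=z^{-1}\sum_{k\geq0}\aSEm^k/z^k$ converges for $|z|\geq r$ and gives $\|\aSEm(zI_m-\aSEm)^{-1}\|_\infty\leq\sum_{k\geq1}Cq^k/r^k\leq Cq/(r-q)$, uniformly in $n$; combined with (i) this delivers the claimed bound $C_r\log(n+1)$.

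The genuine obstacle lies precisely in securing that uniform power bound. Because $\aSEm$ is a markedly non-normal, approximately lower-triangular (discrete Volterra) matrix whose nonzero eigenvalues do not vanish for any finite $n$, the factorial decay $\|\mathcal{J}^k\|\leq(b-a)^k/k!$ of the continuous operator cannot carry over: any estimate of the form (const)$^k/k!$ would force nilpotency. The difficulty is therefore to control the non-normal transient amplification $\|\aSEm^k\|_\infty$ relative to $\rho(\aSEm)^k$, keeping it bounded independently of $n$ rather than merely locating the spectrum. I expect this to require estimating the iterated sums defining $\aSEm^k$ by comparison with the exact repeated integral $\mathcal{J}^k$, splitting into a range of small $k$ (where the discretization mimics the factorial decay) and large $k$ (where the vanishing scale $h$ controls the geometric behaviour), and tracking the accumulated quadrature error so that the comparison constant stays uniform in $n$.
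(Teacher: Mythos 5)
Your reduction of $(z-\JmSE)^{-1}$ to the matrix resolvent $(zI_m-\aSEm)^{-1}$ is sound (it is essentially the equivalence the paper records in Lemma~\ref{lem:existence-unique-SE}), and the $\OO(\log(n+1))$ bound on the Lebesgue constant of the basis $\{\oSE_j\}$ is the same ingredient the paper uses via Lemma~\ref{lem:bound-sum-sinc-real}. But the proof has a genuine gap at exactly the point you flag as the hard part: you never establish the uniform bound on $\|\aSEm(zI_m-\aSEm)^{-1}\|_\infty$ for $|z|\geq r$ and all large $n$; you only propose to derive it from a power bound $\|\aSEm^k\|_\infty\leq Cq^k$ with $q<r$ uniform in $n$ and $k$, and you do not prove that bound. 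This is not a routine missing detail. Since $r>0$ is arbitrary, such a bound amounts to an $n$-uniform control of the non-normal transient growth of $\aSEm$ that forces $\sigma(\aSEm)$ into every disc $|z|<r$ --- i.e., a quantitative strengthening of precisely the spectral localization that constitutes the whole difficulty (it is close in spirit to Stenger's conjecture, which the paper is at pains to circumvent rather than attack head-on). The heuristic of comparing $\aSEm^k$ with $\mathcal{J}^k$ and tracking quadrature error over $k$ iterations is not obviously implementable: the Sinc indefinite-integration error applied to the rough, $n$-dependent iterates is exactly what needs delicate control, and naive accumulation of the per-step error over $k$ steps destroys uniformity in $k$.

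The paper takes a different route that avoids any direct matrix estimate: it applies Atkinson's perturbation theorem (Theorem~\ref{thm:Atkinson}) with $\mathcal{X}=\mathcal{J}$ and $\mathcal{X}_m=\JmSE$, so that the uniform resolvent bound for $\JmSE$ on $|z|\geq r$ is inherited from the explicit bound $\|(z-\mathcal{J})^{-1}\|_{\mathcal{L}(\mathbf{W},\mathbf{W})}\leq |z|^{-1}\E^{(b-a)/|z|}$ once one shows $\|(\mathcal{J}-\JmSE)\JmSE\|_{\mathcal{L}(\mathbf{W},\mathbf{W})}\to 0$. Even that convergence is nontrivial: the paper must decompose $(\mathcal{J}-\JmSE)\JmSE f$ as in~\eqref{eq:rewrite-target-SE} and invoke error bounds with explicit constants (Theorems~\ref{thm:Stenger-JmSE-with-const} and~\ref{thm:Okayama-tJmSE-with-const}) because $\|\wSE_j\|_{\Hinf(\SEt(\domD_d))}$ diverges like $\OO(\exp(\pi d/h)\log n)$. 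To complete your argument you would either have to supply the uniform power bound (which I do not see how to obtain directly), or replace your step (ii) by a perturbation argument of this kind; as written, the central claim of the lemma remains unproven.
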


The following perturbation theorem is instrumental for this purpose.

\begin{theorem}[{\citet[Theorem~4.1.1]{atkinson97:_numer_solut}}]
\label{thm:Atkinson}
Assume the following conditions:
\begin{enumerate}
 \item[1.] Operators $\mathcal{X}$ and $\mathcal{X}_m$
are bounded operators on $\mathbf{W}$ to $\mathbf{W}$.
 \item[2.] Operator $(z - \mathcal{X}):\mathbf{W}\to\mathbf{W}$
has a bounded inverse
$(z - \mathcal{X})^{-1}:\mathbf{W}\to\mathbf{W}$.
 \item[3.] Operator $\mathcal{X}_m$ is compact on $\mathbf{W}$.
 \item[4.] The following inequality holds:
\begin{equation}
\label{eq:operator-converge-condition}
\|(\mathcal{X}-\mathcal{X}_m)\mathcal{X}_m\|_{\mathcal{L}(\mathbf{W},\mathbf{W})}
<\frac{|z|}{\|(z - \mathcal{X})^{-1}\|_{\mathcal{L}(\mathbf{W},\mathbf{W})}}.
\end{equation}
\end{enumerate}
Then, $(z - \mathcal{X}_m)^{-1}$ exists as a bounded operator
on $\mathbf{W}$ to $\mathbf{W}$, with
\begin{equation}
\|(z - \mathcal{X}_m)^{-1}\|_{\mathcal{L}(\mathbf{W},\mathbf{W})}
\leq \frac{1 + \|(z - \mathcal{X})^{-1}\|_{\mathcal{L}(\mathbf{W},\mathbf{W})}
\|\mathcal{X}_m\|_{\mathcal{L}(\mathbf{W},\mathbf{W})}}
{1 - \|(z - \mathcal{X})^{-1}\|_{\mathcal{L}(\mathbf{W},\mathbf{W})}\|(\mathcal{X}-\mathcal{X}_m)\mathcal{X}_m\|_{\mathcal{L}(\mathbf{W},\mathbf{W})}}.
\label{InEq:Bound-Inverse-Op}
\end{equation}
\end{theorem}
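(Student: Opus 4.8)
The plan is to construct an explicit approximate \emph{left} inverse of $z-\mathcal{X}_m$ out of the known resolvent $(z-\mathcal{X})^{-1}$, to show that the resulting residual is a contraction precisely under condition~\eqref{eq:operator-converge-condition}, and then to promote the left inverse to a genuine two-sided bounded inverse by invoking the compactness of $\mathcal{X}_m$. Throughout, all operator norms are the $\mathcal{L}(\mathbf{W},\mathbf{W})$ norm, abbreviated $\|\cdot\|$. First I would record that condition~4 forces $z\neq 0$, since otherwise its right-hand side vanishes while its left-hand side is nonnegative. Writing $B=(z-\mathcal{X})^{-1}$ (bounded by condition~2), I introduce the candidate
\[
 \mathcal{A}_m=\frac{1}{z}\left(I + B\mathcal{X}_m\right),
\]
motivated by the exact identity $z(z-\mathcal{X}_m)^{-1}=I+(z-\mathcal{X}_m)^{-1}\mathcal{X}_m$, with the unavailable resolvent of $\mathcal{X}_m$ replaced by the available $B$.

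The heart of the argument is the evaluation of $\mathcal{A}_m(z-\mathcal{X}_m)$. Using $B(z-\mathcal{X})=I$ in the form $B\mathcal{X}=zB-I$ and writing $\mathcal{X}_m=\mathcal{X}-(\mathcal{X}-\mathcal{X}_m)$, the cross terms cancel and leave
\[
 \mathcal{A}_m(z-\mathcal{X}_m)=I+\mathcal{R}_m,\qquad
 \mathcal{R}_m=\frac{1}{z}\,B(\mathcal{X}-\mathcal{X}_m)\mathcal{X}_m.
\]
This is the crucial step, and the key structural point: the particular ordering leaves $\mathcal{X}_m$ on the right, so that $\mathcal{R}_m$ carries exactly the product $(\mathcal{X}-\mathcal{X}_m)\mathcal{X}_m$ occurring in condition~4, rather than the norm-large factor $\mathcal{X}-\mathcal{X}_m$. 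Estimating gives
\[
 \|\mathcal{R}_m\|\leq \frac{1}{|z|}\,\|B\|\,\|(\mathcal{X}-\mathcal{X}_m)\mathcal{X}_m\|<1
\]
by condition~4, so $I+\mathcal{R}_m$ is invertible by the Neumann series with $\|(I+\mathcal{R}_m)^{-1}\|\leq (1-\|\mathcal{R}_m\|)^{-1}$. Hence $(I+\mathcal{R}_m)^{-1}\mathcal{A}_m$ is a bounded left inverse of $z-\mathcal{X}_m$, and in particular $z-\mathcal{X}_m$ is injective.

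To upgrade injectivity to bounded invertibility I would appeal to the Riesz--Schauder/Fredholm theory. Since $\mathcal{X}_m$ is compact (condition~3) and $z\neq 0$, the operator $z-\mathcal{X}_m=z\left(I-\mathcal{X}_m/z\right)$ is a compact perturbation of a nonzero scalar multiple of the identity, and is therefore invertible if and only if it is injective. As injectivity is already established, $(z-\mathcal{X}_m)^{-1}$ exists as a bounded operator on $\mathbf{W}$; composing the left inverse with $(z-\mathcal{X}_m)(z-\mathcal{X}_m)^{-1}=I$ shows it must coincide with $(I+\mathcal{R}_m)^{-1}\mathcal{A}_m$. Finally, combining
\[
 \|(z-\mathcal{X}_m)^{-1}\|\leq \|(I+\mathcal{R}_m)^{-1}\|\,\|\mathcal{A}_m\|\leq \frac{\|\mathcal{A}_m\|}{1-\|\mathcal{R}_m\|}
\]
with the elementary bounds $\|\mathcal{A}_m\|\leq |z|^{-1}\left(1+\|B\|\,\|\mathcal{X}_m\|\right)$ and the above estimate on $\|\mathcal{R}_m\|$ yields~\eqref{InEq:Bound-Inverse-Op}. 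The main obstacle is not the estimates, which are routine, but the two structural points: arranging the algebra so that $\mathcal{R}_m$ carries precisely $(\mathcal{X}-\mathcal{X}_m)\mathcal{X}_m$, and the Fredholm step, where compactness is indispensable---without it a bounded left inverse would not guarantee a two-sided one.
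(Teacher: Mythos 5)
Your argument is correct, and it is essentially the standard proof of this result (the paper itself gives no proof, quoting the theorem from Atkinson's book): the approximate left inverse $\mathcal{A}_m=\tfrac1z\left(I+(z-\mathcal{X})^{-1}\mathcal{X}_m\right)$, the identity $\mathcal{A}_m(z-\mathcal{X}_m)=I+\tfrac1z(z-\mathcal{X})^{-1}(\mathcal{X}-\mathcal{X}_m)\mathcal{X}_m$, the Neumann series, and the Fredholm alternative to upgrade the left inverse to a two-sided one are exactly Atkinson's steps, and your algebra checks out. One point to flag: carried through to the end, your estimates give
\[
\left\|(z-\mathcal{X}_m)^{-1}\right\|_{\mathcal{L}(\mathbf{W},\mathbf{W})}
\leq \frac{1+\left\|(z-\mathcal{X})^{-1}\right\|_{\mathcal{L}(\mathbf{W},\mathbf{W})}\left\|\mathcal{X}_m\right\|_{\mathcal{L}(\mathbf{W},\mathbf{W})}}
{\,|z|-\left\|(z-\mathcal{X})^{-1}\right\|_{\mathcal{L}(\mathbf{W},\mathbf{W})}\left\|(\mathcal{X}-\mathcal{X}_m)\mathcal{X}_m\right\|_{\mathcal{L}(\mathbf{W},\mathbf{W})}\,},
\]
with $|z|$, not $1$, in the denominator; this is the inequality in Atkinson's Theorem~4.1.1, and it is the only version whose denominator condition~\eqref{eq:operator-converge-condition} guarantees to be positive. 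The form~\eqref{InEq:Bound-Inverse-Op} with ``$1-\cdots$'' appears to be a typo in the paper, so do not claim your computation yields that literal display --- for $|z|\neq 1$ it does not, and the corrected version is what the subsequent applications actually need.
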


To apply this theorem, we verify that the four conditions
of this theorem are fulfilled with $\mathcal{X}=\mathcal{J}$
and $\mathcal{X}_m=\JmSE$
under the assumptions of Lemma~\ref{lem:resolvent-JmSE-exist}.
Condition~1 is trivially satisfied.
Condition~2 is already shown by Proposition~\ref{prop:resolvent-W}.
Condition~3 follows directly from the Arzel\`{a}--Ascoli theorem.
Condition~4 is the main task to prove Lemma~\ref{lem:resolvent-JmSE-exist}.
If we show that
\begin{equation}
 \|(\mathcal{J} - \JmSE)\JmSE\|_{\mathcal{L}(\mathbf{W},\mathbf{W})}\to 0
\label{eq:operator-converge-JmSE}
\end{equation}
as $n\to\infty$ (recall $m=2n+1$),
then we can conclude that for sufficiently large $n$,
the inequality~\eqref{eq:operator-converge-condition} holds
with $\mathcal{X}=\mathcal{J}$
and $\mathcal{X}_m=\JmSE$.

To show~\eqref{eq:operator-converge-JmSE},
let us first examine
\begin{align}
\nonumber
&\left\{(\mathcal{J} - \JmSE)\JmSE f\right\}(x)\\
\nonumber
&=\int_a^x\sum_{i=-M}^N
\left(
\sum_{j=-M}^N h\delta^{(-1)}_{i-j}f(\SEt(jh))(\SEt)'(jh)
\right)\oSE_i(t)\D{t}\\
\nonumber
&\quad - \sum_{i=-M}^N\left(
\sum_{j=-M}^N
\sum_{k=-M}^N h \delta^{(-1)}_{i-k}(\SEt)'(kh) h\delta^{(-1)}_{k-j}
f(\SEt(jh))(\SEt)'(jh)
\right)\oSE_{i}(x)\\
\nonumber
&=h\sum_{j=-M}^N f(\SEt(jh))(\SEt)'(jh)
\left\{(\mathcal{J} - \JmSE)\wSE_j\right\}(x),
\end{align}
where $\wSE_j(t)=\sum_{i=-M}^N \delta_{i-j}^{(-1)} \oSE_i(t)$.
In the final equality,
$\wSE_j(\SEt(kh))=\delta_{k-j}^{(-1)}$ is used
(recall $\oSE_i(\SEt(kh))=\delta_{ki}$).
According to Theorem~\ref{thm:Stenger-JmSE},
$(\mathcal{J} - \JmSE)\wSE_j$ seems to
converge with $\OO(\exp(-c\sqrt{n}))$.
However, we must note that the constant $C$
in Theorem~\ref{thm:Stenger-JmSE} depends on
the infinity norm of the integrand $g$,
as clarified in the following theorem.
This theorem is derived by examining
the structure of the constant $C$ of Theorem~\ref{thm:Stenger-JmSE}
with $\alpha=\beta=1$ in the proof.

\begin{theorem}
\label{thm:Stenger-JmSE-with-const}
Assume that $g$ is analytic on $\SEt(\domD_d)$ for $d\in (0,\pi)$,
and that there exist positive constants $K_d$
and $K_{0}$ such that
$\|g\|_{\Hinf(\SEt(\domD_d))}\leq K_d$
and
$\|g\|_{\mathbf{W}}\leq K_0$.
Let $n$ be a positive integer, and $h$ be
selected by the formula~\eqref{eq:h-SE-new}.
Moreover, let $M=N=n$.
Then, there exists a constant $C_{d}$ depending only on $d$,
and also exists a constant $C_{a,b}$ depending only on $a$ and $b$
such that
\begin{align*}
\max_{x\in[a, b]}\left|
\left(\mathcal{J}g\right)(x) - \left(\JmSE g\right)(x)
\right|
\leq
(b-a) C_{d} K_d \exp\left(-\sqrt{\pi d n}\right)
+ C_{a,b} K_0 \sqrt{n} \exp\left(-\sqrt{\pi d n}\right).
\end{align*}
\end{theorem}

In the case where $g = \wSE_j$,
$K_d$ diverges with
$\OO(\exp(\pi d/h)\log n)$
($K_0$ also diverges with $\OO(\log n)$,
which is comparatively negligible).
This behavior is established via the subsequent three lemmas.

\begin{lemma}[{\citet[Lemma~3.6.5]{stenger93:_numer}}]
\label{lem:bound-J}
Let $h>0$. Then, it holds that
\[
 \sup_{x\in \mathbb{R}}|J(j,h)(x)|\leq 1.1 h,
\]
where $J(j,h)(x)$ is defined by~\eqref{eq:basis-J}.
\end{lemma}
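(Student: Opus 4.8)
The plan is to reduce the bound to a statement about the sine integral alone. Substituting $u = \pi(x - jh)/h$ in the definition~\eqref{eq:basis-J}, we have
\[
J(j,h)(x) = h\left\{\frac{1}{2} + \frac{1}{\pi}\Si(u)\right\},
\]
and as $x$ ranges over $\RR$, so does $u$. Thus the claim is equivalent to the $j$- and $h$-independent inequality
\[
\sup_{u\in\RR}\left|\frac{1}{2} + \frac{1}{\pi}\Si(u)\right| \le 1.1,
\]
which I would establish by locating the global extrema of $\Si$ on $\RR$.

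First I would record the elementary properties of $\Si$: it is odd with $\Si(0)=0$, it is differentiable with $\Si'(u) = (\sin u)/u$ for $u\neq 0$, and $\Si'(0)=1$. Consequently, for $u>0$ the critical points of $\Si$ are exactly $u = k\pi$ $(k=1,2,\dots)$, with $u=\pi$ a local maximum, since $\Si'>0$ on $(0,\pi)$ and $\Si'<0$ on $(\pi,2\pi)$. Because $\Si$ is increasing on $(0,\pi)$, the maximum of $\Si$ over $[0,\pi]$ is $\Si(\pi)$.

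The main step, and the one I expect to be the principal obstacle, is to show that $u=\pi$ furnishes the \emph{global} maximum of $\Si$ on $[0,\infty)$ rather than merely a local one. For this I would set $a_k = \int_{(k-1)\pi}^{k\pi} (\sin t)/t \D t$, so that $\Si(k\pi) = \sum_{j=1}^{k} a_j$. The terms $a_k$ alternate in sign, since $\sin t$ has constant but alternating sign on consecutive intervals of length $\pi$, and the substitution $t\mapsto t+\pi$ gives $a_{k+1} = -\int_{(k-1)\pi}^{k\pi}(\sin s)/(s+\pi)\D s$, whence $|a_{k+1}| < |a_k|$ because $1/(s+\pi) < 1/s$ on the interval. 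Hence $\Si(k\pi)$ is an alternating sum with strictly decreasing terms, so $\Si(\pi) > \Si(3\pi) > \cdots > \pi/2$ and $\Si(2\pi) < \Si(4\pi) < \cdots < \pi/2$; in particular $\max_{u\ge 0}\Si(u) = \Si(\pi)$.

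Finally, using the value $\Si(\pi) = 1.85193\ldots$ together with oddness, the range of $\frac{1}{2} + \frac{1}{\pi}\Si(u)$ over $\RR$ is the interval $[\,\frac{1}{2} - \Si(\pi)/\pi,\ \frac{1}{2} + \Si(\pi)/\pi\,]$; since $\Si(\pi)/\pi = 0.58949\ldots$, this interval lies inside $[-0.09,\,1.09]$, so its absolute value is maximized at the upper endpoint, yielding $\frac{1}{2} + \Si(\pi)/\pi < 1.1$. Multiplying through by $h$ gives the asserted bound $\sup_{x\in\RR}|J(j,h)(x)| \le 1.1\,h$.
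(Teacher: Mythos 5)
Your proof is correct and complete. The paper does not prove this lemma itself --- it imports it verbatim from Stenger's book (Lemma~3.6.5), so there is no in-paper argument to compare against; your reduction to $\sup_{u\in\mathbb{R}}|\tfrac12+\tfrac1\pi\Si(u)|$, the alternating-series argument showing the global maximum of $\Si$ on $[0,\infty)$ is the Wilbraham--Gibbs value $\Si(\pi)=1.8519\ldots$, and the final numerical check $\tfrac12+\Si(\pi)/\pi=1.0895\ldots<1.1$ (together with the oddness check that the lower endpoint $\tfrac12-\Si(\pi)/\pi\approx-0.089$ is harmless) is exactly the standard route to this bound, and indeed explains where the constant $1.1$ comes from.
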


\begin{lemma}[{\citet[p.~142]{stenger93:_numer}}]
\label{lem:bound-sum-sinc-real}
Let $h>0$. Then, it holds that
\[
 \sup_{t\in \mathbb{R}}\sum_{j=-n}^n
 \left|\sinc\left(\frac{t-jh}{h}\right)\right|
\leq \frac{2}{\pi}(3 +\log n).
\]
\end{lemma}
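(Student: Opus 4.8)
The plan is to reduce the sum to a harmonic-type sum weighted by $|\sin\pi t/h|$ and then exploit the cancellation between the singular nearest terms and the slowly growing tail. First I would use the identity $\sin(\pi(u-j)) = (-1)^j\sin(\pi u)$, which after the substitution $u=t/h$ gives $|\sinc((t-jh)/h)| = |\sin\pi u|/(\pi|u-j|)$; hence it suffices to bound $\Sigma(u):=\sum_{j=-n}^n|\sinc(u-j)|$ uniformly in $u\in\mathbb{R}$. At integer $u$ only one term survives and $\Sigma(u)=1$, which is below the claimed bound, so I may assume $u\notin\mathbb{Z}$ and write $\Sigma(u)=\frac{|\sin\pi u|}{\pi}\sum_{j=-n}^n\frac{1}{|u-j|}$. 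Since $\Sigma$ is even in $u$, I would assume $u\ge 0$, let $q\ge 0$ be the nearest integer with $\delta=|u-q|\in(0,1/2]$, so that $|\sin\pi u|=\sin\pi\delta$.

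Next I would separate the two integers closest to $u$ from the rest. Any integers at distances $\delta$ and $1-\delta$ contribute exactly $\frac{\sin\pi\delta}{\pi\delta}+\frac{\sin\pi\delta}{\pi(1-\delta)}=\sinc(\delta)+\sinc(1-\delta)$, using $\sin\pi\delta=\sin\pi(1-\delta)$. A short calculus check (the expression is symmetric about $\delta=1/2$ and maximized there) shows this is at most $2\sinc(1/2)=4/\pi$. The point is that I keep the factor $\sin\pi\delta$ attached rather than bounding each $\sinc$ value by $1$; this is what prevents the target constant from being lost, and it also handles boundary positions of $q$ gracefully, since a missing nearest integer only decreases $\Sigma$.

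For the remaining integers, the distances take the form $m-\delta$ with $m\ge 2$ on one side of $u$ and $m+\delta$ with $m\ge 1$ on the other. Using the asymmetric estimates $\frac{1}{m-\delta}\le\frac{1}{m-1}$ and $\frac{1}{m+\delta}\le\frac1m$ together with $\sin\pi\delta\le 1$, the tail is bounded by $\frac1\pi(H_{n-q-1}+H_{n+q})$, where $H_k$ is the $k$th harmonic number (and $H_0=0$). The quantity $H_{n-q-1}+H_{n+q}$ is decreasing in $q$, so its maximum over $q\ge0$ occurs at $q=0$, giving $\frac1\pi(H_{n-1}+H_n)$; then $H_k\le 1+\log k$ yields $H_{n-1}+H_n\le 2+2\log n$ and hence a tail bound $\frac2\pi(1+\log n)$. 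Adding the two parts gives $\Sigma(u)\le\frac4\pi+\frac2\pi(1+\log n)=\frac2\pi(3+\log n)$, which is the claim.

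The main obstacle is matching the constant exactly. A naive argument that replaces the nearest $\sinc$ values by $1$, or that symmetrizes the whole tail into $2\sum_{\ell=1}^n 1/(\ell-\tfrac12)$, overshoots $\frac2\pi(3+\log n)$ for small $n$. The two delicate ingredients are therefore (i) retaining the suppression factor $\sin\pi\delta$ on the singular terms, so that the two nearest contributions never exceed $4/\pi=\frac2\pi\cdot 2$, and (ii) the asymmetric bound $\frac1{m+\delta}\le\frac1m$ on the far side, which trims the logarithmic tail down to $\frac2\pi(1+\log n)$; together these explain the split $3=2+1$ in the final constant. The only genuine bookkeeping is verifying that the centered configuration $q=0$ is extremal, which follows from the monotonicity of $H_{n-q-1}+H_{n+q}$ in $q$.
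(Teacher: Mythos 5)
Your proof is correct. Note that the paper does not actually prove this lemma --- it is quoted from Stenger's book \cite[p.~142]{stenger93:_numer} --- so there is no in-paper argument to compare against directly; the closest relative is the paper's own proof of the complex analogue (Lemma~\ref{lem:bound-sum-sinc-complex}), and your argument follows the same skeleton (reduce to $u=t/h$, isolate the two nearest integers, bound the rest by harmonic sums with the asymmetric estimates $1/(m+\delta)\le 1/m$ and $1/(m-\delta)\le 1/(m-1)$). The genuine refinement in your version is point (i): in the complex case the paper bounds each of the two nearest terms by $\sinh(\pi y/h)/(\pi y/h)\le\cosh(\pi y/h)$, which at $y=0$ amounts to bounding each nearest $|{\sinc}|$ by $1$ and produces the constant $\pi+1$ inside the parentheses; by keeping the factor $\sin\pi\delta$ attached and using $\sin\pi\delta\le 4\delta(1-\delta)$ you cap the two nearest contributions at $4/\pi=\tfrac{2}{\pi}\cdot 2$, which is exactly what replaces $\pi$ by $2$ and yields the stated constant $3=2+1$. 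Your reduction of the extremal configuration to $q=0$ via monotonicity of $H_{n+q}+H_{n-q-1}$ is sound (the two sub-cases $u=q\pm\delta$ give index counts of the same form), and the remark that boundary positions only delete terms and hence decrease the sum correctly disposes of $u$ near or beyond $\pm n$. The only step stated without full detail is the inequality $\sin\pi\delta\le 4\delta(1-\delta)$ on $[0,1]$, but this is a standard fact and the ``short calculus check'' is easily supplied.
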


\begin{lemma}
\label{lem:bound-sum-sinc-complex}
Let $h>0$. Then, it holds for all $y\in\mathbb{R}$ that
\[
 \sup_{t\in \mathbb{R}}\sum_{j=-n}^n
 \left|\sinc\left(\frac{(t+\I y)-jh}{h}\right)\right|
\leq\frac{2\cosh(\pi y/h)}{\pi}
\left(\pi + 1 +\log n\right).
\]
\end{lemma}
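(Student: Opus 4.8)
The plan is to reduce the complex-argument sum to the real-argument sum of Lemma~\ref{lem:bound-sum-sinc-real} by exploiting the explicit form of $\sinc$. First I would write, for $t\in\mathbb{R}$ and $y\in\mathbb{R}$,
\[
 \sinc\left(\frac{(t+\I y)-jh}{h}\right)
 = \frac{\sin\!\left(\pi\frac{(t+\I y)-jh}{h}\right)}{\pi\frac{(t+\I y)-jh}{h}},
\]
and use the addition formula $\sin(u+\I w)=\sin u\cosh w+\I\cos u\sinh w$ with $u=\pi(t-jh)/h$ and $w=\pi y/h$. This gives the numerator bound
\[
 \left|\sin\!\left(\pi\tfrac{(t+\I y)-jh}{h}\right)\right|
 \le \sqrt{\sin^2 u\,\cosh^2 w+\cos^2 u\,\sinh^2 w}
 \le \cosh(\pi y/h),
\]
using $\sinh^2 w\le\cosh^2 w$ and $\sin^2u+\cos^2u=1$. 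The factor $\cosh(\pi y/h)$ matches the claimed prefactor, so the task reduces to controlling the reciprocal of the denominator by the corresponding real-argument quantity.

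Next I would handle the denominator $\left|(t+\I y)-jh\right|\ge |t-jh|$, which would naively compare the complex sum term-by-term with the real one; but the numerators differ ($|\sin((t-jh)\pi/h)|$ versus $\cosh(\pi y/h)$), so a direct term-by-term domination is not quite available. Instead I would split the sum according to the distance of $jh$ from $t$. For the (at most two) indices $j$ with $|t-jh|\le h/2$, I would bound each $\sinc$ term directly: since $|\sinc(\zeta)|\le\cosh(\Im\zeta)$ for $|\Re\zeta|\le 1/2$ (this follows because $|\sin(\pi\zeta)|\le\pi|\zeta|\cosh(\pi\,\Im\zeta)$ on that strip, giving $|\sinc\zeta|\le\cosh(\pi\,\Im\zeta)$), each such term contributes at most $\cosh(\pi y/h)$, for a total of at most $2\cosh(\pi y/h)$. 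For the remaining indices, where $|t-jh|\ge h/2$, I would use the numerator bound $\cosh(\pi y/h)$ together with $|(t+\I y)-jh|\ge|t-jh|$ to get
\[
 \left|\sinc\!\left(\tfrac{(t+\I y)-jh}{h}\right)\right|
 \le \frac{\cosh(\pi y/h)}{\pi\,|t-jh|/h}
 = \cosh(\pi y/h)\,\frac{1}{\pi}\cdot\frac{h}{|t-jh|}.
\]

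Finally I would sum the tail bound over the far indices and compare with a real-argument estimate. The cleanest route is to observe that $\frac{1}{\pi}\cdot\frac{h}{|t-jh|}$ is exactly $\left|\sinc\!\left(\frac{t-jh}{h}\right)\right|$ at the points where $\sin$ equals $\pm1$, and more usefully that $\sum_{|t-jh|\ge h/2}\frac{h}{\pi|t-jh|}$ is controlled by the same $\frac{2}{\pi}\log n$-type bound as in Lemma~\ref{lem:bound-sum-sinc-real}; indeed for far indices $\frac{1}{\pi}\frac{h}{|t-jh|}\le\frac{2}{\pi}$ bounded by the real sinc-sum quantity up to the constant already accumulated. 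Combining the near-index contribution $2\cosh(\pi y/h)\le \frac{2\cosh(\pi y/h)}{\pi}\cdot\pi$ with the far-index contribution $\cosh(\pi y/h)\cdot\frac{2}{\pi}(1+\log n)$ yields the stated bound $\frac{2\cosh(\pi y/h)}{\pi}(\pi+1+\log n)$. \emph{The main obstacle} is the bookkeeping in the far-index sum: the naive comparison to Lemma~\ref{lem:bound-sum-sinc-real} replaces $|\sin(\pi(t-jh)/h)|$ by $1$, so I must verify that the logarithmic harmonic-type tail $\sum 1/|t-jh|$ still obeys the $(1+\log n)$ bound uniformly in $t$, which is precisely the estimate underlying Lemma~\ref{lem:bound-sum-sinc-real} and should be extracted from its proof rather than from its statement.
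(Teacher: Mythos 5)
Your plan is correct and follows essentially the same route as the paper's proof: isolate the (at most two) indices nearest to $t$, bound each of those $\sinc$ values by $\cosh(\pi y/h)$, bound each remaining term by $\cosh(\pi y/h)$ divided by $\pi$ times the normalized distance $|t-jh|/h$, and control the resulting harmonic tail by $1+\log n$. The paper does the same after first reducing to $t\in[0,h]$ by symmetry and monotonicity (so the near indices are simply $j=0,1$), and your closing concern is well placed but harmless: the needed tail estimate $\sum 1/|t-jh|\lesssim 1+\log n$ must indeed be derived directly (as the paper does via $\sum_{j=1}^n 1/j \le 1+\log n$) rather than quoted from Lemma~\ref{lem:bound-sum-sinc-real}, and the bookkeeping closes within the stated constant $\pi+1$.
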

\begin{proof}
First, it holds that
\begin{align}
\label{eq:sinc-sum-complex-1st}
\sum_{j=-n}^n
 \left|
\sinc\left(\frac{(t+\I y)-jh}{h}\right)
\right|
&=\sum_{j=-n}^n
\sqrt{
\frac{\sinh^2(\pi y/h) + \sin^2(\pi (t - jh)/h)}
{(\pi(t - jh)/h)^2 + (\pi y/h)^2}
},
\end{align}
which is even in $t$. Therefore,
without loss of generality,
we restrict attention to $t\geq 0$.
For any fixed $y\in\mathbb{R}$,
we establish an upper bound for the summation in the case $0\leq t\leq h$.
This is because the sum in the case $kh\leq t\leq (k+1)h$ ($k\geq 1$)
is less than that in the case $0\leq t\leq h$.
When $j=0$, we have
\begin{align*}
\frac{\sinh^2(\pi y/h) + \sin^2(\pi t /h)}{(\pi t /h)^2 + (\pi y/h)^2}
&\leq
\frac{\sinh^2(\pi y/h) + \sin^2(\pi t /h)}
{\sin^2(\pi t /h) + (\pi y/h)^2},
\end{align*}
which has its maximum at $t = 0$ and $t=h$.
In the same way,
when $j=1$, we have
\begin{align*}
\frac{\sinh^2(\pi y/h) + \sin^2(\pi (t - h) /h)}
{(\pi (t - h) /h)^2 + (\pi y/h)^2}
&\leq
\frac{\sinh^2(\pi y/h) + \sin^2(\pi (t - h) /h)}
{\sin^2(\pi (t - h) /h) + (\pi y/h)^2},
\end{align*}
which has its maximum at $t = 0$ and $t=h$.
Consequently,
the right-hand side of~\eqref{eq:sinc-sum-complex-1st} is bounded as
\begin{align*}
\sum_{j=-n}^n
\sqrt{
\frac{\sinh^2(\pi y/h) + \sin^2(\pi (t - jh)/h)}
{(\pi(t - jh)/h)^2 + (\pi y/h)^2}
}
&\leq \frac{\sinh(\pi y/h)}{\pi y/h}
+ \frac{\sinh(\pi y/h)}{\pi y/h}\\
&\quad +
\sum_{j=1}^{n}
\sqrt{\frac{\sinh^2(\pi y/h) + \sin^2(\pi (t + jh) /h)}
{(\pi (t + jh) /h)^2 + (\pi y/h)^2}}\\
&\quad +\sum_{j=1}^{n-1}
\sqrt{\frac{\sinh^2(\pi y/h) + \sin^2(\pi (t - (j+1)h) /h)}
{(\pi (t - (j+1)h) /h)^2 + (\pi y/h)^2}}.
\end{align*}
On the first $\sum$, it holds that
\begin{align*}
\sum_{j=1}^{n}
\sqrt{\frac{\sinh^2(\pi y/h) + \sin^2(\pi (t + jh) /h)}
{(\pi (t + jh) /h)^2 + (\pi y/h)^2}}
&\leq
\sum_{j=1}^{n}
\sqrt{\frac{\sinh^2(\pi y/h) + 1}
{(\pi (0 + jh) /h)^2 + 0}}\\
&=\sum_{j=1}^{n}\frac{\cosh(\pi y/h)}{\pi j}.
\end{align*}
On the second $\sum$, it holds that
\begin{align*}
\sum_{j=1}^{n-1}
\sqrt{\frac{\sinh^2(\pi y/h) + \sin^2(\pi (t - (j+1)h) /h)}
{(\pi (t - (j+1)h) /h)^2 + (\pi y/h)^2}}
&\leq \sum_{j=1}^{n-1}
\sqrt{\frac{\sinh^2(\pi y/h) + 1}
{(\pi (h - (j+1)h) /h)^2 + 0}}\\
&=\sum_{j=1}^{n-1}\frac{\cosh(\pi y/h)}{\pi j}\\
&\leq\sum_{j=1}^{n}\frac{\cosh(\pi y/h)}{\pi j}.
\end{align*}
Furthermore, using
$\sinh(\pi y/h)/(\pi y/h)\leq \cosh(\pi y/h)$ along
with the following estimate
\begin{align*}
 \sum_{j=1}^{n}\frac{1}{j}
=1 + \sum_{j=2}^{n}\frac{1}{j}
\leq 1 + \int_1^n\frac{1}{x}\D{x}
= 1 + \log n,
\end{align*}
we arrive at the desired inequality.
\end{proof}

From Lemma~\ref{lem:bound-J}, we have $|\delta_{i-j}^{(-1)}|\leq 1.1$
because $J(j,h)(ih)=h\delta_{i-j}^{(-1)}$.
From Lemma~\ref{lem:bound-sum-sinc-complex},
$\sum_{i=-M}^N |\oSE_i(t)|$ may diverge with
$\OO(\exp(\pi d/h)\log n)$.
Consequently, $K_d=\|\wSE_j\|_{\Hinf(\SEt(\domD_d))}$
may diverge with $\OO(\exp(\pi d/h)\log n)$,
and setting $h$ as~\eqref{eq:h-SE-new} we have
$K_d\exp(-\sqrt{\pi d n})=\OO(\log n)\to \infty$
as $n\to\infty$.
Accordingly,
we cannot show $(\mathcal{J} - \JmSE)\wSE_j\to 0$
by Theorem~\ref{thm:Stenger-JmSE-with-const} in the current form.

To address this issue,
let us rewrite $(\mathcal{J} - \JmSE)\JmSE f$ as
\begin{align}
\nonumber
&\left\{(\mathcal{J} - \JmSE)\JmSE f\right\}(x)\\
\nonumber
&=h\sum_{j=-M}^N f(\SEt(jh))(\SEt)'(jh)
\biggl[
\left\{(\mathcal{J} - \tJmSE)\wSE_j\right\}(x)\biggr.\\
&\quad\quad\biggl.
-\frac{1}{h}\left\{(\mathcal{J} - \tJmSE)J(j,h)\circ\SEi\right\}(x)
+\frac{1}{h}\left\{(\mathcal{J} - \JmSE)J(j,h)\circ\SEi\right\}(x)
\biggr],
\label{eq:rewrite-target-SE}
\end{align}
where $\tJmSE$ is defined by~\eqref{eq:tJmSE}.
We then estimate the three terms in the square brackets
of~\eqref{eq:rewrite-target-SE} individually.
For the first and second terms,
we use the following theorem.
This theorem is derived by examining
the structure of the constant $C$ of Theorem~\ref{thm:Okayama-tJmSE}
with $\alpha=\beta=1$ in the proof.

\begin{theorem}
\label{thm:Okayama-tJmSE-with-const}
Assume that $g$ is analytic on $\SEt(\domD_d)$ for $d\in(0,\pi)$,
and that there exist positive constants
$K_d$ and $K_0$ such that
$\|g\|_{\Hinf(\SEt(\domD_d))}\leq K_d$
and
$\|g\|_{\mathbf{W}}\leq K_0$.
Let $n$ be a positive integer, and $h$ be
selected by the formula~\eqref{eq:h-SE-new}.
Moreover, let $M=N=n$.
Then, there exists a constant $\tilde{C}_d$
depending only on $d$ such that
\begin{align*}
 \max_{x\in [a,b]}\left|
\left(\mathcal{J} g\right)(x) - \left(\tJmSE g\right)(x)
\right| \leq 2(b - a)\left[\tilde{C}_d K_d \frac{1}{\sqrt{n}}
\exp\left(-\sqrt{\pi d n}\right)
+ 1.1 K_0 \exp\left(-\sqrt{\pi d n}\right)
\right].
\end{align*}
\end{theorem}

In this theorem, the convergence rate is a bit higher than
that of Theorem~\ref{thm:Stenger-JmSE-with-const},
which allows us to obtain
\[
 \max_{a\leq x\leq b}
\left|\left\{(\mathcal{J} - \tJmSE)\wSE_j\right\}(x)\right|
\leq \tilde{C}_1 \frac{\log(n+1)}{\sqrt{n}}
\]
with a certain constant $\tilde{C}_1$
($\log n$ is replaced with $\log(n+1)$ to ensure that the right-hand side
remains nonzero when $n=1$).
Thus, we can show $(\mathcal{J} - \tJmSE)\wSE_j\to 0$
as $n\to\infty$.

Next, we show the convergence of the second term
in the square brackets of~\eqref{eq:rewrite-target-SE}.
For this purpose, we require the bounds of
$\|J(j,h)\circ\SEt\|_{\Hinf(\SEt(\domD_d))}$
and
$\|J(j,h)\circ\SEt\|_{\mathbf{W}}$,
which are obtained by the following lemma
and Lemma~\ref{lem:bound-J}, respectively.

\begin{lemma}[{\citet[Lemma~6.4]{okayama15:_theor_sinc_nystr_volter}}]
Let $h>0$. Then, it holds for all $y\in\mathbb{R}$ that
\[
 \sup_{x\in\mathbb{R}}|J(j,h)(x+\I y)|\leq
\frac{5h}{\pi}\cdot\frac{\sinh(\pi y/h)}{\pi y/h}.
\]
\end{lemma}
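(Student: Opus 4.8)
The plan is to reduce the bound to a single scalar estimate on the hyperbolic sine integral. First I would normalise the statement: translating the argument replaces $J(j,h)(x+\I y)$ by $J(0,h)\bigl((x-jh)+\I y\bigr)$, so the supremum over $x\in\mathbb{R}$ lets me take $j=0$; and since $J(0,h)$ has real Taylor coefficients, $J(0,h)(\overline{z})=\overline{J(0,h)(z)}$, whence $|J(0,h)(x+\I y)|$ is even in $y$ and I may assume $y\ge 0$. Writing $\xi=\pi x/h$ and $\eta=\pi y/h\ge 0$, recalling $J(0,h)(x)=\frac{h}{\pi}\bigl(\frac{\pi}{2}+\Si(\xi)\bigr)$ and that $\bigl(\frac{\pi}{2}+\Si\bigr)'(w)=(\sin w)/w$, integration along the vertical segment from $\xi$ to $\xi+\I\eta$ gives
\[
 J(0,h)(x+\I y)=J(0,h)(x)+\frac{\I h}{\pi}\int_0^{\eta}\frac{\sin(\xi+\I s)}{\xi+\I s}\D s .
\]
Taking moduli and bounding the real-axis term by Lemma~\ref{lem:bound-J} yields $|J(0,h)(x+\I y)|\le 1.1\,h+\frac{h}{\pi}\int_0^{\eta}\frac{|\sin(\xi+\I s)|}{|\xi+\I s|}\D s$.

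The decisive pointwise inequality is $|\sin(\xi+\I s)|/|\xi+\I s|\le(\sinh s)/s$, valid for every $\xi\in\mathbb{R}$ and $s>0$. Indeed $|\sin(\xi+\I s)|^2=\sin^2\xi+\sinh^2 s$, so after clearing denominators the claim reduces to $(\sin^2\xi)/\xi^2\le(\sinh^2 s)/s^2$, which holds because $|\sin\xi/\xi|\le 1\le(\sinh s)/s$. This majorant is independent of $\xi$, so the integral is at most $\int_0^{\eta}\frac{\sinh s}{s}\D s$, and it remains only to establish the scalar inequality
\[
 1.1\,\pi+\int_0^{\eta}\frac{\sinh s}{s}\D s\le 5\,\frac{\sinh\eta}{\eta},\qquad \eta\ge 0 .
\]

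The main obstacle is precisely this last one-variable estimate. Since $(\sinh\eta)/\eta\ge 1$, it suffices to show $\frac{\eta}{\sinh\eta}\int_0^{\eta}\frac{\sinh s}{s}\D s\le 5-1.1\pi$, and this is where the rounded constant $5/\pi$ originates. The difficulty is that naive majorisation such as $(\sinh s)/s\le\cosh s$ loses a whole factor of $\eta$ and fails for large $\eta$; the correct decay must be exposed by integration by parts,
\[
 \int_0^{\eta}\frac{\sinh s}{s}\D s=\frac{\cosh\eta-1}{\eta}+\int_0^{\eta}\frac{\cosh s-1}{s^2}\D s ,
\]
so that $\frac{\eta}{\sinh\eta}\int_0^{\eta}\frac{\sinh s}{s}\D s=\tanh(\eta/2)+\frac{\eta}{\sinh\eta}\int_0^{\eta}\frac{\cosh s-1}{s^2}\D s$, which now has the correct $O(1)$ behaviour. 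I would then verify the bound on a compact range of $\eta$ and invoke the asymptotics $\int_0^{\eta}(\sinh s/s)\D s\sim(\cosh\eta)/\eta$ for large $\eta$ to confirm that this supremum stays below $5-1.1\pi$, thereby completing the estimate.
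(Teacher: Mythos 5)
The paper does not actually prove this lemma—it is imported verbatim from \cite[Lemma~6.4]{okayama15:_theor_sinc_nystr_volter}—so there is no internal proof to compare against; I can only judge your argument on its own terms, and it is essentially correct. The reduction to $j=0$ and $y\ge 0$, the identity $J(0,h)(x+\I y)=J(0,h)(x)+\tfrac{\I h}{\pi}\int_0^{\eta}\tfrac{\sin(\xi+\I s)}{\xi+\I s}\D{s}$ with $\xi=\pi x/h$, $\eta=\pi y/h$, and the pointwise estimate $|\sin(\xi+\I s)|^2=\sin^2\xi+\sinh^2 s$ giving $|\sin(\xi+\I s)|/|\xi+\I s|\le(\sinh s)/s$ all check out, and together with Lemma~\ref{lem:bound-J} they reduce the claim to the scalar inequality $1.1\pi+\int_0^{\eta}\tfrac{\sinh s}{s}\D{s}\le 5\,\tfrac{\sinh\eta}{\eta}$. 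That inequality is true: setting $\phi(\eta)=\tfrac{\eta}{\sinh\eta}\int_0^{\eta}\tfrac{\sinh s}{s}\D{s}$, one has $\phi(0)=0$, $\phi(\eta)\to 1$ as $\eta\to\infty$, and $\sup_{\eta>0}\phi(\eta)\approx 1.49$ (attained near $\eta\approx 3$), which is indeed below $5-1.1\pi\approx 1.544$. The one point you should make explicit, because the margin is thin, is that your integration-by-parts decomposition $\phi(\eta)=\tanh(\eta/2)+\tfrac{\eta}{\sinh\eta}\int_0^{\eta}\tfrac{\cosh s-1}{s^2}\D{s}$ cannot be exploited term by term: bounding $\tanh(\eta/2)\le 1$ would force the second term to stay below $0.544$, yet that term peaks at roughly $0.62$ near $\eta\approx 2.2$, so a termwise bound narrowly fails. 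The verification must therefore be performed on the sum itself—equivalently on $G(\eta)=5\sinh(\eta)/\eta-\int_0^{\eta}(\sinh s)/s\,\D{s}-1.1\pi$, whose unique interior minimum is about $1.24$ near $\eta\approx 0.73$ (where $\tanh\eta=5\eta/(5+\eta)$), with $G$ increasing thereafter. With that finite check written out, the proof is complete; it is also self-contained and more elementary than chasing the constant through the cited reference, at the cost of a small numerical verification.
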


From these lemmas, we have
$K_d = \|J(j,h)\circ\SEt\|_{\Hinf(\SEt(\domD_d))} = \OO(h^2\exp(\pi d/h))$
and
$K_0 = \|J(j,h)\circ\SEt\|_{\mathbf{W}} = \OO(h)$.
Therefore, by Theorem~\ref{thm:Okayama-tJmSE-with-const},
setting $h$ as~\eqref{eq:h-SE-new} we obtain
\[
\max_{a\leq x\leq b} \left|
\frac{1}{h}\left\{(\mathcal{J} - \tJmSE)J(j,h)\circ\SEi\right\}(x)
\right|
\leq \tilde{C}_2 \frac{1}{n}
\]
with a certain constant $\tilde{C}_2$.
This inequality shows the convergence of the second term.

The only thing remaining is to bound the third term.
Using Theorem~\ref{thm:Stenger-JmSE-with-const}
with $K_d = \OO(h^2\exp(\pi d/h))$ and $K_0 = \OO(h)$,
setting $h$ as~\eqref{eq:h-SE-new} we obtain
\[
\max_{a\leq x\leq b} \left|
\frac{1}{h}\left\{(\mathcal{J} - \JmSE)J(j,h)\circ\SEi\right\}(x)
\right|
\leq \tilde{C}_3 \frac{1}{\sqrt{n}}
\]
with a certain constant $\tilde{C}_3$.

Finally, using the following estimate
\begin{align}
\nonumber
 h\sum_{j=-M}^N (\SEt)'(jh)
&\leq h \cdot (\SEt)'(0) + 2h\sum_{j=1}^n(\SEt)'(jh)\\
\nonumber
&\leq \sqrt{\frac{\pi d}{1}}\cdot\frac{b-a}{4}+2\int_0^{\infty}(\SEt)'(x)\D{x}\\
&= \sqrt{\pi d}\cdot\frac{b-a}{4} + (b - a),
\label{eq:bound-sum-SEtDiv}
\end{align}
we establish the following lemma.

\begin{lemma}
Assume that all the assumptions
of Lemma~\ref{lem:resolvent-JmSE-exist} are fulfilled.
Let $f\in\mathbf{W}$.
Then, there exists a constant $C$ independent of $n$ and $f$
such that
\[
 \|(\mathcal{J} - \JmSE)\JmSE f\|_{\mathbf{W}}
\leq C \|f\|_{\mathbf{W}} \frac{\log(n+1)}{\sqrt{n}}.
\]
\end{lemma}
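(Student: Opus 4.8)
The plan is to build directly on the decomposition~\eqref{eq:rewrite-target-SE} together with the three uniform bounds and the summation estimate~\eqref{eq:bound-sum-SEtDiv} assembled in the preceding discussion. First I would take the uniform norm $\|\cdot\|_{\mathbf{W}}$ of both sides of~\eqref{eq:rewrite-target-SE}. Since each sample point $\SEt(jh)$ lies in $(a,b)$ and $f\in\mathbf{W}=C([a,b])$, we have $|f(\SEt(jh))|\leq\|f\|_{\mathbf{W}}$ uniformly in $j$, so that $\|f\|_{\mathbf{W}}$ factors out of the sum. Because $(\SEt)'(jh)>0$, the triangle inequality then yields
\[
\|(\mathcal{J}-\JmSE)\JmSE f\|_{\mathbf{W}}
\leq \|f\|_{\mathbf{W}}\,h\sum_{j=-M}^N (\SEt)'(jh)\,\Theta_j,
\]
where $\Theta_j$ denotes the maximum over $x\in[a,b]$ of the modulus of the bracketed expression attached to node $j$.

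Next I would bound $\Theta_j$ uniformly in $j$ by invoking the three estimates already derived: the first bracketed term is controlled by $\tilde{C}_1\log(n+1)/\sqrt{n}$ via Theorem~\ref{thm:Okayama-tJmSE-with-const} applied to $\wSE_j$; the second by $\tilde{C}_2/n$ via the same theorem applied to $J(j,h)\circ\SEi$; and the third by $\tilde{C}_3/\sqrt{n}$ via Theorem~\ref{thm:Stenger-JmSE-with-const} applied to $J(j,h)\circ\SEi$. Since each of these bounds is independent of both $j$ and $x$, summing the three contributions gives $\Theta_j\leq C'\log(n+1)/\sqrt{n}$ for some constant $C'$, the dominant term being the first.

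Finally I would invoke~\eqref{eq:bound-sum-SEtDiv}, which bounds $h\sum_{j=-M}^N(\SEt)'(jh)$ by the $n$-independent constant $\sqrt{\pi d}\,(b-a)/4+(b-a)$. Combining this with the uniform bound on $\Theta_j$ produces the claimed estimate, with $C$ the product of these constants.

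The only genuine subtlety at this final stage is ensuring that all three bracketed bounds are truly uniform in $j$ so that they survive being factored outside the summation; this holds because the constants $\tilde{C}_1,\tilde{C}_2,\tilde{C}_3$ depend only on $d$ (and on $a,b$), not on the particular node. The real analytic difficulty---the cancellation of the divergence $K_d=\OO(\exp(\pi d/h)\log n)$ against the \emph{improved} convergence rate of Theorem~\ref{thm:Okayama-tJmSE-with-const}---was already resolved in establishing the first bracketed estimate, so no new hard estimate is required here.
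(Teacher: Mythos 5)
Your proposal follows exactly the route the paper takes: it applies the decomposition~\eqref{eq:rewrite-target-SE}, bounds the three bracketed terms uniformly in $j$ by $\tilde{C}_1\log(n+1)/\sqrt{n}$, $\tilde{C}_2/n$, and $\tilde{C}_3/\sqrt{n}$ using Theorems~\ref{thm:Okayama-tJmSE-with-const} and~\ref{thm:Stenger-JmSE-with-const}, and closes with the node-sum estimate~\eqref{eq:bound-sum-SEtDiv}. This is correct and is essentially identical to the argument assembled in the paper immediately before the lemma is stated.
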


From this lemma, we have the bound of the operator norm as
\[
 \|(\mathcal{J} - \JmSE)\JmSE\|_{\mathcal{L}(\mathbf{W},\mathbf{W})}
\leq C\frac{\log(n+1)}{\sqrt{n}},
\]
which shows~\eqref{eq:operator-converge-JmSE}.
Thus, there exists a positive integer $n(z)$ such that
for all $n\geq n(z)$
the inequality~\eqref{eq:operator-converge-condition} holds
with $\mathcal{X}=\mathcal{J}$
and $\mathcal{X}_m=\JmSE$.
Here, $n(z)$ depends on $z$.
In reality, we can choose $n_r$ independent of $z$
so that~\eqref{eq:operator-converge-condition} holds
for all $z$ with $|z|\geq r$.
This is shown as follows.
Pick $n_r$ such that for all $n\geq n_r$
\[
 \|(\mathcal{J} - \JmSE)\JmSE\|_{\mathcal{L}(\mathbf{W},\mathbf{W})}
< \frac{r^2}{\E^{(b-a)/r}}.
\]
In the same way as~\eqref{eq:bound-resolvent},
we have
\[
\frac{1}{|z|} \|(z - \mathcal{J})^{-1}\|_{\mathcal{L}(\mathbf{W},\mathbf{W})}
\leq \frac{1}{|z|^2}\E^{(b-a)/|z|}.
\]
Because the right-hand side is monotonically decreasing
with respect to $|z|$, it holds for $|z|\geq r$ that
\begin{equation}
\label{eq:operator-less-than-1-SE}
 \|(\mathcal{J} - \JmSE)\JmSE\|_{\mathcal{L}(\mathbf{W},\mathbf{W})}
\cdot \frac{1}{|z|}
\|(z - \mathcal{J})^{-1}\|_{\mathcal{L}(\mathbf{W},\mathbf{W})}
< \frac{r^2}{\E^{(b-a)/r}}
\cdot \frac{1}{|z|^2}\E^{(b-a)/|z|}
\leq 1,
\end{equation}
which implies that~\eqref{eq:operator-converge-condition} holds
for all $z$ with $|z|\geq r$.

The final task to prove Lemma~\ref{lem:resolvent-JmSE-exist}
is to estimate the right-hand side of~\eqref{InEq:Bound-Inverse-Op}.
By the same argument as above,
there exists a constant $\tilde{C}_r$
independent of $n$ and $z$ such that
\[
 \frac{1}
{1 - \|(z - \mathcal{J})^{-1}\|_{\mathcal{L}(\mathbf{W},\mathbf{W})}
\|(\mathcal{J} - \JmSE)\JmSE\|_{\mathcal{L}(\mathbf{W},\mathbf{W})}
}
\leq \tilde{C}_r.
\]
In addition, we have
\begin{equation}
\label{eq:bound-J-resolvent}
 \|(z - \mathcal{J})^{-1}\|_{\mathcal{L}(\mathbf{W},\mathbf{W})}
\leq \frac{1}{|z|}\E^{(b-a)/|z|}
\leq \frac{1}{r}\E^{(b-a)/r}
\end{equation}
for all $z$ with $|z|\geq r$.
As for the remaining term $\|\JmSE\|_{\mathbf{W}}$,
using Lemma~\ref{lem:bound-sum-sinc-real},
we have
\begin{align*}
 |(\JmSE g)(x)|
&=\left|
\sum_{i=-M}^N\left(
h\sum_{j=-M}^N \delta_{i-j}^{(-1)} g(\SEt(jh))(\SEt)'(jh)
\right)\oSE_i(x)
\right|\\
&\leq \|g\|_{\mathbf{W}}h\sum_{j=-M}^N(\SEt)'(jh) |\wSE_j(x)|\\
&\leq \|g\|_{\mathbf{W}}
\left(h\sum_{j=-M}^N(\SEt)'(jh)\right)\cdot \hat{C}\log(n+1),
\end{align*}
with a certain constant $\hat{C}$.
Furthermore, using~\eqref{eq:bound-sum-SEtDiv},
we obtain
$\|\JmSE\|_{\mathcal{L}(\mathbf{W},\mathbf{W})}\leq \{1 + \sqrt{\pi d}/4\}(b-a)\hat{C}\log(n+1)$,
and thus~\eqref{eq:Bound-Resolvent-JmSE} holds
with a certain constant $C_r$.
This completes the proof of Lemma~\ref{lem:resolvent-JmSE-exist}.

\subsection{Existence of the resolvent of \texorpdfstring{$\JmDE$}{JmDE}}

The goal of this subsection is to show the following lemma.

\begin{lemma}
\label{lem:resolvent-JmDE-exist}
Let $\JmDE:\mathbf{W}\to\mathbf{W}$ be the linear operator
defined by~\eqref{eq:JmDE}.
Let $d$ be a positive constant with $d<\pi/2$,
$n$ be a positive integer, and $h$ be
selected by the formula~\eqref{eq:h-DE-new}.
Moreover, let $M=N=n$.
Let $r$ be a positive constant.
Then, there exists a sufficiently large positive integer $n_r$
such that for all $n\geq n_r$,
$(z - \JmDE)^{-1}$ exists for all $|z|\geq r$ and is bounded as
\begin{equation}
 \label{eq:Bound-Resolvent-JmDE}
 \left\|
(z - \JmDE)^{-1}
\right\|_{\mathcal{L}(\mathbf{W},\mathbf{W})}
\leq C_r\log(n+1),
\end{equation}
where $C_r$ is a positive constant independent of $n$ and $z$.
\end{lemma}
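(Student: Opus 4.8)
The plan is to invoke Atkinson's perturbation theorem (Theorem~\ref{thm:Atkinson}) with $\mathcal{X}=\mathcal{J}$ and $\mathcal{X}_m=\JmDE$, in complete parallel with the proof of Lemma~\ref{lem:resolvent-JmSE-exist}. Conditions~1 and~3 hold because $\JmDE$ is a finite-rank (hence bounded and compact) operator on $\mathbf{W}$, and Condition~2 is supplied by Proposition~\ref{prop:resolvent-W}. The entire argument therefore reduces to verifying Condition~4, for which it suffices to prove
\[
\|(\mathcal{J}-\JmDE)\JmDE\|_{\mathcal{L}(\mathbf{W},\mathbf{W})}\to 0\qquad(n\to\infty).
\]

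First I would expand $\{(\mathcal{J}-\JmDE)\JmDE f\}(x)$ and collect it, using $\wDE_j(t)=\sum_{i=-M}^N\delta_{i-j}^{(-1)}\oDE_i(t)$ together with $\oDE_i(\DEt(kh))=\delta_{ki}$, into the form
\[
h\sum_{j=-M}^N f(\DEt(jh))(\DEt)'(jh)\,\{(\mathcal{J}-\JmDE)\wDE_j\}(x).
\]
Because $\wDE_j$ and $\frac{1}{h}J(j,h)\circ\DEi$ agree at every node $\DEt(kh)$ (both equal $\delta_{k-j}^{(-1)}$ there) while $\JmDE$ and $\tJmDE$ see only nodal values, the same purely nodal algebra that produced~\eqref{eq:rewrite-target-SE} yields the three-term splitting
\[
(\mathcal{J}-\JmDE)\wDE_j=(\mathcal{J}-\tJmDE)\wDE_j-\frac{1}{h}(\mathcal{J}-\tJmDE)J(j,h)\circ\DEi+\frac{1}{h}(\mathcal{J}-\JmDE)J(j,h)\circ\DEi.
\]
A single-constant estimate is hopeless: by Lemma~\ref{lem:bound-sum-sinc-complex}, $K_d=\|\wDE_j\|_{\Hinf(\DEt(\domD_d))}$ diverges like $\OO(\E^{\pi d/h}\log n)=\OO(\E^{\pi d n/\log(2dn)}\log n)$, which is exactly the reciprocal of the DE convergence rate. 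The splitting is precisely the device that supplies the extra decay needed to beat this growth.

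The key ingredient is DE analogues of the ``with const'' theorems, obtained by tracking the dependence on $K_d=\|g\|_{\Hinf(\DEt(\domD_d))}$ and $K_0=\|g\|_{\mathbf{W}}$ in the proofs of Theorems~\ref{thm:Okayama-JmDE} and~\ref{thm:Okayama-tJmDE} at $\alpha=\beta=1$; their leading factors are $K_d\,\E^{-\pi d n/\log(2dn)}$ (for $\JmDE$) and $K_d\,\frac{\log(2dn)}{n}\E^{-\pi d n/\log(2dn)}$ (for $\tJmDE$), with the corresponding $K_0$-terms. Applying these to $g=\wDE_j$ in the first term, and to $g=J(j,h)\circ\DEi$ in the second and third---where Lemma~\ref{lem:bound-J} and the bound on $|J(j,h)(x+\I y)|$ give $K_0=\OO(h)$ and $K_d=\OO(h^2\E^{\pi d/h})$---the factor $\E^{\pi d n/\log(2dn)}$ cancels in every case, leaving contributions of orders $\OO((\log n)^2/n)$, $\OO((\log(2dn))^2/n^2)$, and $\OO((\log(2dn))/n)$, respectively. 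Summing over $j$ and using the DE analogue of~\eqref{eq:bound-sum-SEtDiv}, namely $h\sum_{j=-n}^n(\DEt)'(jh)\leq h(\DEt)'(0)+2\int_0^\infty(\DEt)'(x)\D{x}=\OO(1)$, gives the required operator convergence.

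Finally, the passage to a threshold $n_r$ independent of $z$ and the resolvent bound~\eqref{eq:Bound-Resolvent-JmDE} carry over verbatim from the SE case: the resolvent estimate~\eqref{eq:bound-J-resolvent} for $\mathcal{J}$ is transformation-free, so the threshold used in~\eqref{eq:operator-less-than-1-SE} is unchanged, while $\|\JmDE\|_{\mathcal{L}(\mathbf{W},\mathbf{W})}=\OO(\log(n+1))$ follows from Lemma~\ref{lem:bound-sum-sinc-real} together with the DE analogue of~\eqref{eq:bound-sum-SEtDiv}, exactly as for $\JmSE$. I expect the main obstacle to be the cancellation just described: one must extract the precise $K_d$-dependence from the DE error analysis and exploit the extra $\frac{\log(2dn)}{n}$ decay available for the original operator $\tJmDE$ (but not for $\JmDE$), so that the divergent norm of $\wDE_j$ is absorbed rather than amplified.
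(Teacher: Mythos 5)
Your proposal is correct and follows essentially the same route as the paper: Atkinson's perturbation theorem, the three-term splitting of $(\mathcal{J}-\JmDE)\wDE_j$ through $\tJmDE$ and $J(j,h)\circ\DEi$ (the paper's~\eqref{eq:rewrite-target-DE}), the constant-tracking DE theorems with $K_d=\OO(\E^{\pi d/h}\log n)$ for $\wDE_j$ and $K_d=\OO(h^2\E^{\pi d/h})$, $K_0=\OO(h)$ for $J(j,h)\circ\DEi$, the sum estimate~\eqref{eq:bound-sum-DEtDiv}, the $z$-uniform threshold via $r^2/\E^{(b-a)/r}$, and the $\OO(\log(n+1))$ bound on $\|\JmDE\|_{\mathcal{L}(\mathbf{W},\mathbf{W})}$. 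The three contribution orders you obtain match the paper's $\tilde{C}_1\log(n+1)\log(2dn)/n$, $\tilde{C}_2\{\log(2dn)/n\}^2$, and $\tilde{C}_3\log(2dn)/n$.
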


We prove this lemma in the same way as Lemma~\ref{lem:resolvent-JmSE-exist}.
First, we show
\begin{equation}
\label{eq:operator-converge-JmDE}
 \|(\mathcal{J} - \JmDE)\JmDE\|_{\mathcal{L}(\mathbf{W},\mathbf{W})}\to 0
\end{equation}
as $n\to\infty$.
For the purpose,
let us rewrite $(\mathcal{J} - \JmDE)\JmDE f$ as
\begin{align}
\nonumber
&\left\{(\mathcal{J} - \JmDE)\JmDE f\right\}(x)\\
\nonumber
&=h\sum_{j=-M}^N f(\DEt(jh))(\DEt)'(jh)
\left[
\left\{(\mathcal{J} - \tJmDE)\wDE_j\right\}(x)\right .\\
&\quad\quad\left.
-\frac{1}{h}\left\{(\mathcal{J} - \tJmDE)J(j,h)\circ\DEi\right\}(x)
+\frac{1}{h}\left\{(\mathcal{J} - \JmDE)J(j,h)\circ\DEi\right\}(x)
\right],
\label{eq:rewrite-target-DE}
\end{align}
where $\tJmDE$ is defined by~\eqref{eq:tJmDE},
and $\wDE_j(t)=\sum_{i=-M}^N \delta_{i-j}^{(-1)} \oDE_i(t)$.
We then estimate three terms in the square brackets
of~\eqref{eq:rewrite-target-DE} individually.
For the first and second terms,
we use the following theorem.
This theorem is derived by examining
the structure of the constant $C$ of Theorem~\ref{thm:Okayama-tJmDE}
with $\alpha=\beta=1$ in the proof.

\begin{theorem}
\label{thm:Okayama-tJmDE-with-const}
Assume that $g$ is analytic on $\DEt(\domD_d)$ for $d\in(0,\pi/2)$,
and that there exist positive constants
$K_d$ and $K_0$ such that
$\|g\|_{\Hinf(\DEt(\domD_d))}\leq K_d$
and
$\|g(z)\|_{\mathbf{W}}\leq K_0$.
Let $n$ be a positive integer, and $h$ be
selected by the formula~\eqref{eq:h-DE-new}.
Moreover, let $M=N=n$.
Then, there exists a constant $\tilde{C}_d$
depending only on $d$ such that
\begin{align*}
 \max_{x\in [a,b]}\left|
\left(\mathcal{J} g\right)(x) - \left(\tJmDE g\right)(x)
\right|
 \leq 2(b - a)\left[\tilde{C}_d K_d \frac{\log(2 d n)}{n}
\exp\left(\frac{-\pi d n}{\log(2 d n)}\right)
+ 1.1 K_0 \exp\left(-\pi d n\right)
\right].
\end{align*}
\end{theorem}

In the case where $g = \wDE_j$,
$K_d$ diverges with
$\OO(\exp(\pi d/h)\log n)$,
and $K_0$ also diverges with $\OO(\log n)$.
That is shown in the same way as $g = \wSE_j$.
Therefore, by Theorem~\ref{thm:Okayama-tJmDE-with-const},
setting $h$ as~\eqref{eq:h-DE-new}
we obtain
\[
 \max_{a\leq x\leq b}
\left|
\left\{(\mathcal{J} - \tJmDE)\wDE_j\right\}(x)
\right|
\leq \tilde{C}_1 \log(n+1) \cdot \frac{\log(2dn)}{n}
\]
with a certain constant $\tilde{C}_1$.
In the case where $g = J(j,h)\circ\DEi$,
we have $K_d=\OO(h^2\exp(\pi d/h))$
and $K_0=\OO(h)$ in the same way as $g = J(j,h)\circ\SEi$.
Therefore, by Theorem~\ref{thm:Okayama-tJmDE-with-const},
setting $h$ as~\eqref{eq:h-DE-new}
we obtain
\[
  \max_{a\leq x\leq b}
\left|\frac{1}{h}
\left\{(\mathcal{J} - \tJmDE)J(j,h)\circ\DEi\right\}(x)
\right|
\leq \tilde{C}_2 \left\{\frac{\log(2dn)}{n}\right\}^2
\]
with a certain constant $\tilde{C}_2$.

For the third term,
we use the following theorem.
This theorem is derived by examining
the structure of the constant $C$ of Theorem~\ref{thm:Okayama-JmDE}
with $\alpha=\beta=1$ in the proof.

\begin{theorem}
\label{thm:Okayama-JmDE-with-const}
Assume that $g$ is analytic on $\DEt(\domD_d)$ for $d\in (0,\pi/2)$,
and that there exist positive constants $K_d$
and $K_{0}$ such that
$\|g\|_{\Hinf(\DEt(\domD_d))}\leq K_d$
and
$\|g(z)\|_{\mathbf{W}}\leq K_0$.
Let $n$ be a positive integer, and $h$ be
selected by the formula~\eqref{eq:h-DE-new}.
Moreover, let $M=N=n$.
Then, there exists a constant $C_{d}$ depending only on $d$,
and also exists a constant $C_{a,b}$ depending only on $a$ and $b$
such that
\begin{align*}
\max_{x\in[a, b]}\left|
\left(\mathcal{J}g\right)(x) - \left(\JmDE g\right)(x)
\right|
\leq (b-a)C_d K_d \exp\left(\frac{-\pi d n}{\log(2 d n)}\right)
+ C_{a,b} K_0 \exp\left(-\sqrt{\pi d n}\right).
\end{align*}
\end{theorem}

Using Theorem~\ref{thm:Okayama-JmDE-with-const}
with $K_d = \OO(h^2\exp(\pi d/h))$ and $K_0 = \OO(h)$,
setting $h$ as~\eqref{eq:h-DE-new} we obtain
\[
\max_{a\leq x\leq b} \left|
\frac{1}{h}\left\{(\mathcal{J} - \JmDE)J(j,h)\circ\DEi\right\}(x)
\right|
\leq \tilde{C}_3 \frac{\log(2 d n)}{n}
\]
with a certain constant $\tilde{C}_3$.

Finally, using the following estimate
\begin{align}
\nonumber
 h\sum_{j=-M}^N (\DEt)'(jh)
&\leq 2 d\cdot
\frac{\log(2 d n)}{2 d n}\cdot(\DEt)'(0) + 2h\sum_{j=1}^n(\DEt)'(jh)\\
\nonumber
&\leq 2 d\cdot \frac{\log \E}{\E}\cdot\frac{\pi}{4}(b-a)
+2\int_0^{\infty}(\DEt)'(x)\D{x}\\
&= \frac{\pi d}{2\E}(b-a) + (b - a),
\label{eq:bound-sum-DEtDiv}
\end{align}
we establish the following lemma.

\begin{lemma}
Assume that all the assumptions
of Lemma~\ref{lem:resolvent-JmDE-exist} are fulfilled.
Let $f\in\mathbf{W}$.
Then, there exists a constant $C$ independent of $n$ and $f$
such that
\[
 \|(\mathcal{J} - \JmDE)\JmDE f\|_{\mathbf{W}}
\leq C \|f\|_{\mathbf{W}} \log(n+1)\cdot \frac{\log(2 d n)}{n}.
\]
\end{lemma}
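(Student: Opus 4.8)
The plan is to assemble the three termwise bounds already established for the bracketed summands of~\eqref{eq:rewrite-target-DE}, together with the uniform bound~\eqref{eq:bound-sum-DEtDiv} on the weighted sum of DE derivatives. Since all the analytic content has been absorbed into Theorems~\ref{thm:Okayama-tJmDE-with-const} and~\ref{thm:Okayama-JmDE-with-const} and into the decomposition~\eqref{eq:rewrite-target-DE}, the proof reduces to a bookkeeping argument identifying the dominant rate, exactly mirroring the SE case.

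First I would take the uniform norm over $x\in[a,b]$ of both sides of~\eqref{eq:rewrite-target-DE} and apply the triangle inequality to the sum over $j$, bounding each sampled value by $|f(\DEt(jh))|\leq\|f\|_{\mathbf{W}}$. This factors $\|f\|_{\mathbf{W}}$ out of the sum and leaves the weighted sum $h\sum_{j=-M}^N(\DEt)'(jh)$ multiplied by the supremum over $j$ of the bracketed quantity. I would then bound that bracketed quantity uniformly in $j$ by combining the three estimates already derived: the first term by $\tilde{C}_1\log(n+1)\cdot\frac{\log(2dn)}{n}$, the second by $\tilde{C}_2\{\log(2dn)/n\}^2$, and the third by $\tilde{C}_3\frac{\log(2dn)}{n}$. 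For all sufficiently large $n$ we have $\log(2dn)/n\leq 1\leq\log(n+1)$, so the first term dominates the other two, and the whole bracket is $\OO(\log(n+1)\cdot\frac{\log(2dn)}{n})$ uniformly in $j$.

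Finally I would invoke~\eqref{eq:bound-sum-DEtDiv} to bound $h\sum_{j=-M}^N(\DEt)'(jh)\leq\{1+\pi d/(2\E)\}(b-a)$, a constant independent of $n$, and collect the three $\tilde{C}_k$ together with this summation constant into a single $C$, yielding the claimed estimate. I do not expect a genuine obstacle here, because the difficulty lived entirely in the termwise theorems and in obtaining the sharpened rate of the first term; the only point requiring care is confirming that the $\log(n+1)$-weighted first term indeed controls the quadratically small second term and the unweighted third term, which is immediate from the stated orders.
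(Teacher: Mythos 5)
Your proposal is correct and follows essentially the same route as the paper: the paper likewise takes the uniform norm of the decomposition~\eqref{eq:rewrite-target-DE}, applies the three termwise bounds obtained from Theorems~\ref{thm:Okayama-tJmDE-with-const} and~\ref{thm:Okayama-JmDE-with-const}, identifies the first term as dominant, and closes with the estimate~\eqref{eq:bound-sum-DEtDiv} on $h\sum_{j}(\DEt)'(jh)$. The dominance check you flag (absorbing the $n=1$ case where $\log(n+1)<1$ into the constant) is the only point of care, and you handle it correctly.
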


From this lemma, we have the bound of the operator norm as
\[
 \|(\mathcal{J} - \JmDE)\JmDE\|_{\mathcal{L}(\mathbf{W},\mathbf{W})}
\leq C\log(n+1)\cdot \frac{\log(2 d n)}{n},
\]
which shows~\eqref{eq:operator-converge-JmDE}.
Thus, for all sufficiently large $n$,
we can use Theorem~\ref{thm:Atkinson}
with $\mathcal{X}=\mathcal{J}$
and $\mathcal{X}_m=\JmDE$.

Pick $n_r$ such that for all $n\geq n_r$
\[
 \|(\mathcal{J} - \JmDE)\JmDE\|_{\mathcal{L}(\mathbf{W},\mathbf{W})}
< \frac{r^2}{\E^{(b-a)/r}}.
\]
Then, in the same way as~\eqref{eq:operator-less-than-1-SE},
it holds for $|z|\geq r$ that
\[
 \|(\mathcal{J} - \JmDE)\JmDE\|_{\mathcal{L}(\mathbf{W},\mathbf{W})}
\cdot \frac{1}{|z|}
\|(z - \mathcal{J})^{-1}\|_{\mathcal{L}(\mathbf{W},\mathbf{W})}
< \frac{r^2}{\E^{(b-a)/r}}
\cdot \frac{1}{|z|^2}\E^{(b-a)/|z|}
\leq 1,
\]
which implies that~\eqref{eq:operator-converge-condition} holds
for all $z$ with $|z|\geq r$.

The final task to prove Lemma~\ref{lem:resolvent-JmDE-exist}
is to estimate the right-hand side of~\eqref{InEq:Bound-Inverse-Op}.
By the same argument as above,
there exists a constant $\tilde{C}_r$
independent of $n$ and $z$ such that
\[
 \frac{1}
{1 - \|(z - \mathcal{J})^{-1}\|_{\mathcal{L}(\mathbf{W},\mathbf{W})}
\|(\mathcal{J} - \JmDE)\JmDE\|_{\mathcal{L}(\mathbf{W},\mathbf{W})}
}
\leq \tilde{C}_r.
\]
In addition, we already know that
$\|(z - \mathcal{J})^{-1}\|_{\mathcal{L}(\mathbf{W},\mathbf{W})}$
is bounded as~\eqref{eq:bound-J-resolvent}
for all $z$ with $|z|\geq r$.
As for the remaining term $\|\JmDE\|_{\mathbf{W}}$,
using Lemma~\ref{lem:bound-sum-sinc-real},
we have
\begin{align*}
 |(\JmDE g)(x)|
&=\left|
\sum_{i=-M}^N\left(
h\sum_{j=-M}^N \delta_{i-j}^{(-1)} g(\DEt(jh))(\DEt)'(jh)
\right)\oDE_i(x)
\right|\\
&\leq \|g\|_{\mathbf{W}}h\sum_{j=-M}^N(\DEt)'(jh) |\wDE_j(x)|\\
&\leq \|g\|_{\mathbf{W}}
\left(h\sum_{j=-M}^N(\DEt)'(jh)\right)\cdot \hat{C}\log(n+1),
\end{align*}
with a certain constant $\hat{C}$.
Furthermore, using~\eqref{eq:bound-sum-DEtDiv},
we obtain
$\|\JmDE\|_{\mathcal{L}(\mathbf{W},\mathbf{W})}\leq\{1+(\pi d)/(2\E)\}(b-a)\hat{C}\log(n+1)$,
and thus~\eqref{eq:Bound-Resolvent-JmDE} holds
with a certain constant $C_r$.
This completes the proof of Lemma~\ref{lem:resolvent-JmDE-exist}.

\section{Analysis of matrices appearing in the Sinc convolution}
\label{sec:proof-F-welldef}

This section provides the proof of Lemmas~\ref{lem:SE-F-welldef}
and~\ref{lem:DE-F-welldef}.

\subsection{Analysis in the case of the SE transformation}
\label{sec:proof-F-welldef-SE}

As for the proof of Lemma~\ref{lem:SE-F-welldef},
the following lemma is essential,
which shows that the resolvent sets $\rho(\JmSE)$ and
$\rho(\aSEm)$ are equivalent.

\begin{lemma}
\label{lem:existence-unique-SE}
Let $\JmSE$ be the linear operator defined by~\eqref{eq:JmSE},
and let $\aSEm$ be an $m\times m$ matrix defined by~\eqref{eq:aSEm}.
Let $v\in C([a,b])$, and consider the following two equations:
\begin{align}
 (z - \JmSE)u(x)             &= v(x)\quad (a\leq x\leq b),
 \label{eq:continuous-SE} \\
 (z - \aSEm)\boldsymbol{c}_m &= \vvSEm, \label{eq:discrete-SE}
\end{align}
where
$\vvSEm = \vSEm v = [v(\SEt(-Mh)),\,\ldots,\,v(\SEt(Nh))]^{\mathrm{T}}$.
Then, the following statements are equivalent:
\begin{enumerate}
 \item[(A)] Equation~\eqref{eq:continuous-SE} has a unique solution
$u\in C([a,b])$.
 \item[(B)] Equation~\eqref{eq:discrete-SE} has a unique solution
$\boldsymbol{c}_m\in\mathbb{R}^m$.
\end{enumerate}
\end{lemma}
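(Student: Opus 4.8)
The plan is to set up an explicit bijection between the solution set of the operator equation~\eqref{eq:continuous-SE} and that of the linear system~\eqref{eq:discrete-SE}, using the sampling operator $\vSEm$ as the map from functions to vectors and an explicit reconstruction formula as its inverse. Throughout I would take $z\neq 0$, which is the only regime relevant to the resolvent analysis; the division by $z$ below genuinely needs this, and indeed at $z=0$ the equivalence can break down because $\JmSE$ has finite rank and is therefore never injective on $C([a,b])$. The two structural facts I would exploit are that the range of $\JmSE$ lies in the span of the components of $\boldsymbol{\omega}^{\SE}_m$, so that $\JmSE u = \boldsymbol{\omega}^{\SE}_m \aSEm \vSEm u$ by~\eqref{eq:JmSE}, and the interpolation identity $\vSEm \boldsymbol{\omega}^{\SE}_m = I_m$ recorded just after~\eqref{eq:vSEm}.

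First, the direction (A) $\Rightarrow$ (B). Given a solution $u$ of~\eqref{eq:continuous-SE}, I would apply $\vSEm$ to both sides. Since $\vSEm \JmSE u = \vSEm \boldsymbol{\omega}^{\SE}_m \aSEm \vSEm u = \aSEm \vSEm u$, this yields $(z - \aSEm)\vSEm u = \vvSEm$, so that $\boldsymbol{c}_m := \vSEm u$ solves~\eqref{eq:discrete-SE}. Conversely, given a solution $\boldsymbol{c}_m$ of~\eqref{eq:discrete-SE}, I would define $u(x) = z^{-1}\{v(x) + \boldsymbol{\omega}^{\SE}_m(x)\aSEm \boldsymbol{c}_m\}$, which lies in $C([a,b])$ because $v$ and the components of $\boldsymbol{\omega}^{\SE}_m$ are continuous. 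Using $\vSEm \boldsymbol{\omega}^{\SE}_m = I_m$ together with~\eqref{eq:discrete-SE} in the form $\vvSEm + \aSEm \boldsymbol{c}_m = z\boldsymbol{c}_m$, I would verify $\vSEm u = \boldsymbol{c}_m$, whence $\JmSE u = \boldsymbol{\omega}^{\SE}_m \aSEm \boldsymbol{c}_m$ and therefore $(z - \JmSE)u = v$.

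The crux is to confirm that these two assignments are mutually inverse, so that \emph{both} existence and uniqueness transfer, not merely existence. For the round trip starting from a solution $u$ of~\eqref{eq:continuous-SE}: putting $\boldsymbol{c}_m = \vSEm u$ and reconstructing gives $z^{-1}\{v + \boldsymbol{\omega}^{\SE}_m \aSEm \vSEm u\}$, which equals $u$ precisely because~\eqref{eq:continuous-SE} rearranges to $z u = v + \boldsymbol{\omega}^{\SE}_m \aSEm \vSEm u$. For the round trip starting from a solution $\boldsymbol{c}_m$ of~\eqref{eq:discrete-SE}, the identity $\vSEm u = \boldsymbol{c}_m$ established above closes the loop. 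Hence the two solution sets are in bijection, and one is a singleton iff the other is, which is exactly the claimed equivalence of (A) and (B).

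I expect the only real subtlety, rather than a genuine obstacle, to be bookkeeping: being careful to invoke the equation itself (not merely the definitions) in each round-trip check, and to keep the standing hypothesis $z\neq 0$ in view. Everything else is finite-dimensional linear algebra driven by the single identity $\vSEm \boldsymbol{\omega}^{\SE}_m = I_m$ and the finite-rank structure of $\JmSE$.
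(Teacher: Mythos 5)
Your proposal is correct and follows essentially the same route as the paper: both rest on the sampling map $\vSEm$ and the reconstruction formula $u=z^{-1}\{v+\boldsymbol{\omega}^{\SE}_m\aSEm\boldsymbol{c}_m\}$, together with the interpolation identity $\vSEm\boldsymbol{\omega}^{\SE}_m=I_m$. Your packaging of these two maps as mutually inverse bijections between the solution sets transfers existence and uniqueness in one stroke (the paper instead runs two separate implication arguments, each invoking uniqueness on one side), and your explicit flagging of the standing hypothesis $z\neq 0$ is apt, since the paper's reconstruction formula also divides by $z$ without saying so.
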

\begin{proof}
We begin by proving that (A) $\Rightarrow$ (B).
If we define $\boldsymbol{c}_m$ as
$\boldsymbol{c}_m = \vSEm u$,
then the equation~\eqref{eq:discrete-SE} holds.
That is, $\boldsymbol{c}_m$ is a solution of~\eqref{eq:discrete-SE}.
To prove the uniqueness, suppose that
$\tilde{\boldsymbol{c}}_m=[\tilde{c}_{-M},\,\ldots,\,\tilde{c}_N]^{\mathrm{T}}$
is another solution of~\eqref{eq:discrete-SE}.
Using the vector $\tilde{\boldsymbol{c}}_m$,
let us define $\tilde{u}\in C([a,b])$ as
\begin{equation}
\label{eq:tilde-u-define}
 \tilde{u}(x) = \frac{1}{z}
\left\{
v(x) + \sum_{k=-M}^N
\left(h\sum_{j=-M}^N\delta^{(-1)}_{k-j}\tilde{c}_j(\SEt)'(jh)\right)
 \oSE_k(x)
\right\}.
\end{equation}
Because
$\oSE_k(\SEt(ih)) = \delta_{ki}$ $(k,i = -M,\,\ldots,\,N)$ holds,
it holds on the points $x=\SEt(ih)$ that
\[
\tilde{u}(\SEt(ih)) =
\frac{1}{z}\left\{
v(\SEt(ih)) + h\sum_{j=-M}^N\delta^{(-1)}_{i-j}\tilde{c}_j(\SEt)'(jh)
\right\}
\quad (i=-M,\,\ldots,\,N).
\]
On the other hand, because $\tilde{\boldsymbol{c}}_m$
is a solution of~\eqref{eq:discrete-SE}, it holds that
\[
\tilde{c}_i =
\frac{1}{z}\left\{
v(\SEt(ih)) + h\sum_{j=-M}^N\delta^{(-1)}_{i-j}\tilde{c}_j(\SEt)'(jh)
\right\}
\quad (i=-M,\,\ldots,\,N).
\]
Consequently, $\tilde{u}(\SEt(ih))=\tilde{c}_i$ holds.
Therefore, the equation~\eqref{eq:tilde-u-define}
can be rewritten as $(z - \JmSE)\tilde{u} = w$,
which shows that $\tilde{u}$ is a solution of~\eqref{eq:continuous-SE}.
Because the solution of~\eqref{eq:continuous-SE} is unique,
$u = \tilde{u}$ holds, which implies
$\boldsymbol{c}_m=\tilde{\boldsymbol{c}}_m$.
This shows the desired uniqueness.

Next, we prove that (B) $\Rightarrow$ (A).
Using the vector $\boldsymbol{c}_m$ (the solution of~\eqref{eq:discrete-SE}),
let us define $u\in C([a,b])$ as
\begin{equation}
\label{eq:u-define}
 u(x) = \frac{1}{z}
\left\{
v(x) + \sum_{k=-M}^N
\left(h\sum_{j=-M}^N\delta^{(-1)}_{k-j}{c}_j(\SEt)'(jh)\right)
 \oSE_k(x)
\right\}.
\end{equation}
By the same argument as above,
$u(\SEt(ih))=c_i$ holds, from which we have $(z - \JmSE)u = v$.
Therefore, $u$ is a solution of~\eqref{eq:continuous-SE}.
To prove the uniqueness, suppose that
$\tilde{u}$ is another solution of~\eqref{eq:continuous-SE}.
If we define $\tilde{\boldsymbol{c}}_m$ as
$\tilde{\boldsymbol{c}}_m = \vSEm \tilde{u}$,
then $\tilde{\boldsymbol{c}}_m$ is a solution of~\eqref{eq:discrete-SE}.
Because the solution of~\eqref{eq:discrete-SE} is unique,
$\boldsymbol{c}_m=\tilde{\boldsymbol{c}}_m$ holds.
Consequently, $\tilde{u}(\SEt(jh))=c_j$ holds.
Therefore, the equation $(z - \JmSE)\tilde{u} = v$ can be rewritten as
\begin{equation}
\label{eq:tilde-u-SE}
\tilde{u}(x)
= \frac{1}{z}
\left\{
v(x) + \sum_{k=-M}^N
\left(h\sum_{j=-M}^N\delta^{(-1)}_{k-j}{c}_j(\SEt)'(jh)\right)
 \oSE_k(x)
\right\}.
\end{equation}
Comparing the right-hand sides of~\eqref{eq:u-define}
and~\eqref{eq:tilde-u-SE},
we conclude $u = \tilde{u}$, which shows the desired uniqueness.
\end{proof}

Using this lemma, we prove Lemma~\ref{lem:SE-F-welldef} as follows.

\begin{proof}[Proof of Lemma~\ref{lem:SE-F-welldef}]
Let $r$ be a positive constant such that $F$ has no singular point
inside $\varGamma_{r}$.
Lemma~\ref{lem:resolvent-JmSE-exist} shows that
there exists a sufficiently large positive integer $n_r$ such that
for all $n\geq n_r$,
$(z - \JmSE)^{-1}:C([a,b])\to C([a,b])$ exists
for all $|z|\geq r$.
According to Lemma~\ref{lem:existence-unique-SE},
$(z - \aSEm)^{-1}:\mathbb{R}^m\to\mathbb{R}^m$
exists if and only if
$(z - \JmSE)^{-1}:C([a,b])\to C([a,b])$ exists.
Therefore, for all $n\geq n_r$,
$(z - \aSEm)^{-1}$ exists for all $|z|\geq r$.
Thus, we can consider the Dunford integral along $\varGamma_r$ as
\[
 \frac{1}{2\pi\I}\oint_{\varGamma_r} (z - \aSEm)^{-1} F(z)\D{z}
= F(\aSEm),
\]
which shows the existence of $F(\aSEm)$.
This completes the proof.
\end{proof}

In the preceding proof, the Dunford integral is used
for showing the well-definedness of $F(\aSEm)$,
but we can show it more directly from the definition of matrix functions.
To this end, let us review the definition of matrix functions.

\begin{definition}[cf.~{\citet[Definition~1.2]{Higham}}]
\label{def:matrix-function}
Let $f$ be defined on the spectrum of $A\in \mathbb{C}^{m\times m}$
and let $A$ have the Jordan canonical form
\begin{align*}
 Z^{-1} A Z &= J = \diag[J_1,\,J_2,\,\ldots,\,J_q],\\
 J_k & =
\begin{bmatrix}
 \lambda_k & 1       &       & \\
           &\lambda_k&\ddots & \\
           &         &\ddots & 1 \\
           &         &       & \lambda_k
\end{bmatrix}\in\mathbb{C}^{m_k\times m_k},
\end{align*}
where $Z$ is nonsingular and $m_1 + m_2 + \cdots + m_q = m$.
Then,
\[
 f(A) := Z f(J) Z^{-1} = Z \diag(f(J_k)) Z^{-1},
\]
where
\[
 f(J_k) :=
\begin{bmatrix}
f(\lambda_k)&f'(\lambda_k)&\cdots&\dfrac{f^{(m_k-1)}(\lambda_k)}{(m_k - 1)!} \\
            & f(\lambda_k)&\ddots& \vdots \\
            &             &\ddots& f'(\lambda_k) \\
            &             &      & f(\lambda_k)
\end{bmatrix}.
\]
\end{definition}

Recall that $F$ is analytic, and accordingly infinitely differentiable,
inside $\varGamma_r$.
According to Definition~\ref{def:matrix-function},
if the spectrum $\sigma(\aSEm)$ is located inside $\varGamma_{r}$,
then $F(\aSEm)$ is well-defined,
because $F^{(m_k-1)}(\lambda_k)$ exists for any $k$.
In the same way of the proof of Lemma~\ref{lem:SE-F-welldef} provided above,
we can show that
there exists a sufficiently large positive integer $n_r$ such that
for all $n\geq n_r$,
$(z - \aSEm)^{-1}$ exists for all $|z|\geq r$.
This implies that for all $n\geq n_r$,
$\sigma(\aSEm)$ is located inside $\varGamma_{r}$.
Thus, $F(\aSEm)$ is well-defined for all $n\geq n_r$.
This completes another proof of the well-definedness of $F(\aSEm)$.

We find further information on the spectrum of $\aSEm$.
Recall that $\hat{f}(s)$ has no singular point outside $\varGamma_{R}$
(assumption of Lemma~\ref{lem:Bromwich-contour}),
and accordingly $F(s)=\hat{f}(1/s)$ is analytic inside $\varGamma_{r}$,
where $r=1/R$.
Here, note that we can choose $r>0$ as small as we want.
Lemma~\ref{lem:SE-F-welldef} shows that
even for arbitrary small $r$,
there exists a sufficiently large positive integer $n_r$ such that
for all $n\geq n_r$,
the spectrum $\sigma(\aSEm)$ lies inside $\varGamma_{r}$.
Thus, we arrive at the following result.

\begin{cor}
\label{cor:SE-new}
Let $d$ be a positive constant with $d<\pi$,
let $n$ be a positive integer, and let $h$ be
selected by the formula~\eqref{eq:h-SE-new}.
Moreover, let $M=N=n$,
and let $\aSEm$ be defined by~\eqref{eq:aSEm}.
Let an arbitrary small positive value $\epsilon$ be given.
Then, there exists a positive integer $n_{\epsilon}$ such that
for all $n\geq n_{\epsilon}$, $\sigma(\aSEm)$ lies inside
$\varGamma_{\epsilon}$.
\end{cor}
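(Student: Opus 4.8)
The plan is to reduce the eigenvalue-localization statement for the matrix $\aSEm$ to the resolvent estimate for the operator $\JmSE$ already established in Lemma~\ref{lem:resolvent-JmSE-exist}, using the spectral equivalence furnished by Lemma~\ref{lem:existence-unique-SE}. The essential observation is that the positive constant $r$ in Lemma~\ref{lem:resolvent-JmSE-exist} is \emph{arbitrary} and carries no dependence on any function $F$ or $\hat{f}$; consequently the localization radius can be driven to zero. Since the corollary makes no analyticity hypothesis and concerns only the matrix $\aSEm$, this $r$-freedom is exactly what is needed.

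First I would apply Lemma~\ref{lem:resolvent-JmSE-exist} with $r=\epsilon$. This yields a positive integer $n_{\epsilon}$ such that for all $n\geq n_{\epsilon}$ the resolvent $(z-\JmSE)^{-1}:\mathbf{W}\to\mathbf{W}$ exists for every $z$ with $|z|\geq\epsilon$; equivalently, no point $z$ with $|z|\geq\epsilon$ belongs to $\sigma(\JmSE)$.

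Next I would invoke Lemma~\ref{lem:existence-unique-SE}, which asserts that for each fixed $z$ the continuous equation $(z-\JmSE)u=v$ is uniquely solvable in $C([a,b])$ if and only if the discrete system $(z-\aSEm)\boldsymbol{c}_m=\vvSEm$ is uniquely solvable in $\mathbb{R}^m$. Because unique solvability of a square linear system is equivalent to invertibility of $(z-\aSEm)$, the resolvent sets coincide, $\rho(\JmSE)=\rho(\aSEm)$, and hence $\sigma(\aSEm)=\sigma(\JmSE)$. Combining this with the previous step, for all $n\geq n_{\epsilon}$ every eigenvalue $\lambda$ of $\aSEm$ satisfies $|\lambda|<\epsilon$, which is precisely the statement that $\sigma(\aSEm)$ lies inside $\varGamma_{\epsilon}$.

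Given that the two supporting lemmas do the heavy lifting, I do not anticipate a substantive obstacle; the proof is essentially a matter of correctly transporting the resolvent information from the operator to the matrix. The one point that warrants care is the passage from ``statement (B) holds for the particular right-hand side $\vvSEm$'' to ``$(z-\aSEm)$ is invertible'': since $z-\aSEm$ is square, non-invertibility would force either non-existence or non-uniqueness of solutions for \emph{every} right-hand side, so unique solvability for a single right-hand side already forces invertibility. I would make this remark explicit so that the identification $\sigma(\aSEm)=\sigma(\JmSE)$ is fully justified. With that in hand, setting $n_{\epsilon}$ equal to the integer supplied by Lemma~\ref{lem:resolvent-JmSE-exist} with $r=\epsilon$ completes the argument.
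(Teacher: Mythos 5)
Your proposal is correct and follows essentially the same route as the paper: the paper obtains the corollary by combining Lemma~\ref{lem:resolvent-JmSE-exist} (with the radius $r$ taken arbitrarily small) with the resolvent-set equivalence of Lemma~\ref{lem:existence-unique-SE}, exactly as you do. Your explicit remark that unique solvability of the square system for a single right-hand side already forces invertibility of $z-\aSEm$ is a sound (and welcome) clarification of a step the paper leaves implicit.
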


This result shows that $\sigma(\aSEm)$ contracts toward the origin
as $n\to\infty$.
Because $\sigma(\JmSE)=\sigma(\aSEm)$,
this is also true for $\sigma(\JmSE)$,
i.e., $\sigma(\JmSE)\to \{0\}$ as $n\to\infty$.
This behavior is consistent with Proposition~\ref{prop:resolvent-W},
which says $\sigma(\mathcal{J})=\{0\}$.

\subsection{Analysis in the case of the DE transformation}
\label{sec:proof-F-welldef-DE}

As for the proof of Lemma~\ref{lem:DE-F-welldef},
the following lemma is essential,
which shows that the resolvent sets $\rho(\JmDE)$ and
$\rho(\aDEm)$ are equivalent.

\begin{lemma}
\label{lem:existence-unique-DE}
Let $\JmDE$ be the linear operator defined by~\eqref{eq:JmDE},
and let $\aDEm$ be an $m\times m$ matrix defined by~\eqref{eq:aDEm}.
Let $v\in C([a,b])$, and consider the following two equations:
\begin{align}
 (z - \JmDE)u(x)             &= v(x)\quad (a\leq x\leq b),
 \label{eq:continuous-DE} \\
 (z - \aDEm)\boldsymbol{c}_m &= \vvDEm, \label{eq:discrete-DE}
\end{align}
where
$\vvDEm = \vDEm v = [v(\DEt(-Mh)),\,\ldots,\,v(\DEt(Nh))]^{\mathrm{T}}$.
Then, the following statements are equivalent:
\begin{enumerate}
 \item[(A)] Equation~\eqref{eq:continuous-DE} has a unique solution
$u\in C([a,b])$.
 \item[(B)] Equation~\eqref{eq:discrete-DE} has a unique solution
$\boldsymbol{c}_m\in\mathbb{R}^m$.
\end{enumerate}
\end{lemma}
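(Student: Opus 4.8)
The plan is to reproduce, step for step, the argument given for the SE counterpart in the proof of Lemma~\ref{lem:existence-unique-SE}, replacing $\SEt$, $\SEi$, $\oSE_k$, $\aSEm$, $\vSEm$ throughout by their DE analogues $\DEt$, $\DEi$, $\oDE_k$, $\aDEm$, $\vDEm$. The one structural fact that the whole equivalence rests on is the cardinal property $\oDE_k(\DEt(ih))=\delta_{ki}$ for $i,k=-M,\ldots,N$, so I would record it first; it follows directly from the definitions of the $\oDE_k$, since $\sinc((\DEi(\DEt(ih))-kh)/h)=\sinc(i-k)=\delta_{ik}$, exactly as the relation $\vSEm\boldsymbol{\omega}^{\SE}_m=I_m$ was used in the SE case.

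For (A) $\Rightarrow$ (B), I would set $\boldsymbol{c}_m=\vDEm u$; sampling the identity $(z-\JmDE)u=v$ at the nodes $\DEt(ih)$ and using~\eqref{eq:JmDE} and~\eqref{eq:aDEm} shows that $\boldsymbol{c}_m$ solves the discrete equation~\eqref{eq:discrete-DE}, establishing existence. For uniqueness, given a second discrete solution $\tilde{\boldsymbol{c}}_m=[\tilde{c}_{-M},\ldots,\tilde{c}_N]^{\mathrm{T}}$, I would define $\tilde u\in C([a,b])$ by the DE analogue of the formula used for $\tilde u$ in the SE proof,
\[
\tilde{u}(x)=\frac{1}{z}\left\{v(x)+\sum_{k=-M}^N\left(h\sum_{j=-M}^N\delta^{(-1)}_{k-j}\tilde{c}_j(\DEt)'(jh)\right)\oDE_k(x)\right\},
\]
evaluate at $x=\DEt(ih)$ and use the cardinal property to obtain $\tilde u(\DEt(ih))=\tilde c_i$, so that the display rewrites as $(z-\JmDE)\tilde u=v$; uniqueness of the continuous solution then forces $u=\tilde u$, hence $\boldsymbol{c}_m=\vDEm u=\vDEm\tilde u=\tilde{\boldsymbol{c}}_m$.

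For (B) $\Rightarrow$ (A), I would use the discrete solution $\boldsymbol{c}_m$ to define $u$ by the same formula with $c_j$ in place of $\tilde c_j$, verify through the cardinal property that $u(\DEt(ih))=c_i$, and conclude $(z-\JmDE)u=v$, giving existence. For uniqueness of the continuous solution, any second solution $\tilde u$ yields $\tilde{\boldsymbol{c}}_m=\vDEm\tilde u$ solving~\eqref{eq:discrete-DE}; uniqueness of the discrete solution gives $\tilde{\boldsymbol{c}}_m=\boldsymbol{c}_m$, so $\tilde u(\DEt(jh))=c_j$, and rewriting $(z-\JmDE)\tilde u=v$ reproduces the defining formula for $u$, whence $u=\tilde u$.

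Because the DE and SE basis functions share an identical construction (a sinc interior augmented with two boundary corrections), no essentially new difficulty arises; the only point requiring care is confirming the cardinal property for the two boundary functions $\oDE_{-M}$ and $\oDE_N$, which holds because their corrective sums omit precisely the $k=-M$ and $k=N$ terms while the surviving sinc collapses to a Kronecker delta. I expect this verification to be the most delicate (though still routine) step, and everything downstream to be a mechanical transcription of the SE argument.
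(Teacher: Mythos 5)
Your proposal is correct and matches the paper's intent exactly: the paper itself omits the proof of Lemma~\ref{lem:existence-unique-DE}, stating that it proceeds in precisely the same way as the proof of Lemma~\ref{lem:existence-unique-SE}, which is the SE argument you transcribe with $\SEt$, $\oSE_k$, $\aSEm$, $\vSEm$ replaced by their DE counterparts. Your added remark that the cardinal property $\oDE_k(\DEt(ih))=\delta_{ki}$ is the one fact to verify is also the right observation, and it holds for the boundary basis functions for the reason you give.
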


The proof of Lemma~\ref{lem:existence-unique-DE} goes in exactly the same way
as that of Lemma~\ref{lem:existence-unique-DE}.
Using this lemma,
Lemma~\ref{lem:DE-F-welldef} is also proved
in exactly the same way
as that of Lemma~\ref{lem:SE-F-welldef}.
Therefore, the proofs for the two lemmas are omitted here.

In addition,
by the same argument as in the case of the SE transformation,
we derive the following result.

\begin{cor}
\label{cor:DE-new}
Let $d$ be a positive constant with $d<\pi/2$,
let $n$ be a positive integer, and let $h$ be
selected by the formula~\eqref{eq:h-DE-new}.
Moreover, let $M=N=n$,
and let $\aDEm$ be defined by~\eqref{eq:aDEm}.
Let an arbitrary small positive value $\epsilon$ be given.
Then, there exists a positive integer $n_{\epsilon}$ such that
for all $n\geq n_{\epsilon}$, $\sigma(\aDEm)$ lies inside
$\varGamma_{\epsilon}$.
\end{cor}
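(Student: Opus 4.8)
The plan is to mirror the argument already used for Corollary~\ref{cor:SE-new}, replacing every SE object with its DE counterpart. The corollary asserts that the spectrum of the finite matrix $\aDEm$ contracts to the origin as $n$ grows; equivalently, that for the prescribed $\epsilon$ all eigenvalues of $\aDEm$ eventually have modulus smaller than $\epsilon$. The key observation is that this is precisely the statement that the matrix resolvent $(z-\aDEm)^{-1}$ exists for every $z$ with $|z|\geq\epsilon$ once $n$ is large enough.

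First I would apply Lemma~\ref{lem:resolvent-JmDE-exist} with $r=\epsilon$. Under the present choices of $d$, of $h$ from~\eqref{eq:h-DE-new}, and of $M=N=n$, that lemma furnishes a positive integer $n_{\epsilon}$ such that for all $n\geq n_{\epsilon}$ the operator resolvent $(z-\JmDE)^{-1}:\mathbf{W}\to\mathbf{W}$ exists for every $|z|\geq\epsilon$. Next I would transfer this existence statement from the operator $\JmDE$ to the matrix $\aDEm$ by means of Lemma~\ref{lem:existence-unique-DE}: since equation~\eqref{eq:continuous-DE} is uniquely solvable if and only if equation~\eqref{eq:discrete-DE} is, the resolvent sets $\rho(\JmDE)$ and $\rho(\aDEm)$ coincide, and consequently $(z-\aDEm)$ is invertible for every $|z|\geq\epsilon$ whenever $n\geq n_{\epsilon}$. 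The conclusion is then immediate: no eigenvalue of $\aDEm$ can satisfy $|z|\geq\epsilon$, so $\sigma(\aDEm)$ is contained in $\{z:|z|<\epsilon\}$, which is exactly the interior of $\varGamma_{\epsilon}$, and $n_{\epsilon}$ is the desired integer.

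I do not expect a genuine obstacle at this stage, because the substantive work has already been carried out in the two DE lemmas being invoked: the Atkinson-type perturbation estimate underlying Lemma~\ref{lem:resolvent-JmDE-exist} and the continuous--discrete equivalence of Lemma~\ref{lem:existence-unique-DE}. The only point demanding care is the boundary bookkeeping: the resolvent is asserted to exist on the closed region $|z|\geq\epsilon$, which yields the strict spectral inclusion $|z|<\epsilon$ (i.e., strictly inside $\varGamma_{\epsilon}$), matching the wording of the corollary. Since $\epsilon>0$ is arbitrary, this simultaneously records that $\sigma(\aDEm)$ collapses to $\{0\}$ as $n\to\infty$, consistent with $\sigma(\mathcal{J})=\{0\}$.
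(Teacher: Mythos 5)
Your proposal is correct and follows essentially the same route as the paper: invoke Lemma~\ref{lem:resolvent-JmDE-exist} with $r=\epsilon$ to get existence of $(z-\JmDE)^{-1}$ for all $|z|\geq\epsilon$ once $n\geq n_{\epsilon}$, then transfer to the matrix via the resolvent-set equivalence of Lemma~\ref{lem:existence-unique-DE}, concluding $\sigma(\aDEm)$ lies inside $\varGamma_{\epsilon}$. This is exactly the argument the paper uses for Corollary~\ref{cor:SE-new} and declares to carry over verbatim to the DE case.
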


This result shows that $\sigma(\aDEm)$ contracts toward the origin
as $n\to\infty$.
Because $\sigma(\JmDE)=\sigma(\aDEm)$,
this is also true for $\sigma(\JmDE)$,
i.e., $\sigma(\JmDE)\to \{0\}$ as $n\to\infty$.

\section{Proof of convergence theorems}
\label{sec:proof-convergence}

We are now prepared to prove
Theorems~\ref{thm:SE-new} and~\ref{thm:DE-new}.

\subsection{Proof of convergence in the case of the SE transformation}
\label{sec:proof-SE}

\begin{proof}[Proof of Theorem~\ref{thm:SE-new}]
Let $r$ be a positive constant
such that $F$ has no singular point inside $\varGamma_r$.
According to
Proposition~\ref{prop:resolvent-W}
and
Lemma~\ref{lem:resolvent-JmSE-exist},
there exists a positive integer $n_r$ such that
for all $n\geq n_r$,
$\sigma(\mathcal{J})$
and $\sigma(\JmSE)$ locate inside $\varGamma_r$.
Then, using the Dunford integral, we have
\begin{align*}
 F(\mathcal{J})g - F(\JmSE)g
&=\frac{1}{2\pi\I}\oint_{\varGamma_r}F(z)
\left\{(z - \mathcal{J})^{-1} - (z - \JmSE)^{-1}\right\} \D{z}\, g\\
&=\frac{1}{2\pi\I}\oint_{\varGamma_r}F(z)
(z - \JmSE)^{-1}(\mathcal{J} - \JmSE)(z - \mathcal{J})^{-1} \D{z}\, g,
\end{align*}
where the second resolvent identity is used at the second equality.
Taking the uniform norm over $[a, b]$, we have
\begin{align*}
&\|F(\mathcal{J})g - F(\JmSE)g\|_{\mathbf{W}}\\
&\leq
\max_{z\in\varGamma_r}|F(z)|\cdot
\|(z - \JmSE)^{-1}\|_{\mathcal{L}(\mathbf{W},\mathbf{W})}
\|(\mathcal{J}-\JmSE)(z - \mathcal{J})^{-1} g\|_{\mathbf{W}}
\cdot\frac{1}{2\pi}
\oint_{\varGamma_r}|\D{z}|\\
&=\max_{z\in\varGamma_r}|F(z)|\cdot
\|(z - \JmSE)^{-1}\|_{\mathcal{L}(\mathbf{W},\mathbf{W})}
\|(\mathcal{J}-\JmSE)\tilde{g}\|_{\mathbf{W}} \cdot r,
\end{align*}
where $\tilde{g}=(z - \mathcal{J})^{-1}g$.
From Proposition~\ref{prop:resolvent-Y},
$\tilde{g}\in\Hinf(\SEt(\domD_d))$.
Therefore, by Theorem~\ref{thm:Stenger-JmSE} with $\alpha=\beta=1$
(alternatively, by Theorem~\ref{thm:Stenger-JmSE-with-const}),
we have
\[
 \|(\mathcal{J}-\JmSE)\tilde{g}\|_{\mathbf{W}}
\leq C\sqrt{n}\exp\left(-\sqrt{\pi d n}\right)
\]
with a certain constant $C$.
Finally, using~\eqref{eq:Bound-Resolvent-JmSE},
we obtain the conclusion.
\end{proof}

\subsection{Proof of convergence in the case of the DE transformation}
\label{sec:proof-DE}

\begin{proof}[Proof of Theorem~\ref{thm:DE-new}]
Let $r$ be a positive constant
such that $F$ has no singular point inside $\varGamma_r$.
According to
Proposition~\ref{prop:resolvent-W}
and
Lemma~\ref{lem:resolvent-JmDE-exist},
there exists a positive integer $n_r$ such that
for all $n\geq n_r$,
$\sigma(\mathcal{J})$
and $\sigma(\JmDE)$ locate inside $\varGamma_r$.
Then, using the Dunford integral, we have
\begin{align*}
 F(\mathcal{J})g - F(\JmDE)g
&=\frac{1}{2\pi\I}\oint_{\varGamma_r}F(z)
\left\{(z - \mathcal{J})^{-1} - (z - \JmDE)^{-1}\right\} \D{z}\, g\\
&=\frac{1}{2\pi\I}\oint_{\varGamma_r}F(z)
(z - \JmDE)^{-1}(\mathcal{J} - \JmDE)(z - \mathcal{J})^{-1} \D{z}\, g,
\end{align*}
where the second resolvent identity is used at the second equality.
Taking the uniform norm over $[a, b]$, we have
\begin{align*}
&\|F(\mathcal{J})g - F(\JmDE)g\|_{\mathbf{W}}\\
&\leq
\max_{z\in\varGamma_r}|F(z)|\cdot
\|(z - \JmDE)^{-1}\|_{\mathcal{L}(\mathbf{W},\mathbf{W})}
\|(\mathcal{J}-\JmDE)(z - \mathcal{J})^{-1} g\|_{\mathbf{W}}
\cdot\frac{1}{2\pi}
\oint_{\varGamma_r}|\D{z}|\\
&=\max_{z\in\varGamma_r}|F(z)|\cdot
\|(z - \JmDE)^{-1}\|_{\mathcal{L}(\mathbf{W},\mathbf{W})}
\|(\mathcal{J}-\JmDE)\tilde{g}\|_{\mathbf{W}} \cdot r,
\end{align*}
where $\tilde{g}=(z - \mathcal{J})^{-1}g$.
From Proposition~\ref{prop:resolvent-Y},
$\tilde{g}\in\Hinf(\DEt(\domD_d))$.
Therefore, by Theorem~\ref{thm:Okayama-JmDE} with $\alpha=\beta=1$
(alternatively, by Theorem~\ref{thm:Okayama-JmDE-with-const}),
we have
\[
 \|(\mathcal{J}-\JmDE)\tilde{g}\|_{\mathbf{W}}
\leq C\exp\left(\frac{-\pi d n}{\log(2 d n)}\right)
\]
with a certain constant $C$.
Finally, using~\eqref{eq:Bound-Resolvent-JmDE},
we obtain the conclusion.
\end{proof}

\section{Numerical examples}
\label{sec:numerical}

This section presents numerical results for approximation
of $p(x)$ in~\eqref{eq:p-x} by Stenger's and the improved
formulas. Hereafter, the former is referred to as
the ``SE-Sinc convolution,'' while the latter is
referred to as the ``DE-Sinc convolution.''
All the programs were implemented in C++ with double-precision
floating-point arithmetic.
The source code for all programs is publicly available at
\url{https://github.com/okayamat/sinc-conv}.
In all the examples given below,
the interval $[a, b]$ is set as $[0, 2]$,
and maximum error among 200 equally spaced points over the
interval is plotted on the graph.

\subsection{Comparison of \texorpdfstring{$g$}{g}: \texorpdfstring{$g$}{g} has a pole or not}

First, let us fix $F(s)$ (i.e., $f(x)$), and compare the difference
of the performance due to the presence of singularity in $g(t)$.

\begin{example}
\label{ex:1}
Consider the following indefinite convolution
\[
\int_0^x (x - t)\sqrt{t}\D{t} = \frac{4}{15}x^{5/2},
\quad 0\leq x\leq 2.
\]
In this case,
$\hat{f}(s) = 1/s^2$ and $F(s)=\hat{f}(1/s)=s^2$.
\end{example}

\begin{example}
\label{ex:2}
Consider the following indefinite convolution
\begin{align*}
\int_0^x (x - t)\frac{\sqrt{t}}{1+t^2}\D{t}
 &= \frac{x+1}{\sqrt{2}}
\left\{\arctan\left(\sqrt{2x}+1\right)
+\arctan\left(\sqrt{2x}-1\right)\right\}\\
&\quad +
 \frac{x-1}{2\sqrt{2}}\log\left(\frac{x-\sqrt{2x}+1}{x+\sqrt{2x}+1}\right)
-2\sqrt{x},
\quad 0\leq x\leq 2.
\end{align*}
In this case,
$\hat{f}(s) = 1/s^2$ and $F(s)=\hat{f}(1/s)=s^2$.
\end{example}

In the case of Example~\ref{ex:1},
$g(t)$ does not have any singularity excluding the endpoint $t=0$.
Therefore,
the assumptions in Theorem~\ref{thm:SE-new}
are fulfilled with $d=3.14$ (slightly less than $\pi$),
and those in Theorem~\ref{thm:DE-new}
are fulfilled with $d=1.57$ (slightly less than $\pi/2$).
The numerical results are shown in Fig.~\ref{fig:ex1}.
From the graph, we observe that the SE-Sinc convolution
attains $\OO(\exp(-c\sqrt{m}))$,
and the DE-Sinc convolution
attains $\OO(\exp(-\tilde{c} m/\log m))$.
In the case of Example~\ref{ex:2},
$g(t)$ has poles at $t=\pm\I$,
from which $g(\SEt(u))$ has poles at $u=-\log(\sqrt{2})\pm (3\pi/4)\I$.
Theorefore,
the assumptions in Theorem~\ref{thm:SE-new}
are fulfilled with $d=2.35$ (slightly less than $3\pi/4$).
In a similar manner,
the assumptions in Theorem~\ref{thm:DE-new}
are fulfilled with $d=0.833$.
The numerical results are shown in Fig.~\ref{fig:ex2}.
From the graph, we observe that
the SE-Sinc and DE-Sinc convolutions still attain
$\OO(\exp(-c\sqrt{m}))$ and $\OO(\exp(-\tilde{c} m/\log m))$,
respectively, although $c$ and $\tilde{c}$ get smaller than
those in Example~\ref{ex:1}.
This is attributed to the reduced values of $d$.

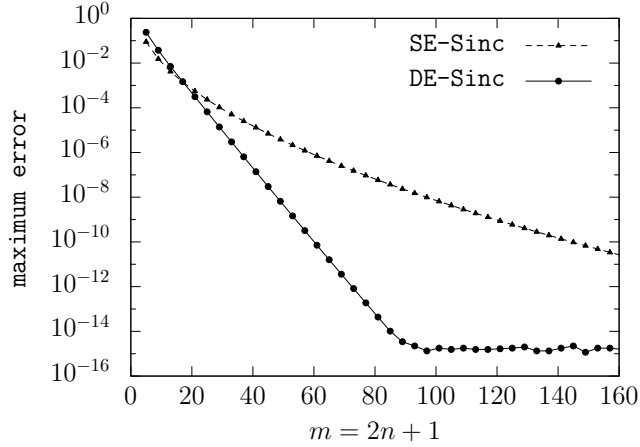
\begin{figure}
\centering
\scalebox{0.6}{
\begin{tikzpicture}[gnuplot]
\tikzset{every node/.append style={font={\ttfamily\fontsize{14.4pt}{17.28pt}\selectfont}}}
\gpmonochromelines
\path (0.000,0.000) rectangle (12.500,8.750);
\gpcolor{color=gp lt color border}
\gpsetlinetype{gp lt border}
\gpsetdashtype{gp dt solid}
\gpsetlinewidth{1.00}
\draw[gp path] (2.431,1.420)--(2.611,1.420);
\draw[gp path] (11.704,1.420)--(11.524,1.420);
\node[gp node right] at (2.166,1.420) {$10^{-16}$};
\draw[gp path] (2.431,1.845)--(2.521,1.845);
\draw[gp path] (11.704,1.845)--(11.614,1.845);
\draw[gp path] (2.431,2.270)--(2.611,2.270);
\draw[gp path] (11.704,2.270)--(11.524,2.270);
\node[gp node right] at (2.166,2.270) {$10^{-14}$};
\draw[gp path] (2.431,2.695)--(2.521,2.695);
\draw[gp path] (11.704,2.695)--(11.614,2.695);
\draw[gp path] (2.431,3.120)--(2.611,3.120);
\draw[gp path] (11.704,3.120)--(11.524,3.120);
\node[gp node right] at (2.166,3.120) {$10^{-12}$};
\draw[gp path] (2.431,3.545)--(2.521,3.545);
\draw[gp path] (11.704,3.545)--(11.614,3.545);
\draw[gp path] (2.431,3.970)--(2.611,3.970);
\draw[gp path] (11.704,3.970)--(11.524,3.970);
\node[gp node right] at (2.166,3.970) {$10^{-10}$};
\draw[gp path] (2.431,4.395)--(2.521,4.395);
\draw[gp path] (11.704,4.395)--(11.614,4.395);
\draw[gp path] (2.431,4.820)--(2.611,4.820);
\draw[gp path] (11.704,4.820)--(11.524,4.820);
\node[gp node right] at (2.166,4.820) {$10^{-8}$};
\draw[gp path] (2.431,5.244)--(2.521,5.244);
\draw[gp path] (11.704,5.244)--(11.614,5.244);
\draw[gp path] (2.431,5.669)--(2.611,5.669);
\draw[gp path] (11.704,5.669)--(11.524,5.669);
\node[gp node right] at (2.166,5.669) {$10^{-6}$};
\draw[gp path] (2.431,6.094)--(2.521,6.094);
\draw[gp path] (11.704,6.094)--(11.614,6.094);
\draw[gp path] (2.431,6.519)--(2.611,6.519);
\draw[gp path] (11.704,6.519)--(11.524,6.519);
\node[gp node right] at (2.166,6.519) {$10^{-4}$};
\draw[gp path] (2.431,6.944)--(2.521,6.944);
\draw[gp path] (11.704,6.944)--(11.614,6.944);
\draw[gp path] (2.431,7.369)--(2.611,7.369);
\draw[gp path] (11.704,7.369)--(11.524,7.369);
\node[gp node right] at (2.166,7.369) {$10^{-2}$};
\draw[gp path] (2.431,7.794)--(2.521,7.794);
\draw[gp path] (11.704,7.794)--(11.614,7.794);
\draw[gp path] (2.431,8.219)--(2.611,8.219);
\draw[gp path] (11.704,8.219)--(11.524,8.219);
\node[gp node right] at (2.166,8.219) {$10^{0}$};
\draw[gp path] (2.431,1.420)--(2.431,1.600);
\draw[gp path] (2.431,8.219)--(2.431,8.039);
\node[gp node center] at (2.431,0.976) {$0$};
\draw[gp path] (3.590,1.420)--(3.590,1.600);
\draw[gp path] (3.590,8.219)--(3.590,8.039);
\node[gp node center] at (3.590,0.976) {$20$};
\draw[gp path] (4.749,1.420)--(4.749,1.600);
\draw[gp path] (4.749,8.219)--(4.749,8.039);
\node[gp node center] at (4.749,0.976) {$40$};
\draw[gp path] (5.908,1.420)--(5.908,1.600);
\draw[gp path] (5.908,8.219)--(5.908,8.039);
\node[gp node center] at (5.908,0.976) {$60$};
\draw[gp path] (7.068,1.420)--(7.068,1.600);
\draw[gp path] (7.068,8.219)--(7.068,8.039);
\node[gp node center] at (7.068,0.976) {$80$};
\draw[gp path] (8.227,1.420)--(8.227,1.600);
\draw[gp path] (8.227,8.219)--(8.227,8.039);
\node[gp node center] at (8.227,0.976) {$100$};
\draw[gp path] (9.386,1.420)--(9.386,1.600);
\draw[gp path] (9.386,8.219)--(9.386,8.039);
\node[gp node center] at (9.386,0.976) {$120$};
\draw[gp path] (10.545,1.420)--(10.545,1.600);
\draw[gp path] (10.545,8.219)--(10.545,8.039);
\node[gp node center] at (10.545,0.976) {$140$};
\draw[gp path] (11.704,1.420)--(11.704,1.600);
\draw[gp path] (11.704,8.219)--(11.704,8.039);
\node[gp node center] at (11.704,0.976) {$160$};
\draw[gp path] (2.431,8.219)--(2.431,1.420)--(11.704,1.420)--(11.704,8.219)--cycle;
\node[gp node center,rotate=-270] at (0.354,4.819) {maximum error};
\node[gp node center] at (7.067,0.310) {$m=2n+1$};
\node[gp node right] at (9.669,7.701) {SE-Sinc};
\gpsetdashtype{gp dt 2}
\draw[gp path] (9.934,7.701)--(11.174,7.701);
\draw[gp path] (2.721,7.773)--(2.953,7.447)--(3.184,7.209)--(3.416,7.010)--(3.648,6.835)%
  --(3.880,6.675)--(4.112,6.528)--(4.344,6.393)--(4.575,6.266)--(4.807,6.144)--(5.039,6.028)%
  --(5.271,5.917)--(5.503,5.810)--(5.735,5.706)--(5.966,5.606)--(6.198,5.509)--(6.430,5.414)%
  --(6.662,5.323)--(6.894,5.233)--(7.125,5.146)--(7.357,5.061)--(7.589,4.978)--(7.821,4.897)%
  --(8.053,4.818)--(8.285,4.740)--(8.516,4.663)--(8.748,4.588)--(8.980,4.514)--(9.212,4.441)%
  --(9.444,4.370)--(9.676,4.299)--(9.907,4.231)--(10.139,4.162)--(10.371,4.095)--(10.603,4.028)%
  --(10.835,3.963)--(11.066,3.898)--(11.298,3.835)--(11.530,3.771)--(11.704,3.725);
\gpsetpointsize{4.00}
\gppoint{gp mark 9}{(2.721,7.773)}
\gppoint{gp mark 9}{(2.953,7.447)}
\gppoint{gp mark 9}{(3.184,7.209)}
\gppoint{gp mark 9}{(3.416,7.010)}
\gppoint{gp mark 9}{(3.648,6.835)}
\gppoint{gp mark 9}{(3.880,6.675)}
\gppoint{gp mark 9}{(4.112,6.528)}
\gppoint{gp mark 9}{(4.344,6.393)}
\gppoint{gp mark 9}{(4.575,6.266)}
\gppoint{gp mark 9}{(4.807,6.144)}
\gppoint{gp mark 9}{(5.039,6.028)}
\gppoint{gp mark 9}{(5.271,5.917)}
\gppoint{gp mark 9}{(5.503,5.810)}
\gppoint{gp mark 9}{(5.735,5.706)}
\gppoint{gp mark 9}{(5.966,5.606)}
\gppoint{gp mark 9}{(6.198,5.509)}
\gppoint{gp mark 9}{(6.430,5.414)}
\gppoint{gp mark 9}{(6.662,5.323)}
\gppoint{gp mark 9}{(6.894,5.233)}
\gppoint{gp mark 9}{(7.125,5.146)}
\gppoint{gp mark 9}{(7.357,5.061)}
\gppoint{gp mark 9}{(7.589,4.978)}
\gppoint{gp mark 9}{(7.821,4.897)}
\gppoint{gp mark 9}{(8.053,4.818)}
\gppoint{gp mark 9}{(8.285,4.740)}
\gppoint{gp mark 9}{(8.516,4.663)}
\gppoint{gp mark 9}{(8.748,4.588)}
\gppoint{gp mark 9}{(8.980,4.514)}
\gppoint{gp mark 9}{(9.212,4.441)}
\gppoint{gp mark 9}{(9.444,4.370)}
\gppoint{gp mark 9}{(9.676,4.299)}
\gppoint{gp mark 9}{(9.907,4.231)}
\gppoint{gp mark 9}{(10.139,4.162)}
\gppoint{gp mark 9}{(10.371,4.095)}
\gppoint{gp mark 9}{(10.603,4.028)}
\gppoint{gp mark 9}{(10.835,3.963)}
\gppoint{gp mark 9}{(11.066,3.898)}
\gppoint{gp mark 9}{(11.298,3.835)}
\gppoint{gp mark 9}{(11.530,3.771)}
\gppoint{gp mark 9}{(10.554,7.701)}
\node[gp node right] at (9.669,7.026) {DE-Sinc};
\gpsetdashtype{gp dt solid}
\draw[gp path] (9.934,7.026)--(11.174,7.026);
\draw[gp path] (2.721,7.954)--(2.953,7.612)--(3.184,7.304)--(3.416,7.015)--(3.648,6.730)%
  --(3.880,6.442)--(4.112,6.155)--(4.344,5.871)--(4.575,5.587)--(4.807,5.304)--(5.039,5.022)%
  --(5.271,4.742)--(5.503,4.463)--(5.735,4.184)--(5.966,3.907)--(6.198,3.631)--(6.430,3.356)%
  --(6.662,3.083)--(6.894,2.811)--(7.125,2.541)--(7.357,2.274)--(7.589,2.073)--(7.821,1.992)%
  --(8.053,1.898)--(8.285,1.951)--(8.516,1.926)--(8.748,1.951)--(8.980,1.926)--(9.212,1.926)%
  --(9.444,1.939)--(9.676,1.951)--(9.907,1.973)--(10.139,1.898)--(10.371,1.898)--(10.603,1.951)%
  --(10.835,1.992)--(11.066,1.873)--(11.298,1.951)--(11.530,1.951)--(11.704,1.932);
\gppoint{gp mark 7}{(2.721,7.954)}
\gppoint{gp mark 7}{(2.953,7.612)}
\gppoint{gp mark 7}{(3.184,7.304)}
\gppoint{gp mark 7}{(3.416,7.015)}
\gppoint{gp mark 7}{(3.648,6.730)}
\gppoint{gp mark 7}{(3.880,6.442)}
\gppoint{gp mark 7}{(4.112,6.155)}
\gppoint{gp mark 7}{(4.344,5.871)}
\gppoint{gp mark 7}{(4.575,5.587)}
\gppoint{gp mark 7}{(4.807,5.304)}
\gppoint{gp mark 7}{(5.039,5.022)}
\gppoint{gp mark 7}{(5.271,4.742)}
\gppoint{gp mark 7}{(5.503,4.463)}
\gppoint{gp mark 7}{(5.735,4.184)}
\gppoint{gp mark 7}{(5.966,3.907)}
\gppoint{gp mark 7}{(6.198,3.631)}
\gppoint{gp mark 7}{(6.430,3.356)}
\gppoint{gp mark 7}{(6.662,3.083)}
\gppoint{gp mark 7}{(6.894,2.811)}
\gppoint{gp mark 7}{(7.125,2.541)}
\gppoint{gp mark 7}{(7.357,2.274)}
\gppoint{gp mark 7}{(7.589,2.073)}
\gppoint{gp mark 7}{(7.821,1.992)}
\gppoint{gp mark 7}{(8.053,1.898)}
\gppoint{gp mark 7}{(8.285,1.951)}
\gppoint{gp mark 7}{(8.516,1.926)}
\gppoint{gp mark 7}{(8.748,1.951)}
\gppoint{gp mark 7}{(8.980,1.926)}
\gppoint{gp mark 7}{(9.212,1.926)}
\gppoint{gp mark 7}{(9.444,1.939)}
\gppoint{gp mark 7}{(9.676,1.951)}
\gppoint{gp mark 7}{(9.907,1.973)}
\gppoint{gp mark 7}{(10.139,1.898)}
\gppoint{gp mark 7}{(10.371,1.898)}
\gppoint{gp mark 7}{(10.603,1.951)}
\gppoint{gp mark 7}{(10.835,1.992)}
\gppoint{gp mark 7}{(11.066,1.873)}
\gppoint{gp mark 7}{(11.298,1.951)}
\gppoint{gp mark 7}{(11.530,1.951)}
\gppoint{gp mark 7}{(10.554,7.026)}
\draw[gp path] (2.431,8.219)--(2.431,1.420)--(11.704,1.420)--(11.704,8.219)--cycle;
\gpdefrectangularnode{gp plot 1}{\pgfpoint{2.431cm}{1.420cm}}{\pgfpoint{11.704cm}{8.219cm}}
\end{tikzpicture}
}
\caption{Numerical results for Example~\ref{ex:1}.}
\label{fig:ex1}
\end{figure}
\begin{figure}
\centering
\scalebox{0.6}{
\begin{tikzpicture}[gnuplot]
\tikzset{every node/.append style={font={\ttfamily\fontsize{14.4pt}{17.28pt}\selectfont}}}
\gpmonochromelines
\path (0.000,0.000) rectangle (12.500,8.750);
\gpcolor{color=gp lt color border}
\gpsetlinetype{gp lt border}
\gpsetdashtype{gp dt solid}
\gpsetlinewidth{1.00}
\draw[gp path] (2.431,1.420)--(2.611,1.420);
\draw[gp path] (11.704,1.420)--(11.524,1.420);
\node[gp node right] at (2.166,1.420) {$10^{-16}$};
\draw[gp path] (2.431,1.845)--(2.521,1.845);
\draw[gp path] (11.704,1.845)--(11.614,1.845);
\draw[gp path] (2.431,2.270)--(2.611,2.270);
\draw[gp path] (11.704,2.270)--(11.524,2.270);
\node[gp node right] at (2.166,2.270) {$10^{-14}$};
\draw[gp path] (2.431,2.695)--(2.521,2.695);
\draw[gp path] (11.704,2.695)--(11.614,2.695);
\draw[gp path] (2.431,3.120)--(2.611,3.120);
\draw[gp path] (11.704,3.120)--(11.524,3.120);
\node[gp node right] at (2.166,3.120) {$10^{-12}$};
\draw[gp path] (2.431,3.545)--(2.521,3.545);
\draw[gp path] (11.704,3.545)--(11.614,3.545);
\draw[gp path] (2.431,3.970)--(2.611,3.970);
\draw[gp path] (11.704,3.970)--(11.524,3.970);
\node[gp node right] at (2.166,3.970) {$10^{-10}$};
\draw[gp path] (2.431,4.395)--(2.521,4.395);
\draw[gp path] (11.704,4.395)--(11.614,4.395);
\draw[gp path] (2.431,4.820)--(2.611,4.820);
\draw[gp path] (11.704,4.820)--(11.524,4.820);
\node[gp node right] at (2.166,4.820) {$10^{-8}$};
\draw[gp path] (2.431,5.244)--(2.521,5.244);
\draw[gp path] (11.704,5.244)--(11.614,5.244);
\draw[gp path] (2.431,5.669)--(2.611,5.669);
\draw[gp path] (11.704,5.669)--(11.524,5.669);
\node[gp node right] at (2.166,5.669) {$10^{-6}$};
\draw[gp path] (2.431,6.094)--(2.521,6.094);
\draw[gp path] (11.704,6.094)--(11.614,6.094);
\draw[gp path] (2.431,6.519)--(2.611,6.519);
\draw[gp path] (11.704,6.519)--(11.524,6.519);
\node[gp node right] at (2.166,6.519) {$10^{-4}$};
\draw[gp path] (2.431,6.944)--(2.521,6.944);
\draw[gp path] (11.704,6.944)--(11.614,6.944);
\draw[gp path] (2.431,7.369)--(2.611,7.369);
\draw[gp path] (11.704,7.369)--(11.524,7.369);
\node[gp node right] at (2.166,7.369) {$10^{-2}$};
\draw[gp path] (2.431,7.794)--(2.521,7.794);
\draw[gp path] (11.704,7.794)--(11.614,7.794);
\draw[gp path] (2.431,8.219)--(2.611,8.219);
\draw[gp path] (11.704,8.219)--(11.524,8.219);
\node[gp node right] at (2.166,8.219) {$10^{0}$};
\draw[gp path] (2.431,1.420)--(2.431,1.600);
\draw[gp path] (2.431,8.219)--(2.431,8.039);
\node[gp node center] at (2.431,0.976) {$0$};
\draw[gp path] (3.590,1.420)--(3.590,1.600);
\draw[gp path] (3.590,8.219)--(3.590,8.039);
\node[gp node center] at (3.590,0.976) {$20$};
\draw[gp path] (4.749,1.420)--(4.749,1.600);
\draw[gp path] (4.749,8.219)--(4.749,8.039);
\node[gp node center] at (4.749,0.976) {$40$};
\draw[gp path] (5.908,1.420)--(5.908,1.600);
\draw[gp path] (5.908,8.219)--(5.908,8.039);
\node[gp node center] at (5.908,0.976) {$60$};
\draw[gp path] (7.068,1.420)--(7.068,1.600);
\draw[gp path] (7.068,8.219)--(7.068,8.039);
\node[gp node center] at (7.068,0.976) {$80$};
\draw[gp path] (8.227,1.420)--(8.227,1.600);
\draw[gp path] (8.227,8.219)--(8.227,8.039);
\node[gp node center] at (8.227,0.976) {$100$};
\draw[gp path] (9.386,1.420)--(9.386,1.600);
\draw[gp path] (9.386,8.219)--(9.386,8.039);
\node[gp node center] at (9.386,0.976) {$120$};
\draw[gp path] (10.545,1.420)--(10.545,1.600);
\draw[gp path] (10.545,8.219)--(10.545,8.039);
\node[gp node center] at (10.545,0.976) {$140$};
\draw[gp path] (11.704,1.420)--(11.704,1.600);
\draw[gp path] (11.704,8.219)--(11.704,8.039);
\node[gp node center] at (11.704,0.976) {$160$};
\draw[gp path] (2.431,8.219)--(2.431,1.420)--(11.704,1.420)--(11.704,8.219)--cycle;
\node[gp node center,rotate=-270] at (0.354,4.819) {maximum error};
\node[gp node center] at (7.067,0.310) {$m=2n+1$};
\node[gp node right] at (9.669,7.701) {SE-Sinc};
\gpsetdashtype{gp dt 2}
\draw[gp path] (9.934,7.701)--(11.174,7.701);
\draw[gp path] (2.721,7.539)--(2.953,7.144)--(3.184,6.891)--(3.416,6.669)--(3.648,6.462)%
  --(3.880,6.268)--(4.112,6.093)--(4.344,5.920)--(4.575,5.768)--(4.807,5.671)--(5.039,5.575)%
  --(5.271,5.478)--(5.503,5.379)--(5.735,5.286)--(5.966,5.190)--(6.198,5.099)--(6.430,5.005)%
  --(6.662,4.909)--(6.894,4.818)--(7.125,4.722)--(7.357,4.631)--(7.589,4.538)--(7.821,4.447)%
  --(8.053,4.358)--(8.285,4.268)--(8.516,4.192)--(8.748,4.155)--(8.980,4.116)--(9.212,4.072)%
  --(9.444,4.026)--(9.676,3.979)--(9.907,3.929)--(10.139,3.880)--(10.371,3.827)--(10.603,3.775)%
  --(10.835,3.722)--(11.066,3.668)--(11.298,3.612)--(11.530,3.558)--(11.704,3.516);
\gpsetpointsize{4.00}
\gppoint{gp mark 9}{(2.721,7.539)}
\gppoint{gp mark 9}{(2.953,7.144)}
\gppoint{gp mark 9}{(3.184,6.891)}
\gppoint{gp mark 9}{(3.416,6.669)}
\gppoint{gp mark 9}{(3.648,6.462)}
\gppoint{gp mark 9}{(3.880,6.268)}
\gppoint{gp mark 9}{(4.112,6.093)}
\gppoint{gp mark 9}{(4.344,5.920)}
\gppoint{gp mark 9}{(4.575,5.768)}
\gppoint{gp mark 9}{(4.807,5.671)}
\gppoint{gp mark 9}{(5.039,5.575)}
\gppoint{gp mark 9}{(5.271,5.478)}
\gppoint{gp mark 9}{(5.503,5.379)}
\gppoint{gp mark 9}{(5.735,5.286)}
\gppoint{gp mark 9}{(5.966,5.190)}
\gppoint{gp mark 9}{(6.198,5.099)}
\gppoint{gp mark 9}{(6.430,5.005)}
\gppoint{gp mark 9}{(6.662,4.909)}
\gppoint{gp mark 9}{(6.894,4.818)}
\gppoint{gp mark 9}{(7.125,4.722)}
\gppoint{gp mark 9}{(7.357,4.631)}
\gppoint{gp mark 9}{(7.589,4.538)}
\gppoint{gp mark 9}{(7.821,4.447)}
\gppoint{gp mark 9}{(8.053,4.358)}
\gppoint{gp mark 9}{(8.285,4.268)}
\gppoint{gp mark 9}{(8.516,4.192)}
\gppoint{gp mark 9}{(8.748,4.155)}
\gppoint{gp mark 9}{(8.980,4.116)}
\gppoint{gp mark 9}{(9.212,4.072)}
\gppoint{gp mark 9}{(9.444,4.026)}
\gppoint{gp mark 9}{(9.676,3.979)}
\gppoint{gp mark 9}{(9.907,3.929)}
\gppoint{gp mark 9}{(10.139,3.880)}
\gppoint{gp mark 9}{(10.371,3.827)}
\gppoint{gp mark 9}{(10.603,3.775)}
\gppoint{gp mark 9}{(10.835,3.722)}
\gppoint{gp mark 9}{(11.066,3.668)}
\gppoint{gp mark 9}{(11.298,3.612)}
\gppoint{gp mark 9}{(11.530,3.558)}
\gppoint{gp mark 9}{(10.554,7.701)}
\node[gp node right] at (9.669,7.026) {DE-Sinc};
\gpsetdashtype{gp dt solid}
\draw[gp path] (9.934,7.026)--(11.174,7.026);
\draw[gp path] (2.721,7.504)--(2.953,7.168)--(3.184,6.880)--(3.416,6.539)--(3.648,6.311)%
  --(3.880,6.134)--(4.112,5.952)--(4.344,5.760)--(4.575,5.546)--(4.807,5.323)--(5.039,5.080)%
  --(5.271,4.891)--(5.503,4.783)--(5.735,4.653)--(5.966,4.503)--(6.198,4.338)--(6.430,4.159)%
  --(6.662,3.965)--(6.894,3.756)--(7.125,3.559)--(7.357,3.478)--(7.589,3.370)--(7.821,3.244)%
  --(8.053,3.103)--(8.285,2.949)--(8.516,2.782)--(8.748,2.600)--(8.980,2.406)--(9.212,2.248)%
  --(9.444,2.168)--(9.676,2.095)--(9.907,2.005)--(10.139,1.962)--(10.371,1.933)--(10.603,1.951)%
  --(10.835,1.882)--(11.066,1.898)--(11.298,1.913)--(11.530,1.939)--(11.704,1.896);
\gppoint{gp mark 7}{(2.721,7.504)}
\gppoint{gp mark 7}{(2.953,7.168)}
\gppoint{gp mark 7}{(3.184,6.880)}
\gppoint{gp mark 7}{(3.416,6.539)}
\gppoint{gp mark 7}{(3.648,6.311)}
\gppoint{gp mark 7}{(3.880,6.134)}
\gppoint{gp mark 7}{(4.112,5.952)}
\gppoint{gp mark 7}{(4.344,5.760)}
\gppoint{gp mark 7}{(4.575,5.546)}
\gppoint{gp mark 7}{(4.807,5.323)}
\gppoint{gp mark 7}{(5.039,5.080)}
\gppoint{gp mark 7}{(5.271,4.891)}
\gppoint{gp mark 7}{(5.503,4.783)}
\gppoint{gp mark 7}{(5.735,4.653)}
\gppoint{gp mark 7}{(5.966,4.503)}
\gppoint{gp mark 7}{(6.198,4.338)}
\gppoint{gp mark 7}{(6.430,4.159)}
\gppoint{gp mark 7}{(6.662,3.965)}
\gppoint{gp mark 7}{(6.894,3.756)}
\gppoint{gp mark 7}{(7.125,3.559)}
\gppoint{gp mark 7}{(7.357,3.478)}
\gppoint{gp mark 7}{(7.589,3.370)}
\gppoint{gp mark 7}{(7.821,3.244)}
\gppoint{gp mark 7}{(8.053,3.103)}
\gppoint{gp mark 7}{(8.285,2.949)}
\gppoint{gp mark 7}{(8.516,2.782)}
\gppoint{gp mark 7}{(8.748,2.600)}
\gppoint{gp mark 7}{(8.980,2.406)}
\gppoint{gp mark 7}{(9.212,2.248)}
\gppoint{gp mark 7}{(9.444,2.168)}
\gppoint{gp mark 7}{(9.676,2.095)}
\gppoint{gp mark 7}{(9.907,2.005)}
\gppoint{gp mark 7}{(10.139,1.962)}
\gppoint{gp mark 7}{(10.371,1.933)}
\gppoint{gp mark 7}{(10.603,1.951)}
\gppoint{gp mark 7}{(10.835,1.882)}
\gppoint{gp mark 7}{(11.066,1.898)}
\gppoint{gp mark 7}{(11.298,1.913)}
\gppoint{gp mark 7}{(11.530,1.939)}
\gppoint{gp mark 7}{(10.554,7.026)}
\draw[gp path] (2.431,8.219)--(2.431,1.420)--(11.704,1.420)--(11.704,8.219)--cycle;
\gpdefrectangularnode{gp plot 1}{\pgfpoint{2.431cm}{1.420cm}}{\pgfpoint{11.704cm}{8.219cm}}
\end{tikzpicture}
}
\caption{Numerical results for Example~\ref{ex:2}.}
\label{fig:ex2}
\end{figure}
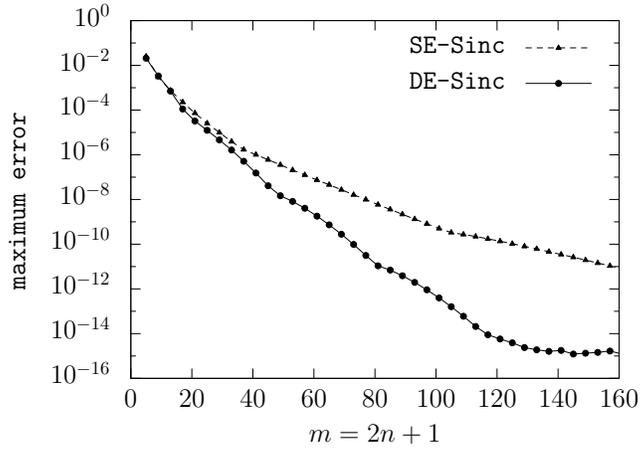

\subsection{Comparison of \texorpdfstring{$F$}{F}: \texorpdfstring{$F$}{F} has a pole/branch point or not}

Next, let us fix $g(t)$, and focus on the difference of $F(s)$.
In the subsequent four examples,
the assumptions in Theorem~\ref{thm:SE-new}
are fulfilled with $d=3.14$ (slightly less than $\pi$),
and those in Theorem~\ref{thm:DE-new}
are fulfilled with $d=1.57$ (slightly less than $\pi/2$).

\begin{example}
\label{ex:3}
Consider the following indefinite convolution
\[
\int_0^x J_0(2\sqrt{x - t})\sqrt{t}\D{t} = \frac{1}{4}
\left\{
\sin\left(2\sqrt{x}\right)
-2\sqrt{x}\cos\left(2\sqrt{x}\right)
\right\},
\quad 0\leq x\leq 2,
\]
where $J_0$ is the Bessel function of the first kind of order zero.
In this case,
$\hat{f}(s) = \E^{-1/s}/s$ and $F(s)=\hat{f}(1/s)=s\E^{-s}$.
\end{example}

\begin{example}
\label{ex:4}
Consider the following indefinite convolution
\[
\int_0^x \E^{x-t}\sqrt{t}\D{t}
 = \frac{\sqrt{\pi}}{2}\E^x \erf\left(\sqrt{x}\right) - \sqrt{x},
\quad 0\leq x\leq 2,
\]
where $\erf(x)$ is the error function.
In this case,
$\hat{f}(s) = 1/(s-1)$ and $F(s)=\hat{f}(1/s)=s/(1-s)$.
\end{example}

\begin{figure}
\centering
\scalebox{0.6}{
\begin{tikzpicture}[gnuplot]
\tikzset{every node/.append style={font={\ttfamily\fontsize{14.4pt}{17.28pt}\selectfont}}}
\gpmonochromelines
\path (0.000,0.000) rectangle (12.500,8.750);
\gpcolor{color=gp lt color border}
\gpsetlinetype{gp lt border}
\gpsetdashtype{gp dt solid}
\gpsetlinewidth{1.00}
\draw[gp path] (2.431,1.420)--(2.611,1.420);
\draw[gp path] (11.704,1.420)--(11.524,1.420);
\node[gp node right] at (2.166,1.420) {$10^{-16}$};
\draw[gp path] (2.431,1.845)--(2.521,1.845);
\draw[gp path] (11.704,1.845)--(11.614,1.845);
\draw[gp path] (2.431,2.270)--(2.611,2.270);
\draw[gp path] (11.704,2.270)--(11.524,2.270);
\node[gp node right] at (2.166,2.270) {$10^{-14}$};
\draw[gp path] (2.431,2.695)--(2.521,2.695);
\draw[gp path] (11.704,2.695)--(11.614,2.695);
\draw[gp path] (2.431,3.120)--(2.611,3.120);
\draw[gp path] (11.704,3.120)--(11.524,3.120);
\node[gp node right] at (2.166,3.120) {$10^{-12}$};
\draw[gp path] (2.431,3.545)--(2.521,3.545);
\draw[gp path] (11.704,3.545)--(11.614,3.545);
\draw[gp path] (2.431,3.970)--(2.611,3.970);
\draw[gp path] (11.704,3.970)--(11.524,3.970);
\node[gp node right] at (2.166,3.970) {$10^{-10}$};
\draw[gp path] (2.431,4.395)--(2.521,4.395);
\draw[gp path] (11.704,4.395)--(11.614,4.395);
\draw[gp path] (2.431,4.820)--(2.611,4.820);
\draw[gp path] (11.704,4.820)--(11.524,4.820);
\node[gp node right] at (2.166,4.820) {$10^{-8}$};
\draw[gp path] (2.431,5.244)--(2.521,5.244);
\draw[gp path] (11.704,5.244)--(11.614,5.244);
\draw[gp path] (2.431,5.669)--(2.611,5.669);
\draw[gp path] (11.704,5.669)--(11.524,5.669);
\node[gp node right] at (2.166,5.669) {$10^{-6}$};
\draw[gp path] (2.431,6.094)--(2.521,6.094);
\draw[gp path] (11.704,6.094)--(11.614,6.094);
\draw[gp path] (2.431,6.519)--(2.611,6.519);
\draw[gp path] (11.704,6.519)--(11.524,6.519);
\node[gp node right] at (2.166,6.519) {$10^{-4}$};
\draw[gp path] (2.431,6.944)--(2.521,6.944);
\draw[gp path] (11.704,6.944)--(11.614,6.944);
\draw[gp path] (2.431,7.369)--(2.611,7.369);
\draw[gp path] (11.704,7.369)--(11.524,7.369);
\node[gp node right] at (2.166,7.369) {$10^{-2}$};
\draw[gp path] (2.431,7.794)--(2.521,7.794);
\draw[gp path] (11.704,7.794)--(11.614,7.794);
\draw[gp path] (2.431,8.219)--(2.611,8.219);
\draw[gp path] (11.704,8.219)--(11.524,8.219);
\node[gp node right] at (2.166,8.219) {$10^{0}$};
\draw[gp path] (2.431,1.420)--(2.431,1.600);
\draw[gp path] (2.431,8.219)--(2.431,8.039);
\node[gp node center] at (2.431,0.976) {$0$};
\draw[gp path] (3.590,1.420)--(3.590,1.600);
\draw[gp path] (3.590,8.219)--(3.590,8.039);
\node[gp node center] at (3.590,0.976) {$20$};
\draw[gp path] (4.749,1.420)--(4.749,1.600);
\draw[gp path] (4.749,8.219)--(4.749,8.039);
\node[gp node center] at (4.749,0.976) {$40$};
\draw[gp path] (5.908,1.420)--(5.908,1.600);
\draw[gp path] (5.908,8.219)--(5.908,8.039);
\node[gp node center] at (5.908,0.976) {$60$};
\draw[gp path] (7.068,1.420)--(7.068,1.600);
\draw[gp path] (7.068,8.219)--(7.068,8.039);
\node[gp node center] at (7.068,0.976) {$80$};
\draw[gp path] (8.227,1.420)--(8.227,1.600);
\draw[gp path] (8.227,8.219)--(8.227,8.039);
\node[gp node center] at (8.227,0.976) {$100$};
\draw[gp path] (9.386,1.420)--(9.386,1.600);
\draw[gp path] (9.386,8.219)--(9.386,8.039);
\node[gp node center] at (9.386,0.976) {$120$};
\draw[gp path] (10.545,1.420)--(10.545,1.600);
\draw[gp path] (10.545,8.219)--(10.545,8.039);
\node[gp node center] at (10.545,0.976) {$140$};
\draw[gp path] (11.704,1.420)--(11.704,1.600);
\draw[gp path] (11.704,8.219)--(11.704,8.039);
\node[gp node center] at (11.704,0.976) {$160$};
\draw[gp path] (2.431,8.219)--(2.431,1.420)--(11.704,1.420)--(11.704,8.219)--cycle;
\node[gp node center,rotate=-270] at (0.354,4.819) {maximum error};
\node[gp node center] at (7.067,0.310) {$m=2n+1$};
\node[gp node right] at (9.669,7.701) {SE-Sinc};
\gpsetdashtype{gp dt 2}
\draw[gp path] (9.934,7.701)--(11.174,7.701);
\draw[gp path] (2.721,7.521)--(2.953,7.268)--(3.184,7.055)--(3.416,6.870)--(3.648,6.704)%
  --(3.880,6.552)--(4.112,6.411)--(4.344,6.280)--(4.575,6.155)--(4.807,6.036)--(5.039,5.923)%
  --(5.271,5.814)--(5.503,5.709)--(5.735,5.609)--(5.966,5.512)--(6.198,5.418)--(6.430,5.326)%
  --(6.662,5.236)--(6.894,5.149)--(7.125,5.064)--(7.357,4.980)--(7.589,4.899)--(7.821,4.819)%
  --(8.053,4.740)--(8.285,4.663)--(8.516,4.588)--(8.748,4.514)--(8.980,4.442)--(9.212,4.370)%
  --(9.444,4.300)--(9.676,4.231)--(9.907,4.163)--(10.139,4.095)--(10.371,4.029)--(10.603,3.964)%
  --(10.835,3.899)--(11.066,3.835)--(11.298,3.772)--(11.530,3.710)--(11.704,3.664);
\gpsetpointsize{4.00}
\gppoint{gp mark 9}{(2.721,7.521)}
\gppoint{gp mark 9}{(2.953,7.268)}
\gppoint{gp mark 9}{(3.184,7.055)}
\gppoint{gp mark 9}{(3.416,6.870)}
\gppoint{gp mark 9}{(3.648,6.704)}
\gppoint{gp mark 9}{(3.880,6.552)}
\gppoint{gp mark 9}{(4.112,6.411)}
\gppoint{gp mark 9}{(4.344,6.280)}
\gppoint{gp mark 9}{(4.575,6.155)}
\gppoint{gp mark 9}{(4.807,6.036)}
\gppoint{gp mark 9}{(5.039,5.923)}
\gppoint{gp mark 9}{(5.271,5.814)}
\gppoint{gp mark 9}{(5.503,5.709)}
\gppoint{gp mark 9}{(5.735,5.609)}
\gppoint{gp mark 9}{(5.966,5.512)}
\gppoint{gp mark 9}{(6.198,5.418)}
\gppoint{gp mark 9}{(6.430,5.326)}
\gppoint{gp mark 9}{(6.662,5.236)}
\gppoint{gp mark 9}{(6.894,5.149)}
\gppoint{gp mark 9}{(7.125,5.064)}
\gppoint{gp mark 9}{(7.357,4.980)}
\gppoint{gp mark 9}{(7.589,4.899)}
\gppoint{gp mark 9}{(7.821,4.819)}
\gppoint{gp mark 9}{(8.053,4.740)}
\gppoint{gp mark 9}{(8.285,4.663)}
\gppoint{gp mark 9}{(8.516,4.588)}
\gppoint{gp mark 9}{(8.748,4.514)}
\gppoint{gp mark 9}{(8.980,4.442)}
\gppoint{gp mark 9}{(9.212,4.370)}
\gppoint{gp mark 9}{(9.444,4.300)}
\gppoint{gp mark 9}{(9.676,4.231)}
\gppoint{gp mark 9}{(9.907,4.163)}
\gppoint{gp mark 9}{(10.139,4.095)}
\gppoint{gp mark 9}{(10.371,4.029)}
\gppoint{gp mark 9}{(10.603,3.964)}
\gppoint{gp mark 9}{(10.835,3.899)}
\gppoint{gp mark 9}{(11.066,3.835)}
\gppoint{gp mark 9}{(11.298,3.772)}
\gppoint{gp mark 9}{(11.530,3.710)}
\gppoint{gp mark 9}{(10.554,7.701)}
\node[gp node right] at (9.669,7.026) {DE-Sinc};
\gpsetdashtype{gp dt solid}
\draw[gp path] (9.934,7.026)--(11.174,7.026);
\draw[gp path] (2.721,7.640)--(2.953,7.437)--(3.184,7.183)--(3.416,6.915)--(3.648,6.649)%
  --(3.880,6.382)--(4.112,6.109)--(4.344,5.842)--(4.575,5.572)--(4.807,5.302)--(5.039,5.033)%
  --(5.271,4.764)--(5.503,4.494)--(5.735,4.222)--(5.966,3.951)--(6.198,3.681)--(6.430,3.413)%
  --(6.662,3.181)--(6.894,2.953)--(7.125,2.719)--(7.357,2.481)--(7.589,2.255)--(7.821,2.026)%
  --(8.053,1.898)--(8.285,1.823)--(8.516,1.845)--(8.748,1.882)--(8.980,1.845)--(9.212,1.845)%
  --(9.444,1.882)--(9.676,1.845)--(9.907,1.823)--(10.139,1.845)--(10.371,1.823)--(10.603,1.855)%
  --(10.835,1.882)--(11.066,1.882)--(11.298,1.864)--(11.530,1.845)--(11.704,1.828);
\gppoint{gp mark 7}{(2.721,7.640)}
\gppoint{gp mark 7}{(2.953,7.437)}
\gppoint{gp mark 7}{(3.184,7.183)}
\gppoint{gp mark 7}{(3.416,6.915)}
\gppoint{gp mark 7}{(3.648,6.649)}
\gppoint{gp mark 7}{(3.880,6.382)}
\gppoint{gp mark 7}{(4.112,6.109)}
\gppoint{gp mark 7}{(4.344,5.842)}
\gppoint{gp mark 7}{(4.575,5.572)}
\gppoint{gp mark 7}{(4.807,5.302)}
\gppoint{gp mark 7}{(5.039,5.033)}
\gppoint{gp mark 7}{(5.271,4.764)}
\gppoint{gp mark 7}{(5.503,4.494)}
\gppoint{gp mark 7}{(5.735,4.222)}
\gppoint{gp mark 7}{(5.966,3.951)}
\gppoint{gp mark 7}{(6.198,3.681)}
\gppoint{gp mark 7}{(6.430,3.413)}
\gppoint{gp mark 7}{(6.662,3.181)}
\gppoint{gp mark 7}{(6.894,2.953)}
\gppoint{gp mark 7}{(7.125,2.719)}
\gppoint{gp mark 7}{(7.357,2.481)}
\gppoint{gp mark 7}{(7.589,2.255)}
\gppoint{gp mark 7}{(7.821,2.026)}
\gppoint{gp mark 7}{(8.053,1.898)}
\gppoint{gp mark 7}{(8.285,1.823)}
\gppoint{gp mark 7}{(8.516,1.845)}
\gppoint{gp mark 7}{(8.748,1.882)}
\gppoint{gp mark 7}{(8.980,1.845)}
\gppoint{gp mark 7}{(9.212,1.845)}
\gppoint{gp mark 7}{(9.444,1.882)}
\gppoint{gp mark 7}{(9.676,1.845)}
\gppoint{gp mark 7}{(9.907,1.823)}
\gppoint{gp mark 7}{(10.139,1.845)}
\gppoint{gp mark 7}{(10.371,1.823)}
\gppoint{gp mark 7}{(10.603,1.855)}
\gppoint{gp mark 7}{(10.835,1.882)}
\gppoint{gp mark 7}{(11.066,1.882)}
\gppoint{gp mark 7}{(11.298,1.864)}
\gppoint{gp mark 7}{(11.530,1.845)}
\gppoint{gp mark 7}{(10.554,7.026)}
\draw[gp path] (2.431,8.219)--(2.431,1.420)--(11.704,1.420)--(11.704,8.219)--cycle;
\gpdefrectangularnode{gp plot 1}{\pgfpoint{2.431cm}{1.420cm}}{\pgfpoint{11.704cm}{8.219cm}}
\end{tikzpicture}
}
\caption{Numerical results for Example~\ref{ex:3}.}
\label{fig:ex3}
\end{figure}
\begin{figure}
\centering
\scalebox{0.6}{
\begin{tikzpicture}[gnuplot]
\tikzset{every node/.append style={font={\ttfamily\fontsize{14.4pt}{17.28pt}\selectfont}}}
\gpmonochromelines
\path (0.000,0.000) rectangle (12.500,8.750);
\gpcolor{color=gp lt color border}
\gpsetlinetype{gp lt border}
\gpsetdashtype{gp dt solid}
\gpsetlinewidth{1.00}
\draw[gp path] (2.431,1.420)--(2.611,1.420);
\draw[gp path] (11.704,1.420)--(11.524,1.420);
\node[gp node right] at (2.166,1.420) {$10^{-16}$};
\draw[gp path] (2.431,1.845)--(2.521,1.845);
\draw[gp path] (11.704,1.845)--(11.614,1.845);
\draw[gp path] (2.431,2.270)--(2.611,2.270);
\draw[gp path] (11.704,2.270)--(11.524,2.270);
\node[gp node right] at (2.166,2.270) {$10^{-14}$};
\draw[gp path] (2.431,2.695)--(2.521,2.695);
\draw[gp path] (11.704,2.695)--(11.614,2.695);
\draw[gp path] (2.431,3.120)--(2.611,3.120);
\draw[gp path] (11.704,3.120)--(11.524,3.120);
\node[gp node right] at (2.166,3.120) {$10^{-12}$};
\draw[gp path] (2.431,3.545)--(2.521,3.545);
\draw[gp path] (11.704,3.545)--(11.614,3.545);
\draw[gp path] (2.431,3.970)--(2.611,3.970);
\draw[gp path] (11.704,3.970)--(11.524,3.970);
\node[gp node right] at (2.166,3.970) {$10^{-10}$};
\draw[gp path] (2.431,4.395)--(2.521,4.395);
\draw[gp path] (11.704,4.395)--(11.614,4.395);
\draw[gp path] (2.431,4.820)--(2.611,4.820);
\draw[gp path] (11.704,4.820)--(11.524,4.820);
\node[gp node right] at (2.166,4.820) {$10^{-8}$};
\draw[gp path] (2.431,5.244)--(2.521,5.244);
\draw[gp path] (11.704,5.244)--(11.614,5.244);
\draw[gp path] (2.431,5.669)--(2.611,5.669);
\draw[gp path] (11.704,5.669)--(11.524,5.669);
\node[gp node right] at (2.166,5.669) {$10^{-6}$};
\draw[gp path] (2.431,6.094)--(2.521,6.094);
\draw[gp path] (11.704,6.094)--(11.614,6.094);
\draw[gp path] (2.431,6.519)--(2.611,6.519);
\draw[gp path] (11.704,6.519)--(11.524,6.519);
\node[gp node right] at (2.166,6.519) {$10^{-4}$};
\draw[gp path] (2.431,6.944)--(2.521,6.944);
\draw[gp path] (11.704,6.944)--(11.614,6.944);
\draw[gp path] (2.431,7.369)--(2.611,7.369);
\draw[gp path] (11.704,7.369)--(11.524,7.369);
\node[gp node right] at (2.166,7.369) {$10^{-2}$};
\draw[gp path] (2.431,7.794)--(2.521,7.794);
\draw[gp path] (11.704,7.794)--(11.614,7.794);
\draw[gp path] (2.431,8.219)--(2.611,8.219);
\draw[gp path] (11.704,8.219)--(11.524,8.219);
\node[gp node right] at (2.166,8.219) {$10^{0}$};
\draw[gp path] (2.431,1.420)--(2.431,1.600);
\draw[gp path] (2.431,8.219)--(2.431,8.039);
\node[gp node center] at (2.431,0.976) {$0$};
\draw[gp path] (3.590,1.420)--(3.590,1.600);
\draw[gp path] (3.590,8.219)--(3.590,8.039);
\node[gp node center] at (3.590,0.976) {$20$};
\draw[gp path] (4.749,1.420)--(4.749,1.600);
\draw[gp path] (4.749,8.219)--(4.749,8.039);
\node[gp node center] at (4.749,0.976) {$40$};
\draw[gp path] (5.908,1.420)--(5.908,1.600);
\draw[gp path] (5.908,8.219)--(5.908,8.039);
\node[gp node center] at (5.908,0.976) {$60$};
\draw[gp path] (7.068,1.420)--(7.068,1.600);
\draw[gp path] (7.068,8.219)--(7.068,8.039);
\node[gp node center] at (7.068,0.976) {$80$};
\draw[gp path] (8.227,1.420)--(8.227,1.600);
\draw[gp path] (8.227,8.219)--(8.227,8.039);
\node[gp node center] at (8.227,0.976) {$100$};
\draw[gp path] (9.386,1.420)--(9.386,1.600);
\draw[gp path] (9.386,8.219)--(9.386,8.039);
\node[gp node center] at (9.386,0.976) {$120$};
\draw[gp path] (10.545,1.420)--(10.545,1.600);
\draw[gp path] (10.545,8.219)--(10.545,8.039);
\node[gp node center] at (10.545,0.976) {$140$};
\draw[gp path] (11.704,1.420)--(11.704,1.600);
\draw[gp path] (11.704,8.219)--(11.704,8.039);
\node[gp node center] at (11.704,0.976) {$160$};
\draw[gp path] (2.431,8.219)--(2.431,1.420)--(11.704,1.420)--(11.704,8.219)--cycle;
\node[gp node center,rotate=-270] at (0.354,4.819) {maximum error};
\node[gp node center] at (7.067,0.310) {$m=2n+1$};
\node[gp node right] at (9.669,7.701) {SE-Sinc};
\gpsetdashtype{gp dt 2}
\draw[gp path] (9.934,7.701)--(11.174,7.701);
\draw[gp path] (2.721,7.997)--(2.953,7.611)--(3.184,7.338)--(3.416,7.106)--(3.648,6.918)%
  --(3.880,6.747)--(4.112,6.586)--(4.344,6.430)--(4.575,6.287)--(4.807,6.147)--(5.039,6.025)%
  --(5.271,5.928)--(5.503,5.834)--(5.735,5.741)--(5.966,5.652)--(6.198,5.564)--(6.430,5.478)%
  --(6.662,5.395)--(6.894,5.314)--(7.125,5.234)--(7.357,5.155)--(7.589,5.077)--(7.821,5.001)%
  --(8.053,4.926)--(8.285,4.852)--(8.516,4.780)--(8.748,4.709)--(8.980,4.639)--(9.212,4.569)%
  --(9.444,4.501)--(9.676,4.433)--(9.907,4.366)--(10.139,4.301)--(10.371,4.236)--(10.603,4.172)%
  --(10.835,4.109)--(11.066,4.046)--(11.298,3.984)--(11.530,3.922)--(11.704,3.877);
\gpsetpointsize{4.00}
\gppoint{gp mark 9}{(2.721,7.997)}
\gppoint{gp mark 9}{(2.953,7.611)}
\gppoint{gp mark 9}{(3.184,7.338)}
\gppoint{gp mark 9}{(3.416,7.106)}
\gppoint{gp mark 9}{(3.648,6.918)}
\gppoint{gp mark 9}{(3.880,6.747)}
\gppoint{gp mark 9}{(4.112,6.586)}
\gppoint{gp mark 9}{(4.344,6.430)}
\gppoint{gp mark 9}{(4.575,6.287)}
\gppoint{gp mark 9}{(4.807,6.147)}
\gppoint{gp mark 9}{(5.039,6.025)}
\gppoint{gp mark 9}{(5.271,5.928)}
\gppoint{gp mark 9}{(5.503,5.834)}
\gppoint{gp mark 9}{(5.735,5.741)}
\gppoint{gp mark 9}{(5.966,5.652)}
\gppoint{gp mark 9}{(6.198,5.564)}
\gppoint{gp mark 9}{(6.430,5.478)}
\gppoint{gp mark 9}{(6.662,5.395)}
\gppoint{gp mark 9}{(6.894,5.314)}
\gppoint{gp mark 9}{(7.125,5.234)}
\gppoint{gp mark 9}{(7.357,5.155)}
\gppoint{gp mark 9}{(7.589,5.077)}
\gppoint{gp mark 9}{(7.821,5.001)}
\gppoint{gp mark 9}{(8.053,4.926)}
\gppoint{gp mark 9}{(8.285,4.852)}
\gppoint{gp mark 9}{(8.516,4.780)}
\gppoint{gp mark 9}{(8.748,4.709)}
\gppoint{gp mark 9}{(8.980,4.639)}
\gppoint{gp mark 9}{(9.212,4.569)}
\gppoint{gp mark 9}{(9.444,4.501)}
\gppoint{gp mark 9}{(9.676,4.433)}
\gppoint{gp mark 9}{(9.907,4.366)}
\gppoint{gp mark 9}{(10.139,4.301)}
\gppoint{gp mark 9}{(10.371,4.236)}
\gppoint{gp mark 9}{(10.603,4.172)}
\gppoint{gp mark 9}{(10.835,4.109)}
\gppoint{gp mark 9}{(11.066,4.046)}
\gppoint{gp mark 9}{(11.298,3.984)}
\gppoint{gp mark 9}{(11.530,3.922)}
\gppoint{gp mark 9}{(10.554,7.701)}
\node[gp node right] at (9.669,7.026) {DE-Sinc};
\gpsetdashtype{gp dt solid}
\draw[gp path] (9.934,7.026)--(11.174,7.026);
\draw[gp path] (2.744,8.219)--(2.953,7.790)--(3.184,7.449)--(3.416,7.127)--(3.648,6.838)%
  --(3.880,6.620)--(4.112,6.390)--(4.344,6.146)--(4.575,5.897)--(4.807,5.645)--(5.039,5.388)%
  --(5.271,5.126)--(5.503,4.856)--(5.735,4.586)--(5.966,4.385)--(6.198,4.182)--(6.430,3.970)%
  --(6.662,3.753)--(6.894,3.531)--(7.125,3.304)--(7.357,3.073)--(7.589,2.839)--(7.821,2.604)%
  --(8.053,2.402)--(8.285,2.229)--(8.516,2.120)--(8.748,2.182)--(8.980,2.182)--(9.212,2.182)%
  --(9.444,2.168)--(9.676,2.154)--(9.907,2.195)--(10.139,2.229)--(10.371,2.182)--(10.603,2.207)%
  --(10.835,2.182)--(11.066,2.120)--(11.298,2.182)--(11.530,2.229)--(11.704,2.172);
\gppoint{gp mark 7}{(2.953,7.790)}
\gppoint{gp mark 7}{(3.184,7.449)}
\gppoint{gp mark 7}{(3.416,7.127)}
\gppoint{gp mark 7}{(3.648,6.838)}
\gppoint{gp mark 7}{(3.880,6.620)}
\gppoint{gp mark 7}{(4.112,6.390)}
\gppoint{gp mark 7}{(4.344,6.146)}
\gppoint{gp mark 7}{(4.575,5.897)}
\gppoint{gp mark 7}{(4.807,5.645)}
\gppoint{gp mark 7}{(5.039,5.388)}
\gppoint{gp mark 7}{(5.271,5.126)}
\gppoint{gp mark 7}{(5.503,4.856)}
\gppoint{gp mark 7}{(5.735,4.586)}
\gppoint{gp mark 7}{(5.966,4.385)}
\gppoint{gp mark 7}{(6.198,4.182)}
\gppoint{gp mark 7}{(6.430,3.970)}
\gppoint{gp mark 7}{(6.662,3.753)}
\gppoint{gp mark 7}{(6.894,3.531)}
\gppoint{gp mark 7}{(7.125,3.304)}
\gppoint{gp mark 7}{(7.357,3.073)}
\gppoint{gp mark 7}{(7.589,2.839)}
\gppoint{gp mark 7}{(7.821,2.604)}
\gppoint{gp mark 7}{(8.053,2.402)}
\gppoint{gp mark 7}{(8.285,2.229)}
\gppoint{gp mark 7}{(8.516,2.120)}
\gppoint{gp mark 7}{(8.748,2.182)}
\gppoint{gp mark 7}{(8.980,2.182)}
\gppoint{gp mark 7}{(9.212,2.182)}
\gppoint{gp mark 7}{(9.444,2.168)}
\gppoint{gp mark 7}{(9.676,2.154)}
\gppoint{gp mark 7}{(9.907,2.195)}
\gppoint{gp mark 7}{(10.139,2.229)}
\gppoint{gp mark 7}{(10.371,2.182)}
\gppoint{gp mark 7}{(10.603,2.207)}
\gppoint{gp mark 7}{(10.835,2.182)}
\gppoint{gp mark 7}{(11.066,2.120)}
\gppoint{gp mark 7}{(11.298,2.182)}
\gppoint{gp mark 7}{(11.530,2.229)}
\gppoint{gp mark 7}{(10.554,7.026)}
\draw[gp path] (2.431,8.219)--(2.431,1.420)--(11.704,1.420)--(11.704,8.219)--cycle;
\gpdefrectangularnode{gp plot 1}{\pgfpoint{2.431cm}{1.420cm}}{\pgfpoint{11.704cm}{8.219cm}}
\end{tikzpicture}
}
\caption{Numerical results for Example~\ref{ex:4}.}
\label{fig:ex4}
\end{figure}

\begin{example}
\label{ex:5}
Consider the following indefinite convolution
\[
\int_0^x \cos(x - t)\sqrt{t}\D{t} = \sqrt{\frac{\pi}{2}}
\left\{
  C\left(\sqrt{\frac{2 x}{\pi}}\right)\sin x
- S\left(\sqrt{\frac{2 x}{\pi}}\right)\cos x
\right\},
\quad 0\leq x\leq 2,
\]
where $C(x)$ and $S(x)$ are the Fresnel integrals defined by
\begin{align}
 C(x) &= \int_0^x \cos\left(\frac{\pi}{2}t^2\right)\D{t},
\label{eq:FresnelC}
\\
 S(x) &= \int_0^x \sin\left(\frac{\pi}{2}t^2\right)\D{t}.
\label{eq:FresnelS}
\end{align}
In this case,
$\hat{f}(s) = s/(1+s^2)$ and $F(s)=\hat{f}(1/s)=s/(1 + s^2)$.
\end{example}

\begin{example}
\label{ex:6}
Consider the following indefinite convolution
\begin{align*}
\int_0^x \frac{\sin(x-t)}{x-t}\sqrt{t}\D{t}
& = \sqrt{2\pi}
\left\{
S\left(\sqrt{\frac{2 x}{\pi}}\right)\cos x
- C\left(\sqrt{\frac{2 x}{\pi}}\right)\sin x
\right\}\\
&\quad +
\pi\sqrt{x}
\left\{S^2\left(\sqrt{\frac{2 x}{\pi}}\right)
 + C^2\left(\sqrt{\frac{2 x}{\pi}}\right)\right\},
\quad 0\leq x\leq 2,\nonumber
\end{align*}
where $C(x)$ and $S(x)$ are the Fresnel integrals defined
by~\eqref{eq:FresnelC} and~\eqref{eq:FresnelS}, respectively.
In this case,
$\hat{f}(s) = \arctan(1/s)$ and $F(s)=\arctan s$.
\end{example}

\begin{figure}
\centering
\scalebox{0.6}{
\begin{tikzpicture}[gnuplot]
\tikzset{every node/.append style={font={\ttfamily\fontsize{14.4pt}{17.28pt}\selectfont}}}
\gpmonochromelines
\path (0.000,0.000) rectangle (12.500,8.750);
\gpcolor{color=gp lt color border}
\gpsetlinetype{gp lt border}
\gpsetdashtype{gp dt solid}
\gpsetlinewidth{1.00}
\draw[gp path] (2.431,1.420)--(2.611,1.420);
\draw[gp path] (11.704,1.420)--(11.524,1.420);
\node[gp node right] at (2.166,1.420) {$10^{-16}$};
\draw[gp path] (2.431,1.845)--(2.521,1.845);
\draw[gp path] (11.704,1.845)--(11.614,1.845);
\draw[gp path] (2.431,2.270)--(2.611,2.270);
\draw[gp path] (11.704,2.270)--(11.524,2.270);
\node[gp node right] at (2.166,2.270) {$10^{-14}$};
\draw[gp path] (2.431,2.695)--(2.521,2.695);
\draw[gp path] (11.704,2.695)--(11.614,2.695);
\draw[gp path] (2.431,3.120)--(2.611,3.120);
\draw[gp path] (11.704,3.120)--(11.524,3.120);
\node[gp node right] at (2.166,3.120) {$10^{-12}$};
\draw[gp path] (2.431,3.545)--(2.521,3.545);
\draw[gp path] (11.704,3.545)--(11.614,3.545);
\draw[gp path] (2.431,3.970)--(2.611,3.970);
\draw[gp path] (11.704,3.970)--(11.524,3.970);
\node[gp node right] at (2.166,3.970) {$10^{-10}$};
\draw[gp path] (2.431,4.395)--(2.521,4.395);
\draw[gp path] (11.704,4.395)--(11.614,4.395);
\draw[gp path] (2.431,4.820)--(2.611,4.820);
\draw[gp path] (11.704,4.820)--(11.524,4.820);
\node[gp node right] at (2.166,4.820) {$10^{-8}$};
\draw[gp path] (2.431,5.244)--(2.521,5.244);
\draw[gp path] (11.704,5.244)--(11.614,5.244);
\draw[gp path] (2.431,5.669)--(2.611,5.669);
\draw[gp path] (11.704,5.669)--(11.524,5.669);
\node[gp node right] at (2.166,5.669) {$10^{-6}$};
\draw[gp path] (2.431,6.094)--(2.521,6.094);
\draw[gp path] (11.704,6.094)--(11.614,6.094);
\draw[gp path] (2.431,6.519)--(2.611,6.519);
\draw[gp path] (11.704,6.519)--(11.524,6.519);
\node[gp node right] at (2.166,6.519) {$10^{-4}$};
\draw[gp path] (2.431,6.944)--(2.521,6.944);
\draw[gp path] (11.704,6.944)--(11.614,6.944);
\draw[gp path] (2.431,7.369)--(2.611,7.369);
\draw[gp path] (11.704,7.369)--(11.524,7.369);
\node[gp node right] at (2.166,7.369) {$10^{-2}$};
\draw[gp path] (2.431,7.794)--(2.521,7.794);
\draw[gp path] (11.704,7.794)--(11.614,7.794);
\draw[gp path] (2.431,8.219)--(2.611,8.219);
\draw[gp path] (11.704,8.219)--(11.524,8.219);
\node[gp node right] at (2.166,8.219) {$10^{0}$};
\draw[gp path] (2.431,1.420)--(2.431,1.600);
\draw[gp path] (2.431,8.219)--(2.431,8.039);
\node[gp node center] at (2.431,0.976) {$0$};
\draw[gp path] (3.590,1.420)--(3.590,1.600);
\draw[gp path] (3.590,8.219)--(3.590,8.039);
\node[gp node center] at (3.590,0.976) {$20$};
\draw[gp path] (4.749,1.420)--(4.749,1.600);
\draw[gp path] (4.749,8.219)--(4.749,8.039);
\node[gp node center] at (4.749,0.976) {$40$};
\draw[gp path] (5.908,1.420)--(5.908,1.600);
\draw[gp path] (5.908,8.219)--(5.908,8.039);
\node[gp node center] at (5.908,0.976) {$60$};
\draw[gp path] (7.068,1.420)--(7.068,1.600);
\draw[gp path] (7.068,8.219)--(7.068,8.039);
\node[gp node center] at (7.068,0.976) {$80$};
\draw[gp path] (8.227,1.420)--(8.227,1.600);
\draw[gp path] (8.227,8.219)--(8.227,8.039);
\node[gp node center] at (8.227,0.976) {$100$};
\draw[gp path] (9.386,1.420)--(9.386,1.600);
\draw[gp path] (9.386,8.219)--(9.386,8.039);
\node[gp node center] at (9.386,0.976) {$120$};
\draw[gp path] (10.545,1.420)--(10.545,1.600);
\draw[gp path] (10.545,8.219)--(10.545,8.039);
\node[gp node center] at (10.545,0.976) {$140$};
\draw[gp path] (11.704,1.420)--(11.704,1.600);
\draw[gp path] (11.704,8.219)--(11.704,8.039);
\node[gp node center] at (11.704,0.976) {$160$};
\draw[gp path] (2.431,8.219)--(2.431,1.420)--(11.704,1.420)--(11.704,8.219)--cycle;
\node[gp node center,rotate=-270] at (0.354,4.819) {maximum error};
\node[gp node center] at (7.067,0.310) {$m=2n+1$};
\node[gp node right] at (9.669,7.701) {SE-Sinc};
\gpsetdashtype{gp dt 2}
\draw[gp path] (9.934,7.701)--(11.174,7.701);
\draw[gp path] (2.721,7.601)--(2.953,7.299)--(3.184,7.093)--(3.416,6.916)--(3.648,6.758)%
  --(3.880,6.613)--(4.112,6.478)--(4.344,6.355)--(4.575,6.238)--(4.807,6.127)--(5.039,6.020)%
  --(5.271,5.918)--(5.503,5.818)--(5.735,5.723)--(5.966,5.634)--(6.198,5.545)--(6.430,5.458)%
  --(6.662,5.373)--(6.894,5.292)--(7.125,5.212)--(7.357,5.135)--(7.589,5.058)--(7.821,4.983)%
  --(8.053,4.908)--(8.285,4.839)--(8.516,4.770)--(8.748,4.700)--(8.980,4.630)--(9.212,4.565)%
  --(9.444,4.500)--(9.676,4.434)--(9.907,4.372)--(10.139,4.309)--(10.371,4.247)--(10.603,4.186)%
  --(10.835,4.125)--(11.066,4.067)--(11.298,4.006)--(11.530,3.951)--(11.704,3.905);
\gpsetpointsize{4.00}
\gppoint{gp mark 9}{(2.721,7.601)}
\gppoint{gp mark 9}{(2.953,7.299)}
\gppoint{gp mark 9}{(3.184,7.093)}
\gppoint{gp mark 9}{(3.416,6.916)}
\gppoint{gp mark 9}{(3.648,6.758)}
\gppoint{gp mark 9}{(3.880,6.613)}
\gppoint{gp mark 9}{(4.112,6.478)}
\gppoint{gp mark 9}{(4.344,6.355)}
\gppoint{gp mark 9}{(4.575,6.238)}
\gppoint{gp mark 9}{(4.807,6.127)}
\gppoint{gp mark 9}{(5.039,6.020)}
\gppoint{gp mark 9}{(5.271,5.918)}
\gppoint{gp mark 9}{(5.503,5.818)}
\gppoint{gp mark 9}{(5.735,5.723)}
\gppoint{gp mark 9}{(5.966,5.634)}
\gppoint{gp mark 9}{(6.198,5.545)}
\gppoint{gp mark 9}{(6.430,5.458)}
\gppoint{gp mark 9}{(6.662,5.373)}
\gppoint{gp mark 9}{(6.894,5.292)}
\gppoint{gp mark 9}{(7.125,5.212)}
\gppoint{gp mark 9}{(7.357,5.135)}
\gppoint{gp mark 9}{(7.589,5.058)}
\gppoint{gp mark 9}{(7.821,4.983)}
\gppoint{gp mark 9}{(8.053,4.908)}
\gppoint{gp mark 9}{(8.285,4.839)}
\gppoint{gp mark 9}{(8.516,4.770)}
\gppoint{gp mark 9}{(8.748,4.700)}
\gppoint{gp mark 9}{(8.980,4.630)}
\gppoint{gp mark 9}{(9.212,4.565)}
\gppoint{gp mark 9}{(9.444,4.500)}
\gppoint{gp mark 9}{(9.676,4.434)}
\gppoint{gp mark 9}{(9.907,4.372)}
\gppoint{gp mark 9}{(10.139,4.309)}
\gppoint{gp mark 9}{(10.371,4.247)}
\gppoint{gp mark 9}{(10.603,4.186)}
\gppoint{gp mark 9}{(10.835,4.125)}
\gppoint{gp mark 9}{(11.066,4.067)}
\gppoint{gp mark 9}{(11.298,4.006)}
\gppoint{gp mark 9}{(11.530,3.951)}
\gppoint{gp mark 9}{(10.554,7.701)}
\node[gp node right] at (9.669,7.026) {DE-Sinc};
\gpsetdashtype{gp dt solid}
\draw[gp path] (9.934,7.026)--(11.174,7.026);
\draw[gp path] (2.721,7.693)--(2.953,7.444)--(3.184,7.210)--(3.416,6.969)--(3.648,6.723)%
  --(3.880,6.484)--(4.112,6.252)--(4.344,6.020)--(4.575,5.792)--(4.807,5.563)--(5.039,5.338)%
  --(5.271,5.114)--(5.503,4.892)--(5.735,4.669)--(5.966,4.448)--(6.198,4.224)--(6.430,4.008)%
  --(6.662,3.786)--(6.894,3.572)--(7.125,3.354)--(7.357,3.136)--(7.589,2.922)--(7.821,2.704)%
  --(8.053,2.497)--(8.285,2.285)--(8.516,2.129)--(8.748,1.926)--(8.980,1.951)--(9.212,1.951)%
  --(9.444,1.973)--(9.676,1.926)--(9.907,1.926)--(10.139,1.898)--(10.371,1.973)--(10.603,1.898)%
  --(10.835,1.926)--(11.066,1.973)--(11.298,1.898)--(11.530,1.951)--(11.704,1.911);
\gppoint{gp mark 7}{(2.721,7.693)}
\gppoint{gp mark 7}{(2.953,7.444)}
\gppoint{gp mark 7}{(3.184,7.210)}
\gppoint{gp mark 7}{(3.416,6.969)}
\gppoint{gp mark 7}{(3.648,6.723)}
\gppoint{gp mark 7}{(3.880,6.484)}
\gppoint{gp mark 7}{(4.112,6.252)}
\gppoint{gp mark 7}{(4.344,6.020)}
\gppoint{gp mark 7}{(4.575,5.792)}
\gppoint{gp mark 7}{(4.807,5.563)}
\gppoint{gp mark 7}{(5.039,5.338)}
\gppoint{gp mark 7}{(5.271,5.114)}
\gppoint{gp mark 7}{(5.503,4.892)}
\gppoint{gp mark 7}{(5.735,4.669)}
\gppoint{gp mark 7}{(5.966,4.448)}
\gppoint{gp mark 7}{(6.198,4.224)}
\gppoint{gp mark 7}{(6.430,4.008)}
\gppoint{gp mark 7}{(6.662,3.786)}
\gppoint{gp mark 7}{(6.894,3.572)}
\gppoint{gp mark 7}{(7.125,3.354)}
\gppoint{gp mark 7}{(7.357,3.136)}
\gppoint{gp mark 7}{(7.589,2.922)}
\gppoint{gp mark 7}{(7.821,2.704)}
\gppoint{gp mark 7}{(8.053,2.497)}
\gppoint{gp mark 7}{(8.285,2.285)}
\gppoint{gp mark 7}{(8.516,2.129)}
\gppoint{gp mark 7}{(8.748,1.926)}
\gppoint{gp mark 7}{(8.980,1.951)}
\gppoint{gp mark 7}{(9.212,1.951)}
\gppoint{gp mark 7}{(9.444,1.973)}
\gppoint{gp mark 7}{(9.676,1.926)}
\gppoint{gp mark 7}{(9.907,1.926)}
\gppoint{gp mark 7}{(10.139,1.898)}
\gppoint{gp mark 7}{(10.371,1.973)}
\gppoint{gp mark 7}{(10.603,1.898)}
\gppoint{gp mark 7}{(10.835,1.926)}
\gppoint{gp mark 7}{(11.066,1.973)}
\gppoint{gp mark 7}{(11.298,1.898)}
\gppoint{gp mark 7}{(11.530,1.951)}
\gppoint{gp mark 7}{(10.554,7.026)}
\draw[gp path] (2.431,8.219)--(2.431,1.420)--(11.704,1.420)--(11.704,8.219)--cycle;
\gpdefrectangularnode{gp plot 1}{\pgfpoint{2.431cm}{1.420cm}}{\pgfpoint{11.704cm}{8.219cm}}
\end{tikzpicture}
}
\caption{Numerical results for Example~\ref{ex:5}.}
\label{fig:ex5}
\end{figure}
\begin{figure}
\centering
\scalebox{0.6}{
\begin{tikzpicture}[gnuplot]
\tikzset{every node/.append style={font={\ttfamily\fontsize{14.4pt}{17.28pt}\selectfont}}}
\gpmonochromelines
\path (0.000,0.000) rectangle (12.500,8.750);
\gpcolor{color=gp lt color border}
\gpsetlinetype{gp lt border}
\gpsetdashtype{gp dt solid}
\gpsetlinewidth{1.00}
\draw[gp path] (2.431,1.420)--(2.611,1.420);
\draw[gp path] (11.704,1.420)--(11.524,1.420);
\node[gp node right] at (2.166,1.420) {$10^{-16}$};
\draw[gp path] (2.431,1.845)--(2.521,1.845);
\draw[gp path] (11.704,1.845)--(11.614,1.845);
\draw[gp path] (2.431,2.270)--(2.611,2.270);
\draw[gp path] (11.704,2.270)--(11.524,2.270);
\node[gp node right] at (2.166,2.270) {$10^{-14}$};
\draw[gp path] (2.431,2.695)--(2.521,2.695);
\draw[gp path] (11.704,2.695)--(11.614,2.695);
\draw[gp path] (2.431,3.120)--(2.611,3.120);
\draw[gp path] (11.704,3.120)--(11.524,3.120);
\node[gp node right] at (2.166,3.120) {$10^{-12}$};
\draw[gp path] (2.431,3.545)--(2.521,3.545);
\draw[gp path] (11.704,3.545)--(11.614,3.545);
\draw[gp path] (2.431,3.970)--(2.611,3.970);
\draw[gp path] (11.704,3.970)--(11.524,3.970);
\node[gp node right] at (2.166,3.970) {$10^{-10}$};
\draw[gp path] (2.431,4.395)--(2.521,4.395);
\draw[gp path] (11.704,4.395)--(11.614,4.395);
\draw[gp path] (2.431,4.820)--(2.611,4.820);
\draw[gp path] (11.704,4.820)--(11.524,4.820);
\node[gp node right] at (2.166,4.820) {$10^{-8}$};
\draw[gp path] (2.431,5.244)--(2.521,5.244);
\draw[gp path] (11.704,5.244)--(11.614,5.244);
\draw[gp path] (2.431,5.669)--(2.611,5.669);
\draw[gp path] (11.704,5.669)--(11.524,5.669);
\node[gp node right] at (2.166,5.669) {$10^{-6}$};
\draw[gp path] (2.431,6.094)--(2.521,6.094);
\draw[gp path] (11.704,6.094)--(11.614,6.094);
\draw[gp path] (2.431,6.519)--(2.611,6.519);
\draw[gp path] (11.704,6.519)--(11.524,6.519);
\node[gp node right] at (2.166,6.519) {$10^{-4}$};
\draw[gp path] (2.431,6.944)--(2.521,6.944);
\draw[gp path] (11.704,6.944)--(11.614,6.944);
\draw[gp path] (2.431,7.369)--(2.611,7.369);
\draw[gp path] (11.704,7.369)--(11.524,7.369);
\node[gp node right] at (2.166,7.369) {$10^{-2}$};
\draw[gp path] (2.431,7.794)--(2.521,7.794);
\draw[gp path] (11.704,7.794)--(11.614,7.794);
\draw[gp path] (2.431,8.219)--(2.611,8.219);
\draw[gp path] (11.704,8.219)--(11.524,8.219);
\node[gp node right] at (2.166,8.219) {$10^{0}$};
\draw[gp path] (2.431,1.420)--(2.431,1.600);
\draw[gp path] (2.431,8.219)--(2.431,8.039);
\node[gp node center] at (2.431,0.976) {$0$};
\draw[gp path] (3.590,1.420)--(3.590,1.600);
\draw[gp path] (3.590,8.219)--(3.590,8.039);
\node[gp node center] at (3.590,0.976) {$20$};
\draw[gp path] (4.749,1.420)--(4.749,1.600);
\draw[gp path] (4.749,8.219)--(4.749,8.039);
\node[gp node center] at (4.749,0.976) {$40$};
\draw[gp path] (5.908,1.420)--(5.908,1.600);
\draw[gp path] (5.908,8.219)--(5.908,8.039);
\node[gp node center] at (5.908,0.976) {$60$};
\draw[gp path] (7.068,1.420)--(7.068,1.600);
\draw[gp path] (7.068,8.219)--(7.068,8.039);
\node[gp node center] at (7.068,0.976) {$80$};
\draw[gp path] (8.227,1.420)--(8.227,1.600);
\draw[gp path] (8.227,8.219)--(8.227,8.039);
\node[gp node center] at (8.227,0.976) {$100$};
\draw[gp path] (9.386,1.420)--(9.386,1.600);
\draw[gp path] (9.386,8.219)--(9.386,8.039);
\node[gp node center] at (9.386,0.976) {$120$};
\draw[gp path] (10.545,1.420)--(10.545,1.600);
\draw[gp path] (10.545,8.219)--(10.545,8.039);
\node[gp node center] at (10.545,0.976) {$140$};
\draw[gp path] (11.704,1.420)--(11.704,1.600);
\draw[gp path] (11.704,8.219)--(11.704,8.039);
\node[gp node center] at (11.704,0.976) {$160$};
\draw[gp path] (2.431,8.219)--(2.431,1.420)--(11.704,1.420)--(11.704,8.219)--cycle;
\node[gp node center,rotate=-270] at (0.354,4.819) {maximum error};
\node[gp node center] at (7.067,0.310) {$m=2n+1$};
\node[gp node right] at (9.669,7.701) {SE-Sinc};
\gpsetdashtype{gp dt 2}
\draw[gp path] (9.934,7.701)--(11.174,7.701);
\draw[gp path] (2.721,7.583)--(2.953,7.280)--(3.184,7.035)--(3.416,6.841)--(3.648,6.666)%
  --(3.880,6.506)--(4.112,6.361)--(4.344,6.226)--(4.575,6.098)--(4.807,5.978)--(5.039,5.863)%
  --(5.271,5.756)--(5.503,5.650)--(5.735,5.551)--(5.966,5.454)--(6.198,5.358)--(6.430,5.267)%
  --(6.662,5.178)--(6.894,5.093)--(7.125,5.010)--(7.357,4.930)--(7.589,4.850)--(7.821,4.771)%
  --(8.053,4.694)--(8.285,4.621)--(8.516,4.546)--(8.748,4.476)--(8.980,4.405)--(9.212,4.333)%
  --(9.444,4.268)--(9.676,4.197)--(9.907,4.135)--(10.139,4.068)--(10.371,4.006)--(10.603,3.941)%
  --(10.835,3.880)--(11.066,3.818)--(11.298,3.757)--(11.530,3.699)--(11.704,3.653);
\gpsetpointsize{4.00}
\gppoint{gp mark 9}{(2.721,7.583)}
\gppoint{gp mark 9}{(2.953,7.280)}
\gppoint{gp mark 9}{(3.184,7.035)}
\gppoint{gp mark 9}{(3.416,6.841)}
\gppoint{gp mark 9}{(3.648,6.666)}
\gppoint{gp mark 9}{(3.880,6.506)}
\gppoint{gp mark 9}{(4.112,6.361)}
\gppoint{gp mark 9}{(4.344,6.226)}
\gppoint{gp mark 9}{(4.575,6.098)}
\gppoint{gp mark 9}{(4.807,5.978)}
\gppoint{gp mark 9}{(5.039,5.863)}
\gppoint{gp mark 9}{(5.271,5.756)}
\gppoint{gp mark 9}{(5.503,5.650)}
\gppoint{gp mark 9}{(5.735,5.551)}
\gppoint{gp mark 9}{(5.966,5.454)}
\gppoint{gp mark 9}{(6.198,5.358)}
\gppoint{gp mark 9}{(6.430,5.267)}
\gppoint{gp mark 9}{(6.662,5.178)}
\gppoint{gp mark 9}{(6.894,5.093)}
\gppoint{gp mark 9}{(7.125,5.010)}
\gppoint{gp mark 9}{(7.357,4.930)}
\gppoint{gp mark 9}{(7.589,4.850)}
\gppoint{gp mark 9}{(7.821,4.771)}
\gppoint{gp mark 9}{(8.053,4.694)}
\gppoint{gp mark 9}{(8.285,4.621)}
\gppoint{gp mark 9}{(8.516,4.546)}
\gppoint{gp mark 9}{(8.748,4.476)}
\gppoint{gp mark 9}{(8.980,4.405)}
\gppoint{gp mark 9}{(9.212,4.333)}
\gppoint{gp mark 9}{(9.444,4.268)}
\gppoint{gp mark 9}{(9.676,4.197)}
\gppoint{gp mark 9}{(9.907,4.135)}
\gppoint{gp mark 9}{(10.139,4.068)}
\gppoint{gp mark 9}{(10.371,4.006)}
\gppoint{gp mark 9}{(10.603,3.941)}
\gppoint{gp mark 9}{(10.835,3.880)}
\gppoint{gp mark 9}{(11.066,3.818)}
\gppoint{gp mark 9}{(11.298,3.757)}
\gppoint{gp mark 9}{(11.530,3.699)}
\gppoint{gp mark 9}{(10.554,7.701)}
\node[gp node right] at (9.669,7.026) {DE-Sinc};
\gpsetdashtype{gp dt solid}
\draw[gp path] (9.934,7.026)--(11.174,7.026);
\draw[gp path] (2.721,7.726)--(2.953,7.417)--(3.184,7.107)--(3.416,6.817)--(3.648,6.544)%
  --(3.880,6.283)--(4.112,6.032)--(4.344,5.783)--(4.575,5.537)--(4.807,5.294)--(5.039,5.061)%
  --(5.271,4.825)--(5.503,4.590)--(5.735,4.363)--(5.966,4.132)--(6.198,3.908)--(6.430,3.678)%
  --(6.662,3.451)--(6.894,3.234)--(7.125,3.017)--(7.357,2.798)--(7.589,2.563)--(7.821,2.362)%
  --(8.053,2.182)--(8.285,2.054)--(8.516,1.973)--(8.748,2.010)--(8.980,2.026)--(9.212,2.010)%
  --(9.444,1.973)--(9.676,1.992)--(9.907,2.010)--(10.139,1.962)--(10.371,1.973)--(10.603,1.983)%
  --(10.835,2.010)--(11.066,1.951)--(11.298,2.026)--(11.530,1.973)--(11.704,2.000);
\gppoint{gp mark 7}{(2.721,7.726)}
\gppoint{gp mark 7}{(2.953,7.417)}
\gppoint{gp mark 7}{(3.184,7.107)}
\gppoint{gp mark 7}{(3.416,6.817)}
\gppoint{gp mark 7}{(3.648,6.544)}
\gppoint{gp mark 7}{(3.880,6.283)}
\gppoint{gp mark 7}{(4.112,6.032)}
\gppoint{gp mark 7}{(4.344,5.783)}
\gppoint{gp mark 7}{(4.575,5.537)}
\gppoint{gp mark 7}{(4.807,5.294)}
\gppoint{gp mark 7}{(5.039,5.061)}
\gppoint{gp mark 7}{(5.271,4.825)}
\gppoint{gp mark 7}{(5.503,4.590)}
\gppoint{gp mark 7}{(5.735,4.363)}
\gppoint{gp mark 7}{(5.966,4.132)}
\gppoint{gp mark 7}{(6.198,3.908)}
\gppoint{gp mark 7}{(6.430,3.678)}
\gppoint{gp mark 7}{(6.662,3.451)}
\gppoint{gp mark 7}{(6.894,3.234)}
\gppoint{gp mark 7}{(7.125,3.017)}
\gppoint{gp mark 7}{(7.357,2.798)}
\gppoint{gp mark 7}{(7.589,2.563)}
\gppoint{gp mark 7}{(7.821,2.362)}
\gppoint{gp mark 7}{(8.053,2.182)}
\gppoint{gp mark 7}{(8.285,2.054)}
\gppoint{gp mark 7}{(8.516,1.973)}
\gppoint{gp mark 7}{(8.748,2.010)}
\gppoint{gp mark 7}{(8.980,2.026)}
\gppoint{gp mark 7}{(9.212,2.010)}
\gppoint{gp mark 7}{(9.444,1.973)}
\gppoint{gp mark 7}{(9.676,1.992)}
\gppoint{gp mark 7}{(9.907,2.010)}
\gppoint{gp mark 7}{(10.139,1.962)}
\gppoint{gp mark 7}{(10.371,1.973)}
\gppoint{gp mark 7}{(10.603,1.983)}
\gppoint{gp mark 7}{(10.835,2.010)}
\gppoint{gp mark 7}{(11.066,1.951)}
\gppoint{gp mark 7}{(11.298,2.026)}
\gppoint{gp mark 7}{(11.530,1.973)}
\gppoint{gp mark 7}{(10.554,7.026)}
\draw[gp path] (2.431,8.219)--(2.431,1.420)--(11.704,1.420)--(11.704,8.219)--cycle;
\gpdefrectangularnode{gp plot 1}{\pgfpoint{2.431cm}{1.420cm}}{\pgfpoint{11.704cm}{8.219cm}}
\end{tikzpicture}
}
\caption{Numerical results for Example~\ref{ex:6}.}
\label{fig:ex6}
\end{figure}
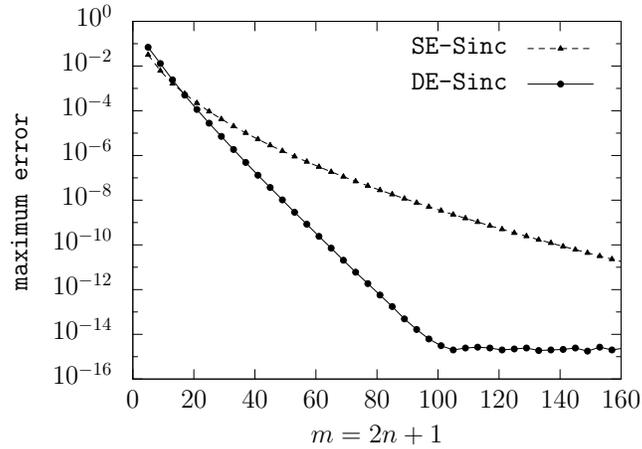

In the case of Example~\ref{ex:3},
$F(s)$ is an entire function.
In the case of Example~\ref{ex:4},
$F(s)$ has a pole on $\Omega^{+}=\{z\in\mathbb{R}:\Re z > 0\}$,
and Theorem~\ref{thm:SE-old} does not support in such a case
(whereas Theorems~\ref{thm:SE-new} and~\ref{thm:DE-new} do).
In the case of Example~\ref{ex:5},
$F(s)$ has poles at $s=\pm \I$.
In the case of Example~\ref{ex:6},
$F(s)$ has branch points at $s=\pm \I$.
The numerical results are shown in Figs.~\ref{fig:ex3}--\ref{fig:ex6}.
We observe that the SE-Sinc and DE-Sinc convolution
exhibit nearly identical convergence behavior in all figures,
regardless of whether $F(s)$ has singular points
or not. This is because $F(s)$ satisfies
Assumption~\ref{assump:F} in all the four examples.



\subsection{Cases where \texorpdfstring{$F$}{F} is not analytic at the origin}

Finally, let us consider the cases
where $F(s)$ does not satisfy Assumption~\ref{assump:F}.
In the subsequent three examples,
$d$ is heuristically set as $d=3.14$ in the SE-Sinc convolution,
and $d=1.57$ in the DE-Sinc convolution,
without theoretical justification.

\begin{example}
\label{ex:7}
Consider the following indefinite convolution
\[
\int_0^x \log(x - t)\sqrt{t}\D{t}
 = \frac{2}{9}x^{3/2}\left\{-8 + 3\log(4x)\right\},
\quad 0\leq x\leq 2.
\]
In this case,
$\hat{f}(s) = -(\gamma + \log s) / s$ and $F(s)=s(-\gamma + \log s)$,
where $\gamma$ is Euler's constant.
\end{example}

\begin{example}
\label{ex:8}
Consider the following indefinite convolution
\[
\int_0^x \frac{(x - t)^{1/3}}{\Gamma(4/3)}\sqrt{t}\D{t}
 = \frac{\sqrt{\pi}}{2\Gamma(17/6)}x^{11/6},
\quad 0\leq x\leq 2.
\]
In this case,
$\hat{f}(s) = s^{-4/3}$ and $F(s)=s^{4/3}$.
\end{example}

\begin{example}
\label{ex:9}
Consider the following indefinite convolution
\[
\int_0^x H(x - t -1)\sqrt{t}\D{t}
 = \dfrac{2}{3}(x-1)^{3/2} H(x - 1), \quad 0\leq x\leq 2,
\]
where $H(x)$ is the Heaviside step function.
In this case,
$\hat{f}(s) = \E^{-s} / s$ and $F(s)=s \E^{-1/s}$.
\end{example}

In the case of Example~\ref{ex:7},
$F(s)$ has a logarithmic branch point at the origin.
In the case of Example~\ref{ex:8},
$F(s)$ has an algebraic branch point at the origin.
In the case of Example~\ref{ex:9},
$F(s)$ has an essential singular point at the origin.
Therefore,
$F(s)$ violates Assumption~\ref{assump:F}
in all the three examples.
Nevertheless, the numerical results (Figs.~\ref{fig:ex7}--\ref{fig:ex9})
indicate that the SE-Sinc convolution in Examples~\ref{ex:7}
and~\ref{ex:8} performs robustly exhibiting its characteristic
convergence rate: $\OO(\exp(-c\sqrt{m}))$.
This is not the case in Example~\ref{ex:9};
the SE-Sinc convolution no longer attains
root-exponential convergence,
although convergence of some order is still observed.

The DE-Sinc convolution shows a complex behavior.
In Example~\ref{ex:7},
it seems to converge with its usual rate
at the first stage ($m\leq 25$),
but the rate becomes worse thereafter.
Similarly, in Example~\ref{ex:8},
it seems to converge with its usual rate
at the first stage ($m\leq 41$),
but the rate becomes worse thereafter.
In Example~\ref{ex:9},
the DE-Sinc convolution fails to converge for $m\geq 17$.
This is because $\exp(-(\aDEm)^{-1})$ becomes a zero matrix
for $m\geq 17$ in the computation of $F(\aDEm)$.

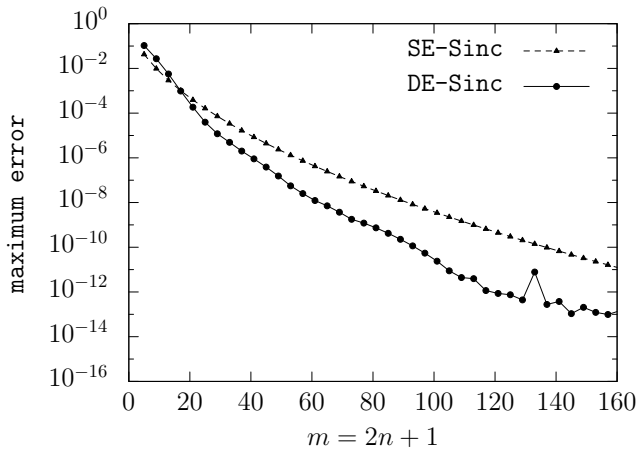
\begin{figure}
\centering
\scalebox{0.6}{
\begin{tikzpicture}[gnuplot]
\tikzset{every node/.append style={font={\ttfamily\fontsize{14.4pt}{17.28pt}\selectfont}}}
\gpmonochromelines
\path (0.000,0.000) rectangle (12.500,8.750);
\gpcolor{color=gp lt color border}
\gpsetlinetype{gp lt border}
\gpsetdashtype{gp dt solid}
\gpsetlinewidth{1.00}
\draw[gp path] (2.431,1.420)--(2.611,1.420);
\draw[gp path] (11.704,1.420)--(11.524,1.420);
\node[gp node right] at (2.166,1.420) {$10^{-16}$};
\draw[gp path] (2.431,1.845)--(2.521,1.845);
\draw[gp path] (11.704,1.845)--(11.614,1.845);
\draw[gp path] (2.431,2.270)--(2.611,2.270);
\draw[gp path] (11.704,2.270)--(11.524,2.270);
\node[gp node right] at (2.166,2.270) {$10^{-14}$};
\draw[gp path] (2.431,2.695)--(2.521,2.695);
\draw[gp path] (11.704,2.695)--(11.614,2.695);
\draw[gp path] (2.431,3.120)--(2.611,3.120);
\draw[gp path] (11.704,3.120)--(11.524,3.120);
\node[gp node right] at (2.166,3.120) {$10^{-12}$};
\draw[gp path] (2.431,3.545)--(2.521,3.545);
\draw[gp path] (11.704,3.545)--(11.614,3.545);
\draw[gp path] (2.431,3.970)--(2.611,3.970);
\draw[gp path] (11.704,3.970)--(11.524,3.970);
\node[gp node right] at (2.166,3.970) {$10^{-10}$};
\draw[gp path] (2.431,4.395)--(2.521,4.395);
\draw[gp path] (11.704,4.395)--(11.614,4.395);
\draw[gp path] (2.431,4.820)--(2.611,4.820);
\draw[gp path] (11.704,4.820)--(11.524,4.820);
\node[gp node right] at (2.166,4.820) {$10^{-8}$};
\draw[gp path] (2.431,5.244)--(2.521,5.244);
\draw[gp path] (11.704,5.244)--(11.614,5.244);
\draw[gp path] (2.431,5.669)--(2.611,5.669);
\draw[gp path] (11.704,5.669)--(11.524,5.669);
\node[gp node right] at (2.166,5.669) {$10^{-6}$};
\draw[gp path] (2.431,6.094)--(2.521,6.094);
\draw[gp path] (11.704,6.094)--(11.614,6.094);
\draw[gp path] (2.431,6.519)--(2.611,6.519);
\draw[gp path] (11.704,6.519)--(11.524,6.519);
\node[gp node right] at (2.166,6.519) {$10^{-4}$};
\draw[gp path] (2.431,6.944)--(2.521,6.944);
\draw[gp path] (11.704,6.944)--(11.614,6.944);
\draw[gp path] (2.431,7.369)--(2.611,7.369);
\draw[gp path] (11.704,7.369)--(11.524,7.369);
\node[gp node right] at (2.166,7.369) {$10^{-2}$};
\draw[gp path] (2.431,7.794)--(2.521,7.794);
\draw[gp path] (11.704,7.794)--(11.614,7.794);
\draw[gp path] (2.431,8.219)--(2.611,8.219);
\draw[gp path] (11.704,8.219)--(11.524,8.219);
\node[gp node right] at (2.166,8.219) {$10^{0}$};
\draw[gp path] (2.431,1.420)--(2.431,1.600);
\draw[gp path] (2.431,8.219)--(2.431,8.039);
\node[gp node center] at (2.431,0.976) {$0$};
\draw[gp path] (3.590,1.420)--(3.590,1.600);
\draw[gp path] (3.590,8.219)--(3.590,8.039);
\node[gp node center] at (3.590,0.976) {$20$};
\draw[gp path] (4.749,1.420)--(4.749,1.600);
\draw[gp path] (4.749,8.219)--(4.749,8.039);
\node[gp node center] at (4.749,0.976) {$40$};
\draw[gp path] (5.908,1.420)--(5.908,1.600);
\draw[gp path] (5.908,8.219)--(5.908,8.039);
\node[gp node center] at (5.908,0.976) {$60$};
\draw[gp path] (7.068,1.420)--(7.068,1.600);
\draw[gp path] (7.068,8.219)--(7.068,8.039);
\node[gp node center] at (7.068,0.976) {$80$};
\draw[gp path] (8.227,1.420)--(8.227,1.600);
\draw[gp path] (8.227,8.219)--(8.227,8.039);
\node[gp node center] at (8.227,0.976) {$100$};
\draw[gp path] (9.386,1.420)--(9.386,1.600);
\draw[gp path] (9.386,8.219)--(9.386,8.039);
\node[gp node center] at (9.386,0.976) {$120$};
\draw[gp path] (10.545,1.420)--(10.545,1.600);
\draw[gp path] (10.545,8.219)--(10.545,8.039);
\node[gp node center] at (10.545,0.976) {$140$};
\draw[gp path] (11.704,1.420)--(11.704,1.600);
\draw[gp path] (11.704,8.219)--(11.704,8.039);
\node[gp node center] at (11.704,0.976) {$160$};
\draw[gp path] (2.431,8.219)--(2.431,1.420)--(11.704,1.420)--(11.704,8.219)--cycle;
\node[gp node center,rotate=-270] at (0.354,4.819) {maximum error};
\node[gp node center] at (7.067,0.310) {$m=2n+1$};
\node[gp node right] at (9.669,7.701) {SE-Sinc};
\gpsetdashtype{gp dt 2}
\draw[gp path] (9.934,7.701)--(11.174,7.701);
\draw[gp path] (2.721,7.636)--(2.953,7.365)--(3.184,7.142)--(3.416,6.946)--(3.648,6.770)%
  --(3.880,6.609)--(4.112,6.460)--(4.344,6.320)--(4.575,6.188)--(4.807,6.063)--(5.039,5.943)%
  --(5.271,5.829)--(5.503,5.719)--(5.735,5.613)--(5.966,5.510)--(6.198,5.411)--(6.430,5.315)%
  --(6.662,5.221)--(6.894,5.129)--(7.125,5.040)--(7.357,4.953)--(7.589,4.868)--(7.821,4.785)%
  --(8.053,4.703)--(8.285,4.624)--(8.516,4.545)--(8.748,4.468)--(8.980,4.393)--(9.212,4.318)%
  --(9.444,4.245)--(9.676,4.173)--(9.907,4.103)--(10.139,4.033)--(10.371,3.964)--(10.603,3.896)%
  --(10.835,3.829)--(11.066,3.763)--(11.298,3.698)--(11.530,3.635)--(11.704,3.587);
\gpsetpointsize{4.00}
\gppoint{gp mark 9}{(2.721,7.636)}
\gppoint{gp mark 9}{(2.953,7.365)}
\gppoint{gp mark 9}{(3.184,7.142)}
\gppoint{gp mark 9}{(3.416,6.946)}
\gppoint{gp mark 9}{(3.648,6.770)}
\gppoint{gp mark 9}{(3.880,6.609)}
\gppoint{gp mark 9}{(4.112,6.460)}
\gppoint{gp mark 9}{(4.344,6.320)}
\gppoint{gp mark 9}{(4.575,6.188)}
\gppoint{gp mark 9}{(4.807,6.063)}
\gppoint{gp mark 9}{(5.039,5.943)}
\gppoint{gp mark 9}{(5.271,5.829)}
\gppoint{gp mark 9}{(5.503,5.719)}
\gppoint{gp mark 9}{(5.735,5.613)}
\gppoint{gp mark 9}{(5.966,5.510)}
\gppoint{gp mark 9}{(6.198,5.411)}
\gppoint{gp mark 9}{(6.430,5.315)}
\gppoint{gp mark 9}{(6.662,5.221)}
\gppoint{gp mark 9}{(6.894,5.129)}
\gppoint{gp mark 9}{(7.125,5.040)}
\gppoint{gp mark 9}{(7.357,4.953)}
\gppoint{gp mark 9}{(7.589,4.868)}
\gppoint{gp mark 9}{(7.821,4.785)}
\gppoint{gp mark 9}{(8.053,4.703)}
\gppoint{gp mark 9}{(8.285,4.624)}
\gppoint{gp mark 9}{(8.516,4.545)}
\gppoint{gp mark 9}{(8.748,4.468)}
\gppoint{gp mark 9}{(8.980,4.393)}
\gppoint{gp mark 9}{(9.212,4.318)}
\gppoint{gp mark 9}{(9.444,4.245)}
\gppoint{gp mark 9}{(9.676,4.173)}
\gppoint{gp mark 9}{(9.907,4.103)}
\gppoint{gp mark 9}{(10.139,4.033)}
\gppoint{gp mark 9}{(10.371,3.964)}
\gppoint{gp mark 9}{(10.603,3.896)}
\gppoint{gp mark 9}{(10.835,3.829)}
\gppoint{gp mark 9}{(11.066,3.763)}
\gppoint{gp mark 9}{(11.298,3.698)}
\gppoint{gp mark 9}{(11.530,3.635)}
\gppoint{gp mark 9}{(10.554,7.701)}
\node[gp node right] at (9.669,7.026) {DE-Sinc};
\gpsetdashtype{gp dt solid}
\draw[gp path] (9.934,7.026)--(11.174,7.026);
\draw[gp path] (2.721,7.804)--(2.953,7.553)--(3.184,7.263)--(3.416,6.939)--(3.648,6.632)%
  --(3.880,6.347)--(4.112,6.127)--(4.344,5.964)--(4.575,5.798)--(4.807,5.650)--(5.039,5.494)%
  --(5.271,5.324)--(5.503,5.137)--(5.735,4.990)--(5.966,4.858)--(6.198,4.757)--(6.430,4.636)%
  --(6.662,4.501)--(6.894,4.429)--(7.125,4.339)--(7.357,4.235)--(7.589,4.121)--(7.821,3.996)%
  --(8.053,3.858)--(8.285,3.705)--(8.516,3.522)--(8.748,3.394)--(8.980,3.373)--(9.212,3.147)%
  --(9.444,3.092)--(9.676,3.067)--(9.907,2.969)--(10.139,3.499)--(10.371,2.883)--(10.603,2.939)%
  --(10.835,2.708)--(11.066,2.828)--(11.298,2.732)--(11.530,2.692)--(11.704,2.747);
\gppoint{gp mark 7}{(2.721,7.804)}
\gppoint{gp mark 7}{(2.953,7.553)}
\gppoint{gp mark 7}{(3.184,7.263)}
\gppoint{gp mark 7}{(3.416,6.939)}
\gppoint{gp mark 7}{(3.648,6.632)}
\gppoint{gp mark 7}{(3.880,6.347)}
\gppoint{gp mark 7}{(4.112,6.127)}
\gppoint{gp mark 7}{(4.344,5.964)}
\gppoint{gp mark 7}{(4.575,5.798)}
\gppoint{gp mark 7}{(4.807,5.650)}
\gppoint{gp mark 7}{(5.039,5.494)}
\gppoint{gp mark 7}{(5.271,5.324)}
\gppoint{gp mark 7}{(5.503,5.137)}
\gppoint{gp mark 7}{(5.735,4.990)}
\gppoint{gp mark 7}{(5.966,4.858)}
\gppoint{gp mark 7}{(6.198,4.757)}
\gppoint{gp mark 7}{(6.430,4.636)}
\gppoint{gp mark 7}{(6.662,4.501)}
\gppoint{gp mark 7}{(6.894,4.429)}
\gppoint{gp mark 7}{(7.125,4.339)}
\gppoint{gp mark 7}{(7.357,4.235)}
\gppoint{gp mark 7}{(7.589,4.121)}
\gppoint{gp mark 7}{(7.821,3.996)}
\gppoint{gp mark 7}{(8.053,3.858)}
\gppoint{gp mark 7}{(8.285,3.705)}
\gppoint{gp mark 7}{(8.516,3.522)}
\gppoint{gp mark 7}{(8.748,3.394)}
\gppoint{gp mark 7}{(8.980,3.373)}
\gppoint{gp mark 7}{(9.212,3.147)}
\gppoint{gp mark 7}{(9.444,3.092)}
\gppoint{gp mark 7}{(9.676,3.067)}
\gppoint{gp mark 7}{(9.907,2.969)}
\gppoint{gp mark 7}{(10.139,3.499)}
\gppoint{gp mark 7}{(10.371,2.883)}
\gppoint{gp mark 7}{(10.603,2.939)}
\gppoint{gp mark 7}{(10.835,2.708)}
\gppoint{gp mark 7}{(11.066,2.828)}
\gppoint{gp mark 7}{(11.298,2.732)}
\gppoint{gp mark 7}{(11.530,2.692)}
\gppoint{gp mark 7}{(10.554,7.026)}
\draw[gp path] (2.431,8.219)--(2.431,1.420)--(11.704,1.420)--(11.704,8.219)--cycle;
\gpdefrectangularnode{gp plot 1}{\pgfpoint{2.431cm}{1.420cm}}{\pgfpoint{11.704cm}{8.219cm}}
\end{tikzpicture}
}
\caption{Numerical results for Example~\ref{ex:7}.}
\label{fig:ex7}
\end{figure}
\begin{figure}
\centering
\scalebox{0.6}{
\begin{tikzpicture}[gnuplot]
\tikzset{every node/.append style={font={\ttfamily\fontsize{14.4pt}{17.28pt}\selectfont}}}
\gpmonochromelines
\path (0.000,0.000) rectangle (12.500,8.750);
\gpcolor{color=gp lt color border}
\gpsetlinetype{gp lt border}
\gpsetdashtype{gp dt solid}
\gpsetlinewidth{1.00}
\draw[gp path] (2.431,1.420)--(2.611,1.420);
\draw[gp path] (11.704,1.420)--(11.524,1.420);
\node[gp node right] at (2.166,1.420) {$10^{-16}$};
\draw[gp path] (2.431,1.845)--(2.521,1.845);
\draw[gp path] (11.704,1.845)--(11.614,1.845);
\draw[gp path] (2.431,2.270)--(2.611,2.270);
\draw[gp path] (11.704,2.270)--(11.524,2.270);
\node[gp node right] at (2.166,2.270) {$10^{-14}$};
\draw[gp path] (2.431,2.695)--(2.521,2.695);
\draw[gp path] (11.704,2.695)--(11.614,2.695);
\draw[gp path] (2.431,3.120)--(2.611,3.120);
\draw[gp path] (11.704,3.120)--(11.524,3.120);
\node[gp node right] at (2.166,3.120) {$10^{-12}$};
\draw[gp path] (2.431,3.545)--(2.521,3.545);
\draw[gp path] (11.704,3.545)--(11.614,3.545);
\draw[gp path] (2.431,3.970)--(2.611,3.970);
\draw[gp path] (11.704,3.970)--(11.524,3.970);
\node[gp node right] at (2.166,3.970) {$10^{-10}$};
\draw[gp path] (2.431,4.395)--(2.521,4.395);
\draw[gp path] (11.704,4.395)--(11.614,4.395);
\draw[gp path] (2.431,4.820)--(2.611,4.820);
\draw[gp path] (11.704,4.820)--(11.524,4.820);
\node[gp node right] at (2.166,4.820) {$10^{-8}$};
\draw[gp path] (2.431,5.244)--(2.521,5.244);
\draw[gp path] (11.704,5.244)--(11.614,5.244);
\draw[gp path] (2.431,5.669)--(2.611,5.669);
\draw[gp path] (11.704,5.669)--(11.524,5.669);
\node[gp node right] at (2.166,5.669) {$10^{-6}$};
\draw[gp path] (2.431,6.094)--(2.521,6.094);
\draw[gp path] (11.704,6.094)--(11.614,6.094);
\draw[gp path] (2.431,6.519)--(2.611,6.519);
\draw[gp path] (11.704,6.519)--(11.524,6.519);
\node[gp node right] at (2.166,6.519) {$10^{-4}$};
\draw[gp path] (2.431,6.944)--(2.521,6.944);
\draw[gp path] (11.704,6.944)--(11.614,6.944);
\draw[gp path] (2.431,7.369)--(2.611,7.369);
\draw[gp path] (11.704,7.369)--(11.524,7.369);
\node[gp node right] at (2.166,7.369) {$10^{-2}$};
\draw[gp path] (2.431,7.794)--(2.521,7.794);
\draw[gp path] (11.704,7.794)--(11.614,7.794);
\draw[gp path] (2.431,8.219)--(2.611,8.219);
\draw[gp path] (11.704,8.219)--(11.524,8.219);
\node[gp node right] at (2.166,8.219) {$10^{0}$};
\draw[gp path] (2.431,1.420)--(2.431,1.600);
\draw[gp path] (2.431,8.219)--(2.431,8.039);
\node[gp node center] at (2.431,0.976) {$0$};
\draw[gp path] (3.590,1.420)--(3.590,1.600);
\draw[gp path] (3.590,8.219)--(3.590,8.039);
\node[gp node center] at (3.590,0.976) {$20$};
\draw[gp path] (4.749,1.420)--(4.749,1.600);
\draw[gp path] (4.749,8.219)--(4.749,8.039);
\node[gp node center] at (4.749,0.976) {$40$};
\draw[gp path] (5.908,1.420)--(5.908,1.600);
\draw[gp path] (5.908,8.219)--(5.908,8.039);
\node[gp node center] at (5.908,0.976) {$60$};
\draw[gp path] (7.068,1.420)--(7.068,1.600);
\draw[gp path] (7.068,8.219)--(7.068,8.039);
\node[gp node center] at (7.068,0.976) {$80$};
\draw[gp path] (8.227,1.420)--(8.227,1.600);
\draw[gp path] (8.227,8.219)--(8.227,8.039);
\node[gp node center] at (8.227,0.976) {$100$};
\draw[gp path] (9.386,1.420)--(9.386,1.600);
\draw[gp path] (9.386,8.219)--(9.386,8.039);
\node[gp node center] at (9.386,0.976) {$120$};
\draw[gp path] (10.545,1.420)--(10.545,1.600);
\draw[gp path] (10.545,8.219)--(10.545,8.039);
\node[gp node center] at (10.545,0.976) {$140$};
\draw[gp path] (11.704,1.420)--(11.704,1.600);
\draw[gp path] (11.704,8.219)--(11.704,8.039);
\node[gp node center] at (11.704,0.976) {$160$};
\draw[gp path] (2.431,8.219)--(2.431,1.420)--(11.704,1.420)--(11.704,8.219)--cycle;
\node[gp node center,rotate=-270] at (0.354,4.819) {maximum error};
\node[gp node center] at (7.067,0.310) {$m=2n+1$};
\node[gp node right] at (9.669,7.701) {SE-Sinc};
\gpsetdashtype{gp dt 2}
\draw[gp path] (9.934,7.701)--(11.174,7.701);
\draw[gp path] (2.721,7.712)--(2.953,7.383)--(3.184,7.140)--(3.416,6.939)--(3.648,6.760)%
  --(3.880,6.597)--(4.112,6.446)--(4.344,6.305)--(4.575,6.171)--(4.807,6.045)--(5.039,5.925)%
  --(5.271,5.809)--(5.503,5.699)--(5.735,5.592)--(5.966,5.489)--(6.198,5.389)--(6.430,5.292)%
  --(6.662,5.198)--(6.894,5.106)--(7.125,5.016)--(7.357,4.929)--(7.589,4.844)--(7.821,4.760)%
  --(8.053,4.679)--(8.285,4.599)--(8.516,4.520)--(8.748,4.443)--(8.980,4.367)--(9.212,4.292)%
  --(9.444,4.220)--(9.676,4.148)--(9.907,4.076)--(10.139,4.007)--(10.371,3.938)--(10.603,3.870)%
  --(10.835,3.803)--(11.066,3.737)--(11.298,3.672)--(11.530,3.608)--(11.704,3.560);
\gpsetpointsize{4.00}
\gppoint{gp mark 9}{(2.721,7.712)}
\gppoint{gp mark 9}{(2.953,7.383)}
\gppoint{gp mark 9}{(3.184,7.140)}
\gppoint{gp mark 9}{(3.416,6.939)}
\gppoint{gp mark 9}{(3.648,6.760)}
\gppoint{gp mark 9}{(3.880,6.597)}
\gppoint{gp mark 9}{(4.112,6.446)}
\gppoint{gp mark 9}{(4.344,6.305)}
\gppoint{gp mark 9}{(4.575,6.171)}
\gppoint{gp mark 9}{(4.807,6.045)}
\gppoint{gp mark 9}{(5.039,5.925)}
\gppoint{gp mark 9}{(5.271,5.809)}
\gppoint{gp mark 9}{(5.503,5.699)}
\gppoint{gp mark 9}{(5.735,5.592)}
\gppoint{gp mark 9}{(5.966,5.489)}
\gppoint{gp mark 9}{(6.198,5.389)}
\gppoint{gp mark 9}{(6.430,5.292)}
\gppoint{gp mark 9}{(6.662,5.198)}
\gppoint{gp mark 9}{(6.894,5.106)}
\gppoint{gp mark 9}{(7.125,5.016)}
\gppoint{gp mark 9}{(7.357,4.929)}
\gppoint{gp mark 9}{(7.589,4.844)}
\gppoint{gp mark 9}{(7.821,4.760)}
\gppoint{gp mark 9}{(8.053,4.679)}
\gppoint{gp mark 9}{(8.285,4.599)}
\gppoint{gp mark 9}{(8.516,4.520)}
\gppoint{gp mark 9}{(8.748,4.443)}
\gppoint{gp mark 9}{(8.980,4.367)}
\gppoint{gp mark 9}{(9.212,4.292)}
\gppoint{gp mark 9}{(9.444,4.220)}
\gppoint{gp mark 9}{(9.676,4.148)}
\gppoint{gp mark 9}{(9.907,4.076)}
\gppoint{gp mark 9}{(10.139,4.007)}
\gppoint{gp mark 9}{(10.371,3.938)}
\gppoint{gp mark 9}{(10.603,3.870)}
\gppoint{gp mark 9}{(10.835,3.803)}
\gppoint{gp mark 9}{(11.066,3.737)}
\gppoint{gp mark 9}{(11.298,3.672)}
\gppoint{gp mark 9}{(11.530,3.608)}
\gppoint{gp mark 9}{(10.554,7.701)}
\node[gp node right] at (9.669,7.026) {DE-Sinc};
\gpsetdashtype{gp dt solid}
\draw[gp path] (9.934,7.026)--(11.174,7.026);
\draw[gp path] (2.721,7.880)--(2.953,7.538)--(3.184,7.217)--(3.416,6.901)--(3.648,6.585)%
  --(3.880,6.272)--(4.112,5.968)--(4.344,5.667)--(4.575,5.375)--(4.807,5.148)--(5.039,5.011)%
  --(5.271,4.858)--(5.503,4.693)--(5.735,4.528)--(5.966,4.351)--(6.198,4.217)--(6.430,4.081)%
  --(6.662,3.977)--(6.894,3.856)--(7.125,3.720)--(7.357,3.624)--(7.589,3.539)--(7.821,3.441)%
  --(8.053,3.332)--(8.285,3.212)--(8.516,3.082)--(8.748,2.942)--(8.980,2.784)--(9.212,2.600)%
  --(9.444,2.506)--(9.676,2.362)--(9.907,2.253)--(10.139,2.229)--(10.371,2.146)--(10.603,2.079)%
  --(10.835,2.111)--(11.066,2.018)--(11.298,2.195)--(11.530,2.079)--(11.704,2.103);
\gppoint{gp mark 7}{(2.721,7.880)}
\gppoint{gp mark 7}{(2.953,7.538)}
\gppoint{gp mark 7}{(3.184,7.217)}
\gppoint{gp mark 7}{(3.416,6.901)}
\gppoint{gp mark 7}{(3.648,6.585)}
\gppoint{gp mark 7}{(3.880,6.272)}
\gppoint{gp mark 7}{(4.112,5.968)}
\gppoint{gp mark 7}{(4.344,5.667)}
\gppoint{gp mark 7}{(4.575,5.375)}
\gppoint{gp mark 7}{(4.807,5.148)}
\gppoint{gp mark 7}{(5.039,5.011)}
\gppoint{gp mark 7}{(5.271,4.858)}
\gppoint{gp mark 7}{(5.503,4.693)}
\gppoint{gp mark 7}{(5.735,4.528)}
\gppoint{gp mark 7}{(5.966,4.351)}
\gppoint{gp mark 7}{(6.198,4.217)}
\gppoint{gp mark 7}{(6.430,4.081)}
\gppoint{gp mark 7}{(6.662,3.977)}
\gppoint{gp mark 7}{(6.894,3.856)}
\gppoint{gp mark 7}{(7.125,3.720)}
\gppoint{gp mark 7}{(7.357,3.624)}
\gppoint{gp mark 7}{(7.589,3.539)}
\gppoint{gp mark 7}{(7.821,3.441)}
\gppoint{gp mark 7}{(8.053,3.332)}
\gppoint{gp mark 7}{(8.285,3.212)}
\gppoint{gp mark 7}{(8.516,3.082)}
\gppoint{gp mark 7}{(8.748,2.942)}
\gppoint{gp mark 7}{(8.980,2.784)}
\gppoint{gp mark 7}{(9.212,2.600)}
\gppoint{gp mark 7}{(9.444,2.506)}
\gppoint{gp mark 7}{(9.676,2.362)}
\gppoint{gp mark 7}{(9.907,2.253)}
\gppoint{gp mark 7}{(10.139,2.229)}
\gppoint{gp mark 7}{(10.371,2.146)}
\gppoint{gp mark 7}{(10.603,2.079)}
\gppoint{gp mark 7}{(10.835,2.111)}
\gppoint{gp mark 7}{(11.066,2.018)}
\gppoint{gp mark 7}{(11.298,2.195)}
\gppoint{gp mark 7}{(11.530,2.079)}
\gppoint{gp mark 7}{(10.554,7.026)}
\draw[gp path] (2.431,8.219)--(2.431,1.420)--(11.704,1.420)--(11.704,8.219)--cycle;
\gpdefrectangularnode{gp plot 1}{\pgfpoint{2.431cm}{1.420cm}}{\pgfpoint{11.704cm}{8.219cm}}
\end{tikzpicture}
}
\caption{Numerical results for Example~\ref{ex:8}.}
\label{fig:ex8}
\end{figure}
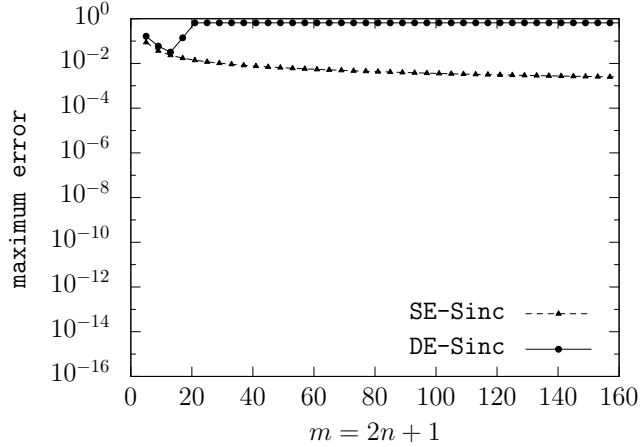
\begin{figure}
\centering
\scalebox{0.6}{
\begin{tikzpicture}[gnuplot]
\tikzset{every node/.append style={font={\ttfamily\fontsize{14.4pt}{17.28pt}\selectfont}}}
\gpmonochromelines
\path (0.000,0.000) rectangle (12.500,8.750);
\gpcolor{color=gp lt color border}
\gpsetlinetype{gp lt border}
\gpsetdashtype{gp dt solid}
\gpsetlinewidth{1.00}
\draw[gp path] (2.431,1.420)--(2.611,1.420);
\draw[gp path] (11.704,1.420)--(11.524,1.420);
\node[gp node right] at (2.166,1.420) {$10^{-16}$};
\draw[gp path] (2.431,1.845)--(2.521,1.845);
\draw[gp path] (11.704,1.845)--(11.614,1.845);
\draw[gp path] (2.431,2.270)--(2.611,2.270);
\draw[gp path] (11.704,2.270)--(11.524,2.270);
\node[gp node right] at (2.166,2.270) {$10^{-14}$};
\draw[gp path] (2.431,2.695)--(2.521,2.695);
\draw[gp path] (11.704,2.695)--(11.614,2.695);
\draw[gp path] (2.431,3.120)--(2.611,3.120);
\draw[gp path] (11.704,3.120)--(11.524,3.120);
\node[gp node right] at (2.166,3.120) {$10^{-12}$};
\draw[gp path] (2.431,3.545)--(2.521,3.545);
\draw[gp path] (11.704,3.545)--(11.614,3.545);
\draw[gp path] (2.431,3.970)--(2.611,3.970);
\draw[gp path] (11.704,3.970)--(11.524,3.970);
\node[gp node right] at (2.166,3.970) {$10^{-10}$};
\draw[gp path] (2.431,4.395)--(2.521,4.395);
\draw[gp path] (11.704,4.395)--(11.614,4.395);
\draw[gp path] (2.431,4.820)--(2.611,4.820);
\draw[gp path] (11.704,4.820)--(11.524,4.820);
\node[gp node right] at (2.166,4.820) {$10^{-8}$};
\draw[gp path] (2.431,5.244)--(2.521,5.244);
\draw[gp path] (11.704,5.244)--(11.614,5.244);
\draw[gp path] (2.431,5.669)--(2.611,5.669);
\draw[gp path] (11.704,5.669)--(11.524,5.669);
\node[gp node right] at (2.166,5.669) {$10^{-6}$};
\draw[gp path] (2.431,6.094)--(2.521,6.094);
\draw[gp path] (11.704,6.094)--(11.614,6.094);
\draw[gp path] (2.431,6.519)--(2.611,6.519);
\draw[gp path] (11.704,6.519)--(11.524,6.519);
\node[gp node right] at (2.166,6.519) {$10^{-4}$};
\draw[gp path] (2.431,6.944)--(2.521,6.944);
\draw[gp path] (11.704,6.944)--(11.614,6.944);
\draw[gp path] (2.431,7.369)--(2.611,7.369);
\draw[gp path] (11.704,7.369)--(11.524,7.369);
\node[gp node right] at (2.166,7.369) {$10^{-2}$};
\draw[gp path] (2.431,7.794)--(2.521,7.794);
\draw[gp path] (11.704,7.794)--(11.614,7.794);
\draw[gp path] (2.431,8.219)--(2.611,8.219);
\draw[gp path] (11.704,8.219)--(11.524,8.219);
\node[gp node right] at (2.166,8.219) {$10^{0}$};
\draw[gp path] (2.431,1.420)--(2.431,1.600);
\draw[gp path] (2.431,8.219)--(2.431,8.039);
\node[gp node center] at (2.431,0.976) {$0$};
\draw[gp path] (3.590,1.420)--(3.590,1.600);
\draw[gp path] (3.590,8.219)--(3.590,8.039);
\node[gp node center] at (3.590,0.976) {$20$};
\draw[gp path] (4.749,1.420)--(4.749,1.600);
\draw[gp path] (4.749,8.219)--(4.749,8.039);
\node[gp node center] at (4.749,0.976) {$40$};
\draw[gp path] (5.908,1.420)--(5.908,1.600);
\draw[gp path] (5.908,8.219)--(5.908,8.039);
\node[gp node center] at (5.908,0.976) {$60$};
\draw[gp path] (7.068,1.420)--(7.068,1.600);
\draw[gp path] (7.068,8.219)--(7.068,8.039);
\node[gp node center] at (7.068,0.976) {$80$};
\draw[gp path] (8.227,1.420)--(8.227,1.600);
\draw[gp path] (8.227,8.219)--(8.227,8.039);
\node[gp node center] at (8.227,0.976) {$100$};
\draw[gp path] (9.386,1.420)--(9.386,1.600);
\draw[gp path] (9.386,8.219)--(9.386,8.039);
\node[gp node center] at (9.386,0.976) {$120$};
\draw[gp path] (10.545,1.420)--(10.545,1.600);
\draw[gp path] (10.545,8.219)--(10.545,8.039);
\node[gp node center] at (10.545,0.976) {$140$};
\draw[gp path] (11.704,1.420)--(11.704,1.600);
\draw[gp path] (11.704,8.219)--(11.704,8.039);
\node[gp node center] at (11.704,0.976) {$160$};
\draw[gp path] (2.431,8.219)--(2.431,1.420)--(11.704,1.420)--(11.704,8.219)--cycle;
\node[gp node center,rotate=-270] at (0.354,4.819) {maximum error};
\node[gp node center] at (7.067,0.310) {$m=2n+1$};
\node[gp node right] at (9.669,2.612) {SE-Sinc};
\gpsetdashtype{gp dt 2}
\draw[gp path] (9.934,2.612)--(11.174,2.612);
\draw[gp path] (2.721,7.770)--(2.953,7.610)--(3.184,7.525)--(3.416,7.470)--(3.648,7.431)%
  --(3.880,7.400)--(4.112,7.375)--(4.344,7.353)--(4.575,7.334)--(4.807,7.318)--(5.039,7.303)%
  --(5.271,7.290)--(5.503,7.278)--(5.735,7.267)--(5.966,7.256)--(6.198,7.247)--(6.430,7.238)%
  --(6.662,7.229)--(6.894,7.221)--(7.125,7.213)--(7.357,7.206)--(7.589,7.198)--(7.821,7.192)%
  --(8.053,7.185)--(8.285,7.179)--(8.516,7.173)--(8.748,7.167)--(8.980,7.162)--(9.212,7.157)%
  --(9.444,7.152)--(9.676,7.147)--(9.907,7.142)--(10.139,7.138)--(10.371,7.134)--(10.603,7.129)%
  --(10.835,7.125)--(11.066,7.122)--(11.298,7.118)--(11.530,7.114)--(11.704,7.111);
\gpsetpointsize{4.00}
\gppoint{gp mark 9}{(2.721,7.770)}
\gppoint{gp mark 9}{(2.953,7.610)}
\gppoint{gp mark 9}{(3.184,7.525)}
\gppoint{gp mark 9}{(3.416,7.470)}
\gppoint{gp mark 9}{(3.648,7.431)}
\gppoint{gp mark 9}{(3.880,7.400)}
\gppoint{gp mark 9}{(4.112,7.375)}
\gppoint{gp mark 9}{(4.344,7.353)}
\gppoint{gp mark 9}{(4.575,7.334)}
\gppoint{gp mark 9}{(4.807,7.318)}
\gppoint{gp mark 9}{(5.039,7.303)}
\gppoint{gp mark 9}{(5.271,7.290)}
\gppoint{gp mark 9}{(5.503,7.278)}
\gppoint{gp mark 9}{(5.735,7.267)}
\gppoint{gp mark 9}{(5.966,7.256)}
\gppoint{gp mark 9}{(6.198,7.247)}
\gppoint{gp mark 9}{(6.430,7.238)}
\gppoint{gp mark 9}{(6.662,7.229)}
\gppoint{gp mark 9}{(6.894,7.221)}
\gppoint{gp mark 9}{(7.125,7.213)}
\gppoint{gp mark 9}{(7.357,7.206)}
\gppoint{gp mark 9}{(7.589,7.198)}
\gppoint{gp mark 9}{(7.821,7.192)}
\gppoint{gp mark 9}{(8.053,7.185)}
\gppoint{gp mark 9}{(8.285,7.179)}
\gppoint{gp mark 9}{(8.516,7.173)}
\gppoint{gp mark 9}{(8.748,7.167)}
\gppoint{gp mark 9}{(8.980,7.162)}
\gppoint{gp mark 9}{(9.212,7.157)}
\gppoint{gp mark 9}{(9.444,7.152)}
\gppoint{gp mark 9}{(9.676,7.147)}
\gppoint{gp mark 9}{(9.907,7.142)}
\gppoint{gp mark 9}{(10.139,7.138)}
\gppoint{gp mark 9}{(10.371,7.134)}
\gppoint{gp mark 9}{(10.603,7.129)}
\gppoint{gp mark 9}{(10.835,7.125)}
\gppoint{gp mark 9}{(11.066,7.122)}
\gppoint{gp mark 9}{(11.298,7.118)}
\gppoint{gp mark 9}{(11.530,7.114)}
\gppoint{gp mark 9}{(10.554,2.612)}
\node[gp node right] at (9.669,1.937) {DE-Sinc};
\gpsetdashtype{gp dt solid}
\draw[gp path] (9.934,1.937)--(11.174,1.937);
\draw[gp path] (2.721,7.886)--(2.953,7.698)--(3.184,7.584)--(3.416,7.856)--(3.648,8.141)%
  --(3.880,8.141)--(4.112,8.141)--(4.344,8.141)--(4.575,8.141)--(4.807,8.141)--(5.039,8.141)%
  --(5.271,8.141)--(5.503,8.141)--(5.735,8.141)--(5.966,8.141)--(6.198,8.141)--(6.430,8.141)%
  --(6.662,8.141)--(6.894,8.141)--(7.125,8.141)--(7.357,8.141)--(7.589,8.141)--(7.821,8.141)%
  --(8.053,8.141)--(8.285,8.141)--(8.516,8.141)--(8.748,8.141)--(8.980,8.141)--(9.212,8.141)%
  --(9.444,8.141)--(9.676,8.141)--(9.907,8.141)--(10.139,8.141)--(10.371,8.141)--(10.603,8.141)%
  --(10.835,8.141)--(11.066,8.141)--(11.298,8.141)--(11.530,8.141)--(11.704,8.141);
\gppoint{gp mark 7}{(2.721,7.886)}
\gppoint{gp mark 7}{(2.953,7.698)}
\gppoint{gp mark 7}{(3.184,7.584)}
\gppoint{gp mark 7}{(3.416,7.856)}
\gppoint{gp mark 7}{(3.648,8.141)}
\gppoint{gp mark 7}{(3.880,8.141)}
\gppoint{gp mark 7}{(4.112,8.141)}
\gppoint{gp mark 7}{(4.344,8.141)}
\gppoint{gp mark 7}{(4.575,8.141)}
\gppoint{gp mark 7}{(4.807,8.141)}
\gppoint{gp mark 7}{(5.039,8.141)}
\gppoint{gp mark 7}{(5.271,8.141)}
\gppoint{gp mark 7}{(5.503,8.141)}
\gppoint{gp mark 7}{(5.735,8.141)}
\gppoint{gp mark 7}{(5.966,8.141)}
\gppoint{gp mark 7}{(6.198,8.141)}
\gppoint{gp mark 7}{(6.430,8.141)}
\gppoint{gp mark 7}{(6.662,8.141)}
\gppoint{gp mark 7}{(6.894,8.141)}
\gppoint{gp mark 7}{(7.125,8.141)}
\gppoint{gp mark 7}{(7.357,8.141)}
\gppoint{gp mark 7}{(7.589,8.141)}
\gppoint{gp mark 7}{(7.821,8.141)}
\gppoint{gp mark 7}{(8.053,8.141)}
\gppoint{gp mark 7}{(8.285,8.141)}
\gppoint{gp mark 7}{(8.516,8.141)}
\gppoint{gp mark 7}{(8.748,8.141)}
\gppoint{gp mark 7}{(8.980,8.141)}
\gppoint{gp mark 7}{(9.212,8.141)}
\gppoint{gp mark 7}{(9.444,8.141)}
\gppoint{gp mark 7}{(9.676,8.141)}
\gppoint{gp mark 7}{(9.907,8.141)}
\gppoint{gp mark 7}{(10.139,8.141)}
\gppoint{gp mark 7}{(10.371,8.141)}
\gppoint{gp mark 7}{(10.603,8.141)}
\gppoint{gp mark 7}{(10.835,8.141)}
\gppoint{gp mark 7}{(11.066,8.141)}
\gppoint{gp mark 7}{(11.298,8.141)}
\gppoint{gp mark 7}{(11.530,8.141)}
\gppoint{gp mark 7}{(10.554,1.937)}
\draw[gp path] (2.431,8.219)--(2.431,1.420)--(11.704,1.420)--(11.704,8.219)--cycle;
\gpdefrectangularnode{gp plot 1}{\pgfpoint{2.431cm}{1.420cm}}{\pgfpoint{11.704cm}{8.219cm}}
\end{tikzpicture}
}
\caption{Numerical results for Example~\ref{ex:9}.}
\label{fig:ex9}
\end{figure}

\section{Concluding remarks}
\label{sec:conclusion}

For the indefinite convolution~\eqref{eq:p-x},
Stenger~\cite{stenger95:_colloc}
derived an excellent approximation formula~\eqref{eq:approx-p-SE}.
He also gave the convergence analysis (Theorem~\ref{thm:SE-old}),
which claims that his formula may attain root-exponential convergence:
$\OO(\exp(-c\sqrt{m}))$.
However, the convergence theorem
relies on two nontrivial assumptions.
The first one is Assumption~\ref{assump:SE1}.
In this assumption, $\hat{f}$ is assumed to be analytic on $\Omega^{+}$,
which is not satisfied in several standard examples,
such as $f(x)=\E^{x}$ $(\hat{f}(s)=1/(s-1))$.
Furthermore, it is assumed that $\aSEm$ is diagonalizable,
and $\sigma(\aSEm)\subseteq\Omega^{+}$.
The latter assumption ($\sigma(\aSEm)\subseteq\Omega^{+}$)
is known as Stenger's conjecture,
which has not strictly been proved thus far.
This assumption is made to ensure that $F(\aSEm)$ is well-defined.
The second one is Assumption~\ref{assump:SE2}.
In this assumption, $P(v,x)$ is assumed to belong to
$\MC_{\alpha,\beta}(\SEt(\domD_{d+\epsilon}))$,
and the parameters $\alpha$, $\beta$ and $d$ are used
for implementation to select $h$ by~\eqref{eq:h-SE}
and $M$ and $N$ by~\eqref{eq:MN-SE}.
However, those parameters are not easy to obtain
because $P(v,x)$ is not explicitly known.

The first contribution of this paper is to eliminate
the two assumptions. Instead of Assumption~\ref{assump:SE1},
this study makes Assumption~\ref{assump:F},
which is more amenable to practical applications.
Under Assumption~\ref{assump:F}, the well-definedness of $F(\aSEm)$
is established (Lemma~\ref{lem:SE-F-welldef}).
This result reveals that Stenger's conjecture is not required
to prove the well-definedness of $F(\aSEm)$.
On a relevant note, this study reveals that
$\sigma(\aSEm)\to \{0\}$ as $m\to\infty$ (Corollary~\ref{cor:SE-new}).
Furthermore, eliminating Assumption~\ref{assump:SE2},
this study establishes Theorem~\ref{thm:SE-new}.
This elimination is fairly useful for a practical use.

It should be noted that the results of this paper
rely on the finiteness of the interval $[a, b]$,
whereas Stenger's framework accommodates potentially infinite intervals.
Extension to infinite intervals is one of future subjects.
Furthermore, as for the convergence theorem,
this paper (Theorem~\ref{thm:SE-new}) assumes that $g$ is bounded,
whereas $g$ may have integrable singularity
in Stenger's error analysis (Theorem~\ref{thm:SE-old}).
Extension to the unbounded case is also one of future subjects.

The second contribution of this paper is to improve
the convergence rate of Stenger's formula. The idea
is to replace the SE transformation with the DE
transformation, which results in the formula~\eqref{eq:approx-p-DE}.
Furthemore, this study provides similar theoretical
results to the case of the SE transformation;
Lemma~\ref{lem:DE-F-welldef}
and Theorem~\ref{thm:DE-new} are established.
As a result, it is shown that
the improved formula may attain nearly exponential
convergence: $\OO(\exp(-\tilde{c}m/\log m))$,
which is significantly higher than root-exponential
convergence.
In fact, such an improvement can be observed
in the numerical results of Examples~\ref{ex:1}--\ref{ex:6}.

Numerical results of Examples~\ref{ex:7}--\ref{ex:9}
offer preliminary insights into the behavior of
the Sinc convolution in contexts beyond the scope of
Theorems~\ref{thm:SE-new} and~\ref{thm:DE-new}.
Notably, effective convergence is still observed in
Examples~\ref{ex:7} and~\ref{ex:8},
suggesting the potential robustness of the method.
A rigorous theoretical framework to explain these
phenomena remains a subject for future work.

As a final remark,
Stenger's beautiful expression of
the indefinite convolution~\eqref{eq:p-F-J-g}
can be utilized not only for the Sinc convolution,
but also for any other numerical methods.
As described in the introduction,
Stenger's conjecture
was formulated for
some approximation formulas~\citep{stenger15:_comput_schr,stenger21:_indef}.
The idea of refinement of theory of
the Sinc convolution presented in this study
may also be applied to those formulas.




\section*{Funding}
This work was supported by JSPS Grant-in-Aid for Scientific Research (C) JP23K03218.

\bibliographystyle{IMANUM-BIB}
\bibliography{sinc-conv-refine}


\end{document}